
\documentclass{article}
\usepackage{amsmath, amssymb, amsfonts}
\usepackage{amsthm}
\usepackage{geometry}
\usepackage{graphicx}
\usepackage[titletoc]{appendix}
\usepackage{natbib}

\setcounter{MaxMatrixCols}{10}

\newtheorem{theorem}{Theorem}[section]
\newtheorem{lemma}[theorem]{Lemma}
\newtheorem{proposition}[theorem]{Proposition}
\newtheorem{corollary}[theorem]{Corollary}
\newtheorem{assumption}{Assumption}
\theoremstyle{definition}
\newtheorem{definition}{Definition}[section]
\newtheorem{example}{Example}[section]
\newtheorem{remark}[theorem]{Remark}

\newcommand{\secret}[1]{}
\input{tcilatex}

\begin{document}

\title{On Size and Power of Heteroskedasticity and Autocorrelation Robust
Tests \thanks{%
Parts of the results in the paper have been presented as the Econometric
Theory Lecture at the International Symposium on Econometric Theory and
Applications, Shanghai, May 19-21, 2012. We are grateful to the Editor and
three referees for helpful comments.}}
\author{David Preinerstorfer and Benedikt M. P\"{o}tscher\thanks{%
Department of Statistics, University of Vienna, Oskar-Morgenstern-Platz 1,
A-1090 Vienna, Austria. E-mail: \{david.preinerstorfer,
benedikt.poetscher\}@univie.ac.at} \\
Department of Statistics, University of Vienna}
\date{ Preliminary version: April 2012\\
First version: January 2013\\
This version: June 2014}
\maketitle

\begin{abstract}
Testing restrictions on regression coefficients in linear models often
requires correcting the conventional F-test for potential heteroskedasticity
or autocorrelation amongst the disturbances, leading to so-called
heteroskedasticity and autocorrelation robust test procedures. These
procedures have been developed with the purpose of attenuating size
distortions and power deficiencies present for the uncorrected F-test. We
develop a general theory to establish positive as well as negative
finite-sample results concerning the size and power properties of a large
class of heteroskedasticity and autocorrelation robust tests. Using these
results we show that nonparametrically as well as parametrically corrected
F-type tests in time series regression models with stationary disturbances
have either size equal to one or nuisance-infimal power equal to zero under
very weak assumptions on the covariance model and under generic conditions
on the design matrix. In addition we suggest an adjustment procedure based
on artificial regressors. This adjustment resolves the problem in many cases
in that the so-adjusted tests do not suffer from size distortions. At the
same time their power function is bounded away from zero. As a second
application we discuss the case of heteroskedastic disturbances.

AMS Mathematics Subject Classification 2010: 62F03, 62J05, 62F35, 62M10,
62M15

Keywords: Size distortion, power deficiency, invariance, robustness,
autocorrelation, heteroskedasticity, HAC, fixed-bandwidth,
long-run-variance, feasible GLS
\end{abstract}

\section{Introduction}

So-called autocorrelation robust tests have received considerable attention
in the econometrics literature in the last two and a half decades. These
tests are Wald-type tests which make use of an appropriate nonparametric
variance estimator that tries to take into account the autocorrelation in
the data. The early papers on such nonparametric variance estimators in
econometrics date from the late 1980s and early 1990s (see, e.g., \cite%
{NW87,NW94}, \cite{A91}, and \cite{A92}) and typically consider consistent
variance estimators. The ideas and techniques underlying this literature
derive from the much earlier literature on spectral estimation and can be
traced back to work by \cite{Bartlett1950}, \cite{Jow1955}, \cite{Hannan1957}%
, and \cite{GR57}, the latter explicitly discussing what would now be called
autocorrelation robust tests and confidence intervals (Section 7.9 of \cite%
{GR57}). For book-length treatments of spectral estimation see the classics 
\cite{Hannan70} or \cite{Anderson71}. Autocorrelation robust tests for the
location parameter also play an important r\^{o}le in the field of
simulation, see, e.g., \cite{Heidel1981} or \cite{FJ2010}. In a similar
vein, so-called heteroskedasticity robust variance estimators and associated
tests have been invented by \cite{E63, E67} and have later been introduced
into the econometrics literature. As mentioned before, the autocorrelation
robust test statistics considered in the above cited econometrics literature
employ consistent variance estimators leading to an asymptotic chi-square
distribution under the null. It soon transpired from Monte Carlo studies
that these tests (using as critical values the quantiles of the asymptotic
chi-square distribution) are often severely oversized in finite samples.
This has led to the proposal to use a test statistic of the same form, but
to obtain the critical values from another (nuisance parameter-free)
distribution which arises as the limiting distribution in an alternative
asymptotic framework ("fixed bandwidth asymptotics") in which the variance
estimator is no longer consistent, see \cite{KVB2000}, \cite{KiefVogl2002,
KV2002,KV2005}. The idea of using "fixed bandwidth asymptotics" can be
traced back to earlier work by \cite{Neave1970}. Monte Carlo studies have
shown that these tests typically are also oversized, albeit less so than the
tests mentioned earlier.\footnote{\label{FN_1}Some of \ the Monte Carlo
studies in the literature initialize the disturbance process with its
stationary distribution, while others use a fixed starting value for
initialization. In both cases size distortions are found for both classes of
tests referred to in the text.} This improvement, however, is often achieved
at the expense of some loss of power. In an attempt to better understand
size and power properties of autocorrelation robust tests, higher-order
asymptotic properties of these tests have been studied (\cite{RV2001}, \cite%
{J04}, \cite{SPJ08,SPJ11}, \cite{ZhangShao2013}).

The first-order as well as the higher-order asymptotic results in the
literature cited above are all \emph{pointwise }asymptotic results in the
sense that they are derived under the assumption of a \emph{fixed}
underlying data-generating process (DGP). Therefore, while these results
tell us something about the limit of the rejection probability, or the rate
of convergence to this limit, for a \emph{fixed} underlying DGP, they do not
necessarily inform us about the \emph{size }of the test or its asymptotic
behavior (e.g., limit of the size as sample size increases) nor about the 
\emph{power function} or its asymptotic behavior. The reason is that the
asymptotic results do not hold uniformly in the underlying DGP under the
typical assumptions on the feasible set of DGPs in this literature. Of
course, one could restrict the set of feasible DGPs in such a way that the
asymptotic results hold uniformly, but this would require the imposition of
unnatural and untenable assumptions on the set of feasible DGPs as will
transpire from the subsequent discussion; cf. also Subsection \ref{Disc}.

In Section \ref{tsr} of the present paper we provide a theoretical
finite-sample analysis of the size and power properties of autocorrelation
robust tests for linear restrictions on the parameters in a linear
regression model with autocorrelated errors. Being finite-sample results,
the findings of the paper apply equally well regardless of whether we fancy
that the variance estimator being used would be consistent or not would
sample size go to infinity. Under a mild assumption on the richeness of the
set of allowed autocorrelation structures in the maintained model, the
results in Section \ref{tsr} imply that in most cases the size of common
autocorrelation robust tests is $1$ or that the worst case power is $0$ (or
both). The richness assumption just mentioned only amounts to requiring that
all correlation structures corresponding to stationary Gaussian
autoregressive processes of order $1$ are allowed for in the model. Compared
to the much wider assumptions on the DGP appearing in the literature on
autocorrelation robust tests cited above, this certainly is a very mild
assumption. [Not including all stationary Gaussian autoregressive models of
order $1$ into the set of feasible disturbance processes appears to be an
unnatural restriction in a theory of autocorrelation robust tests, cf. also
the discussion in Subsection \ref{Disc}.] A similar negative result is
derived for tests that do not use a nonparametric variance estimator but use
a variance estimator derived from a parametric model as well as for tests
based on a feasible generalized least squares estimator (Subsection \ref{GLS}%
). We also show that the just mentioned negative results hold generically in
the sense that, given the linear restrictions to be tested, the set of
design matrices such that the negative results do \emph{not} apply is a
negligible set (Propositions \ref{generic} and \ref{generic_2}).
Furthermore, we provide a positive result in that we isolate conditions (on
the design matrix and on the restrictions to be tested) such that the size
of the test can be controlled. While this result is obtained under the
strong assumption that the set of feasible correlation structures coincides
with the correlation structures of all stationary autoregressive process of
order $1$, it should be noted that the negative results equally well hold
under this parametric correlation model. The positive result just mentioned
is then used to show how for the majority of testing problems
autocorrelation robust tests can be adjusted in such a way that they do not
suffer from the "size equals $1$" and the "worst case power equals $0$"
problem. In Section \ref{Het} we provide an analogous negative result for
heteroskedasticity robust tests and discuss why a (nontrivial) positive
result is not possible.

The above mentioned results for autocorrelation/heteroskedasticity robust
tests can of course also be phrased in terms of properties of the confidence
sets that are obtained from these tests via inversion. For example, the
"size equals one" results for the tests translate into "infimal coverage
probability equals zero" results for the corresponding confidence sets.

We next discuss some related literature. Problems with tests and confidence
sets for the intercept in a linear regression model with autoregressive
disturbances have been pointed out in Section 5.3 of \cite{Dufour1997} (in a
somewhat different setup). These results are specific to testing the
intercept and do not apply to other linear restrictions. This is, in
particular witnessed by our positive results for certain testing problems.
Furthermore, there is a considerable body of literature concerned with the
properties of the \emph{standard} $F$-test (i.e., the $F$-test constructed
without any correction for autocorrelation) in the presence of
autocorrelation, see the references cited in \cite{KKB90} and \cite{Ban00}.
Much of this literature concentrates on the case where the errors follow a
stationary autoregressive process of order $1$. As the correlation in the
errors is not accounted for when considering the standard $F$-test, it is
not too surprising that the standard $F$-test typically shows deplorable
performance for large values of the autocorrelation coefficient $\rho $, see 
\cite{Kr89}, \cite{KKB90}, \cite{Ban00}, and Subsection \ref{Krae} for more
discussion. Section \ref{tsr} of the present paper shows that
autocorrelation robust tests, which despite having built into them a
correction for autocorrelation, exhibit a similarly bad behavior. Finally,
in a different testing problem (the leading case being testing the
correlation of the errors in a spatial regression model) \cite{Mart10} has
studied the power of a class of invariant tests including standard tests
like the Cliff-Ord test and observed somewhat similar results in that the
power of the tests considered typically approaches (as the strength of the
correlation increases) either $0$ or $1$. While his results are similar in
spirit to some of our results, his arguments are unfortunately fraught with
a host of problems. See \cite{Prein2013} for discussion, corrections, and
extensions.

The results in Section \ref{tsr} for autocorrelation robust tests and in
Section \ref{Het} for heteroskedasticity robust tests are derived as special
cases of a more general theory for size and power properties of a larger
class of tests that are invariant under a particular group of affine
transformations. This theory is provided in Section \ref{General}. One of
the mechanisms behind the negative results in the present paper is a
concentration mechanism explained subsequent to Theorem \ref{thmlrv} and in
more detail in Subsection \ref{sec_neg}, cf. also Corollary \ref{CW}. A
second mechanism generating negative results is described in Theorem \ref%
{prop_101}. The theory underlying the positive results mentioned above is
provided in Subsection \ref{sec_pos} and in Theorem \ref{TU_1} as well as
Proposition \ref{enforce_inv}. Furthermore, the results in Section \ref%
{General} allow for covariance structures more general than the ones
discussed in Sections \ref{tsr} and \ref{Het}. For example, from the results
in Section \ref{General} results similar to the ones in Section \ref{tsr}
could be derived for heteroskedasticity/autocorrelation robust tests of
regression coefficients in spatial regression models or in panel data
models; for an overview of heteroskedasticity/autocorrelation robust tests
in these models see \cite{KelPru2007, KelPru2010}, and \cite{Vog2012}. We do
not provide any such results for lack of space. We note that for the
uncorrected standard $F$-test in this setting negative results have been
derived in \cite{K03} and \cite{KH2009}.

\section{The Hypothesis Testing Framework\label{HTFramework}}

Consider the linear regression model 
\begin{equation}
\mathbf{Y}=X\beta +\mathbf{U},  \label{lm}
\end{equation}%
where $X$ is a (real) nonstochastic regressor (design) matrix of dimension $%
n\times k$ and $\beta \in \mathbb{R}^{k}$ denotes the unknown regression
parameter vector. We assume $\limfunc{rank}(X)=k$ and $1\leq k<n$. The $%
n\times 1$ disturbance vector $\mathbf{U}=(\mathbf{u}_{1},\ldots ,\mathbf{u}%
_{n})^{\prime }$ is normally distributed with mean zero and unknown
covariance matrix $\sigma ^{2}\Sigma $, where $0<\sigma ^{2}<\infty $ holds
(and $\sigma $ always denotes the positive square root). The matrix $\Sigma $
varies in a prescribed (nonempty) set $\mathfrak{C}$ of symmetric and
positive definite $n\times n$ matrices.\footnote{%
Although not expressed in the notation, the elements of $\mathbf{Y}$, $X$,
and $\mathbf{U}$ (and even the probability space supporting $\mathbf{Y}$ and 
$\mathbf{U}$) may depend on sample size $n$. Furthermore, the obvious
dependence of$\ \mathfrak{C}$ on $n$ will also not be shown in the notation.
[Note that $\mathfrak{C}$ depends on $n$ even if it is induced by a
covariance model for the entire process $(\mathbf{u}_{t})_{t\in \mathbb{N}}$
that does not depend on $n$.]} Throughout the paper we make the assumption
that $\mathfrak{C}$ is such that $\sigma ^{2}$ and $\Sigma \in \mathfrak{C}$
can be uniquely determined from $\sigma ^{2}\Sigma $. [For example, if the
first diagonal element of each $\Sigma \in \mathfrak{C}$ equals $1$ this is
satisfied; alternatively, if the largest diagonal element or the trace of
each $\Sigma \in \mathfrak{C}$ is normalized to a fixed constant, $\mathfrak{%
C}$ has this property.] Of course, this assumption entails little loss of
generality and can, if necessary, always be achieved by a suitable
reparameterization of $\sigma ^{2}\Sigma $.

The linear model described above induces a collection of distributions on $%
\mathbb{R}^{n}$, the sample space of $\mathbf{Y}$. Denoting a Gaussian
probability measure with mean $\mu \in \mathbb{R}^{n}$ and (possibly
singular) covariance matrix $\Phi $ by $P_{\mu ,\Phi }$ and setting $%
\mathfrak{M}=\text{span}(X)$, the induced collection of distributions is
given by 
\begin{equation}
\left\{ P_{\mu ,\sigma ^{2}\Sigma }:\mu \in \mathfrak{M},0<\sigma
^{2}<\infty ,\Sigma \in \mathfrak{C}\right\} .  \label{lm2}
\end{equation}%
Note that each $P_{\mu ,\sigma ^{2}\Sigma }$ in (\ref{lm2}) is absolutely
continuous with respect to (w.r.t.) Lebesgue measure on $\mathbb{R}^{n}$,
since every $\Sigma \in \mathfrak{C}$ is positive definite by assumption. We
consider the problem of testing a linear (better: affine) restriction on the
parameter vector $\beta \in \mathbb{R}^{k}$, namely the problem of testing
the null $R\beta =r$ versus the alternative $R\beta \neq r$, where $R$ is a $%
q\times k$ matrix of rank $q$, $q\geq 1$, and $r\in \mathbb{R}^{q}$. To be
more precise and to emphasize that the testing problem is in fact a compound
one, the testing problem needs to be written as%
\begin{equation}
H_{0}:R\beta =r,0<\sigma ^{2}<\infty ,\Sigma \in \mathfrak{C}~~\text{ vs. }%
~~H_{1}:R\beta \neq r,0<\sigma ^{2}<\infty ,\Sigma \in \mathfrak{C}.
\label{testing problem 0}
\end{equation}%
This is important to stress, because size and power properties of tests
critically depend on nuisance parameters and, in particular, on the
complexity of $\mathfrak{C}$. Define the affine space 
\begin{equation*}
\mathfrak{M}_{0}=\left\{ \mu \in \mathfrak{M}:\mu =X\beta \text{ and }R\beta
=r\right\}
\end{equation*}%
and let 
\begin{equation*}
\mathfrak{M}_{1}=\mathfrak{M}\backslash \mathfrak{M}_{0}=\left\{ \mu \in 
\mathfrak{M}:\mu =X\beta \text{ and }R\beta \neq r\right\} .
\end{equation*}%
Adopting these definitions, the above testing problem can also be written as 
\begin{equation}
H_{0}:\mu \in \mathfrak{M}_{0},0<\sigma ^{2}<\infty ,\Sigma \in \mathfrak{C}%
~~\text{ vs. }~~H_{1}:\mu \in \mathfrak{M}_{1},0<\sigma ^{2}<\infty ,\Sigma
\in \mathfrak{C}.  \label{testing problem}
\end{equation}

Two remarks are in order: First, the Gaussiantiy assumption is not really a
restriction for the negative results in the paper, since they hold a
fortiori in any enlarged model that allows not only for Gaussian but also
for non-Gaussian disturbances. Furthermore, a large portion of the results
in the paper (positive or negative) continues to hold for certain classes of
non-Gaussian distributions such as, e.g., elliptical distributions, see
Subsection \ref{other_distr}. Second, if $X$ were allowed to be stochastic
but independent of $\mathbf{U}$, the results of the paper apply to size and
power conditional on $X$. Because $X$ is observable, one could then argue in
the spirit of conditional inference (see, e.g., \cite{RO1979}) that
conditional size and power and not their unconditional counterparts are the
more relevant characteristics of a test.

Recall that a (randomized) test is a Borel-measurable function $\varphi $
from the sample space $\mathbb{R}^{n}$ to $[0,1]$. If $\varphi =\boldsymbol{1%
}_{W}$, the set $W$ is called the rejection region of the test. As usual,
the size of a test $\varphi $ is the supremum over all rejection
probabilities under the null hypothesis $H_{0}$ and thus is given by $%
\sup_{\mu \in \mathfrak{M}_{0}}\sup_{0<\sigma ^{2}<\infty }\sup_{\Sigma \in 
\mathfrak{C}}E_{\mu ,\sigma ^{2}\Sigma }\left( \varphi \right) $ where $%
E_{\mu ,\sigma ^{2}\Sigma }$ refers to expectation under the probability
measure $P_{\mu ,\sigma ^{2}\Sigma }$.

Throughout the paper we shall always reserve the symbol $\hat{\beta}(y)$ for 
$\left( X^{\prime }X\right) ^{-1}X^{\prime }y$, where $X$ is the design
matrix appearing in (\ref{lm}) and $y\in \mathbb{R}^{n}$. Furthermore,
random vectors and random variables are always written in bold capital and
bold lower case letters, respectively. Lebesgue measure on $\mathbb{R}^{n}$
will be denoted by $\lambda _{\mathbb{R}^{n}}$, whereas Lebesgue measure on
an affine subspace $\mathcal{A}$ of $\mathbb{R}^{n}$ (but viewed as a
measure on the Borel-sets of $\mathbb{R}^{n}$) will be denoted by $\lambda _{%
\mathcal{A}}$, with zero-dimensional Lebesgue measure being interpreted as
point mass. We shall write $\limfunc{int}(A)$, $\limfunc{cl}(A)$, and $%
\limfunc{bd}(A)$ for the interior, closure, and boundary of a set $%
A\subseteq \mathbb{R}^{n}$, respectively, taken with respect to the
Euclidean topology. The Euclidean norm is denoted by $\left\Vert \cdot
\right\Vert $, while $d(x,A)$ denotes the Euclidean distance of the point $%
x\in \mathbb{R}^{n}$ to the set $A\subseteq \mathbb{R}^{n}$. Let $B^{\prime
} $ denote the transpose of a matrix $B$ and let $\limfunc{span}\left(
B\right) $ denote the space spanned by the columns of $B$. For a linear
subspace $\mathcal{L}$ of $\mathbb{R}^{n}$ we let $\mathcal{L}^{\bot }$
denote its orthogonal complement and we let $\Pi _{\mathcal{L}}$ denote the
orthogonal projection onto $\mathcal{L}$. For a vector $x$ in Euclidean
space we define the symbol $\left\langle x\right\rangle $ to denote $\pm x$
for $x\neq 0$, the sign being chosen in such a way that the first nonzero
component of $\left\langle x\right\rangle $ is positive, and we set $%
\left\langle 0\right\rangle =0$. The $j$-th standard basis vector in $%
\mathbb{R}^{n}$ is denoted by $e_{j}(n)$. The set of real matrices of
dimension $m\times n$ is denoted by $\mathbb{R}^{m\times n}$. We also
introduce the following terminology.

\begin{definition}
\label{CD}Let $\mathfrak{C}$ be a set of symmetric and positive definite $%
n\times n$ matrices. An $l$-dimensional linear subspace $\mathcal{Z}$ of $%
\mathbb{R}^{n}$ with $0\leq l<n$ is called a \emph{concentration space} of $%
\mathfrak{C}$, if there exists a sequence $(\Sigma _{m})_{m\in \mathbb{N}}$
in $\mathfrak{C}$, such that $\Sigma _{m}\rightarrow \bar{\Sigma}$ and $%
\limfunc{span}(\bar{\Sigma})=\mathcal{Z}$.
\end{definition}

While we shall in the sequel often refer to $\mathfrak{C}$ as the covariance
model, one should keep in mind that the set of all feasible covariance
matrices corresponding to (\ref{lm2})\ is given by $\left\{ \sigma
^{2}\Sigma :0<\sigma ^{2}<\infty ,\Sigma \in \mathfrak{C}\right\} $. In this
context we note that two covariance models $\mathfrak{C}$ and $\mathfrak{C}%
^{\ast }$ can be equivalent in the sense of giving rise to the same set of
feasible covariance matrices, but need not have the same concentration
spaces.\footnote{%
In applying the general results in Section \ref{sec_neg} or Corollary \ref%
{CW} to a particular problem some skill in choosing between equivalent $%
\mathfrak{C}$ and $\mathfrak{C}^{\ast }$ may thus be required as one choice
for $\mathfrak{C}$ may lead to more interesting results than does another
choice.}

\section{Size and Power of Tests of Linear Restrictions in Regression Models
with Autocorrelated Disturbances\label{tsr}}

In this section we investigate size and power properties of autocorrelation
robust tests that have been designed for use in case of stationary
disturbances. Studies of the properties of such tests in the literature (%
\cite{NW87,NW94}, \cite{A91}, \cite{A92}, \cite{KVB2000}, \cite%
{KiefVogl2002,KV2002,KV2005}, \cite{Jan2002, J04}, \cite{SPJ08, SPJ11})
maintain assumptions that allow for nonparametric models for the spectral
distribution of the disturbances. For example, a typical nonparametric model
results from assuming that the disturbance vector consists of $n$
consecutive elements of a weakly stationary process with spectral density
equal to%
\begin{equation*}
f(\omega )=(2\pi )^{-1}\left\vert \sum_{j=0}^{\infty }c_{j}\exp (-\iota
j\omega )\right\vert ^{2},
\end{equation*}%
where the coefficients $c_{j}$ are not all equal to zero and, for $\xi \geq
0 $ a given number, satisfy the summability condition $\sum_{j=0}^{\infty
}j^{\xi }\left\vert c_{j}\right\vert <\infty $. Here $\iota $ denotes the
imaginary unit. Let $\mathfrak{F}_{\xi }$ denote the collection of all such
spectral densities $f$. The corresponding covariance model $\mathfrak{C}%
_{\xi }$ is then given by $\left\{ \Sigma \left( f\right) :f\in \mathfrak{F}%
_{\xi }\right\} $ where $\Sigma \left( f\right) $ is the $n\times n$
correlation matrix%
\begin{equation*}
\Sigma \left( f\right) =\left( \int_{-\pi }^{\pi }\exp \left( -\iota \omega
\left( i-j\right) \right) f(\omega )d\omega \left/ \int_{-\pi }^{\pi
}f(\omega )d\omega \right. \right) _{i,j=1}^{n}.
\end{equation*}%
Certainly, $\mathfrak{F}_{\xi }$ contains all spectral densities of
stationary autoregressive moving average models of arbitrary large order.
Hence, the following assumption on the covariance model $\mathfrak{C}$ that
we shall impose for most results in this section is very mild and is
satisfied by the typical nonparametric model allowed for in the above
mentioned literature. It certainly covers the case where $\mathfrak{C}=%
\mathfrak{C}_{\xi }$ or where $\mathfrak{C}$ corresponds to an
autoregressive model of order $p\geq 1$.

\begin{assumption}
\label{AAR(1)} $\mathfrak{C}_{AR(1)}\subseteq \mathfrak{C}$.
\end{assumption}

Here $\mathfrak{C}_{AR(1)}$ denotes the set of correlation matrices
corresponding to $n$ successive elements of a stationary autoregressive
processes of order $1$, i.e., $\mathfrak{C}_{AR(1)}=\left\{ \Lambda (\rho
):\rho \in (-1,1)\right\} $ where the $\left( i,j\right) $-th entry in the $%
n\times n$ matrix $\Lambda (\rho )$ is given by $\rho ^{|i-j|}$. As hinted
at in the introduction, parameter values $\left( \mu ,\sigma ^{2},\Sigma
\right) $ with $\Sigma =\Lambda (\rho )$ where $\rho $ gets close to $\pm 1$
and $\sigma ^{2}$ is constant will play an important r\^{o}le as they will
be instrumental for establishing the bad size and power properties of the
tests presented below.\footnote{%
If we parameterized in terms of $\rho $ and the innovation variance $\sigma
_{\varepsilon }^{2}=\sigma ^{2}\left( 1-\rho ^{2}\right) $, this would
correspond to $\sigma _{\varepsilon }^{2}\rightarrow 0$ at the appropriate
rate.} We want to stress here that, as $\rho \rightarrow \pm 1$, the
corresponding stationary process does \emph{not} converge to an integrated
process but rather to a harmonic process.\footnote{%
To see this note that the covariance function of the disturbances converges
to that of a (very simple) harmonic process as $\rho \rightarrow \pm 1$. In
view of Gaussianity, this implies convergence of finite-dimensional
distributions and hence weak convergence of the entire process, cf. \cite%
{Billing}, p.19.} But see also Remark B(i) in Subsection \ref{Disc} for a
discussion that holding $\sigma ^{2}$ constant is actually not a restriction.

For later use we note that under Assumption \ref{AAR(1)} the matrices $%
e_{+}e_{+}^{\prime }$ and $e_{-}e_{-}^{\prime }$ are limit points of the
covariance model $\mathfrak{C}$ where $e_{+}=(1,\ldots ,1)^{\prime }$ and $%
e_{-}=(-1,1,\ldots ,\left( -1\right) ^{n})^{\prime }$ are $n\times 1$
vectors (since $\Lambda (\rho _{m})$ converges to $e_{+}e_{+}^{\prime }$ ($%
e_{-}e_{-}^{\prime }$, respectively) if $\rho _{m}\rightarrow 1$ ($\rho
_{m}\rightarrow -1$, respectively)). Other singular limit points of $%
\mathfrak{C}$ are possible, but $e_{+}e_{+}^{\prime }$ and $%
e_{-}e_{-}^{\prime }$ are the only singular limit points of $\mathfrak{C}%
_{AR(1)}$.

\subsection{Some preliminary results for the location model\label{prelim}}

Before we present the results for common nonparametrically based
autocorrelation robust tests in the next subsection and for parametrically
based tests in Subsection \ref{GLS}, it is perhaps helpful to gain some
understanding for these results from a very special case, namely from the
location model. We should, however, warn the reader that only some, but not
all, phenomena that we shall later observe in the case of a general
regression model will occur in the case of the location model, because it
represents an oversimplification of the general case. Hence, while gaining
intuition in the location model is certainly helpful, this intuition does
not paint a complete and faithful picture of the situation in a general
regression model.

Consider now the location model, i.e., model (\ref{lm}) with $k=1$ and $%
X=e_{+}$. Let Assumption \ref{AAR(1)} hold and assume that we want to test $%
\beta =\beta _{0}$ against the alternative $\beta \neq \beta _{0}$. Consider
the commonly used autocorrelation robust test statistic%
\begin{equation*}
\tau _{loc}(y)=(\hat{\beta}(y)-\beta _{0})^{2}/\hat{\omega}^{2}\left(
y\right)
\end{equation*}%
where $\hat{\beta}(y)$ is the arithmetic mean $n^{-1}e_{+}^{\prime }y$ and
where $\hat{\omega}^{2}\left( y\right) $ is one of the usual autocorrelation
robust estimators for the variance of the least squares estimator. As usual,
the null hypothesis is rejected if $\tau _{loc}(y)\geq C$ for some
user-specified critical value $C$ satisfying $0<C<\infty $. For definiteness
of the discussion assume that one has chosen the Bartlett estimator,
although any estimator based on weights satisfying Assumption \ref{AW} given
below could be used instead. It is then not difficult to see (cf. Lemma \ref%
{LRVPD} given below) that $\hat{\omega}^{2}\left( y\right) $ is positive,
and hence $\tau _{loc}(y)$ is well-defined, except when $y$ is proportional
to $e_{+}$; in this case we set $\tau _{loc}(y)$ equal to $0$, which, of
course, is a completely arbitrary choice, but has no effect on the rejection
probability of the resulting test as the event that $y$ is proportional to $%
e_{+}$ has probability zero under all the distributions in the model.

Consider now the points $\left( \beta _{0},1,\Lambda (\rho )\right) $ in the
null hypothesis, where we have set $\sigma ^{2}=1$ for simplicity and where
we let $\rho \in (-1,1)$ converge to $1$. Writing $P_{\rho }$ for $%
P_{e_{+}\beta _{0},\Lambda (\rho )}$, i.e., for the distribution of the
data, observe that under $P_{\rho }$ the distribution of $\hat{\beta}%
(y)-\beta _{0}=n^{-1}e_{+}^{\prime }y-\beta _{0}$ is $N\left(
0,n^{-2}e_{+}^{\prime }\Lambda (\rho )e_{+}\right) $. Noting that $\Lambda
(\rho )\rightarrow e_{+}e_{+}^{\prime }$ for $\rho \rightarrow 1$, we see
that under $P_{\rho }$ the distribution of the numerator of the test
statistic converges weakly for $\rho \rightarrow 1$ to a chi-square
distribution with one degree of freedom. Concerning the denominator, observe
that $\hat{\omega}^{2}\left( y\right) $ is a quadratic form in the residual
vector $y-e_{+}\hat{\beta}(y)=\left( I_{n}-n^{-1}e_{+}e_{+}^{\prime }\right)
y$, this vector being distributed under $P_{\rho }$ as $N\left( 0,A\left(
\rho \right) \right) $ with $A\left( \rho \right) =\left(
I_{n}-n^{-1}e_{+}e_{+}^{\prime }\right) \Lambda (\rho )\left(
I_{n}-n^{-1}e_{+}e_{+}^{\prime }\right) $. Now for $\rho \rightarrow 1$ we
see that $A\left( \rho \right) $ converges to the zero matrix, and therefore
the distribution of the residual vector under $P_{\rho }$ converges to
pointmass at zero. Consequently, the distribution of the quadratic form $%
\hat{\omega}^{2}\left( y\right) $ under $P_{\rho }$ collapses to pointmass
at zero. But this shows that all of the mass of the distribution of the test
statistic $\tau _{loc}$ under $P_{\rho }$ escapes to infinity for $\rho
\rightarrow 1$, entailing convergence of the rejection probabilities $%
P_{\rho }\left( \tau _{loc}(y)\geq C\right) $ to $1$, although the
distributions $P_{\rho }$ correspond to points $\left( \beta _{0},1,\Lambda
(\rho )\right) $ in the null hypothesis. This of course then implies that
the size of the test equals $1$.

In a similar vein, consider the points $\left( \beta _{0},1,\Lambda (\rho
)\right) $ in the null hypothesis where now $\rho $ converges to $-1$. Note
that $P_{\rho }$ then converges weakly to $N\left( e_{+}\beta
_{0},e_{-}e_{-}^{\prime }\right) $ which is the distribution of $e_{+}\beta
_{0}+e_{-}\mathbf{g}$ where $\mathbf{g}$ is a standard normal random
variable. Similar computations as before show that under $P_{\rho }$ the
distribution of the numerator of the test statistic now converges weakly to
the distribution of $n^{-2}\left( e_{+}^{\prime }e_{-}\right) ^{2}\mathbf{g}%
^{2}$ and that the distribution of the residual vector converges weakly to
the distribution of $\left( I_{n}-n^{-1}e_{+}e_{+}^{\prime }\right) e_{-}%
\mathbf{g}$, the weak convergence occurring jointly. Because of $\hat{\omega}%
^{2}\left( y\right) =\hat{\omega}^{2}\left( \left(
I_{n}-n^{-1}e_{+}e_{+}^{\prime }\right) y\right) $, it follows from the
continuous mapping theorem that the distribution of the denominator of the
test statistic under $P_{\rho }$ converges weakly to the distribution of $%
\hat{\omega}^{2}\left( \left( I_{n}-n^{-1}e_{+}e_{+}^{\prime }\right) e_{-}%
\mathbf{g}\right) $ (and convergence is joint with the numerator). Note that 
$\hat{\omega}^{2}\left( \left( I_{n}-n^{-1}e_{+}e_{+}^{\prime }\right) e_{-}%
\mathbf{g}\right) $ equals $\hat{\omega}^{2}\left(
e_{-}-n^{-1}e_{+}e_{+}^{\prime }e_{-}\right) \mathbf{g}^{2}$ by homogeneity
of $\hat{\omega}^{2}$. Now, if sample size $n$ is even, we see that $%
e_{+}^{\prime }e_{-}=0$, entailing that the distribution of the test
statistic under $P_{\rho }$ converges to pointmass at zero for $\rho
\rightarrow -1$ (since $\hat{\omega}^{2}\left( e_{-}\right) \mathbf{g}^{2}$
is almost surely positive). As a consequence, if sample size $n$ is even the
rejection probabilities $P_{\rho }\left( \tau _{loc}(y)\geq C\right) $
converge to zero as $\rho \rightarrow -1$ since $C>0$. Next consider the
case where $n$ is odd. Then $e_{+}^{\prime }e_{-}=-1$ and the limiting
distribution of the test statistic is pointmass at $n^{-2}\hat{\omega}%
^{-2}\left( e_{-}+n^{-1}e_{+}\right) $ which is \emph{positive }(and is
well-defined since $\hat{\omega}^{2}\left( e_{-}+n^{-1}e_{+}\right) >0$ as $%
e_{-}+n^{-1}e_{+}$ is not proportional to $e_{+}$). Hence, if $n$ is odd, we
learn that the rejection probabilities $P_{\rho }\left( \tau _{loc}(y)\geq
C\right) $ converge to zero or one as $\rho \rightarrow -1$ depending on
whether $C$ satisfies $C>n^{-2}\hat{\omega}^{-2}\left(
e_{-}+n^{-1}e_{+}\right) $ or $C<n^{-2}\hat{\omega}^{-2}\left(
e_{-}+n^{-1}e_{+}\right) $.

In summary we have learned that the size of the autocorrelation robust test
in the location model is always equal to one, an "offending" sequence
leading to this result being, e.g., $\left( \beta _{0},1,\Lambda (\rho
)\right) $ with $\rho \rightarrow 1$. We have also learned that if $n$ is
even, or if $n$ is odd and the critical value $C$ is larger than $n^{-2}\hat{%
\omega}^{-2}\left( e_{-}+n^{-1}e_{+}\right) $, the test is severely biased
as the rejection probabilities get arbitrarily close to zero in certain
parts of the null hypothesis; of course, this implies dismal power
properties of the test in certain parts of the alternative hypothesis. The
"offending" sequence in this case being again $\left( \beta _{0},1,\Lambda
(\rho )\right) $, but now with $\rho \rightarrow -1$. It is worth noting
that in the case where $n$ is odd and $C<n^{-2}\hat{\omega}^{-2}\left(
e_{-}+n^{-1}e_{+}\right) $ holds, this "offending" sequence does not inform
us about biasedness of the test, but rather provides a second sequence along
which the null rejection probabilities converge to $1$. We note here also
that due to certain invariance properties of the test statistic in fact any
sequence $\left( \beta _{0},\sigma ^{2},\Lambda (\rho )\right) $ with $\rho
\rightarrow \pm 1$ and \emph{arbitrary} behavior of $\sigma ^{2}$, $0<\sigma
^{2}<\infty $, is an "offending" sequence in the same way as $\left( \beta
_{0},1,\Lambda (\rho )\right) $ is. The results obtained above heavily
exploit the fact that $\rho $ can be chosen arbitrarily close to $\pm 1$
(entailing that $\Lambda (\rho )$ becomes singular in the limit). To what
extent an assumption \emph{restricting} the parameter space $\mathfrak{C}$
in such a way, that the matrices $\Sigma \in \mathfrak{C}$ do not have limit
points that are singular, can provide an escape route avoiding the size and
power problems observed above is discussed in Subsection \ref{Disc}.

We would like to stress once more that not all cases that can arise in a
general regression model (see Theorems \ref{thmlrv} and \ref{TU_2}) appear
already in the location model discussed above. For example, for other design
matrices and/or linear hypothesis to be tested, the roles of the "offending"
sequences $\rho \rightarrow 1$ and $\rho \rightarrow -1$ may be reversed, or
both sequences may lead to rejection probabilities converging to $1$, etc.
Furthermore, there exist cases where the above mentioned sequences are not
"offending" at all, see Theorem \ref{TU_2}.

We close this subsection with some comments on a heuristic argument that
tries to explain the above results. The argument is as follows: Suppose one
enlarges the model by adjoining the limit points $\left( \beta ,\sigma
^{2},\Lambda (\rho )\right) $ with $\rho =1$. Then the test problem now also
contains the problem of testing $\beta =\beta _{0}$ against $\beta \neq
\beta _{0}$ in the family $\mathcal{P}_{1}=\left\{ P_{e_{+}\beta ,\sigma
^{2}\Lambda (1)}:\beta \in \mathbb{R},0<\sigma ^{2}<\infty \right\} $ as a
subproblem.\footnote{%
We stress that the parameters $\beta $ and $\sigma ^{2}$ are identifiable in
the model $\mathcal{P}_{1}$.} Because of $\Lambda (1)=e_{+}e_{+}^{\prime }$,
this subproblem is equivalent to testing $\beta =\beta _{0}$ against $\beta
\neq \beta _{0}$ in the family $\left\{ N\left( \beta ,\sigma ^{2}\right)
:\beta \in \mathbb{R},0<\sigma ^{2}<\infty \right\} $. Obviously, there is
no "reasonable" test for the latter testing problem, and thus for the test
problem in the family $\mathcal{P}_{1}$. The intuitively appealing argument
now is that the absence of a "reasonable" test in the family $\mathcal{P}%
_{1} $ should necessarily imply trouble for tests, and in particular for
autocorrelation robust tests, in the original test problem in the family $%
\mathcal{P}_{orig}=\left\{ P_{e_{+}\beta ,\sigma ^{2}\Lambda (\rho )}:\beta
\in \mathbb{R},0<\sigma ^{2}<\infty ,\left\vert \rho \right\vert <1\right\} $
whenever $\rho $ is close to one. While this argument has some appeal, it
seems to rest on some sort of tacit continuity assumption regarding the
rejection probabilities at the point $\rho =1$, which is unjustified as we
now show: If $\varphi $ is any test, i.e., is a measurable function on $%
\mathbb{R}^{n}$ with values in $\left[ 0,1\right] $, then any test $\varphi
^{\ast }$ that coincides with $\varphi $ on $\mathbb{R}^{n}\backslash 
\limfunc{span}\left( e_{+}\right) $ has the same rejection probabilities in
the model $\mathcal{P}_{orig}$ as has $\varphi $; and any test $\varphi
^{\ast \ast }$ that coincides with $\varphi $ on $\limfunc{span}\left(
e_{+}\right) $ has the same rejection probabilities in the model $\mathcal{P}%
_{1}$ as has $\varphi $. This is so since the distributions in $\mathcal{P}%
_{1}$ are concentrated on $\limfunc{span}\left( e_{+}\right) $, whereas this
set is a null set for the distributions in $\mathcal{P}_{orig}$. As a
consequence, the sequence of rejection probabilities of a test $\varphi $
under $P_{\rho }$ with $\rho <1$ but $\rho \rightarrow 1$\ is unaffected by
modifying the test on $\limfunc{span}\left( e_{+}\right) $, whereas such a
modification will substantially affect the rejection probability under $%
\mathcal{P}_{1}$ (e.g., we can make it equal to $0$ or to $1$ by suitable
modifications of $\varphi $ on $\limfunc{span}\left( e_{+}\right) $). This,
of course, then shows that rejection probabilities of a test $\varphi $ will
in general not be continuous at the point $\rho =1$. Put differently, in the
case of the test statistic $\tau _{loc}$ the rejection probabilities under $%
\mathcal{P}_{1}$ depend only on the (completely arbitrary) way $\tau _{loc}$
is defined on $\limfunc{span}\left( e_{+}\right) $, while the rejection
probabilities under $\mathcal{P}_{orig}$ are completely unaffected by the
way $\tau _{loc}$ is defined on $\limfunc{span}\left( e_{+}\right) $. Hence,
any attempt to obtain information on the behavior of $P_{\rho }\left( \tau
_{loc}(y)\geq C\right) $ for $\rho \rightarrow 1$ from the rejection
probabilities of the test statistic under the limiting family $\mathcal{P}%
_{1}$ \emph{alone} is necessarily futile. [At the heart of the matter lies
here the fact, that while the distributions in $\mathcal{P}_{1}$ can be
approximated by distributions in $\mathcal{P}_{orig}$ in the sense of weak
convergence, this has little consequences for closeness of rejection
probabilities in general, especially since the distributions in $\mathcal{P}%
_{1}$ and $\mathcal{P}_{orig}$ are orthogonal and the tests one is
interested in are not continuous everywhere.] In a similar way one could try
to predict the behavior of the rejection probabilities for $\rho \rightarrow
-1$ from the limiting experiment corresponding to the family $\mathcal{P}%
_{-1}=\left\{ P_{e_{+}\beta ,\sigma ^{2}\Lambda (-1)}:\beta \in \mathbb{R}%
,0<\sigma ^{2}<\infty \right\} $, the argument now being as follows: Since $%
n>1$ is always assumed, the parameter $\beta $ can be estimated without
error in the model $\mathcal{P}_{-1}$. Thus, we can test the hypothesis $%
\beta =\beta _{0}$ without committing any error, seemingly suggesting that $%
P_{\rho }\left( \tau _{loc}(y)\geq C\right) $ should converge to zero for $%
\rho \rightarrow -1$. However, as we have shown above, $P_{\rho }\left( \tau
_{loc}(y)\geq C\right) $ does \emph{not} always converge to zero for $\rho
\rightarrow -1$, namely it converges to one if $n$ is odd and $C<n^{-2}\hat{%
\omega}^{-2}\left( e_{-}+n^{-1}e_{+}\right) $ holds.\footnote{%
Note that the arbitrariness in the definition of the test statistic $\tau
_{loc}(y)$ on $\limfunc{span}\left( e_{+}\right) $ has no effect on the
rejection probabilities under the experiment $\mathcal{P}_{-1}$. Hence, one
could hope to derive the behavior of \ $P_{\rho }\left( \tau
_{loc}(y)>C\right) $ for $\rho \rightarrow -1$ by first computing the
rejection probability in the limiting experiment $\mathcal{P}_{-1}$ and then
by arguing that the map $\rho \mapsto P_{\rho }\left( \tau
_{loc}(y)>C\right) $ \ is continuous at $\rho =-1$. However, this would just
amount to reproducing our direct argument given earlier.} Summarizing we see
that, while the heuristic arguments are interesting, they do not really
capture the underlying mechanism; cf. the discussion following Theorem \ref%
{thmlrv}. Furthermore, the heuristic arguments just discussed are specific
to the location model (i.e., to the case $X=e_{+}$), whereas severe size
distortions can also arise in more general regression models as will be
shown in the next subsection.

\subsection{Nonparametrically based autocorrelation robust tests \label{HAR}}

Commonly used autocorrelation robust tests for the null hypothesis $H_{0}$
given by (\ref{testing problem 0}) are based on test statistics of the form $%
(R\hat{\beta}(y)-r)^{\prime }\hat{\Omega}^{-1}\left( y\right) (R\hat{\beta}%
(y)-r)$, with the statistic typically being undefined if $\hat{\Omega}\left(
y\right) $ is singular. Here%
\begin{equation}
\hat{\Omega}\left( y\right) =nR(X^{\prime }X)^{-1}\hat{\Psi}(y)(X^{\prime
}X)^{-1}R^{\prime }  \label{omega}
\end{equation}%
and $\hat{\Psi}$ is a nonparametric estimator for $n^{-1}\mathbb{E}%
(X^{\prime }\mathbf{UU}^{\prime }X)$. The type of estimator $\hat{\Psi}$ we
consider in this subsection is obtained as a weighted sum of sample
autocovariances of $\hat{v}_{t}(y)=\hat{u}_{t}(y)x_{t\mathbf{\cdot }%
}^{\prime }$, where $\hat{u}_{t}(y)$ is the $t$-th coordinate of the least
squares residual vector $\hat{u}(y)=y-X\hat{\beta}(y)$ and $x_{t\cdot }$
denotes the $t$-th row vector of $X$. That is%
\begin{equation}
\hat{\Psi}(y)=\hat{\Psi}_{w}(y)=\sum\limits_{j=-(n-1)}^{n-1}w(j,n)\hat{\Gamma%
}_{j}(y)  \label{lrve}
\end{equation}%
for every $y\in \mathbb{R}^{n}$ with $\hat{\Gamma}_{j}(y)=n^{-1}%
\sum_{t=j+1}^{n}\hat{v}_{t}(y)\hat{v}_{t-j}(y)^{\prime }$ if $j\geq 0$ and $%
\hat{\Gamma}_{j}\left( y\right) =\hat{\Gamma}_{-j}(y)^{\prime }$ else. The
associated estimator $\hat{\Omega}$ will be denoted by $\hat{\Omega}_{w}$.
We make the following assumption on the weights.

\begin{assumption}
\label{AW} The weights $w(j,n)$ for $j=-(n-1),\ldots ,n-1$ are
data-independent and satisfy $w(0,n)=1$ as well as $w\left( -j,n\right)
=w\left( j,n\right) $. Furthermore, the symmetric $n\times n$ Toeplitz
matrix $\mathcal{W}_{n}$ with elements $w\left( i-j,n\right) $ is positive
definite.\footnote{%
For the case where $\mathcal{W}_{n}$ is only nonnegative definite see
Subsection \ref{alternative}.}
\end{assumption}

The positive definiteness assumption on $\mathcal{W}_{n}$ is weaker than the
frequently employed assumption that the Fourier transform $w\dag \left(
\omega \right) $ of the weights is nonnegative for all $\omega \in \lbrack
-\pi ,\pi ]$.\footnote{%
Note that the quadratic form $\alpha ^{\prime }\mathcal{W}_{n}\alpha $ can
be represented as $\int_{-\pi }^{\pi }\left\vert \sum_{j=1}^{n}\alpha
_{j}\exp \left( \iota j\omega \right) \right\vert ^{2}w\dag \left( \omega
\right) d\omega $. If $w\dag \left( \omega \right) \geq 0$ for all $\omega
\in \lbrack -\pi ,\pi ]$ is assumed, the integrand is nonnegative; and if $%
\alpha \neq 0$ it is positive almost everywhere (since it is then a product
of two nontrivial trigonometric polynomials).} It certainly implies that $%
\hat{\Psi}_{w}(y)$, and hence $\hat{\Omega}_{w}\left( y\right) $, is always
nonnegative definite, but it will allow us to show more, see Lemma \ref%
{LRVPD} below. In many applications the weights take the form $%
w(j,n)=w_{0}\left( |j|/M_{n}\right) $, where the lag-window $w_{0}$ is an
even function with $w_{0}(0)=1$ and where $M_{n}>0$ is a truncation lag
(bandwidth) parameter. In this case the first part of the above assumption
means that we are considering deterministic bandwidths only (as is the case,
e.g., in \cite{NW87}, Sections 3-5 of \cite{A91}, \cite{H92}, \cite%
{KV2002,KV2005}, and \cite{Jan2002,J04}). Extensions of the results in this
subsection to data-dependent bandwidth choices and prewhitening will be
discussed in \cite{Prein2013b}. Assumption \ref{AW} is known to be
satisfied, e.g., for the (modified) Bartlett, Parzen, or the Quadratic
Spectral lag-window, but is not satisfied, e.g., for the rectangular
lag-window (with $M_{n}>1$).\footnote{%
The estimator in \cite{KKW91} coincides with ($n$ times) the estimator given
by (\ref{omega}) if the rectangular lag-window is used and $R=I_{k}$.} See 
\cite{Anderson71} or \cite{Hannan70} for more discussion. It is also
satisfied for many exponentiated lag-windows as used in \cite{SPJ06,SPJ07}
and \cite{SPJ11}.

In the typical asymptotic analysis of this sort of tests in the literature
the event where the estimator $\hat{\Omega}_{w}$ is singular is
asymptotically negligible (as $\hat{\Omega}_{w}$ converges to a positive
definite or almost surely positive definite matrix), and hence there is no
need to be specific about the definition of the test statistic on this
event. However, if one is concerned with finite-sample properties, one has
to think about the definition of the test statistic also in the case where $%
\hat{\Omega}_{w}\left( y\right) $ is singular. We thus define the test
statistic as follows:\footnote{%
Some authors (e.g., \cite{KV2002,KV2005}) choose to normalize also by $q$,
the number of restrictions to be tested. This is of course immaterial as
long as one accordingly adjusts the critical vlaue.} 
\begin{equation}
T(y)=\left\{ 
\begin{array}{cc}
(R\hat{\beta}(y)-r)^{\prime }\hat{\Omega}_{w}^{-1}\left( y\right) (R\hat{%
\beta}(y)-r) & \text{if }\det \hat{\Omega}_{w}\left( y\right) \neq 0, \\ 
0 & \text{if }\det \hat{\Omega}_{w}\left( y\right) =0.%
\end{array}%
\right.  \label{tslrv}
\end{equation}%
Of course, assigning the test statistic $T$ the value zero on the set where $%
\hat{\Omega}_{w}\left( y\right) $ is singular is arbitrary. However, it will
be irrelevant for size and power properties of the test \emph{provided} we
can ensure that the set of $y\in \mathbb{R}^{n}$ for which $\det \hat{\Omega}%
_{w}\left( y\right) =0$ holds is a $\lambda _{\mathbb{R}^{n}}$-null set
(since all relevant distributions $P_{\mu ,\sigma ^{2}\Sigma }$ are
absolutely continuous w.r.t. $\lambda _{\mathbb{R}^{n}}$ due to the fact
that every element of $\Sigma \in \mathfrak{C}$ is positive definite by
assumption). We thus need to study under which circumstances this is
ensured. This will be done in the subsequent lemma. It will prove useful to
introduce the following matrix for every $y\in \mathbb{R}^{n}$%
\begin{eqnarray}
B(y) &=&R(X^{\prime }X)^{-1}X^{\prime }\limfunc{diag}\left( \hat{u}%
_{1}(y),\ldots ,\hat{u}_{n}(y)\right)  \notag \\
&=&R(X^{\prime }X)^{-1}X^{\prime }\limfunc{diag}\left( e_{1}^{\prime }(n)\Pi
_{\limfunc{span}(X)^{\bot }}y,\ldots ,e_{n}^{\prime }(n)\Pi _{\limfunc{span}%
(X)^{\bot }}y\right) ,  \label{B_matrix}
\end{eqnarray}%
as well as the following assumption on the design matrix $X$ (and on the
restriction matrix $R$):

\begin{assumption}
\label{R_and_X}Let $1\leq i_{1}<\ldots <i_{s}\leq n$ denote all the indices
for which $e_{i_{j}}(n)\in \limfunc{span}(X)$ holds where $e_{j}(n)$ denotes
the $j$-th standard basis vector in $\mathbb{R}^{n}$. If no such index
exists, set $s=0$. Let $X^{\prime }\left( \lnot (i_{1},\ldots i_{s})\right) $
denote the matrix which is obtained from $X^{\prime }$ by deleting all
columns with indices $i_{j}$, $1\leq i_{1}<\ldots <i_{s}\leq n$ (if $s=0$ no
column is deleted). Then $\limfunc{rank}\left( R(X^{\prime }X)^{-1}X^{\prime
}\left( \lnot (i_{1},\ldots i_{s})\right) \right) =q$ holds.
\end{assumption}

The lemma is now as follows. Note that the matrix $B\left( y\right) $ does 
\emph{not} depend on the weights $w\left( j,n\right) $.

\begin{lemma}
\label{LRVPD} Suppose Assumption \ref{AW} is satisfied. Then the following
holds:

\begin{enumerate}
\item $\hat{\Omega}_{w}\left( y\right) $ is nonnegative definite for every $%
y\in \mathbb{R}^{n}$.

\item $\hat{\Omega}_{w}\left( y\right) $ is singular if and only if $%
\limfunc{rank}\left( B(y)\right) <q$.

\item $\hat{\Omega}_{w}\left( y\right) =0$ if and only if $B(y)=0$.

\item The set of all $y\in \mathbb{R}^{n}$ for which $\hat{\Omega}_{w}\left(
y\right) $ is singular (or, equivalently, for which $\limfunc{rank}\left(
B(y)\right) <q$) is either a $\lambda _{\mathbb{R}^{n}}$-null set or the
entire sample space $\mathbb{R}^{n}$. The latter occurs if and only if
Assumption \ref{R_and_X} is violated.
\end{enumerate}
\end{lemma}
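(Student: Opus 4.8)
The plan is to first establish a closed-form expression for $\hat{\Omega}_{w}(y)$ that exposes both $B(y)$ and the weight matrix $\mathcal{W}_{n}$, and then to read off all four parts from it. Writing $V(y)=X^{\prime}\limfunc{diag}\left( \hat{u}_{1}(y),\ldots ,\hat{u}_{n}(y)\right) $ for the $k\times n$ matrix whose $t$-th column is $\hat{v}_{t}(y)=\hat{u}_{t}(y)x_{t\cdot }^{\prime }$, I would collect the double sum defining $\hat{\Psi}_{w}$ in (\ref{lrve}) by the coefficient of $\hat{v}_{s}(y)\hat{v}_{t}(y)^{\prime }$. A routine reindexing, using $w(-j,n)=w(j,n)$ and $\hat{\Gamma}_{j}=\hat{\Gamma}_{-j}^{\prime }$, shows this coefficient equals $n^{-1}w(s-t,n)$, so that $\hat{\Psi}_{w}(y)=n^{-1}V(y)\mathcal{W}_{n}V(y)^{\prime }$. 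Substituting into (\ref{omega}) and using $B(y)=R(X^{\prime }X)^{-1}V(y)$ yields the central identity
\begin{equation*}
\hat{\Omega}_{w}(y)=B(y)\mathcal{W}_{n}B(y)^{\prime }.
\end{equation*}

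With this identity, Parts 1--3 reduce to standard linear algebra. Since $\mathcal{W}_{n}$ is positive definite by Assumption \ref{AW}, let $C$ denote its symmetric positive definite square root, so $\mathcal{W}_{n}=CC$ and $\hat{\Omega}_{w}(y)=(B(y)C)(B(y)C)^{\prime }$ is a Gram matrix, giving Part 1. Setting $M=B(y)C$ (a $q\times n$ matrix) and using $\limfunc{rank}(MM^{\prime })=\limfunc{rank}(M)$ together with invertibility of $C$ (so $\limfunc{rank}(M)=\limfunc{rank}(B(y))$), I obtain that $\hat{\Omega}_{w}(y)$ is singular iff $\limfunc{rank}(B(y))<q$, which is Part 2; and $MM^{\prime }=0$ iff $M=0$ iff $B(y)=0$ gives Part 3.

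For Part 4 I would argue in two steps. First, each entry of $B(y)$ is a linear form in $y$, because $\hat{u}_{t}(y)=e_{t}^{\prime }(n)\Pi _{\limfunc{span}(X)^{\bot }}y$ is linear; hence every $q\times q$ minor of $B(y)$ is a homogeneous polynomial in $y$. The set $\{y:\limfunc{rank}(B(y))<q\}$ is the common zero set of these minors, so it is either all of $\mathbb{R}^{n}$ (when every minor vanishes identically) or contained in the proper algebraic set cut out by a nonvanishing minor, hence a $\lambda _{\mathbb{R}^{n}}$-null set; this yields the dichotomy. Second, I must tie the degenerate case to Assumption \ref{R_and_X}. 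The key observation is that $\hat{u}_{t}(\cdot )\equiv 0$ precisely when $\Pi _{\limfunc{span}(X)^{\bot }}e_{t}(n)=0$, i.e.\ precisely when $e_{t}(n)\in \limfunc{span}(X)$; thus the columns of $B(y)$ with indices $i_{1},\ldots ,i_{s}$ vanish identically, while every remaining $\hat{u}_{t}(\cdot )$ is a nonzero linear functional. Writing $A=R(X^{\prime }X)^{-1}X^{\prime }$, so that the $t$-th column of $B(y)$ is $\hat{u}_{t}(y)$ times the $t$-th column of $A$, the column space of $B(y)$ is always contained in the span of the columns of $A(\lnot (i_{1},\ldots ,i_{s}))$; hence $\limfunc{rank}(B(y))\leq \limfunc{rank}(A(\lnot (i_{1},\ldots ,i_{s})))$ for all $y$, and $\hat{\Omega}_{w}(y)$ is singular for every $y$ whenever Assumption \ref{R_and_X} fails.

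Conversely, assuming Assumption \ref{R_and_X}, I would choose $q$ linearly independent columns $a_{t_{1}},\ldots ,a_{t_{q}}$ of $A$ with $t_{1},\ldots ,t_{q}\notin \{i_{1},\ldots ,i_{s}\}$ and pick $y$ outside the finite union of the hyperplanes $\{\hat{u}_{t_{l}}(\cdot )=0\}$ (possible since each is a proper hyperplane, so the union is a null set); for such $y$ the vectors $\hat{u}_{t_{l}}(y)a_{t_{l}}$ stay independent and $\limfunc{rank}(B(y))=q$, so the singular set is not all of $\mathbb{R}^{n}$ and, by the dichotomy, is a $\lambda _{\mathbb{R}^{n}}$-null set. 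I expect the bookkeeping in this last step---correctly isolating the identically vanishing columns and tying the surviving rank exactly to Assumption \ref{R_and_X}---to be the only genuinely delicate part, since the identity $\hat{\Omega}_{w}(y)=B(y)\mathcal{W}_{n}B(y)^{\prime }$ reduces everything else to standard facts.
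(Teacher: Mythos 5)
Your proof is correct and follows essentially the same route as the paper's: the central identity $\hat{\Omega}_{w}(y)=B(y)\mathcal{W}_{n}B(y)^{\prime}$ (which the paper states without the reindexing derivation you supply) reduces Parts 1--3 to positive definiteness of $\mathcal{W}_{n}$, and Part 4 to the structure of $B(y)$. The only cosmetic difference is in Part 4, where the paper argues directly that under Assumption \ref{R_and_X} singularity forces $\hat{u}_{l}(y)=0$ for some $l\notin\{i_{1},\ldots,i_{s}\}$, placing the singular set in a finite union of proper hyperplanes, whereas you reach the same conclusion via the polynomial (minor) dichotomy plus a witness point --- an argument the paper itself deploys for the generalization in Lemma \ref{LRVPD_2}.
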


\begin{remark}
(i) Setting $R=X^{\prime }X$ and $q=k$ shows that a necessary and sufficient
condition for $\hat{\Psi}_{w}$ to be $\lambda _{\mathbb{R}^{n}}$-almost
everywhere nonsingular is that $e_{i}(n)\notin \limfunc{span}(X)$ for all $%
i=1,\ldots ,n$. [If this condition is not satisfied $\hat{\Psi}_{w}(y)$ is
singular for every $y\in \mathbb{R}^{n}$.] In particular, it follows that
under this simple condition $\hat{\Omega}_{w}\left( y\right) $ is
nonsingular $\lambda _{\mathbb{R}^{n}}$-almost everywhere for \emph{every}
choice of the restriction matrix $R$.

(ii) In the case $q=1$ Assumption \ref{R_and_X} is easily seen to be
violated if and only if%
\begin{equation*}
R(X^{\prime }X)^{-1}X^{\prime }e_{i}(n)=0\text{ \ or \ }e_{i}(n)\in \limfunc{%
span}(X)\text{ holds for every }i=1,\ldots ,n.
\end{equation*}
\end{remark}

We learn from the preceding lemma that, provided Assumption \ref{R_and_X} is
satisfied (which only depends on $X$ and $R$ and hence can be verified by
the user), our choice of defining the test statistic $T$ to be zero on the
set where $\hat{\Omega}_{w}$ is singular is immaterial and has no effect on
the size and power properties of the test. We also learn from that lemma
that, in case Assumption \ref{R_and_X} is violated, the commonly used
autocorrelation robust tests break down completely in a trivial way as $\hat{%
\Omega}_{w}(y)$ is then singular for \emph{every} data point $y$. We are
therefore forced to impose Assumption \ref{R_and_X} on the design matrix $X$
if we want commonly used autocorrelation robust tests to make any sense at
all. We shall thus impose Assumption \ref{R_and_X} in the following
development. We also note that, given a restriction matrix $R$, the set of
design matrices that lead to a violation of Assumption \ref{R_and_X} is a
"thin" subset in the set of all $n\times k$ matrices of full rank.

As usual, the test based on $T$ rejects $H_{0}$ if $T(y)\geq C$ where $C>0$
is an appropriate critical value. In applications the critical value is
usually taken from the asymptotic distribution of $T$ (obtained either under
assumptions that guarantee consistency of $\hat{\Omega}_{w}$ or under the
assumption of a "fixed bandwidth", i.e., $M_{n}/n>0$ independent of $n$). In
the subsequent theorem, which discusses size and power properties of
autocorrelation robust tests based on $T$, we allow for \emph{arbitrary}
(nonrandom) critical values $C>0$.\footnote{%
Because the theorem is a finite-sample result, we are free to imagine that $%
C $ depends on sample size $n$. In fact, there is nothing in the theory that
prohibits us from imagining that $C$ depends even on the design matrix $X$,
on the restriction given by $(R,r)$, or on the weights $w(j,n)$.} Because of
this, and since the theorem is a finite-sample result, it applies equally
well to standard autocorrelation robust tests (for which one fancies that $%
M_{n}\rightarrow \infty $ and $M_{n}/n\rightarrow 0$ if $n$ would increase
to infinity) and to so-called "fixed-bandwidth" tests (which assume $%
M_{n}/n>0$ independent of $n$).

\begin{theorem}
\label{thmlrv} Suppose Assumptions \ref{AAR(1)}, \ref{AW}, and \ref{R_and_X}
are satisfied. Let $T$ be the test statistic defined in (\ref{tslrv}) with $%
\hat{\Psi}_{w}$ as in (\ref{lrve}). Let $W(C)=\left\{ y\in \mathbb{R}%
^{n}:T(y)\geq C\right\} $ be the rejection region where $C$ is a real number
satisfying $0<C<\infty $. Then the following holds:

\begin{enumerate}
\item Suppose $\limfunc{rank}\left( B(e_{+})\right) =q$ and $T(e_{+}+\mu
_{0}^{\ast })>C$ hold for some (and hence all) $\mu _{0}^{\ast }\in 
\mathfrak{M}_{0}$, or $\limfunc{rank}\left( B(e_{-})\right) =q$ and $%
T(e_{-}+\mu _{0}^{\ast })>C$ hold for some (and hence all) $\mu _{0}^{\ast
}\in \mathfrak{M}_{0}$. Then%
\begin{equation}
\sup\limits_{\Sigma \in \mathfrak{C}}P_{\mu _{0},\sigma ^{2}\Sigma }\left(
W\left( C\right) \right) =1  \label{rp_1}
\end{equation}%
holds for every $\mu _{0}\in \mathfrak{M}_{0}$ and every $0<\sigma
^{2}<\infty $. In particular, the size of the test is equal to one.

\item Suppose $\limfunc{rank}\left( B(e_{+})\right) =q$ and $T(e_{+}+\mu
_{0}^{\ast })<C$ hold for some (and hence all) $\mu _{0}^{\ast }\in 
\mathfrak{M}_{0}$, or $\limfunc{rank}\left( B(e_{-})\right) =q$ and $%
T(e_{-}+\mu _{0}^{\ast })<C$ hold for some (and hence all) $\mu _{0}^{\ast
}\in \mathfrak{M}_{0}$. Then 
\begin{equation}
\inf_{\Sigma \in \mathfrak{C}}P_{\mu _{0},\sigma ^{2}\Sigma }\left( W\left(
C\right) \right) =0  \label{rp_2}
\end{equation}%
holds for every $\mu _{0}\in \mathfrak{M}_{0}$ and every $0<\sigma
^{2}<\infty $, and hence%
\begin{equation*}
\inf_{\mu _{1}\in \mathfrak{M}_{1}}\inf_{\Sigma \in \mathfrak{C}}P_{\mu
_{1},\sigma ^{2}\Sigma }\left( W\left( C\right) \right) =0
\end{equation*}%
holds for every $0<\sigma ^{2}<\infty $. In particular, the test is biased.
Furthermore, the nuisance-infimal rejection probability at every point $\mu
_{1}\in \mathfrak{M}_{1}$ is zero, i.e.,%
\begin{equation*}
\inf\limits_{0<\sigma ^{2}<\infty }\inf\limits_{\Sigma \in \mathfrak{C}%
}P_{\mu _{1},\sigma ^{2}\Sigma }(W\left( C\right) )=0.
\end{equation*}%
In particular, the infimal power of the test is equal to zero.

\item Suppose $B(e_{+})=0$ and $R\hat{\beta}(e_{+})\neq 0$ hold, or $%
B(e_{-})=0$ and $R\hat{\beta}(e_{-})\neq 0$ hold. Then 
\begin{equation}
\sup\limits_{\Sigma \in \mathfrak{C}}P_{\mu _{0},\sigma ^{2}\Sigma }\left(
W\left( C\right) \right) =1  \label{rp_3}
\end{equation}%
holds for every $\mu _{0}\in \mathfrak{M}_{0}$ and every $0<\sigma
^{2}<\infty $. In particular, the size of the test is equal to one.
\end{enumerate}
\end{theorem}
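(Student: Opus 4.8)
The plan is to exhibit, for each null parameter $(\mu_0,\sigma^2)$, a sequence of \emph{null} distributions along which the rejection probability tends to $1$; since these distributions belong to the model by Assumption \ref{AAR(1)} ($\mathfrak{C}_{AR(1)}\subseteq\mathfrak{C}$), this forces the supremum in (\ref{rp_3}), and hence the size, to equal $1$. I would treat the first alternative ($B(e_+)=0$ and $R\hat{\beta}(e_+)\neq0$); the second is identical after replacing $e_+$ by $e_-$ and $\rho\uparrow1$ by $\rho\downarrow-1$, using that $\Lambda(\rho)\to e_-e_-'$.

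First I would fix $\mu_0\in\mathfrak{M}_0$ and $0<\sigma^2<\infty$, pick a sequence $\rho_m\uparrow1$, and set $P_m=P_{\mu_0,\sigma^2\Lambda(\rho_m)}$. Because $\Lambda(\rho_m)\to e_+e_+'$, the Gaussian laws $P_m$ converge weakly to $P_\ast=N(\mu_0,\sigma^2e_+e_+')$, which is the law of $\mu_0+\sigma\mathbf{g}e_+$ with $\mathbf{g}\sim N(0,1)$. The purpose of this construction is to read off the degenerate limit of the statistic at $P_\ast$. Using linearity of $\hat{\beta}$ together with $R\hat{\beta}(\mu_0)=r$ (valid since $\mu_0=X\beta_0$ with $R\beta_0=r$), the numerator vector at the limit is $R\hat{\beta}(\mu_0+\sigma\mathbf{g}e_+)-r=\sigma\mathbf{g}\,R\hat{\beta}(e_+)$, which is nonzero almost surely as $R\hat{\beta}(e_+)\neq0$ and $\mathbf{g}\neq0$ a.s. Since $\hat{\Omega}_{w}$ depends on $y$ only through the residual $\Pi_{\limfunc{span}(X)^{\bot}}y$ and is homogeneous of degree two in it, and since $\Pi_{\limfunc{span}(X)^{\bot}}\mu_0=0$, one gets $\hat{\Omega}_{w}(\mu_0+\sigma\mathbf{g}e_+)=\sigma^2\mathbf{g}^2\hat{\Omega}_{w}(e_+)$, which is the zero matrix by Lemma \ref{LRVPD}(3) because $B(e_+)=0$. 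Thus along $P_m$ the numerator stays bounded away from zero while the denominator collapses to zero; this is the concentration mechanism driving the result.

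The delicate point is that $(v,\Omega)\mapsto v'\Omega^{-1}v$ is not continuous at $\Omega=0$, so the continuous mapping theorem cannot be applied to $T$ directly; this is the main obstacle. I would circumvent it by working with the continuous auxiliary function $\psi(y)=\|R\hat{\beta}(y)-r\|^2-C\,\lambda_{\max}(\hat{\Omega}_{w}(y))$, continuous because $\hat{\beta}$ and $\hat{\Omega}_{w}$ are polynomial in $y$ and $\lambda_{\max}$ is continuous on symmetric matrices. By the continuous mapping theorem, under $P_m$ one has $\psi(\mathbf{Y})\distconv\sigma^2\mathbf{g}^2\|R\hat{\beta}(e_+)\|^2$, a limit that is strictly positive a.s. Since $(0,\infty)$ is open, the portmanteau theorem yields $P_m(\psi(\mathbf{Y})>0)\to1$.

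Finally I would convert $\{\psi>0\}$ into rejection. On the event $\{\psi(\mathbf{Y})>0\}\cap\{\det\hat{\Omega}_{w}(\mathbf{Y})\neq0\}$ the matrix $\hat{\Omega}_{w}(\mathbf{Y})$ is positive definite by Lemma \ref{LRVPD}(1), so $\lambda_{\max}(\hat{\Omega}_{w}(\mathbf{Y}))>0$, and the Rayleigh bound $v'\Omega^{-1}v\geq\|v\|^2/\lambda_{\max}(\Omega)$ together with $\psi>0$ gives $T(\mathbf{Y})\geq\|R\hat{\beta}(\mathbf{Y})-r\|^2/\lambda_{\max}(\hat{\Omega}_{w}(\mathbf{Y}))>C$, i.e.\ $\mathbf{Y}\in W(C)$. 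Under Assumption \ref{R_and_X} (in force here), Lemma \ref{LRVPD}(4) shows the singular set of $\hat{\Omega}_{w}$ is a $\lambda_{\mathbb{R}^{n}}$-null set, so $\det\hat{\Omega}_{w}(\mathbf{Y})\neq0$ holds $P_m$-a.s. Hence $P_m(W(C))\geq P_m(\psi(\mathbf{Y})>0)\to1$, and since each $P_m$ is a null distribution this establishes (\ref{rp_3}) and that the size equals one. Once the discontinuity of the quadratic form at a singular denominator is handled by the $\lambda_{\max}$-lower bound, the remaining steps are routine.
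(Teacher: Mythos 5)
Your argument is a correct and self-contained proof of \emph{Part 3} of the theorem, and it rests on the same mechanism as the paper's: concentration of $P_{\mu_0,\sigma^2\Lambda(\rho_m)}$ on $\mu_0+\limfunc{span}(e_+)$ as $\rho_m\to1$, combined with the Rayleigh-quotient lower bound $T(y)\geq\Vert R\hat{\beta}(y)-r\Vert^{2}/\lambda_{\max}(\hat{\Omega}_{w}(y))$ to get around the discontinuity of $(v,\Omega)\mapsto v^{\prime}\Omega^{-1}v$ at $\Omega=0$. The paper packages this as lower semicontinuity of a modified statistic $T^{\ast}$ (set to $+\infty$ on the singular set) inside the general Corollary \ref{CW}(3)/Theorem \ref{inv}; your continuous auxiliary function $\psi(y)=\Vert R\hat{\beta}(y)-r\Vert^{2}-C\lambda_{\max}(\hat{\Omega}_{w}(y))$ is an equally valid repackaging of the same idea, and each step (weak convergence, continuous mapping, portmanteau on the open set $\{\psi>0\}$, a.s.\ nonsingularity of $\hat{\Omega}_{w}$ under Assumption \ref{R_and_X}) checks out.

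The gap is that the statement you were asked to prove has three parts, and you prove only the third. Part 1 concerns the regime $\limfunc{rank}(B(e_{\pm}))=q$ with $T(e_{\pm}+\mu_{0}^{\ast})>C$, where $\hat{\Omega}_{w}(e_{\pm})$ is \emph{nonsingular} rather than zero; there the relevant fact is that $T$ is continuous and $G(\mathfrak{M}_{0})$-invariant on $\mathbb{R}^{n}\setminus N^{\ast}$, so $T(\mu_{0}+ce_{+})=T(\mu_{0}+e_{+})>C$ for every $c\neq0$ and the portmanteau argument applies directly to $W(C)$ (your $\psi$-device is neither needed nor applicable, since $\lambda_{\max}(\hat{\Omega}_{w}(e_{+}))>0$ there). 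Part 2 is the mirror image with $T(e_{\pm}+\mu_{0}^{\ast})<C$, applied to the complement of $W(C)$, but it also contains two further claims that require separate arguments: the statement $\inf_{\mu_{1}\in\mathfrak{M}_{1}}\inf_{\Sigma}P_{\mu_{1},\sigma^{2}\Sigma}(W(C))=0$ needs total-variation continuity of $\mu\mapsto P_{\mu,\sigma^{2}\Sigma}$ (Scheff\'{e}) to transfer the null result to nearby alternatives, and the nuisance-infimal power statement $\inf_{\sigma^{2}}\inf_{\Sigma}P_{\mu_{1},\sigma^{2}\Sigma}(W(C))=0$ at a \emph{fixed} $\mu_{1}$ needs the scale-invariance of $W(C)$ under $G(\{\mu_{0}\})$ so that letting $\sigma^{2}\to\infty$ drags $\mu_{1}$ back toward $\mu_{0}$ (as in Part 3 of Theorem \ref{inv}). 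None of this appears in your write-up, so as it stands the proof establishes only one of the three cases.
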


\begin{remark}
\label{rem_thmlrv}(i)\ As a point of interest we note that the rejection
probabilities $P_{\mu ,\sigma ^{2}\Sigma }(W(C))$ can be shown to depend on $%
\left( \mu ,\sigma ^{2},\Sigma \right) $ only through $\left( \left( R\beta
-r\right) /\sigma ,\Sigma \right) $ (in fact, only through $\left(
\left\langle \left( R\beta -r\right) /\sigma \right\rangle ,\Sigma \right) $%
), see Lemma \ref{aux_100} in Appendix \ref{App_A}.

(ii) Because of (i), the rejection probabilities $P_{\mu _{0},\sigma
^{2}\Sigma }\left( W\left( C\right) \right) $ are constant w.r.t. $\left(
\mu _{0},\sigma ^{2}\right) \in \mathfrak{M}_{0}\times \left( 0,\infty
\right) $ for every $\Sigma \in \mathfrak{C}$. Consequently, we could have
equivalently written (\ref{rp_1}) and (\ref{rp_3}) by inserting an infimum
over $\left( \mu _{0},\sigma ^{2}\right) \in \mathfrak{M}_{0}\times \left(
0,\infty \right) $ in between the supremum and $P_{\mu _{0},\sigma
^{2}\Sigma }\left( W\left( C\right) \right) $. Similarly, we could have
inserted a supremum over $\left( \mu _{0},\sigma ^{2}\right) \in \mathfrak{M}%
_{0}\times \left( 0,\infty \right) $ in between the infimum and $P_{\mu
_{0},\sigma ^{2}\Sigma }\left( W\left( C\right) \right) $ in (\ref{rp_2}). A
similar remark also applies to other results in the paper such as, e.g.,
Theorems \ref{thmlrv2}, \ref{thmGLS}, \ref{thmhet}, and Corollary \ref{CW}.

(iii) Although trivial, it is useful to note that the conclusions of the
preceding theorem also apply to any rejection region $W^{\ast }\in \mathcal{B%
}(\mathbb{R}^{n})$ which differs from $W\left( C\right) $ by a $\lambda _{%
\mathbb{R}^{n}}$-null set.

(iv) By the way $T$ is defined in (\ref{tslrv}), the condition $T(e_{+}+\mu
_{0}^{\ast })>C$ ($T(e_{-}+\mu _{0}^{\ast })>C$, respectively) in Part 1 of
the preceding theorem already implies $\limfunc{rank}\left( B(e_{+})\right)
=q$ ($\limfunc{rank}\left( B(e_{-})\right) =q$, respectively). For reasons
of comparability with Part 2 we have nevertheless included this rank
condition into the formulation of Part 1.
\end{remark}

\begin{remark}
\label{rem_thmlrv_2}(i) Inspection of the proof of Theorem \ref{thmlrv}
shows that Assumption \ref{AAR(1)} can obviously be weakened to the
assumption that $\mathfrak{C}$ contains AR(1) correlation matrices $\Lambda
(\rho _{m}^{(1)})$ and $\Lambda (\rho _{m}^{(2)})$ for two sequences $\rho
_{m}^{(i)}\in \left( -1,1\right) $ with $\rho _{m}^{(1)}\rightarrow 1$ and $%
\rho _{m}^{(2)}\rightarrow -1$. In fact, this can be further weakened to the
assumption that there exist $\Sigma _{m}^{(i)}\in \mathfrak{C}$ with $\Sigma
_{m}^{(1)}\rightarrow e_{+}e_{+}^{\prime }$ and $\Sigma
_{m}^{(2)}\rightarrow e_{-}e_{-}^{\prime }$ for $m\rightarrow \infty $.

(ii) For a discussion on how Theorem \ref{thmlrv} has to be modified in case
only $e_{+}e_{+}^{\prime }$ (or $e_{-}e_{-}^{\prime }$) arises as a singular
accumulation point of $\mathfrak{C}$ see Subsection \ref{Disc}.
\end{remark}

\bigskip

The conditions in Parts 1-3 of the theorem only depend on the design matrix $%
X$, the restriction $\left( R,r\right) $, the vector $e_{+}$ ($e_{-}$,
respectively), the critical value $C$, and the weights $w\left( j,n\right) $
(via $T(e_{+}+\mu _{0}^{\ast })$ or $T(e_{-}+\mu _{0}^{\ast })$,
respectively). Hence, in any particular application it can be decided
whether (and which of) these conditions are satisfied. Furthermore, as will
become transparent from the examples to follow and from Proposition \ref%
{generic} below, in the majority of applications at least one of these
conditions will be satisfied, implying that common autocorrelation robust
tests have size $1$ and/or have power arbitrarily close to $0$ in certain
parts of the alternative hypothesis. Before we turn to these examples, we
want to provide some intuition for Theorem \ref{thmlrv}: Consider a sequence 
$\rho _{m}\in \left( -1,1\right) $ with $\rho _{m}\rightarrow 1$ ($\rho
_{m}\rightarrow -1$, respectively) as $m\rightarrow \infty $. Then $\Sigma
_{m}=\Lambda \left( \rho _{m}\right) \in \mathfrak{C}$ by Assumption \ref%
{AAR(1)} and $\Lambda \left( \rho _{m}\right) \rightarrow e_{+}e_{+}^{\prime
}$ ($e_{-}e_{-}^{\prime }$) holds. Consequently, $P_{\mu _{0},\sigma
^{2}\Sigma _{m}}$ concentrates more and more around the one-dimensional
subspace $\limfunc{span}\left( e_{+}\right) $ ($\limfunc{span}\left(
e_{-}\right) $, respectively) in the sense that it converges weakly to the
singular Gaussian distribution $P_{\mu _{0},\sigma ^{2}e_{+}e_{+}^{\prime }}$
($P_{\mu _{0},\sigma ^{2}e_{-}e_{-}^{\prime }}$, respectively). The
conditions in Part 1 (or Part 3) of the preceding theorem then essentially
allow one to show that (i) the measure $P_{\mu _{0},\sigma
^{2}e_{+}e_{+}^{\prime }}$ ($P_{\mu _{0},\sigma ^{2}e_{-}e_{-}^{\prime }}$,
respectively) is supported by $W\left( C\right) $ (more precisely, after $%
W\left( C\right) $ has been modified by a suitable $\lambda _{\mathbb{R}%
^{n}} $-null set), and (ii) that $P_{\mu _{0},\sigma ^{2}e_{+}e_{+}^{\prime
}}$ ($P_{\mu _{0},\sigma ^{2}e_{-}e_{-}^{\prime }}$, respectively) puts no
mass on the boundary of the (modified) set $W\left( C\right) $. By the
Portmanteau theorem we can then conclude that the sequence of measures $%
P_{\mu _{0},\sigma ^{2}\Sigma _{m}}$ puts more and more mass on $W\left(
C\right) $ in the sense that $P_{\mu _{0},\sigma ^{2}\Sigma _{m}}\left(
W\left( C\right) \right) \rightarrow 1$ as $m\rightarrow \infty $, which
establishes the conclusion of Part 1 of the theorem. The proof of the first
claim in Part 2 works along similar lines but where concentration is now on
the complement of the rejection region $W\left( C\right) $. For more
discussion see Subsection \ref{sec_neg}. The remaining results in Part 2 are
obtained from the first claim in Part 2 exploiting invariance and continuity
properties of the rejection probabilities. While concentration of the
probability measures $P_{\mu _{0},\sigma ^{2}\Sigma _{m}}$constitutes an
important ingredient in the proof of Theorem \ref{thmlrv}, it should,
however, be stressed that there are also other cases (cf. Theorems \ref{TU_2}
and \ref{TU_3}), where despite concentration of $P_{\mu _{0},\sigma
^{2}\Sigma _{m}}$ as above, the conditions for an application of the
Portmanteau theorem are \emph{not} satisfied; in fact, in some of these
cases size $<1$ and infimal power $>0$ can be shown.

We now consider a few examples that illustrate the implications of the
preceding theorem. As in most applications the regression model contains an
intercept, we concentrate on this case in the examples.

\begin{example}
\label{EX1}\textit{(Testing a restriction involving the intercept)} Suppose
that Assumptions \ref{AAR(1)}, \ref{AW}, and \ref{R_and_X} hold. For
definiteness assume that the first column of $X$ corresponds to the
intercept (i.e., the first column of $X$ is $e_{+}$). Assume also that the
restriction involves the intercept, i.e., the first column of $R$ is
nonzero. Then it is easy to see that $B\left( e_{+}\right) =0$ and $R\hat{%
\beta}(e_{+})\neq 0$ holds (the latter since $\hat{\beta}(e_{+})=e_{1}\left(
k\right) $). Consequently, Part 3 of Theorem \ref{thmlrv} applies and shows
that the size of the test $T$ is always $1$. Additionally, the power
deficiency results in Part 2 of the theorem will apply whenever $\limfunc{%
rank}\left( B(e_{-})\right) =q$ and $T(e_{-}+\mu _{0}^{\ast })<C$ hold.
[Whether or not this is the case will depend on $C$, $X$, $R$, and the
weights.]
\end{example}

\begin{example}
\label{EX2}\textit{(Location model)} Suppose that Assumptions \ref{AAR(1)}
and \ref{AW} hold. Suppose $X=e_{+}$ and the hypothesis is $\beta =\beta
_{0} $ (hence $k=q=1$). As just noted in Example \ref{EX1}, the size of the
test $T$ is then always $1$ (as Assumption \ref{R_and_X} is certainly
satisfied). In this simple model the conditions for the power deficiencies
to arise can be made more explicit: Note that $B(e_{-})\neq 0$ clearly
always holds, and hence $\limfunc{rank}B(e_{-})=1=q$. If $n$ is even, it is
also easy to see that $T(e_{-}+\beta _{0}e_{+})=0<C$ always holds.
Consequently, Part 2 of Theorem \ref{thmlrv} applies and shows that the
power of the test gets arbitrarily close to zero in certain parts of the
parameter space as described in the theorem. If $n$ is odd, then $%
T(e_{-}+\beta _{0}e_{+})=n^{-1}\hat{\Psi}_{w}^{-1}(e_{-})$ and the same
conclusion applies provided this quantity is less than $C$.\footnote{%
The discussion in this example so far just reproduces results obtained in
Subsection \ref{prelim}.} For example, for the (modified) Bartlett
lag-window numerical computations show that $n^{-1}\hat{\Psi}%
_{w}^{-1}(e_{-}) $ is less than $1.563$ for every odd $n$ in the range $%
1<n<1000$ and every choice of $M_{n}/n\in (0,1]$; hence, if $C$ has been
chosen to be larger than or equal to $1.563$, which is typically the case at
conventional nominal significance levels, the power deficiencies are also
guaranteed to arise. We note here that this simple location model is often
used in Monte Carlo studies that try to assess finite-sample properties of
autocorrelation robust tests. Furthermore, autocorrelation robust testing of
the location parameter plays an important r\^{o}le in the field of
simulation, see, e.g., \cite{Heidel1981}, \cite{FJ2010}.
\end{example}

\begin{example}
\label{EX3}\textit{(Testing a zero restriction on a slope parameter)}
Consider the same regression model as in Example \ref{EX1} with the same
assumptions, but now suppose that the hypothesis is $\beta _{i}=0$ for some $%
i>1$, i.e., we are interested in testing a slope parameter. Since in this
case $B(e_{+})=0$ and $R\hat{\beta}(e_{+})=0$ obviously hold, where $%
R=e_{i}^{\prime }\left( k\right) $, we need to investigate the behavior of $%
B(e_{-})$ in order to be able to apply Theorem \ref{thmlrv}. If $\limfunc{%
rank}B(e_{-})=1$ holds (which will generically be the case) then size equals 
$1$ in case $T(e_{-})>C$ and the power deficiencies arise in case $%
T(e_{-})<C $.
\end{example}

\begin{example}
\label{EX4}\textit{(Testing for a change in mean)} A special case of the
preceding example is the case where $k=2$, the first column of\textit{\ }$X$
is $e_{+}$ and the second column has entries $x_{t2}=0$ for $1\leq t\leq
t_{\ast }$ and $x_{t2}=1$ else. We assume $t_{\ast }$ to be known and to
satisfy $1<t_{\ast }<n$. The hypothesis to be tested is $\beta _{2}=0$. It
is then easy to see that Assumption \ref{R_and_X} is satisfied. Furthermore,
some simple computations show that $\limfunc{rank}B(e_{-})=q=1$ always
holds. Hence, the test $T$ has size $1$ if $T(e_{-})>C$ and the power
deficiencies arise if $T(e_{-})<C$. In case $n$ as well as $n-t_{0}$ are
even, the latter case always arises since $T(e_{-})=0$ holds. [If $n$ or $%
n-t_{0}$ is odd, $T(e_{-})$ can of course be computed and depends only on $n$%
, $t_{0}$, and $\hat{\Psi}_{w}^{-1}(e_{-})$. We omit the details.]
\end{example}

The cases in Theorem \ref{thmlrv} leading to size $1$ or to power
deficiencies of the test based on $T$, while not being exhaustive, are often
satisfied in applications. We make this formal in the subsequent proposition
in that we prove that, for given restriction $(R,r)$ and critical value $C$,
the conditions in Theorem \ref{thmlrv} involving $X$ are generically
satisfied. The first part of the proposition shows that these conditions are
generically satisfied in the universe of all possible $n\times k$ design
matrices of rank $k$. Parts 2 and 3 show that the same is true if we impose
that the regression model has to contain an intercept. In the subsequent
proposition the dependence of $B\left( y\right) $, of $T(y)$, as well as of $%
\hat{\Omega}_{w}\left( y\right) $ on $X$ will be important and thus we shall
write $B_{X}\left( y\right) $, $T_{X}\left( y\right) $, and $\hat{\Omega}%
_{w,X}\left( y\right) $ for these quantities in the result to follow.

\begin{proposition}
\label{generic}Suppose Assumption \ref{AAR(1)} holds. Fix $\left( R,r\right) 
$ with $\limfunc{rank}\left( R\right) =q$, fix $0<C<\infty $, and fix the
weights $w(j,n)$ which are assumed to satisfy Assumption \ref{AW}. Let $T$
be the test statistic defined in (\ref{tslrv}) with $\hat{\Psi}_{w}$ as in (%
\ref{lrve}) and let $\mu _{0}^{\ast }\in \mathfrak{M}_{0}$ be arbitrary.

\begin{enumerate}
\item Define 
\begin{equation*}
\mathfrak{X}_{0}=\left\{ X\in \mathbb{R}^{n\times k}:\limfunc{rank}\left(
X\right) =k\right\} ,\quad \mathfrak{X}_{1}\left( e_{+}\right) =\left\{ X\in 
\mathfrak{X}_{0}:\limfunc{rank}\left( B_{X}(e_{+})\right) <q\right\} ,
\end{equation*}%
\begin{equation*}
\mathfrak{X}_{2}\left( e_{+}\right) =\left\{ X\in \mathfrak{X}_{0}\backslash 
\mathfrak{X}_{1}\left( e_{+}\right) :T_{X}(e_{+}+\mu _{0}^{\ast })=C\right\}
,
\end{equation*}%
and similarly define $\mathfrak{X}_{1}\left( e_{-}\right) $, $\mathfrak{X}%
_{2}\left( e_{-}\right) $. [Note that $\mathfrak{X}_{2}\left( e_{+}\right) $
and $\mathfrak{X}_{2}\left( e_{-}\right) $ do not depend on the choice of $%
\mu _{0}^{\ast }$.] Then $\mathfrak{X}_{1}\left( e_{+}\right) $, $\mathfrak{X%
}_{2}\left( e_{+}\right) $, $\mathfrak{X}_{1}\left( e_{-}\right) $, and $%
\mathfrak{X}_{2}\left( e_{-}\right) $ are $\lambda _{\mathbb{R}^{n\times k}}$%
-null sets. The set of all design matrices $X\in \mathfrak{X}_{0}$ for which
Theorem \ref{thmlrv} does not apply is a subset of $\left( \mathfrak{X}%
_{1}\left( e_{+}\right) \cup \mathfrak{X}_{2}\left( e_{+}\right) \right)
\cap \left( \mathfrak{X}_{1}\left( e_{-}\right) \cup \mathfrak{X}_{2}\left(
e_{-}\right) \right) $ and hence is a $\lambda _{\mathbb{R}^{n\times k}}$%
-null set. It thus is a "negligible" subset of $\mathfrak{X}_{0}$ in view of
the fact that $\mathfrak{X}_{0}$ differs from$\ \mathbb{R}^{n\times k}$ only
by a $\lambda _{\mathbb{R}^{n\times k}}$-null set.

\item Suppose $k\geq 2$, $X$ has $e_{+}$ as its first column, i.e., $%
X=\left( e_{+},\tilde{X}\right) $, and suppose the first column of $R$
consists of zeros only. Define%
\begin{eqnarray*}
\mathfrak{\tilde{X}}_{0} &=&\left\{ \tilde{X}\in \mathbb{R}^{n\times \left(
k-1\right) }:\limfunc{rank}\left( \left( e_{+},\tilde{X}\right) \right)
=k\right\} , \\
\mathfrak{\tilde{X}}_{1}\left( e_{-}\right) &=&\left\{ \tilde{X}\in 
\mathfrak{\tilde{X}}_{0}:\limfunc{rank}\left( B_{\left( e_{+},\tilde{X}%
\right) }(e_{-})\right) <q\right\} , \\
\mathfrak{\tilde{X}}_{2}\left( e_{-}\right) &=&\left\{ \tilde{X}\in 
\mathfrak{\tilde{X}}_{0}\backslash \mathfrak{\tilde{X}}_{1}\left(
e_{-}\right) :T_{\left( e_{+},\tilde{X}\right) }(e_{-}+\mu _{0}^{\ast
})=C\right\} ,
\end{eqnarray*}%
and note that $\mathfrak{\tilde{X}}_{2}\left( e_{-}\right) $ does not depend
on the choice of $\mu _{0}^{\ast }$. Then $\mathfrak{\tilde{X}}_{1}\left(
e_{-}\right) $ and $\mathfrak{\tilde{X}}_{2}\left( e_{-}\right) $ are $%
\lambda _{\mathbb{R}^{n\times \left( k-1\right) }}$-null sets (with the
analogously defined sets $\mathfrak{\tilde{X}}_{1}\left( e_{+}\right) $ and $%
\mathfrak{\tilde{X}}_{2}\left( e_{+}\right) $ satisfying $\mathfrak{\tilde{X}%
}_{1}\left( e_{+}\right) =\mathfrak{\tilde{X}}_{0}$ and $\mathfrak{\tilde{X}}%
_{2}\left( e_{+}\right) =\emptyset $.). The set of all matrices $\tilde{X}%
\in \mathfrak{\tilde{X}}_{0}$ such that Theorem \ref{thmlrv} does not apply
to the design matrix $X=\left( e_{+},\tilde{X}\right) $ is a subset of $%
\mathfrak{\tilde{X}}_{1}\left( e_{-}\right) \cup \mathfrak{\tilde{X}}%
_{2}\left( e_{-}\right) $ and hence is a $\lambda _{\mathbb{R}^{n\times
\left( k-1\right) }}$-null set. It thus is a "negligible" subset of $%
\mathfrak{\tilde{X}}_{0}$ in view of the fact that $\mathfrak{\tilde{X}}_{0}$
differs from$\ \mathbb{R}^{n\times \left( k-1\right) }$ only by a $\lambda _{%
\mathbb{R}^{n\times \left( k-1\right) }}$-null set.

\item Suppose $k\geq 2$, $X=\left( e_{+},\tilde{X}\right) $, and suppose the
first column of $R$ is nonzero. Then Theorem \ref{thmlrv} applies to the
design matrix $X=\left( e_{+},\tilde{X}\right) $ for every $\tilde{X}\in 
\mathfrak{\tilde{X}}_{0}$ (provided $X$ satisfies Assumption \ref{R_and_X}).%
\footnote{%
If $X$ does not satisfy Assumption \ref{R_and_X}, then the test breaks down
in a trivial way as already discussed.}
\end{enumerate}
\end{proposition}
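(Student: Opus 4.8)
The plan is to reduce the statement ``Theorem~\ref{thmlrv} does not apply to $X$'' to a pair of algebraic conditions, one attached to $e_{+}$ and one to $e_{-}$, and then to exhibit each exceptional set as the zero set of a polynomial that, after clearing denominators, is not identically zero, hence a $\lambda _{\mathbb{R}^{n\times k}}$-null set. First I would record two simplifications valid for every $\mu _{0}^{\ast }=X\beta _{0}\in \mathfrak{M}_{0}$ (so $R\beta _{0}=r$): by linearity $R\hat{\beta}(e_{\pm }+\mu _{0}^{\ast })-r=R\hat{\beta}(e_{\pm })$, and since $\mu _{0}^{\ast }\in \limfunc{span}(X)$ one has $\hat{u}(e_{\pm }+\mu _{0}^{\ast })=\Pi _{\limfunc{span}(X)^{\bot }}e_{\pm }$, whence $\hat{\Omega}_{w,X}(e_{\pm }+\mu _{0}^{\ast })=\hat{\Omega}_{w,X}(e_{\pm })$. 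Consequently, on $\{\limfunc{rank}(B_{X}(e_{\pm }))=q\}$ we get $T_{X}(e_{\pm }+\mu _{0}^{\ast })=R\hat{\beta}(e_{\pm })^{\prime }\hat{\Omega}_{w,X}(e_{\pm })^{-1}R\hat{\beta}(e_{\pm })$, which is independent of $\mu _{0}^{\ast }$ (as asserted) and is a rational function of $X$ with denominator a product of powers of $\det (X^{\prime }X)$ and $\det \hat{\Omega}_{w,X}(e_{\pm })$.

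For Part~1, the combinatorial step is to verify, from the definition (\ref{tslrv}) of $T_{X}$ and from $B_{X}(e_{+})=0\Rightarrow \limfunc{rank}(B_{X}(e_{+}))=0<q$, that the $e_{+}$-conditions of Parts~1--3 fail at $X$ only when $\limfunc{rank}(B_{X}(e_{+}))<q$, or when $\limfunc{rank}(B_{X}(e_{+}))=q$ together with $T_{X}(e_{+}+\mu _{0}^{\ast })=C$; that is, only when $X\in \mathfrak{X}_{1}(e_{+})\cup \mathfrak{X}_{2}(e_{+})$, and symmetrically for $e_{-}$. Since Theorem~\ref{thmlrv} applies as soon as either the $e_{+}$- or the $e_{-}$-conditions hold, the non-applicability set lies in $(\mathfrak{X}_{1}(e_{+})\cup \mathfrak{X}_{2}(e_{+}))\cap (\mathfrak{X}_{1}(e_{-})\cup \mathfrak{X}_{2}(e_{-}))$. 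It then remains to show the four sets are null. The entries of $B_{X}(e_{+})$ are rational in $X$ with denominator a power of $\det (X^{\prime }X)$, so each $q\times q$ minor has a polynomial numerator $p$ and $\mathfrak{X}_{1}(e_{+})\subseteq \{X:p(X)=0\}$; likewise $T_{X}(e_{+}+\mu _{0}^{\ast })-C=h/D$ with $h,D$ polynomial and $D\neq 0$ on $\mathfrak{X}_{0}\backslash \mathfrak{X}_{1}(e_{+})$ (there $\det \hat{\Omega}_{w,X}(e_{+})\neq 0$ by Lemma~\ref{LRVPD}), so $\mathfrak{X}_{2}(e_{+})\subseteq \{X:h(X)=0\}$. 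As the zero set of a nonzero polynomial on $\mathbb{R}^{n\times k}$ is $\lambda _{\mathbb{R}^{n\times k}}$-null, it suffices to produce a single $X^{\ast }\in \mathfrak{X}_{0}$ with $\limfunc{rank}(B_{X^{\ast }}(e_{+}))=q$ and $T_{X^{\ast }}(e_{+}+\mu _{0}^{\ast })\neq C$, forcing $p\not\equiv 0$ and $h\not\equiv 0$.

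Both requirements are met by one construction. Choose $X^{\ast }\in \mathfrak{X}_{0}$ with $\limfunc{span}(X^{\ast })\subseteq e_{+}^{\bot }$ (possible since $\dim e_{+}^{\bot }=n-1\geq k$). Then $(X^{\ast })^{\prime }e_{+}=0$, so $R\hat{\beta}(e_{+})=0$ and $T_{X^{\ast }}(e_{+}+\mu _{0}^{\ast })=0\neq C$; moreover $\Pi _{\limfunc{span}(X^{\ast })^{\bot }}e_{+}=e_{+}$ has all entries $\pm 1$, and scaling the columns of the rank-$q$ matrix $R((X^{\ast })^{\prime }X^{\ast })^{-1}(X^{\ast })^{\prime }$ by these nonzero entries leaves the column span unchanged, giving $\limfunc{rank}(B_{X^{\ast }}(e_{+}))=q$. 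The same construction with $e_{-}$ in place of $e_{+}$ settles $\mathfrak{X}_{1}(e_{-})$ and $\mathfrak{X}_{2}(e_{-})$, and the closing remark that $\mathfrak{X}_{0}$ is co-null uses that its complement is $\{\det (X^{\prime }X)=0\}$.

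For Part~2, a rank-$q$ matrix $R$ with vanishing first column forces $q\leq k-1$, so $k\geq q+1$. With $X=(e_{+},\tilde{X})$ one has $e_{+}\in \limfunc{span}(X)$, hence $B_{X}(e_{+})=0$ and $\limfunc{rank}(B_{X}(e_{+}))=0<q$ for every $\tilde{X}$, giving $\mathfrak{\tilde{X}}_{1}(e_{+})=\mathfrak{\tilde{X}}_{0}$ and $\mathfrak{\tilde{X}}_{2}(e_{+})=\emptyset $; also $R\hat{\beta}(e_{+})$ equals the vanishing first column of $R$, so the $e_{+}$-conditions never hold and non-applicability forces $\tilde{X}\in \mathfrak{\tilde{X}}_{1}(e_{-})\cup \mathfrak{\tilde{X}}_{2}(e_{-})$, whose nullity in $\mathbb{R}^{n\times (k-1)}$ is shown exactly as above. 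The one delicate point, not covered by the orthogonality trick when $n$ is odd (so $e_{+}\not\perp e_{-}$), is producing within this slice a $\tilde{X}$ with $\limfunc{rank}(B_{X}(e_{-}))=q$ and $T_{X}(e_{-}+\mu _{0}^{\ast })\neq C$; here I would take the columns of $\tilde{X}$ orthogonal to $(I_{n}-n^{-1}e_{+}e_{+}^{\prime })e_{-}$ (a hyperplane containing $e_{+}$), so that by a Frisch--Waugh--Lovell computation the least-squares coefficients of $e_{-}$ on the $\tilde{X}$-block vanish, whence $R\hat{\beta}(e_{-})=0$ and $T_{X}(e_{-}+\mu _{0}^{\ast })=0\neq C$, while a generic such $\tilde{X}$ makes $\Pi _{\limfunc{span}(X)^{\bot }}e_{-}$ have full support and hence $\limfunc{rank}(B_{X}(e_{-}))=q$. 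Finally, Part~3 is immediate: for $X=(e_{+},\tilde{X})$ with nonzero first column of $R$ we get $B_{X}(e_{+})=0$ and $R\hat{\beta}(e_{+})\neq 0$, so Part~3 of Theorem~\ref{thmlrv} applies to every $\tilde{X}\in \mathfrak{\tilde{X}}_{0}$ satisfying Assumption~\ref{R_and_X}. The main obstacle throughout is precisely this non-triviality --- exhibiting designs with full-rank $B$ and $T\neq C$ --- and the orthogonality construction, together with its Frisch--Waugh variant for the odd-$n$ slice, disposes of it.
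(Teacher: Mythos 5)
Your proposal is correct and follows essentially the same route as the paper: both reduce non-applicability of Theorem \ref{thmlrv} to membership in $\left( \mathfrak{X}_{1}(e_{+})\cup \mathfrak{X}_{2}(e_{+})\right) \cap \left( \mathfrak{X}_{1}(e_{-})\cup \mathfrak{X}_{2}(e_{-})\right) $, realize each of these sets (after clearing powers of $\det (X^{\prime }X)$ and of $\det \hat{\Omega}_{w,X}(e_{\pm })$ via adjugates) inside the zero set of a multivariate polynomial in the entries of $X$, and certify non-triviality of that polynomial with a witness design whose columns are orthogonal to $e_{\pm }$ (and, in Part 2, to both $e_{+}$ and $e_{-}$, which is exactly the concrete instance of your Frisch--Waugh slice). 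The one place to tighten is your appeal to "a generic such $\tilde{X}$" for full support of $\Pi _{\limfunc{span}(X)^{\bot }}e_{-}$ in Part 2: the explicit choice $\tilde{X}\perp \{e_{+},e_{-}\}$ gives $\hat{u}(e_{-})=e_{-}-(e_{+}^{\prime }e_{-}/n)e_{+}$, whose entries are all nonzero for every $n$, so no further genericity argument is needed.
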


The proof of the proposition actually shows more, namely that the set of
design matrices for which Theorem \ref{thmlrv} does not apply is contained
in an algebraic set. We also remark that if the regressor matrix $X$ is
viewed as randomly drawn from a distribution that is absolutely continuous
w.r.t. $\lambda _{\mathbb{R}^{n\times k}}$, Proposition \ref{generic}
implies that then the conditions of Theorem \ref{thmlrv} are almost surely
satisfied; if $X$ is also independent of $\mathbf{U}$, Theorem \ref{thmlrv}
then establishes negative results for the conditional rejection
probabilities for almost all realizations of $X$.

We next discuss an exceptional case to which Theorem \ref{thmlrv} does not
apply and which is interesting in that a positive result can be established,
at least if the covariance model $\mathfrak{C}$ is assumed to be $\mathfrak{C%
}_{AR(1)}$ or is approximated by $\mathfrak{C}_{AR(1)}$ near the singular
points in the sense of Remark \ref{rem_appr}(i) below. This positive result
will then guide us to an improved version of the test statistic $T$.

\begin{theorem}
\label{TU_2}Suppose $\mathfrak{C}=\mathfrak{C}_{AR(1)}$ and suppose
Assumptions \ref{AW} and \ref{R_and_X} are satisfied. Let $T$ be the test
statistic defined in (\ref{tslrv}) with $\hat{\Psi}_{w}$ as in (\ref{lrve}).
Let $W(C)=\left\{ y\in \mathbb{R}^{n}:T(y)\geq C\right\} $ be the rejection
region where $C$ is a real number satisfying $0<C<\infty $. If $%
e_{+},e_{-}\in \mathfrak{M}$ and $R\hat{\beta}(e_{+})=R\hat{\beta}(e_{-})=0$
is satisfied, then the following holds:

\begin{enumerate}
\item The size of the rejection region $W(C)$ is strictly less than $1$,
i.e.,%
\begin{equation*}
\sup\limits_{\mu _{0}\in \mathfrak{M}_{0}}\sup\limits_{0<\sigma ^{2}<\infty
}\sup\limits_{-1<\rho <1}P_{\mu _{0},\sigma ^{2}\Lambda (\rho )}\left(
W(C)\right) <1.
\end{equation*}%
Furthermore,%
\begin{equation*}
\inf_{\mu _{0}\in \mathfrak{M}_{0}}\inf_{0<\sigma ^{2}<\infty }\inf_{-1<\rho
<1}P_{\mu _{0},\sigma ^{2}\Lambda (\rho )}\left( W(C)\right) >0.
\end{equation*}

\item The infimal power is bounded away from zero, i.e., 
\begin{equation*}
\inf_{\mu _{1}\in \mathfrak{M}_{1}}\inf\limits_{0<\sigma ^{2}<\infty
}\inf\limits_{-1<\rho <1}P_{\mu _{1},\sigma ^{2}\Lambda (\rho )}(W(C))>0.
\end{equation*}

\item For every $0<c<\infty $%
\begin{equation*}
\inf_{\substack{ \mu _{1}\in \mathfrak{M}_{1},0<\sigma ^{2}<\infty  \\ %
d\left( \mu _{1},\mathfrak{M}_{0}\right) /\sigma \geq c}}P_{\mu _{1},\sigma
^{2}\Lambda (\rho _{m})}(W(C))\rightarrow 1
\end{equation*}%
holds for $m\rightarrow \infty $ and for any sequence $\rho _{m}\in (-1,1)$
satisfying $\left\vert \rho _{m}\right\vert \rightarrow 1$. Furthermore, for
every sequence $0<c_{m}<\infty $ and every $0<\varepsilon <1$%
\begin{equation*}
\inf_{\substack{ \mu _{1}\in \mathfrak{M}_{1},  \\ d\left( \mu _{1},%
\mathfrak{M}_{0}\right) \geq c_{m}}}\inf_{-1+\varepsilon \leq \rho \leq
1-\varepsilon }P_{\mu _{1},\sigma _{m}^{2}\Lambda (\rho )}(W(C))\rightarrow 1
\end{equation*}%
holds for $m\rightarrow \infty $ whenever $0<\sigma _{m}^{2}<\infty $ and $%
c_{m}/\sigma _{m}\rightarrow \infty $. [The very last statement holds even
without the conditions $e_{+},e_{-}\in \mathfrak{M}$ and $R\hat{\beta}%
(e_{+})=R\hat{\beta}(e_{-})=0$.]

\item For every $\delta $, $0<\delta <1$, there exists a $C(\delta )$, $%
0<C(\delta )<\infty $, such that%
\begin{equation*}
\sup\limits_{\mu _{0}\in \mathfrak{M}_{0}}\sup\limits_{0<\sigma ^{2}<\infty
}\sup\limits_{-1<\rho <1}P_{\mu _{0},\sigma ^{2}\Lambda (\rho )}(W(C(\delta
)))\leq \delta .
\end{equation*}
\end{enumerate}
\end{theorem}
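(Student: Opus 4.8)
The plan is to reduce the entire theorem to a single weak-convergence statement for the law of $T$ as $\rho\to\pm1$, and then to read off Parts 1--4 by compactness and continuity arguments. First I would record the identity $\hat{\Omega}_{w}(y)=B(y)\mathcal{W}_{n}B(y)^{\prime }$ that underlies Lemma \ref{LRVPD}, together with the observations that $R\hat{\beta}(y)-r$ depends on $y$ only through $\Pi _{\mathfrak{M}}y$ while $B(y)$ depends on $y$ only through $\Pi _{\mathfrak{M}^{\bot }}y$. Using $R\hat{\beta}(e_{\pm })=0$ together with $e_{\pm }\in \mathfrak{M}$ (so $\Pi _{\mathfrak{M}^{\bot }}e_{\pm }=0$), one checks that, under the null with $\mu _{0}=0$ and $r=0$, the statistic obeys $T(y+ce_{\pm })=T(y)$ and is homogeneous of degree zero, $T(cy)=T(y)$ for $c\neq 0$. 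By the invariance recorded in Remark \ref{rem_thmlrv}(i) (Lemma \ref{aux_100}), the null rejection probability depends on $(\mu _{0},\sigma ^{2},\rho )$ only through $\rho $, and the power depends on $(\mu _{1},\sigma ^{2},\rho )$ only through $(v,\rho )$ with $v=(R\beta -r)/\sigma $; hence I may fix $\mu _{0}=0$, $r=0$, $\sigma ^{2}=1$ throughout.

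The central step, and the one I expect to be the main obstacle, is to show that as $\rho \to 1$ the null law of $T$ converges weakly to a proper (almost surely finite) law $\mathcal{L}_{+}$, with an analogous $\mathcal{L}_{-}$ as $\rho \to -1$, so that no mass escapes to $+\infty $. Writing the innovations representation $u_{1}=\epsilon _{1}$, $u_{t}=\rho u_{t-1}+\sqrt{1-\rho ^{2}}\,\epsilon _{t}$ with i.i.d.\ standard normal $\epsilon _{t}$, and setting $\mathbf{W}=\mathbf{U}-u_{1}e_{+}$ (so $\mathbf{W}_{1}=0$), one has $\mathbf{W}/\sqrt{1-\rho ^{2}}\to \mathbf{W}^{\ast }$ in distribution, where $\mathbf{W}^{\ast }=(0,\epsilon _{2},\epsilon _{2}+\epsilon _{3},\ldots )$ is a Gaussian random walk supported on $\{w_{1}=0\}$. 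By the translation- and scale-invariance just established, $T(\mathbf{U})=T(\mathbf{W})=T(\mathbf{W}/\sqrt{1-\rho ^{2}})$, which by the continuous mapping theorem converges in law to $T(\mathbf{W}^{\ast })=:\mathcal{L}_{+}$, provided $T$ is continuous $\mathbf{W}^{\ast }$-almost surely. This is where the hypotheses bite a second time: the discontinuity set of $T$ is $\{y:\limfunc{rank}B(y)<q\}$, a Lebesgue-null algebraic variety in $\mathbb{R}^{n}$ by Lemma \ref{LRVPD} and Assumption \ref{R_and_X}; since $B(y)=B(y+ce_{+})$ and the first coordinate of $e_{+}$ is nonzero, the image of the hyperplane $\{w_{1}=0\}$ under $B$ equals the full image of $B$, so $\limfunc{rank}B=q$ is attained on $\{w_{1}=0\}$ and the discontinuity set meets $\{w_{1}=0\}$ in a proper subvariety, hence a null set for the (absolutely continuous on $\{w_{1}=0\}$) law of $\mathbf{W}^{\ast }$. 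The case $\rho \to -1$ is identical with $e_{-}$ and the alternating partial-sum walk.

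For Parts 1 and 4, note that on $(-1,1)$ the matrix $\Lambda (\rho )$ is positive definite and varies continuously, $T$ is continuous off a Lebesgue-null set, and each $P_{\rho }$ is absolutely continuous; hence $\rho \mapsto \mathrm{law}(T\mid P_{\rho })$ is weakly continuous on $(-1,1)$, and with the endpoint limits $\mathcal{L}_{\pm }$ it extends to a weakly continuous family indexed by the compact interval $[-1,1]$. By Prokhorov's theorem this family is uniformly tight, i.e.\ $\sup _{\rho }P_{\rho }(T\geq C)\to 0$ as $C\to \infty $, which (via the invariance of the first paragraph) yields Part 4 upon choosing $C(\delta )$ with this supremum at most $\delta $. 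For Part 1 the size is then at most any $\delta <1$; for the strictly positive infimal null rejection probability, observe that $\rho \mapsto P_{\rho }(T\geq C)$ is continuous on the compact $[-1,1]$ (the laws have no atom at $C>0$, being absolutely continuous images of $T$) and strictly positive at every $\rho $, including the endpoints where $\mathcal{L}_{\pm }([C,\infty ))>0$ because $T(\mathbf{W}^{\ast })$ is unbounded above (it explodes as $B(\mathbf{W}^{\ast })$ approaches rank-deficiency with the numerator bounded away from zero). An everywhere-positive continuous function on a compact set has positive infimum.

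For Parts 2 and 3 the same reduction gives numerator $v+R\hat{\beta}(\mathbf{W})$ and $\hat{\Omega}_{w}=(1-\rho ^{2})\,B(\mathbf{W}/\sqrt{1-\rho ^{2}})\mathcal{W}_{n}B(\mathbf{W}/\sqrt{1-\rho ^{2}})^{\prime }$, so that for fixed $v\neq 0$, as $\rho \to \pm 1$, $T\sim (1-\rho ^{2})^{-1}v^{\prime }(B(\mathbf{W}^{\ast })\mathcal{W}_{n}B(\mathbf{W}^{\ast })^{\prime })^{-1}v\to \infty $ almost surely, whence the power tends to $1$; this gives Part 3, first display, with $d(\mu _{1},\mathfrak{M}_{0})/\sigma \geq c$ keeping $\Vert v\Vert $ bounded below (using that $d(\mu _{1},\mathfrak{M}_{0})$ and $\Vert R\beta -r\Vert $ are comparable up to constants depending on $X,R$). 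Part 3, second display, is the standard distant-alternative argument: on a compact $\rho $-set $\Lambda (\rho )$ is uniformly nondegenerate, $\hat{\Omega}_{w}$ is $O_{p}(1)$ and bounded below uniformly, while $\Vert v\Vert \to \infty $, so $T\to \infty $ uniformly, using neither $e_{\pm }\in \mathfrak{M}$ nor $R\hat{\beta}(e_{\pm })=0$, consistent with the bracketed remark. For Part 2 I would regard the rejection probability as $g(v,\rho )$ on $\mathbb{R}^{q}\times \lbrack -1,1]$, with $g(v,\pm 1)=1$ for $v\neq 0$ and the endpoint null values from Part 1; it is strictly positive on $\mathbb{R}^{q}\times (-1,1)$ and tends to $1$ as $\Vert v\Vert \to \infty $. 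The only delicate corner is the joint limit $v\to 0$, $\rho \to \pm 1$, where I would show $\liminf g(v,\rho )\geq \mathcal{L}_{\pm }([C,\infty ))>0$ by comparing the alternative law with the null limit law; combined with compactness this produces a positive infimum, i.e.\ Part 2.
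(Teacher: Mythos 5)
Your overall route---establishing weak convergence of the law of $T$ under $P_{\rho }$ to proper limit laws $\mathcal{L}_{\pm }$ as $\rho \rightarrow \pm 1$ via the innovations representation, and then arguing by continuity and compactness on $[-1,1]$---is genuinely different from the paper's, which instead verifies the hypotheses of the general Theorem \ref{TU_1}: there the invariance of $T$ under translation by $J(\mathfrak{C})=\limfunc{span}(e_{+})\cup \limfunc{span}(e_{-})$ is used to replace the singular limit covariance $e_{\pm }e_{\pm }^{\prime }$ by the positive definite matrix $D+e_{\pm }e_{\pm }^{\prime }$ of Lemma \ref{AR_1}, so that all null-set and continuity arguments take place with respect to $\lambda _{\mathbb{R}^{n}}$ rather than with respect to Lebesgue measure on a hyperplane. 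Your version is workable for Parts 1, 4, and the second display of Part 3, but every invocation of "the discontinuity set, respectively the level set $\left\{ T=C\right\} $, is null" must be re-proved relative to the hyperplane $\left\{ w_{1}=0\right\} $ carrying $\mathbf{W}^{\ast }$; the fibering argument you sketch for the rank set (translating by $e_{+}$, which is transversal to that hyperplane) does this, but you apply it only to the rank set and not to $\left\{ T=C\right\} $, which you need for continuity of $\rho \mapsto P_{\rho }(T\geq C)$ at the endpoints.

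The genuine gap is in Part 2, at the corner $v\rightarrow 0$, $\rho \rightarrow \pm 1$. You assert $\liminf g(v,\rho )\geq \mathcal{L}_{\pm }([C,\infty ))$ "by comparing the alternative law with the null limit law", but no such comparison is available: the inequality would amount to unbiasedness of the test in the Gaussian limit experiment, and these tests are in general biased (that is precisely the content of Part 2 of Theorem \ref{thmlrv}). The correct analysis, which is what Theorem \ref{TU} carries out, rescales the mean shift by the rate at which the covariance degenerates---in your parametrization one must track $v_{m}/\sqrt{1-\rho _{m}^{2}}$, the analogue of $\nu _{m}^{\ast \ast }$ there---and splits into cases: if this rescaled shift is bounded, one passes to a convergent subsequence and obtains as limit the rejection probability of the nondegenerate limit experiment (covariance $D+e_{\pm }e_{\pm }^{\prime }$) at a possibly nonzero alternative, which is positive because $\left\{ T\geq C\right\} $ has positive Lebesgue measure, not because it dominates the null value; if it is unbounded, one needs the distant-alternatives statement of Part 7 of Lemma \ref{LWT}, which rests on almost-everywhere nonnegative definiteness of $\hat{\Omega}_{w}$. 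Without this case split your corner bound is unsupported and Part 2 is not proved. A smaller instance of the same issue affects the first display of Part 3: the infimum there ranges over all $\left\Vert v\right\Vert \geq c$, including $\left\Vert v\right\Vert \rightarrow \infty $ jointly with $\rho _{m}\rightarrow \pm 1$, so the pointwise-in-$v$ blowup $T\sim (1-\rho ^{2})^{-1}v^{\prime }(\cdot )v$ does not by itself deliver the required uniformity; a subsequence argument of the same kind is needed.
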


The first statement of the theorem says that, in contrast to the cases
considered in Theorem \ref{thmlrv}, the size of the test $T$ is now bounded
away from $1$ for any choice of the critical value $C$. Moreover, the last
part of the theorem shows that the size can be controlled to be less than or
equal to any prespecified significance level $\delta $ by a suitable choice
of the critical value $C(\delta )$. Because $P_{\mu _{0},\sigma ^{2}\Lambda
(\rho )}(W(C))$ does not depend on $\mu _{0}$ and $\sigma ^{2}$ but only on $%
\rho $ (see Proposition \ref{inv_rej_prob}) and because this probability can
be computed via simulation, the supremum of this probability over $\mu _{0}$%
, $\sigma ^{2}$, and $\rho $ can be easily found by a grid search;
exploiting monotonicity of the probability with respect to $C$, the value of 
$C(\delta )$ can then be found by a simple search algorithm. The theorem
furthermore shows that, again in contrast to the scenario considered in
Theorem \ref{thmlrv}, the infimal power of the test is at least bounded away
from zero. The power even approaches $1$ if either $\left\Vert \left( R\beta
^{(1)}-r\right) /\sigma \right\Vert $ is bounded away from zero and $%
\left\vert \rho \right\vert \rightarrow 1$, or if $\left\Vert \left( R\beta
^{(1)}-r\right) /\sigma \right\Vert \rightarrow \infty $ and $\left\vert
\rho \right\vert $ is bounded away from $1$. [Here $\beta ^{(1)}$ is the
parameter vector corresponding to $\mu _{1}$. Note that $d\left( \mu _{1},%
\mathfrak{M}_{0}\right) $ is bounded from above as well as from below by
multiples of $\left\Vert R\beta ^{(1)}-r\right\Vert $, where the constants
involved are positive and depend only on $X$, $R$, and $r$.]

The preceding theorem required $e_{+},e_{-}\in \mathfrak{M}$ and $R\hat{\beta%
}(e_{+})=R\hat{\beta}(e_{-})=0$. To illustrate, these conditions are, e.g.,
satisfied if $e_{+}$ and $e_{-}$ constitute the first two columns of the
matrix $X$ and the hypothesis tested only involves coefficients $\beta _{i}$
with $i\geq 3$ (i.e., the first two columns of $R$ are zero). While an
intercept will typically be present in a regression model and thus $e_{+}$
appears as one of the regressors (and hence satisfies $e_{+}\in \mathfrak{M}$%
), $e_{-}$ will not necessarily be an element of $\mathfrak{M}$, and hence
the preceding theorem will not apply. However, the following theorem shows
how we can nevertheless extend the same positive results to this case if we
apply a simple adjustment to the test statistic $T$.

\begin{theorem}
\label{TU_3}Suppose $\mathfrak{C}=\mathfrak{C}_{AR(1)}$ and suppose
Assumption \ref{AW} is satisfied. Suppose one of the following scenarios
applies:

\begin{enumerate}
\item $e_{+}\in \mathfrak{M}$ with $R\hat{\beta}(e_{+})=0$ and $e_{-}\notin 
\mathfrak{M}$. Furthermore, $k+1<n$ holds and the $n\times \left( k+1\right) 
$ matrix $\bar{X}=\left( X,e_{-}\right) $ (which necessarily has rank $k+1$)
satisfies Assumption \ref{R_and_X} relative to the $q\times \left(
k+1\right) $ restriction matrix $\bar{R}=\left( R,0\right) $. Define $\bar{%
\beta}\left( y\right) =\left( I_{k},0\right) \left( \bar{X}^{\prime }\bar{X}%
\right) ^{-1}\bar{X}^{\prime }y$.

\item $e_{+}\notin \mathfrak{M}$ and $e_{-}\in \mathfrak{M}$ with $R\hat{%
\beta}(e_{-})=0$. Furthermore, $k+1<n$ holds and the $n\times \left(
k+1\right) $ matrix $\bar{X}=\left( X,e_{+}\right) $ (which necessarily has
rank $k+1$) satisfies Assumption \ref{R_and_X} relative to the $q\times
\left( k+1\right) $ restriction matrix $\bar{R}=\left( R,0\right) $. Define $%
\bar{\beta}\left( y\right) =\left( I_{k},0\right) \left( \bar{X}^{\prime }%
\bar{X}\right) ^{-1}\bar{X}^{\prime }y$.

\item $e_{+}\notin \mathfrak{M}$ and $e_{-}\notin \mathfrak{M}$ with $%
\limfunc{rank}\left( X,e_{+},e_{-}\right) =k+2$. Furthermore, $k+2<n$ holds
and the $n\times \left( k+2\right) $ matrix $\bar{X}=\left(
X,e_{+},e_{-}\right) $ (which necessarily has rank $k+2$) satisfies
Assumption \ref{R_and_X} relative to the $q\times \left( k+2\right) $
restriction matrix $\bar{R}=\left( R,0,0\right) $. Define $\bar{\beta}\left(
y\right) =\left( I_{k},0,0\right) \left( \bar{X}^{\prime }\bar{X}\right)
^{-1}\bar{X}^{\prime }y$.

\item $e_{+}\notin \mathfrak{M}$ and $e_{-}\notin \mathfrak{M}$ with $%
\limfunc{rank}\left( X,e_{+},e_{-}\right) =k+1$. Furthermore, $k+1<n$ holds
and the $n\times \left( k+1\right) $ matrix $\bar{X}=\left( X,e_{+}\right) $
(which necessarily has rank $k+1$) satisfies Assumption \ref{R_and_X}
relative to the $q\times \left( k+1\right) $ restriction matrix $\bar{R}%
=\left( R,0\right) $. Suppose further that $\bar{R}\left( \bar{X}^{\prime }%
\bar{X}\right) ^{-1}\bar{X}^{\prime }e_{-}=0$ holds. Define $\bar{\beta}%
\left( y\right) =\left( I_{k},0\right) \left( \bar{X}^{\prime }\bar{X}%
\right) ^{-1}\bar{X}^{\prime }y$.

\item $e_{+}\notin \mathfrak{M}$ and $e_{-}\notin \mathfrak{M}$ with $%
\limfunc{rank}\left( X,e_{+},e_{-}\right) =k+1$. Furthermore, $k+1<n$ holds
and the $n\times \left( k+1\right) $ matrix $\bar{X}=\left( X,e_{-}\right) $
(which necessarily has rank $k+1$) satisfies Assumption \ref{R_and_X}
relative to the $q\times \left( k+1\right) $ restriction matrix $\bar{R}%
=\left( R,0\right) $. Suppose further that $\bar{R}\left( \bar{X}^{\prime }%
\bar{X}\right) ^{-1}\bar{X}^{\prime }e_{+}=0$ holds. Define $\bar{\beta}%
\left( y\right) =\left( I_{k},0\right) \left( \bar{X}^{\prime }\bar{X}%
\right) ^{-1}\bar{X}^{\prime }y$.
\end{enumerate}

In all five scenarios define%
\begin{equation*}
\bar{T}(y)=\left\{ 
\begin{array}{cc}
(R\bar{\beta}(y)-r)^{\prime }\bar{\Omega}_{w}^{-1}(y)(R\bar{\beta}(y)-r) & 
\text{if }\det \bar{\Omega}_{w}\left( y\right) \neq 0, \\ 
0 & \text{if }\det \bar{\Omega}_{w}\left( y\right) =0,%
\end{array}%
\right.
\end{equation*}%
where $\bar{\Omega}_{w}\left( y\right) =n\bar{R}(\bar{X}^{\prime }\bar{X}%
)^{-1}\bar{\Psi}_{w}(y)(\bar{X}^{\prime }\bar{X})^{-1}\bar{R}^{\prime }$,
and $\bar{\Psi}_{w}(y)$ is computed from (\ref{lrve}) based on $\bar{v}%
_{t}(y)=\bar{u}_{t}(y)\bar{x}_{t\mathbf{\cdot }}^{\prime }$ instead of $\hat{%
v}_{t}(y)$. Here $\bar{u}_{t}(y)$ are the residuals from the regression of $%
y $ on $\bar{X}$, and $\bar{x}_{t\mathbf{\cdot }}$ are the rows of $\bar{X}$%
. Let $\bar{W}(C)=\left\{ y\in \mathbb{R}^{n}:\bar{T}(y)\geq C\right\} $ be
the rejection region where $C$ is a real number satisfying $0<C<\infty $.
Then for each of the five scenarios the conclusions of Theorem \ref{TU_2}
hold with $W(C)$ replaced by $\bar{W}(C)$.
\end{theorem}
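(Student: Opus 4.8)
The plan is to reduce each of the five scenarios to a direct application of Theorem \ref{TU_2} to the \emph{enlarged} regression problem with design matrix $\bar{X}$ and restriction $\bar{R}$, and then to transfer the resulting conclusions back to the original mean spaces $\mathfrak{M}_{0}$ and $\mathfrak{M}_{1}$. Write $\bar{\mathfrak{M}}=\limfunc{span}(\bar{X})$, $\bar{\mathfrak{M}}_{0}=\{\bar{X}\bar{\gamma}:\bar{R}\bar{\gamma}=r\}$, and $\bar{\mathfrak{M}}_{1}=\bar{\mathfrak{M}}\backslash\bar{\mathfrak{M}}_{0}$. By construction $\bar{T}$ is exactly the statistic (\ref{tslrv}) with $(X,R)$ replaced by $(\bar{X},\bar{R})$, the covariance model is still $\mathfrak{C}=\mathfrak{C}_{AR(1)}$, and $\bar{X}$ is assumed to satisfy Assumption \ref{R_and_X} relative to $\bar{R}$ in every scenario. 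Hence to invoke Theorem \ref{TU_2} for the enlarged problem it remains only to verify $e_{+},e_{-}\in\bar{\mathfrak{M}}$ together with $\bar{R}(\bar{X}^{\prime}\bar{X})^{-1}\bar{X}^{\prime}e_{+}=\bar{R}(\bar{X}^{\prime}\bar{X})^{-1}\bar{X}^{\prime}e_{-}=0$.

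Both facts I would check scenario by scenario, using that $\bar{X}$ has full column rank, so that every vector in $\bar{\mathfrak{M}}$ has a \emph{unique} coefficient representation. That $e_{+},e_{-}\in\limfunc{span}(\bar{X})$ is immediate: in Scenarios 1 and 2 one of the two vectors is an appended column while the other lies in $\mathfrak{M}\subseteq\bar{\mathfrak{M}}$; in Scenario 3 both are columns; in Scenarios 4 and 5 one is a column and the rank hypothesis $\limfunc{rank}(X,e_{+},e_{-})=k+1$ forces the other into $\limfunc{span}(\bar{X})$. For the vanishing of $\bar{R}(\bar{X}^{\prime}\bar{X})^{-1}\bar{X}^{\prime}e_{\pm}$ I distinguish three cases. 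If $e_{\pm}$ is literally an appended column, its unique representation is the corresponding standard basis vector, whose first $k$ coordinates are zero, and since $\bar{R}=(R,0)$ (or $(R,0,0)$) annihilates the appended coordinates, the product is zero. If $e_{\pm}\in\mathfrak{M}$ (the non-appended vector in Scenarios 1, 2), its representation is $(\hat{\beta}(e_{\pm})^{\prime},0)^{\prime}$ and the product equals $R\hat{\beta}(e_{\pm})$, which is zero by the standing hypothesis of those scenarios. Finally, for the non-appended vector in Scenarios 4 and 5 the vanishing is precisely the extra condition $\bar{R}(\bar{X}^{\prime}\bar{X})^{-1}\bar{X}^{\prime}e_{\mp}=0$ imposed there. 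Thus the hypotheses of Theorem \ref{TU_2} hold for $(\bar{X},\bar{R})$ in all five cases.

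With Theorem \ref{TU_2} in force for $\bar{W}(C)$, $\bar{\mathfrak{M}}_{0}$, and $\bar{\mathfrak{M}}_{1}$, I would transfer its conclusions via the inclusions $\mathfrak{M}_{0}\subseteq\bar{\mathfrak{M}}_{0}$ and $\mathfrak{M}_{1}\subseteq\bar{\mathfrak{M}}_{1}$. These hold because any $\mu=X\beta\in\mathfrak{M}$ is represented in $\bar{X}$ by $\bar{\gamma}=(\beta^{\prime},0,\ldots)^{\prime}$, with $\bar{R}\bar{\gamma}=R\beta$; uniqueness of this representation then makes $R\beta=r$ equivalent to $\bar{R}\bar{\gamma}=r$ and $R\beta\neq r$ equivalent to $\bar{R}\bar{\gamma}\neq r$, placing $\mu$ in $\bar{\mathfrak{M}}_{0}$ and $\bar{\mathfrak{M}}_{1}$ respectively. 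Parts 1, 2, and 4 now follow by monotonicity: a supremum of rejection probabilities over $\mathfrak{M}_{0}$ is dominated by the corresponding supremum over $\bar{\mathfrak{M}}_{0}$ (yielding size $<1$ and the controllability of size in Part 4), whereas an infimum over $\mathfrak{M}_{0}$ or $\mathfrak{M}_{1}$ dominates the corresponding infimum over $\bar{\mathfrak{M}}_{0}$ or $\bar{\mathfrak{M}}_{1}$ (yielding the lower size bound and the infimal-power bound).

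The step I expect to be the main obstacle is Part 3 (power tending to $1$), since there the index sets are constrained through the distance $d(\mu_{1},\mathfrak{M}_{0})$ rather than through membership, and $\mathfrak{M}_{0}\subseteq\bar{\mathfrak{M}}_{0}$ gives only $d(\mu_{1},\bar{\mathfrak{M}}_{0})\leq d(\mu_{1},\mathfrak{M}_{0})$, which is the wrong direction for a direct inclusion of the constraint sets. To overcome this I would use the comparison noted after Theorem \ref{TU_2}: for $\mu_{1}=X\beta_{1}\in\mathfrak{M}$ both $d(\mu_{1},\mathfrak{M}_{0})$ and $d(\mu_{1},\bar{\mathfrak{M}}_{0})$ are bounded above and below by positive multiples of $\Vert R\beta_{1}-r\Vert$ (for the enlarged distance one uses $\bar{R}\bar{\gamma}_{1}=R\beta_{1}$). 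This furnishes a constant $\kappa>0$, depending only on $X,\bar{X},R,\bar{R},r$, with $d(\mu_{1},\bar{\mathfrak{M}}_{0})\geq\kappa\,d(\mu_{1},\mathfrak{M}_{0})$ for all $\mu_{1}\in\mathfrak{M}$. Hence $d(\mu_{1},\mathfrak{M}_{0})/\sigma\geq c$ forces $d(\mu_{1},\bar{\mathfrak{M}}_{0})/\sigma\geq\kappa c$, so the original index set embeds into the enlarged one at threshold $\kappa c$; the enlarged infimum tends to $1$ by Part 3 of Theorem \ref{TU_2}, and since all rejection probabilities are at most $1$, the smaller infimum over the original index set is squeezed to $1$ as well. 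The same argument handles both limits in Part 3, the last of which does not even require the $e_{\pm}$-conditions, exactly as in Theorem \ref{TU_2}; the remaining manipulations are the routine monotonicity arguments already described.
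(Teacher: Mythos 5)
Your argument is correct, but it takes a different route from the paper's own proof. The paper proves Theorem \ref{TU_3} by keeping the \emph{original} testing problem fixed and verifying the hypotheses of Theorem \ref{TU_1} for the pair $(\bar{\beta},\bar{\Omega}_{w})$ viewed as estimators in the original model: Proposition \ref{enforce_inv} supplies both Assumption \ref{AE} and the crucial invariance condition (\ref{T_inv_wrt_z}), and the only scenario-specific work is checking $\limfunc{span}(J(\mathfrak{C}))\cap \mathfrak{M}\subseteq \mathfrak{M}_{0}-\mu_{0}$, which for Scenarios 4 and 5 requires a short algebraic argument (writing $e_{-}=\delta e_{+}+X\gamma$, using $e_{+}\notin\mathfrak{M}$ to pin down the coefficients, and invoking the assumed condition $\bar{R}(\bar{X}^{\prime}\bar{X})^{-1}\bar{X}^{\prime}e_{-}=0$). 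You instead apply Theorem \ref{TU_2} wholesale to the \emph{enlarged} problem $(\bar{X},\bar{R})$ — after correctly checking $e_{+},e_{-}\in\bar{\mathfrak{M}}$ and the vanishing of $\bar{R}(\bar{X}^{\prime}\bar{X})^{-1}\bar{X}^{\prime}e_{\pm}$ in each scenario — and then transfer back through the inclusions $\mathfrak{M}_{0}\subseteq\bar{\mathfrak{M}}_{0}$ and $\mathfrak{M}_{1}\subseteq\bar{\mathfrak{M}}_{1}$. You also correctly identify and repair the one place where the transfer is not a pure monotonicity argument, namely Part 3, where $\mathfrak{M}_{0}\subseteq\bar{\mathfrak{M}}_{0}$ pushes the distance the wrong way; your fix via the two-sided comparison of both $d(\mu_{1},\mathfrak{M}_{0})$ and $d(\mu_{1},\bar{\mathfrak{M}}_{0})$ with $\Vert R\beta^{(1)}-r\Vert$ is exactly what is needed and yields a uniform constant $\kappa>0$. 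What each approach buys: the paper's route lands directly on the original parameter spaces with no transfer step and exercises the general adjustment machinery (Proposition \ref{enforce_inv}) that is meant to be reusable beyond this theorem; your route treats Theorem \ref{TU_2} as a black box and so avoids Proposition \ref{enforce_inv} and the invariance verification entirely, at the price of the distance-comparison lemma. The paper in fact anticipates your viewpoint in the remark following Proposition \ref{enforce_inv} (the "auxiliary model" reading), but does not use it as the proof of Theorem \ref{TU_3}; your write-up shows it can be carried through completely.
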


Theorem \ref{thmlrv} together with Proposition \ref{generic} has shown that
generically the commonly used test based on the statistic $T$ has severe
size or power deficiencies even for $\mathfrak{C}=\mathfrak{C}_{AR(1)}$,
while Theorem \ref{TU_2} has isolated a special case where this is not so.
Theorem \ref{TU_3} now shows that in many of the cases falling under the
wrath of Theorem \ref{thmlrv} the ensuing problems can be circumvented (if $%
\mathfrak{C}=\mathfrak{C}_{AR(1)}$) by making use of the adjusted version $%
\bar{T}$ of the test statistic. The adjustment mechanism is simple and
amounts to basing the test statistic on estimators $\bar{\beta}$ and $\bar{%
\Omega}_{w}$ that are obtained from a "working model" that always adds the
regressors $e_{+}$ and/or $e_{-}$ to the design matrix. Note that these
regressors effect a purging of the residuals from harmonic components of
angular frequency $0$ and $\pi $. This purging effect together with the fact
that the restrictions to be tested do not involve the coefficients of the
"purging" regressors $e_{+}$ and $e_{-}$ lies at the heart of the positive
results expressed in Theorems \ref{TU_2} and \ref{TU_3}. Numerical results
that will be presented elsewhere support the theoretical result and show
that the adjusted test based on $\bar{T}$ considerably improves over the
unadjusted one based on $T$.

We next illustrate Theorems \ref{TU_2} and \ref{TU_3} in the context of
Examples \ref{EX1}-\ref{EX4}: In Examples \ref{EX1} and \ref{EX2} we have $%
e_{+}\in \mathfrak{M}$ but $R\hat{\beta}(e_{+})\neq 0$, hence neither
Theorem \ref{TU_2} nor Theorem \ref{TU_3} is applicable. In contrast, in
Example \ref{EX3} we have $e_{+}\in \mathfrak{M}$ and $R\hat{\beta}(e_{+})=0$
since $R=e_{i}^{\prime }\left( k\right) $ with $i>1$. In case $e_{-}\notin 
\mathfrak{M}$, which is the typical case and which is, in particular,
satisfied in Example \ref{EX4}, we can then use the adjusted test statistic $%
\bar{T}$ which is obtained from the auxiliary model using the enlarged
design matrix $\bar{X}=\left( X,e_{-}\right) $. Part 1 of Theorem \ref{TU_3}
then informs us that the so-adjusted test does not suffer from the severe
size/power distortions discussed in \ref{EX3} for the unadjusted
autocorrelation robust test (provided the conditions on $\bar{X}$ in the
theorem are satisfied, which generically will be the case). In case $%
e_{-}\in \mathfrak{M}$, Theorem \ref{TU_2} applies to the problem considered
in Example \ref{EX3} whenever $R\hat{\beta}(e_{-})=0$ holds, showing that in
this case already the unadjusted test does not suffer from the severe
size/power distortions. Note that here the condition $R\hat{\beta}(e_{-})=0$
will hold, for example, if $e_{-}$ is one of the columns of $X$ and the
slope parameter that is subjected to test is not the coefficient of $e_{-}$.

\begin{remark}
(i) Suppose the scenario in Part 1 of the above theorem applies except that $%
k+1=n$ holds or $\bar{X}=\left( X,e_{-}\right) $ does not satisfy Assumption %
\ref{R_and_X}. Then the test statistic $\bar{T}$ is identically zero and the
adjustment procedure does not work. A similar remark applies to Parts 2-5.

(ii) Suppose the scenario of Part 4 of the above theorem applies except that 
$\bar{R}\left( \bar{X}^{\prime }\bar{X}\right) ^{-1}\bar{X}^{\prime
}e_{-}\neq 0$ holds. Applying Part 3 of Theorem \ref{thmlrv} to $\bar{T}$
shows that this test has size $1$ and hence the adjustment procedure fails.
A similar comment applies to the scenario of Part 5.
\end{remark}

\begin{remark}
\label{rem_appr}(i) The results in Theorems \ref{TU_2} and \ref{TU_3} have
assumed $\mathfrak{C}=\mathfrak{C}_{AR(1)}$. The results immediately extend
to other covariance models $\mathfrak{C}$ as long as $\mathfrak{C}$ is
norm-bounded, the only singular accumulation points of $\mathfrak{C}$ are $%
e_{+}e_{+}^{\prime }$ and $e_{-}e_{-}^{\prime }$, and for every $\Sigma
_{m}\in \mathfrak{C}$ converging to one of these limit points there exists a
sequence $(\rho _{m})_{m\in \mathbb{N}}$ in $(-1,1)$ such that $\Lambda
^{-1/2}(\rho _{m})\Sigma _{m}\Lambda ^{-1/2}(\rho _{m})\rightarrow I_{n}$
for $m\rightarrow \infty $ (that is, near the "singular boundary" the
covariance model $\mathfrak{C}$ behaves similar to $\mathfrak{C}_{AR(1)}$).
This can be seen from an inspection of the proof. An extension of Theorems %
\ref{TU_2} and \ref{TU_3} to even more general covariance models will be
discussed elsewhere.

(ii) For a discussion of a version of Theorem \ref{TU_2} for the case where $%
\mathfrak{C}=\mathfrak{C}_{AR(1)}^{+}=\left\{ \Lambda \left( \rho \right)
:0\leq \rho <1\right\} $ or $\mathfrak{C}=\left\{ \Lambda \left( \rho
\right) :-1+\varepsilon <\rho <1\right\} $, $\varepsilon >0$, see Subsection %
\ref{Disc}.
\end{remark}

\subsubsection{Alternative nonparametric estimators for the variance
covariance matrix\label{alternative}}

We next discuss test statistics of the form (\ref{tslrv}) that use
estimators other than $\hat{\Psi}_{w}$.

\bigskip

\textbf{A.} \emph{(General quadratic estimators based on }$\hat{v}_{t}$\emph{%
) }The estimator $\hat{\Psi}_{w}$ given by (\ref{lrve}) is a special case of
general quadratic estimators $\hat{\Psi}_{GQ}\left( y\right) $ of the form 
\begin{equation*}
\hat{\Psi}_{GQ}\left( y\right) =\sum_{t,s=1}^{n}w\left( t,s;n\right) \hat{v}%
_{t}(y)\hat{v}_{s}(y)^{\prime }
\end{equation*}%
for every $y\in \mathbb{R}^{n}$, where the $n\times n$ weighting matrix $%
\mathcal{W}_{n}^{\ast }=\left( w\left( t,s;n\right) \right) _{t,s}$ is
symmetric and data-independent. While estimators of this more general form
have been studied in the early literature on spectral estimation, much of
the literature has focused on the special case of weighted autocovariance
estimators of the form $\hat{\Psi}_{w}$ (partly as a consequence of a result
in \cite{GR57} that the restriction to the smaller class of estimators does
not lead to inferior estimators in a certain asymptotic sense). However, if
the data are preprocessed by tapering before an estimator like $\hat{\Psi}%
_{w}$ is computed from the tapered data, the final estimator belongs to the
class of general quadratic estimators. Also, many modern spectral estimators
studied in the engineering literature fall into this class (see \cite{Thom82}%
), but not into the more narrow class of weighted autocovariance estimators.
Another example are the estimators proposed in \cite{Phill05}, \cite{Sun2012}%
, and \cite{ZhangShao2013b}. We now distinguish two cases:

\textit{Case 1:} The weighting matrix $\mathcal{W}_{n}^{\ast }=\left(
w\left( t,s;n\right) \right) _{t,s}$ is positive definite. Inspection of the
proofs then shows that all results given above for the tests $T$ based on $%
\hat{\Psi}_{w}$ remain valid as they stand if $\hat{\Psi}_{w}$ is replaced
by $\hat{\Psi}_{GQ}$ in the definition of the test statistic.

\textit{Case 2: }The weighting matrix $\mathcal{W}_{n}^{\ast }=\left(
w\left( t,s;n\right) \right) _{t,s}$ is only assumed to be nonnegative
definite (as is, e.g., the case for the estimators considered in \cite%
{Phill05} and \cite{Sun2012}). Arguing similar as in the proof of Lemma \ref%
{LRVPD} one can show the following:

\begin{lemma}
\label{LRVPD_2}Suppose $\mathcal{W}_{n}^{\ast }=\left( w\left( t,s;n\right)
\right) _{t,s}$ is nonnegative definite and define 
\begin{equation*}
\hat{\Omega}_{GQ}\left( y\right) =nR(X^{\prime }X)^{-1}\hat{\Psi}%
_{GQ}(y)(X^{\prime }X)^{-1}R^{\prime }.
\end{equation*}%
Then the following hold:

\begin{enumerate}
\item $\hat{\Omega}_{GQ}\left( y\right) $ is nonnegative definite for every $%
y\in \mathbb{R}^{n}$.

\item $\hat{\Omega}_{GQ}\left( y\right) $ is singular if and only if $%
\limfunc{rank}\left( B(y)\mathcal{W}_{n}^{\ast }\right) <q$ (or,
equivalently, if $\limfunc{rank}\left( B(y)\mathcal{W}_{n}^{\ast 1/2}\right)
<q$).

\item $\hat{\Omega}_{GQ}\left( y\right) =0$ if and only if $B(y)\mathcal{W}%
_{n}^{\ast }=0$ (or, equivalently, if $B(y)\mathcal{W}_{n}^{\ast 1/2}=0$).

\item The set of all $y\in \mathbb{R}^{n}$ for which $\hat{\Omega}%
_{GQ}\left( y\right) $ is singular (or, equivalently, for which $\limfunc{%
rank}\left( B(y)\mathcal{W}_{n}^{\ast }\right) <q$) is either a $\lambda _{%
\mathbb{R}^{n}}$-null set or the entire sample space $\mathbb{R}^{n}$.
\end{enumerate}
\end{lemma}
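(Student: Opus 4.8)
The plan is to mimic the proof of Lemma \ref{LRVPD} closely, since the statement is its direct analogue for general quadratic estimators with only nonnegative-definite weighting. The key algebraic observation I would establish first is a factorization of $\hat{\Omega}_{GQ}(y)$ through the matrix $B(y)$ defined in (\ref{B_matrix}) and a square root of $\mathcal{W}_n^{\ast}$. Writing $V(y)$ for the $n\times n$ matrix whose $t$-th row is $\hat{v}_t(y)^{\prime}=\hat{u}_t(y)x_{t\cdot}$, one has $\hat{\Psi}_{GQ}(y)=V(y)^{\prime}\mathcal{W}_n^{\ast}V(y)$, and a short computation shows $R(X^{\prime}X)^{-1}V(y)^{\prime}=B(y)$. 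Hence
\begin{equation*}
\hat{\Omega}_{GQ}(y)=nB(y)\mathcal{W}_n^{\ast}B(y)^{\prime}=n\bigl(B(y)\mathcal{W}_n^{\ast 1/2}\bigr)\bigl(B(y)\mathcal{W}_n^{\ast 1/2}\bigr)^{\prime},
\end{equation*}
where $\mathcal{W}_n^{\ast 1/2}$ denotes the symmetric nonnegative-definite square root of $\mathcal{W}_n^{\ast}$. This single identity is the workhorse for all four parts.

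Parts 1--3 then follow immediately from standard facts about Gram matrices. Part 1 holds because any matrix of the form $AA^{\prime}$ is nonnegative definite. For Part 2, I would invoke that for any real matrix $A$ one has $\limfunc{rank}(AA^{\prime})=\limfunc{rank}(A)$, applied with $A=B(y)\mathcal{W}_n^{\ast 1/2}$; thus $\hat{\Omega}_{GQ}(y)$ is singular (i.e.\ has rank $<q$) if and only if $\limfunc{rank}(B(y)\mathcal{W}_n^{\ast 1/2})<q$, and since $B(y)\mathcal{W}_n^{\ast}=B(y)\mathcal{W}_n^{\ast 1/2}\cdot\mathcal{W}_n^{\ast 1/2}$ shares the same column space as $B(y)\mathcal{W}_n^{\ast 1/2}$ (because $\mathcal{W}_n^{\ast 1/2}$ maps onto the column space of $\mathcal{W}_n^{\ast 1/2}$, which contains the relevant range), the two rank conditions coincide; I would verify this equivalence carefully since $\mathcal{W}_n^{\ast}$ is only nonnegative definite. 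Part 3 is the rank-zero specialization: $\hat{\Omega}_{GQ}(y)=0$ iff $B(y)\mathcal{W}_n^{\ast 1/2}=0$ iff $B(y)\mathcal{W}_n^{\ast}=0$, again using that $AA^{\prime}=0\Leftrightarrow A=0$ over the reals.

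For Part 4, the dichotomy is the genuinely nontrivial point and will be the main obstacle. The claim is that the singular set $\{y:\limfunc{rank}(B(y)\mathcal{W}_n^{\ast})<q\}$ is either $\lambda_{\mathbb{R}^n}$-null or all of $\mathbb{R}^n$. The strategy is to note that each entry of $B(y)$ is an affine (indeed linear) function of $y$, so every $q\times q$ minor of $B(y)\mathcal{W}_n^{\ast 1/2}$ is a polynomial in the coordinates of $y$; the set where all these minors vanish is therefore an algebraic variety. An algebraic variety in $\mathbb{R}^n$ is either all of $\mathbb{R}^n$ or a $\lambda_{\mathbb{R}^n}$-null set, because a nonzero polynomial has Lebesgue-null zero set. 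This gives the dichotomy. Unlike Lemma \ref{LRVPD}, I would not attempt to characterize exactly when the ``entire space'' case occurs in terms of an explicit condition on $X$ and $R$ (the statement of Lemma \ref{LRVPD_2} omits such a characterization), since with a general nonnegative-definite $\mathcal{W}_n^{\ast}$ the clean reduction to Assumption \ref{R_and_X} is no longer available. The delicate step to get right is the equivalence of the two rank formulations in Parts 2--4: one must confirm that passing from $\mathcal{W}_n^{\ast}$ to $\mathcal{W}_n^{\ast 1/2}$ preserves the relevant ranks, which rests on the fact that $\mathcal{W}_n^{\ast}$ and $\mathcal{W}_n^{\ast 1/2}$ have the same range, so right-multiplication by either annihilates $B(y)$ simultaneously and yields matrices of equal rank.
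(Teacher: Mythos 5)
Your proposal is correct and follows essentially the same route as the paper: the factorization $\hat{\Omega}_{GQ}(y)=nB(y)\mathcal{W}_{n}^{\ast }B(y)^{\prime }$ yields Parts 1--3 from standard Gram-matrix facts, and Part 4 follows because the singular set is an algebraic set (the paper uses the single polynomial $\det \left( B(y)\mathcal{W}_{n}^{\ast }B(y)^{\prime }\right) $ where you use the vanishing of all $q\times q$ minors, which is an immaterial difference). Your explicit verification that $\limfunc{rank}\left( B(y)\mathcal{W}_{n}^{\ast }\right) =\limfunc{rank}\left( B(y)\mathcal{W}_{n}^{\ast 1/2}\right) $ via the common range of $\mathcal{W}_{n}^{\ast }$ and $\mathcal{W}_{n}^{\ast 1/2}$ is a detail the paper leaves implicit, and it is handled correctly.
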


As a consequence we see that two cases can arise: In the first case $\hat{%
\Omega}_{GQ}\left( y\right) $ is singular for all $y\in \mathbb{R}^{n}$, in
which case the test statistic $T$ breaks down in a trivial way. Note that
this arises precisely if and only if $\limfunc{rank}\left( B(y)\mathcal{W}%
_{n}^{\ast }\right) <q$ for all $y\in \mathbb{R}^{n}$, which is a condition
solely on the design matrix $X$, the restriction matrix $R$, and the
weighting matrix $\mathcal{W}_{n}^{\ast }$, and thus can be verified in any
particular application. Now suppose that the second case arises, i.e., $%
\limfunc{rank}\left( B(y)\mathcal{W}_{n}^{\ast }\right) =q$ for $\lambda _{%
\mathbb{R}^{n}}$-almost all $y$. Then inspection of the proofs shows that
Theorems \ref{thmlrv} and \ref{TU_2} continue to hold for the test statistic 
$T$ based on $\hat{\Psi}_{GQ}$ provided Assumption \ref{R_and_X} is replaced
by the just mentioned condition $\limfunc{rank}\left( B(y)\mathcal{W}%
_{n}^{\ast }\right) =q$ for $\lambda _{\mathbb{R}^{n}}$-almost all $y$, and
the matrix $B(y)$ in those theorems is replaced by $B(y)\mathcal{W}%
_{n}^{\ast }$. Also Theorem \ref{TU_3} generalizes with the obvious changes.

\bigskip

\textbf{B.} \emph{(An estimator based on }$\hat{u}$\emph{) }Because $n^{-1}%
\mathbb{E}(X^{\prime }\mathbf{UU}^{\prime }X)=n^{-1}X^{\prime }\mathbb{E}(%
\mathbf{UU}^{\prime })X$, a natural estimator is%
\begin{equation*}
\hat{\Psi}_{E}\left( y\right) =n^{-1}X^{\prime }\hat{K}\left( y\right) X
\end{equation*}%
for every $y\in \mathbb{R}^{n}$, where $\hat{K}\left( y\right) $ is the
symmetric $n\times n$ Toeplitz matrix with block elements $%
n^{-1}\sum_{l=j+1}^{n}\hat{u}_{l}(y)\hat{u}_{l-j}(y)$ in the $j$-th diagonal
above the main diagonal. This estimator has already been discussed in \cite%
{E67}, but does not seem to have been used much in the econometrics
literature. It is not difficult to see that $\hat{\Psi}_{E}\left( y\right) $
is always nonnegative definite. It is positive definite if and only if $%
y\notin \limfunc{span}\left( X\right) $; and it is equal to zero for $y\in 
\limfunc{span}\left( X\right) $. Define the statistic $T_{E}$ via (\ref%
{tslrv}) with $\hat{\Omega}_{w}\left( y\right) $ replaced by $\hat{\Omega}%
_{E}\left( y\right) $ where the latter is obtained from (\ref{omega}) by
replacing $\hat{\Psi}_{w}\left( y\right) $ by $\hat{\Psi}_{E}\left( y\right) 
$. It is then easy to see that Theorems \ref{thmlrv}, \ref{TU_2}, and \ref%
{TU_3} carry over to the test based on $T_{E}$ provided Assumption \ref%
{R_and_X} is deleted from the formulation, the condition $\limfunc{rank}%
\left( B(e_{+})\right) =q$ ($\limfunc{rank}\left( B(e_{-})\right) =q$,
respectively)\ is replaced by $e_{+}\notin \limfunc{span}\left( X\right) $ ($%
e_{-}\notin \limfunc{span}\left( X\right) $), and the condition $B(e_{+})=0$
($B(e_{-})=0$, respectively) is replaced by $e_{+}\in \limfunc{span}\left(
X\right) $ ($e_{-}\in \limfunc{span}\left( X\right) $). [While \cite{E67}
provided conditions on the regressors under which consistency of $\hat{\Psi}%
_{E}\left( y\right) $ results, it may not be consistent for some common
forms of regressors (as noted in \cite{E67}). Therefore one may want to
replace $\hat{K}\left( y\right) $ by a variant where the empirical second
moments are downweighted (or more generally are obtained from an estimate of
the spectral density of the errors $\mathbf{u}_{t}$). Similar results can
then be obtained for this variant of the test. We omit the details.]

\bigskip

\textbf{C.} \emph{(Data-driven bandwidth, prewhitening, flat-top kernels,
autoregressive estimates, random critical values) }Tests based on weighted
autocovariance estimators $\hat{\Psi}$, but where the weights are allowed to
depend on the data (e.g., lag-window estimators with data-driven bandwidth
choice), or where prewhitening is used, are discussed in detail in \cite%
{Prein2013b}. Like the results given above, they are obtained by applying
the very general results provided in Subsection \ref{Impclass}. The results
in Subsection \ref{Impclass} essentially rely only on a certain equivariance
property of the estimator $\hat{\Omega}$. These results also accommodate
situations where the estimator $\hat{\Omega}\left( y\right) $ is only
well-defined for $\lambda _{\mathbb{R}^{n}}$-almost all $y$, a case that
arises often when data-driven bandwidth or prewithening are employed, or is
not always nonnegative definite (as is the case with so-called flat-top
kernels, see \cite{Polit2011}). Furthermore, certain cases where the
critical value is allowed to be random are covered by these results, see
Remark \ref{omega_rem}(ii) in Section \ref{Impclass} (and this is also true
for Subsection \ref{GLS} and Section \ref{Het}). Finally, tests based on
estimators $\hat{\Psi}$ obtained from parametric models like vector
autoregressions (see, e.g., \cite{HL97} or \cite{SunKapl2012} and references
therein) also fall into the domain of the results in Subsection \ref%
{Impclass}, but we abstain from a detailed analysis. See, however,
Subsection \ref{GLS} for a related analysis.

\subsubsection{Some discussion\label{Disc}}

\textbf{A. }We next discuss to what extent restricting the space $\mathfrak{C%
}$ of admissible covariance structures in such a way that it has fewer
singular limit points is helpful in ameliorating the properties of
autocorrelation robust tests. In the course of this we also discuss versions
of the results in Theorems \ref{thmlrv}, \ref{TU_2}, and \ref{TU_3} adapted
to such restricted spaces $\mathfrak{C}$. In the subsequent discussion we
concentrate for definiteness on the test statistic $T$ that is based on the
estimator $\hat{\Omega}_{w}$. However, the discussion carries over mutatis
mutandis to the case where the alternative estimators $\hat{\Omega}$
discussed in Subsection \ref{alternative} are used. Similar remarks also
apply to the test statistics considered in Subsection \ref{GLS}.

(i) The negative results in Theorem \ref{thmlrv} (i.e., size equal to $1$
and/or nuisance-infimal rejection probability equal to $0$) are driven by
the fact that, due to Assumption \ref{AAR(1)}, the covariance model $%
\mathfrak{C}$ has $e_{+}e_{+}^{\prime }$ and $e_{-}e_{-}^{\prime }$ as limit
points; cf. also Remark \ref{rem_thmlrv_2}. Suppose now that one would be
willing to assume that $\mathfrak{C}$ does not have any singular limit point
(and is norm-bounded which is not really a restriction here). Then the
negative results in Theorem \ref{thmlrv} do not apply. In fact, an
application of Theorem \ref{TU_1} shows that size is now strictly less than $%
1$ and the infimal power is larger than $0$. Does such an assumption on $%
\mathfrak{C}$ now solve the problem? We do not think so for at least two
reasons: First, making the assumption that the covariance model $\mathfrak{C}
$ does not have singular limit points (like $e_{+}e_{+}^{\prime }$ and $%
e_{-}e_{-}^{\prime }$) is highly questionable, especially in view of the
fact that the main motivation for the development of autocorrelation robust
tests has been the desire to avoid strong assumptions on $\mathfrak{C}$
which could lead to misspecification issues. In particular, in the not
unreasonable case where $\mathfrak{C}$ contains AR(1) correlation matrices $%
\Lambda (\rho )$, such an assumption would require to restrict $\rho $ to an
interval $\left( -1+\varepsilon ,1-\varepsilon \right) $ for some positive $%
\varepsilon $. Given the emphasis on unit root and near unit root processes
in econometrics, such an assumption seems untenable. Second, even if one is
willing to make such a heroic assumption, size or power problems can be
present. To see this assume for definiteness of the discussion that $%
\mathfrak{C}=\mathfrak{C}_{AR(1)}\left( \varepsilon ,\varepsilon \right)
=\left\{ \Lambda (\rho ):\rho \in \left( -1+\varepsilon ,1-\varepsilon
\right) \right\} $ for some small $\varepsilon >0$. As mentioned above, the
size of the test based on $T$ will be less than $1$ and the infimal power
will be larger than $0$. However, an upshot of Theorem \ref{thmlrv} still is
that the size will be close to $1$ and/or the infimal power will be close to 
$0$ for generic design\ matrices $X$, provided $\varepsilon $ is small (more
precisely, for given sample size $n$ this will happen for sufficiently small 
$\varepsilon $).\footnote{%
Of course, size could be reduced to any prescribed value in this situation
by increasing the critical value, but this would then come at the price of \
even further reduced power.} Hence, even under such an assumption,
size/power problems will disappear (or will be moderate) only if one is
willing to assume a relatively large $\varepsilon $ (in relation to sample
size $n$), making the assumption look even more heroic.

(ii) If $\mathfrak{C}$ has $e_{+}e_{+}^{\prime }$ ($e_{-}e_{-}^{\prime }$,
respectively) as its only singular limit point, inspection of the proof of
Theorem \ref{thmlrv} shows that a version of that theorem, in which now
every reference to $e_{-}$ ($e_{+}$, respectively) is deleted, continues to
hold. For example, if $\mathfrak{C}=\mathfrak{C}_{AR(1)}\left( \varepsilon
,0\right) =\left\{ \Lambda (\rho ):\rho \in \left( -1+\varepsilon ,1\right)
\right\} $ with $\varepsilon >0$, such a version of Theorem \ref{thmlrv}
applies. As an illustration, assume that $\mathfrak{C}=\mathfrak{C}%
_{AR(1)}\left( \varepsilon ,0\right) $, that the regression model contains
an intercept, and the hypothesis involves the intercept (in the sense that $R%
\hat{\beta}\left( e_{+}\right) \neq 0$). Then we can conclude from this
version of Theorem \ref{thmlrv} that the size of the test is equal to $1$.
Note that this result covers the case of testing in a location model.

(iii) Suppose $\mathfrak{C}$ has $e_{+}e_{+}^{\prime }$ as its only singular
limit point. Then in the important special case where an intercept is
present in the regression and the hypothesis tested does \emph{not} involve
the intercept (in the sense that $R\hat{\beta}\left( e_{+}\right) =0$), a 
\emph{positive} result (similar to Theorem \ref{TU_2}) is immediately
obtained from Theorem \ref{TU_1}, namely that the test based on $T$ now has
size $<$ $1$ and infimal power $>0$; moreover, the size can be controlled at
any given level $\delta $ by an appropriate choice of the critical value $%
C\left( \delta \right) $. [To be precise, Assumptions \ref{AW} and \ref%
{R_and_X} have to be satisfied, $\mathfrak{C}$ has to be norm-bounded, and
matrices in $\mathfrak{C}$ that approach $e_{+}e_{+}^{\prime }$ have to do
so in the particular manner required in Theorem \ref{TU_1}.] An important
example, where $\mathfrak{C}$ has $e_{+}e_{+}^{\prime }$ as its only
singular limit point (and is norm-bounded and satisfies the just mentioned
assumption required for Theorem \ref{TU_1}, cf. Lemma \ref{AR_1} in Appendix %
\ref{AR_100}), is $\mathfrak{C}=\mathfrak{C}_{AR(1)}\left( \varepsilon
,0\right) $ defined above. While an assumption like $\mathfrak{C}=\mathfrak{C%
}_{AR(1)}\left( \varepsilon ,0\right) $ is perhaps a bit more palatable than
the assumption $\mathfrak{C}=\mathfrak{C}_{AR(1)}\left( \varepsilon
,\varepsilon \right) $, it still imposes an adhoc restriction on the
covariance model $\mathfrak{C}_{AR(1)}$ that is debatable, especially if $%
\varepsilon $ is not small (as is, e.g., the case when $\rho $ is restricted
to be positive). Furthermore, note that, while the extreme size and power
problems (i.e., size equal one and infimal power equal zero) are absent in
the case we discuss here, less extreme, but nevertheless substantial, size
or power problems will generically still be present if $\varepsilon $ is
small as explained in (i) above. In case there is no intercept in the
regression, an appropriate version of Theorem \ref{TU_3} can be used to
generate an adjusted test by adding the intercept as a regressor, thus
bringing one back to the situation just discussed. [With the appropriate
modifications, similar remarks apply to the case where $e_{-}e_{-}^{\prime }$
is the only singular limit point of $\mathfrak{C}$.]

(iv) Regarding the preceding discussion in (iii) one should recall that in
case $\mathfrak{C}=\mathfrak{C}_{AR(1)}$ Theorems \ref{TU_2} and \ref{TU_3}
show how tests, which have size less than one and infimal power larger than
zero, can easily be obtained without \emph{any} need of bounding $\rho $
away from $1$ or $-1$, and thus without introducing any such adhoc
restrictions on $\mathfrak{C}$. Therefore, it would be desirable to free
Theorems \ref{TU_2} and \ref{TU_3} from the assumption $\mathfrak{C}=%
\mathfrak{C}_{AR(1)}$. To what extent this can be achieved without
introducing implausible assumptions like the ones discussed in the preceding
paragraphs will be discussed elsewhere.

\bigskip

\textbf{B.} (i) The results concerning the extreme size distortion and
biasedness of the tests under consideration in Theorems \ref{thmlrv} and \ref%
{thmGLS} are obtained by considering "offending" sequences of the form $%
\left( \mu _{0},\sigma ^{2},\Sigma _{m}\right) $ belonging to the null
hypothesis where $\mu _{0}\in \mathfrak{M}_{0}$ and where $\Sigma _{m}$
converges to $e_{+}e_{+}^{\prime }$ or $e_{-}e_{-}^{\prime }$. For example,
if $\Sigma _{m}=\Lambda (\rho _{m})$ with $\rho _{m}\rightarrow \pm 1$, then
the disturbance processes with covariance matrix $\sigma ^{2}\Sigma _{m}$
converge weakly to a harmonic process as discussed subsequent to Assumption %
\ref{AAR(1)}. However, it follows from Remark \ref{rem_thmlrv}(i) that also
the sequences $\left( \mu _{0},\sigma _{m}^{2},\Sigma _{m}\right) $, where $%
\mu _{0}$ and $\Sigma _{m}$ are as before and $\sigma _{m}^{2}$, $0<\sigma
_{m}^{2}<\infty $, is an \emph{arbitrary }sequence, are "offending"
sequences in the same way. Note that in case $\Sigma _{m}=\Lambda (\rho
_{m}) $ with $\rho _{m}\rightarrow \pm 1$ the corresponding disturbance
processes then need \emph{not }converge weakly to a harmonic process: As an
example, consider the case where one chooses $\sigma _{m}^{2}=\sigma
_{\varepsilon }^{2}\left( 1-\rho _{m}^{2}\right) ^{-1}$ with a constant
innovation variance $\sigma _{\varepsilon }^{2}>0$.

(ii) The covariance model $\mathfrak{C}$ maintained in this section (i.e.,
Section 3) supposes that the disturbances in the regression model are weakly
stationary and that all stationary AR(1) processes are allowed for. For
definiteness of the subsequent discussion assume that $\mathfrak{C}=%
\mathfrak{C}_{AR(1)}$. Now an \emph{alternative} model assumption could be
that the disturbances $\mathbf{u}_{t}$ satisfy 
\begin{equation*}
\mathbf{u}_{t}=\rho \mathbf{u}_{t-1}+\boldsymbol{\varepsilon }_{t},\quad
\quad 1\leq t\leq n
\end{equation*}%
where $\left\vert \rho \right\vert <1$, where the innovations $\boldsymbol{%
\varepsilon }_{t}$ are i.i.d. $N(0,\sigma _{\varepsilon }^{2})$, say, and
where $\mathbf{u}_{0}$ is a (possibly random) starting value with mean zero.
If $\mathbf{u}_{0}$ is treated as a \emph{fixed} random variable (i.e.,
being the same for all choices of the parameters in the model), then the
resulting model is not covered by the results in our paper. [Of course, this
does by no means guarantee that usual autocorrelation robust tests have good
size and power properties; cf. Footnote \ref{FN_1}.] We note, however, that
the assumption that $\mathbf{u}_{0}$ is fixed in the above sense assigns a
special meaning to the time point $t=0$, and hence may be debatable.
Therefore one may rather want to treat $\mathbf{u}_{0}$, more precisely its
distribution, as a further "parameter" of the problem. For example, one
could assume that $\mathbf{u}_{0}$ is $N\left( 0,\sigma _{\ast }^{2}\right) $%
-distributed independently of the innovations $\boldsymbol{\varepsilon }_{t}$
for $t\geq 1$ and with $0<\sigma _{\ast }^{2}<\infty $, where $\sigma _{\ast
}^{2}$ can vary independently of $\rho $ and $\sigma _{\varepsilon }^{2}$.
But then the resulting covariance model $\mathfrak{C}_{\ast }$ contains $%
\mathfrak{C}=\mathfrak{C}_{AR(1)}$ as a subset. Hence, all the results in
the paper concerning size equal to $1$ or infimal power equal to $0$, apply
a fortiori to this larger model $\mathfrak{C}_{\ast }$.

\bigskip

\textbf{C. }In a recent paper \cite{Perron2011} argue that the impossibility
results in \cite{Potscher2002} for estimating the value of the spectral
density at frequency zero are irrelevant in the context of autocorrelation
robust testing: In the framework of a Gaussian location model they compare
the behavior of common autocorrelation robust tests $t_{Robust}$, which are
standardized with the help of a spectral density estimate $\hat{f}_{n}(0)$,
with a benchmark given by the infeasible test statistic $t_{f(0)}$ that uses
the value of the unknown spectral density at frequency zero for
standardization. They find that common autocorrelation robust tests beat the
infeasible test statistic along a sequence of DGPs similar to the ones that
have been used in \cite{Potscher2002} to establish ill-posedness of the
spectral density estimation problem. This is certainly true and in fact easy
to understand: Consider as another benchmark the infeasible test statistic $%
t_{ideal}$, say, which uses the (unknown) finite-sample variance $s_{n}$ of
the arithmetic mean for standardization rather than the asymptotic variance $%
2\pi f(0)$, and observe that this statistic is exactly $N(0,1)$ distributed
(under the null) and has well-behaved size and power properties. Because $%
s_{n}$ does in general not converge uniformly to the asymptotic variance $%
2\pi f(0)$ (for the very same reasons that underlie the impossibility result
in \cite{Potscher2002}) $t_{f(0)}$ is \emph{not} uniformly close to the
ideal test $t_{ideal}$. The fact that $\hat{f}_{n}(0)$ is also not uniformly
close to $f(0)$ (due to the ill-posedness results in \cite{Potscher2002}) is
now "helpful" in the sense that it in principle allows for the possibility
that $2\pi \hat{f}_{n}(0)$ might be closer to the ideal standardization
factor $s_{n}$ than is $2\pi f(0)$, thus allowing for the possibility that $%
t_{Robust}$ might be closer to the ideal test $t_{ideal}$ than to $t_{f(0)}$%
. [Observe that $2\pi \hat{f}_{n}(0)$ as well as $s_{n}$ each not being
uniformly close to $2\pi f(0)$ does in principle not preclude (uniform)
closeness between $2\pi \hat{f}_{n}(0)$ and $s_{n}$.] In other words,
"aiming" at $f(0)$ in standardizing the test statistic is simply the wrong
thing to do. In that sense, the ill-posedness of estimating $f(0)$ is then
indeed irrelevant for autocorrelation robust testing (simply because the
benchmark $t_{f(0)}$ is irrelevant). As a matter of fact, there is no
statement to the contrary in \cite{Potscher2002}: Note that \cite%
{Potscher2002} only discusses ill-posedness of the problem of estimating $%
f(0)$ (considered to be the parameter of interest), and does not make any
statements regarding consequences of this ill-posedness for autocorrelation
robust tests that use $2\pi \hat{f}_{n}(0)$ as an estimate of the variance
nuisance parameter. The claim opening the last but one paragraph on p.1 in 
\cite{Perron2011} is thus simply false. Finally, the preceding discussion
begs the question whether or not uniform closeness of $2\pi \hat{f}_{n}(0)$
and $s_{n}$ can indeed be established under sufficiently general assumptions
on the underlying correlation structure. If possible, this would then
immediately transfer the good size and power properties of $t_{ideal}$\ to $%
t_{Robust}$. However, unfortunately this is not possible: Recall from
Example \ref{EX2} that in the location model considered in \cite{Perron2011}
the size of common autocorrelation robust tests like $t_{Robust}$ is always
equal to $1$.

\subsubsection{Further obstructions to favorable size and power properties 
\label{obstruct}}

The negative results given in Theorem \ref{thmlrv} rest on Assumption \ref%
{AAR(1)}, i.e., $\mathfrak{C}\supseteq \mathfrak{C}_{AR(1)}$, and the fact
that there exist sequences $\Sigma _{m}\in \mathfrak{C}_{AR(1)}$ that
converge to the singular matrices $e_{+}e_{+}^{\prime }$ or $e_{-}^{\prime
}e_{-}$ leading to a concentration phenomenon as discussed in the wake of
Theorem \ref{thmlrv}. The commonly used nonparametric covariance models like 
$\mathfrak{C}_{\xi }$ discussed at the beginning of Section \ref{tsr} of
course also satisfy $\mathfrak{C}_{\xi }\supseteq \mathfrak{C}_{AR(p)}$ for
every $p$, where $\mathfrak{C}_{AR(p)}$ is the set of all $n\times n$
correlation matrices arising from stationary autoregressive process of order
not larger than $p$. In this case additional singular limit matrices arise
which lead to additional conditions under which size equals $1$ or infimal
power equals $0$. We illustrate this shortly for the case where $\mathfrak{C}%
\supseteq \mathfrak{C}_{AR(2)}$. To this end define for $\nu \in (0,\pi )$
the matrix $E(\nu )$ as the $n\times 2$ matrix with $t$-th row equal to $%
(\cos (t\nu ),\sin (t\nu ))$. Furthermore set $E(0)=e_{+}$ and $E(\pi
)=e_{-} $. In Lemma \ref{cpar2}\textbf{\ }in Appendix \ref{AR_100} we show
that the matrices $E(\nu )E(\nu )^{\prime }$ for $\nu \in \lbrack 0,\pi ]$
arise as limits of sequences of matrices in $\mathfrak{C}_{AR(2)}$.
Obviously, $E(\nu )E(\nu )^{\prime }$ is singular whenever $n\geq 3$.
Restricting $\nu $ to the set $\left\{ 0,\pi \right\} $ in the subsequent
theorem reproduces the conditions appearing in Theorem \ref{thmlrv} (albeit
under the stronger assumptions that $\mathfrak{C}\supseteq \mathfrak{C}%
_{AR(2)}$ and $n\geq 3$).

\begin{theorem}
\label{thmlrv2} Suppose $\mathfrak{C}\supseteq \mathfrak{C}_{AR(2)}$,
Assumptions \ref{AW} and \ref{R_and_X} are satisfied, and $n\geq 3$ holds.
Let $T$ be the test statistic defined in (\ref{tslrv}) with $\hat{\Psi}_{w}$
as in (\ref{lrve}). Let $W(C)=\left\{ y\in \mathbb{R}^{n}:T(y)\geq C\right\} 
$ be the rejection region where $C$ is a real number satisfying $0<C<\infty $%
. Then the following holds:

\begin{enumerate}
\item Suppose there exists a $\nu \in \lbrack 0,\pi ]$ such that $\limfunc{%
rank}\left( B(z)\right) =q$ and $T(z+\mu _{0}^{\ast })>C$ hold for some (and
hence all) $\mu _{0}^{\ast }\in \mathfrak{M}_{0}$ and for $\lambda _{%
\limfunc{span}\left( E(\nu )\right) }$-almost all $z\in \limfunc{span}\left(
E(\nu )\right) $. Then%
\begin{equation*}
\sup\limits_{\Sigma \in \mathfrak{C}}P_{\mu _{0},\sigma ^{2}\Sigma }\left(
W\left( C\right) \right) =1
\end{equation*}%
holds for every $\mu _{0}\in \mathfrak{M}_{0}$ and every $0<\sigma
^{2}<\infty $. In particular, the size of the test is equal to one.

\item Suppose there exists a $\nu \in \lbrack 0,\pi ]$ such that $\limfunc{%
rank}\left( B(z)\right) =q$ and $T(z+\mu _{0}^{\ast })<C$ hold for some (and
hence all) $\mu _{0}^{\ast }\in \mathfrak{M}_{0}$ and for $\lambda _{%
\limfunc{span}\left( E(\nu )\right) }$-almost all $z\in \limfunc{span}\left(
E(\nu )\right) $. Then 
\begin{equation*}
\inf_{\Sigma \in \mathfrak{C}}P_{\mu _{0},\sigma ^{2}\Sigma }\left( W\left(
C\right) \right) =0
\end{equation*}%
holds for every $\mu _{0}\in \mathfrak{M}_{0}$ and every $0<\sigma
^{2}<\infty $, and hence%
\begin{equation*}
\inf_{\mu _{1}\in \mathfrak{M}_{1}}\inf_{\Sigma \in \mathfrak{C}}P_{\mu
_{1},\sigma ^{2}\Sigma }\left( W\left( C\right) \right) =0
\end{equation*}%
holds for every $0<\sigma ^{2}<\infty $. In particular, the test is biased.
Furthermore, the nuisance-infimal rejection probability at every point $\mu
_{1}\in \mathfrak{M}_{1}$ is zero, i.e.,%
\begin{equation*}
\inf\limits_{0<\sigma ^{2}<\infty }\inf\limits_{\Sigma \in \mathfrak{C}%
}P_{\mu _{1},\sigma ^{2}\Sigma }(W\left( C\right) )=0.
\end{equation*}%
In particular, the infimal power of the test is equal to zero.

\item Suppose there exists a $\nu \in \lbrack 0,\pi ]$ such that $B(z)=0$
and $R\hat{\beta}(z)\neq 0$ hold for $\lambda _{\limfunc{span}\left( E(\nu
)\right) }$-almost all $z\in \limfunc{span}\left( E(\nu )\right) $. Then 
\begin{equation*}
\sup\limits_{\Sigma \in \mathfrak{C}}P_{\mu _{0},\sigma ^{2}\Sigma }\left(
W\left( C\right) \right) =1
\end{equation*}%
holds for every $\mu _{0}\in \mathfrak{M}_{0}$ and every $0<\sigma
^{2}<\infty $. In particular, the size of the test is equal to one.
\end{enumerate}
\end{theorem}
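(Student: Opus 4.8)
The plan is to reproduce the concentration mechanism behind Theorem~\ref{thmlrv}, the only difference being that the relevant concentration space is now the (at most) two-dimensional subspace $\limfunc{span}(E(\nu))$ instead of $\limfunc{span}(e_{+})$ or $\limfunc{span}(e_{-})$ (the values $\nu\in\{0,\pi\}$ reduce literally to Theorem~\ref{thmlrv}). By Lemma~\ref{cpar2} there is a sequence $\Sigma_{m}\in\mathfrak{C}_{AR(2)}\subseteq\mathfrak{C}$ with $\Sigma_{m}\rightarrow E(\nu)E(\nu)^{\prime}$. Fix $\mu_{0}\in\mathfrak{M}_{0}$ and $0<\sigma^{2}<\infty$ and write $P_{m}=P_{\mu_{0},\sigma^{2}\Sigma_{m}}$. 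Since the means are constant and the covariances converge, $P_{m}$ converges weakly to $P=P_{\mu_{0},\sigma^{2}E(\nu)E(\nu)^{\prime}}$, the law of $\mu_{0}+\sigma E(\nu)\mathbf{G}$ with $\mathbf{G}$ standard Gaussian on $\mathbb{R}^{2}$. Because $E(\nu)$ has full column rank and $n\geq3$, $P$ is supported on the proper affine subspace $\mu_{0}+\limfunc{span}(E(\nu))$ and, viewed there, is mutually absolutely continuous with $\lambda_{\mu_{0}+\limfunc{span}(E(\nu))}$; hence any property holding for $\lambda_{\limfunc{span}(E(\nu))}$-almost all $z$ holds for $P$-almost all $y=\mu_{0}+z$. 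I will also use that, since $\mu_{0}\in\mathfrak{M}=\limfunc{span}(X)$, the residuals are invariant under translation by $\mu_{0}$, so $B(\mu_{0}+z)=B(z)$ and $\hat{\Omega}_{w}(\mu_{0}+z)=\hat{\Omega}_{w}(z)$, while $R\hat{\beta}(\mu_{0}+z)-r=R\hat{\beta}(z)$; this is precisely what makes $T(z+\mu_{0}^{\ast})$ independent of the choice of $\mu_{0}^{\ast}\in\mathfrak{M}_{0}$.

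For Part~1, set $N=\{y\in\mathbb{R}^{n}:\limfunc{rank}(B(y))=q\}$. By Lemma~\ref{LRVPD} this is exactly the locus on which $\hat{\Omega}_{w}$ is nonsingular, hence the open set $\{\det\hat{\Omega}_{w}\neq0\}$, and on $N$ the statistic $T$ is continuous. The set $G=\{y\in N:T(y)>C\}$ is therefore open and contained in $W(C)$. The hypotheses of Part~1, combined with the transfer principle and the translation identities above (take $\mu_{0}^{\ast}=\mu_{0}$), give $\limfunc{rank}(B(y))=q$ and $T(y)>C$ for $P$-almost every $y$, so $P(G)=1$. The open-set half of the Portmanteau theorem yields $\liminf_{m}P_{m}(G)\geq P(G)=1$, and $G\subseteq W(C)$ gives $P_{m}(W(C))\rightarrow1$; since $\Sigma_{m}\in\mathfrak{C}$ this forces $\sup_{\Sigma\in\mathfrak{C}}P_{\mu_{0},\sigma^{2}\Sigma}(W(C))=1$, and as $\mu_{0},\sigma^{2}$ were arbitrary the size equals one. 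Part~2 is the mirror image with $G^{\prime}=\{y\in N:T(y)<C\}$, which is open, disjoint from $W(C)$, and of full $P$-measure by hypothesis; Portmanteau then gives $P_{m}(W(C))\rightarrow0$ and hence $\inf_{\Sigma\in\mathfrak{C}}P_{\mu_{0},\sigma^{2}\Sigma}(W(C))=0$. The remaining assertions of Part~2 follow exactly as in Theorem~\ref{thmlrv}: by Remark~\ref{rem_thmlrv}(i) the rejection probability at a point $\mu_{1}\in\mathfrak{M}_{1}$ depends on the parameters only through $((R\beta^{(1)}-r)/\sigma,\Sigma)$ and is continuous in the mean parameter for fixed positive definite $\Sigma_{m}$, so letting $\sigma\rightarrow\infty$ shows $P_{\mu_{1},\sigma^{2}\Sigma_{m}}(W(C))\rightarrow P_{m}(W(C))$, and taking the infimum over $m$ yields nuisance-infimal power zero at every $\mu_{1}$ (whence biasedness and infimal power zero).

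Part~3 cannot be read off from the limiting statistic, because $B(z)=0$ forces $\hat{\Omega}_{w}=0$ by Lemma~\ref{LRVPD} and hence $T=0$ on the support of $P$ by our convention; the rejection arises instead from a blow-up along the sequence. I would use the elementary bound $T(y)\geq\left\Vert R\hat{\beta}(y)-r\right\Vert^{2}/\lambda_{\max}(\hat{\Omega}_{w}(y))$, valid whenever $\hat{\Omega}_{w}(y)$ is nonsingular, which happens $P_{m}$-almost surely by Lemma~\ref{LRVPD} under Assumption~\ref{R_and_X}. Denoting the right-hand side by $h(y)$ and extending it to $[0,\infty]$, the hypotheses give $R\hat{\beta}(y)-r=R\hat{\beta}(z)\neq0$ and $\hat{\Omega}_{w}(y)=0$ for $P$-almost every $y=\mu_{0}+z$, so $h$ equals $+\infty$ $P$-almost surely and is continuous there (a nonvanishing numerator over a vanishing largest eigenvalue). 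The continuous mapping theorem then gives $h(\mathbf{Y}_{m})\rightarrow+\infty$ in distribution, hence in probability for $\mathbf{Y}_{m}\sim P_{m}$, so $P_{m}(W(C))\geq P_{m}(h>C)\rightarrow1$ and again $\sup_{\Sigma\in\mathfrak{C}}P_{\mu_{0},\sigma^{2}\Sigma}(W(C))=1$.

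The step I expect to demand the most care is the measure-theoretic transfer underpinning Parts~1 and~2: one must verify that the degenerate limiting Gaussian $P$ is genuinely equivalent to Lebesgue measure on the two-dimensional subspace $\limfunc{span}(E(\nu))$, so that the ``$\lambda$-almost all $z$'' hypotheses really control $P$-almost every $y$, and that the open-set form of Portmanteau is legitimately applicable. The clean way around the discontinuity of $T$ is the observation that $P$ concentrates inside the open set $N$ on which $T$ is continuous; this lets one apply Portmanteau to the open sets $G$, $G^{\prime}$ directly and avoid ever computing $\limfunc{bd}(W(C))$ in $\mathbb{R}^{n}$. For Part~3 the analogous delicate point is checking that $+\infty$ is a continuity point of the extended-valued map $h$ at $P$-almost every $y$, which is what legitimizes the continuous mapping step despite $T$ being discontinuous exactly where $\hat{\Omega}_{w}$ degenerates.
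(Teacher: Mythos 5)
Your proposal is correct and follows essentially the same route as the paper: the paper proves this theorem by invoking Lemma \ref{cpar2} to exhibit $\limfunc{span}(E(\nu))$ as a concentration space and then applying Corollary \ref{CW} (i.e., Theorem \ref{inv}), whose proof is exactly your weak-convergence-plus-Portmanteau argument on open subsets of the continuity set of $T$, and whose Part 3 uses the same lower bound $T(y)\geq\lambda_{\max}^{-1}(\hat{\Omega}_{w}(y))\Vert R\hat{\beta}(y)-r\Vert^{2}$ to handle the blow-up when $\hat{\Omega}_{w}$ degenerates. Your explicit treatment of the $\lambda_{\limfunc{span}(E(\nu))}$-almost-everywhere hypotheses (needed because the two-dimensional concentration space lacks the all-or-nothing dichotomy available in the one-dimensional case of Remark \ref{remnew}(i)) matches what the paper's general results require.
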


To illustrate the value added of the preceding theorem when compared to
Theorem \ref{thmlrv} consider the following example: Assume that $e_{+}$ and 
$e_{-}$ are both elements of $\mathfrak{M}$ and $R\hat{\beta}(e_{+})=R\hat{%
\beta}(e_{-})=0$. Then none of the conditions in Theorem \ref{thmlrv} are
satisfied and thus this theorem is not applicable. Suppose now that the
design matrix $X$ contains $E(\nu )$ for some $\nu \in (0,\pi )$ as a
submatrix, i.e., seasonal regressors are included. Without loss of
generality assume that $X=(E(\nu ),X^{(2)})$. If we want to test for absence
of seasonality at angular frequency $\nu $, this corresponds to $R=(I_{2},0)$
and $r=0$. In case Assumption \ref{R_and_X} holds, the conditions in Case 3
of the preceding theorem are then obviously satisfied and we conclude that
the size of the test for absence of seasonality is equal to one. [In case
Assumption \ref{R_and_X} is violated, the test breaks down in a trivial way
as noted earlier.]

We finally ask what happens if we allow for covariance structures deriving
from even higher-order autoregressive models, i.e., $\mathfrak{C}\supseteq 
\mathfrak{C}_{AR(p)}$ with $p>2$. While additional concentration spaces
arise and theorems like the one above can be easily obtained from Corollary %
\ref{CW}, these theorems will often not generate new obstructions to good
size and power properties. The reason for this is that any of the newly
arising concentration spaces already contains one of the concentration
spaces $\limfunc{span}\left( E(\nu )\right) $ for $\nu \in \lbrack 0,\pi ]$
as a subset.

\subsection{Parametrically based autocorrelation robust tests\label{GLS}}

The results in Subsection \ref{HAR} were given for autocorrelation robust
tests that make use of a nonparametric estimator $\hat{\Omega}$. In this
subsection we show that the phenomena encountered in Subsection \ref{HAR}
(size distortions and power deficiencies) are not a consequence of the
nonparametric nature of the estimator, but can equally arise if a parametric
estimator is being used (and even if the parametric model employed correctly
describes the covariance structure of the errors). We illustrate this for
the case where the test statistic is obtained from a feasible generalized
least squares (GLS) estimator predicated on an AR(1) covariance structure,
as well as for the case where the test statistic is obtained from the
ordinary least squares (OLS)\ estimator combined with an estimator for the
variance covariance matrix again predicated on the same covariance
structure. The theoretical results derived below are in line with Monte
Carlo results provided in \cite{ParkMitch1980} and \cite{M89}.

We start with the estimator $\hat{\rho}$ that will be used in the feasible
GLS procedure as well as in the estimator for the variance covariance matrix
of the OLS estimator.

\begin{assumption}
\label{ARER} For $a_{1}\in \left\{ 1,2\right\} $ and $a_{2}\in \left\{
n-1,n\right\} $ with $a_{1}\leq a_{2}$ the estimator $\hat{\rho}$ is of the
form 
\begin{equation*}
\hat{\rho}(y)=\sum\limits_{t=2}^{n}\hat{u}_{t}(y)\hat{u}_{t-1}(y)\left/
\sum\limits_{t=a_{1}}^{a_{2}}\hat{u}_{t}^{2}(y)\right.
\end{equation*}%
for all $y\in \mathbb{R}^{n}\backslash N_{0}(a_{1},a_{2})$ and it is
undefined for $y\in N_{0}(a_{1},a_{2})=\left\{ y\in \mathbb{R}%
^{n}:\sum\limits_{t=a_{1}}^{a_{2}}\hat{u}_{t}^{2}(y)=0\right\} $.
\end{assumption}

The Yule-Walker estimator, which we shall abbreviate by $\hat{\rho}_{YW}$,
corresponds to $a_{1}=1$, $a_{2}=n$, while the least squares estimator $\hat{%
\rho}_{LS}$ corresponds to $a_{1}=1$, $a_{2}=n-1$. The estimators which use $%
a_{1}=2$, $a_{2}=n-1$ or $a_{1}=2$, $a_{2}=n$ have also been considered in
the literature (see, e.g., \cite{ParkMitch1980}, \cite{M89}).

\begin{remark}
\label{rho_hat}\emph{(Some properties of }$\hat{\rho}$\emph{) }(i)\ For the
Yule-Walker estimator $\hat{\rho}_{YW}$ we have $N_{0}(1,n)=\mathfrak{M}$,
i.e., $\hat{\rho}_{YW}$ is well-defined for every $y\in \mathbb{R}%
^{n}\backslash \mathfrak{M}$. Furthermore, $\hat{\rho}_{YW}$ is bounded away
from $1$ in modulus uniformly over its domain of definition, i.e., $%
\sup_{y\in \mathbb{R}^{n}\backslash \mathfrak{M}}|\hat{\rho}_{YW}(y)|<1$
holds. This follows easily from the well-known fact that $|\hat{\rho}%
_{YW}(y)|<1$, that the supremum in question does not change its value if the
range for $y$ is replaced by the compact set $\left\{ y\in \mathfrak{M}%
^{\perp }:\left\Vert y\right\Vert =1\right\} $, and the fact that $\hat{\rho}%
_{YW}$ is continuous on this set. [It can also be derived from the
discussion in Section 3.5 in \cite{GR57}.]

(ii) The least squares estimator $\hat{\rho}_{LS}$ exhibits a somewhat
different behavior: First, $\hat{\rho}_{LS}$ is well defined only on $%
\mathbb{R}^{n}\backslash N_{0}(1,n-1)$, with $N_{0}(1,n-1)$ given by $%
\left\{ y\in \mathbb{R}^{n}:\hat{u}(y)\in \limfunc{span}(e_{n}\left(
n\right) )\right\} $. Note that $\mathbb{R}^{n}\backslash N_{0}(1,n-1)$ is
contained in $\mathbb{R}^{n}\backslash \mathfrak{M}$, but is strictly
smaller in case $e_{n}\left( n\right) $ is orthogonal to each column of $X$.
Second, $\hat{\rho}_{LS}$ is not bounded away from one in modulus, in fact $%
\left\vert \hat{\rho}_{LS}\right\vert \geq 1$ can occur.\footnote{%
There are even cases where $\hat{\rho}_{LS}$ is unbounded.}

(iii) The behavior of the remaining two estimators $\hat{\rho}$ is similar
to the behavior of $\hat{\rho}_{LS}$.

(iv) The set $N_{0}(a_{1},a_{2})$ is always a closed subset of $\mathbb{R}%
^{n}$. It is guaranteed to be a $\lambda _{\mathbb{R}^{n}}$-null set
provided $k\leq a_{2}-a_{1}$ holds, cf. Lemma \ref{LDEF} below. This
condition on $k$ is no restriction in the case of the Yule-Walker estimator
(since we have assumed $k<n$ from the beginning), and is a very mild
condition in the other cases (requiring $k\leq n-2$ or $k\leq n-3$ at most).
\end{remark}

The definition of the test statistics further below will require inversion
of $\Lambda (\hat{\rho})$. While $\Lambda (\hat{\rho})$ is nonsingular if $%
\left\vert \hat{\rho}\right\vert \neq 1$, $\Lambda (\hat{\rho})$ is singular
if $\left\vert \hat{\rho}\right\vert =1$, and hence we need to study the set
of $y$ where $\left\vert \hat{\rho}\left( y\right) \right\vert =1$ (or $\hat{%
\rho}\left( y\right) $ is undefined).

\begin{lemma}
\label{LDEF} Let $\hat{\rho}$ satisfy Assumption \ref{ARER}. Then $\mathfrak{%
M}\subseteq N_{0}(a_{1},a_{2})\subseteq N_{1}(a_{1},a_{2})$ where%
\begin{equation*}
N_{1}(a_{1},a_{2})=\left\{ y\in \mathbb{R}^{n}:\left\vert
\sum\limits_{t=2}^{n}\hat{u}_{t}(y)\hat{u}_{t-1}(y)\right\vert
=\sum\limits_{t=a_{1}}^{a_{2}}\hat{u}_{t}^{2}(y)\right\} .
\end{equation*}%
The set $N_{1}(a_{1},a_{2})$ is a closed subset of $\mathbb{R}^{n}$ and is
precisely the set where the estimator $\hat{\rho}$ is either not
well-defined or is equal to $1$ in modulus. The estimator $\hat{\rho}$ is
continuous on $\mathbb{R}^{n}\backslash N_{0}(a_{1},a_{2})\supseteq \mathbb{R%
}^{n}\backslash N_{1}(a_{1},a_{2})$. If $k\leq a_{2}-a_{1}$ holds, the set $%
N_{1}(a_{1},a_{2})$ is a $\lambda _{\mathbb{R}^{n}}$-null set.
\end{lemma}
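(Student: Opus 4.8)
The plan is to reduce everything to the study of two quadratic forms. Writing $\hat{u}(y)=\Pi_{\mathfrak{M}^{\perp}}y$, so that $\hat{u}_{t}(y)=e_{t}^{\prime}(n)\Pi_{\mathfrak{M}^{\perp}}y$ is linear in $y$, set $P(y)=\sum_{t=2}^{n}\hat{u}_{t}(y)\hat{u}_{t-1}(y)$ and $Q(y)=\sum_{t=a_{1}}^{a_{2}}\hat{u}_{t}^{2}(y)$. Both are homogeneous quadratic polynomials in $y$, $Q\geq 0$ everywhere, and by definition $N_{0}(a_{1},a_{2})=\{Q=0\}$ and $N_{1}(a_{1},a_{2})=\{|P|=Q\}$. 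I first establish the chain $\mathfrak{M}\subseteq N_{0}\subseteq N_{1}$. If $y\in\mathfrak{M}$ then $\hat{u}(y)=0$, so $Q(y)=0$ and $y\in N_{0}$. If $y\in N_{0}$ then, $Q$ being a sum of squares, $\hat{u}_{t}(y)=0$ for every $t\in[a_{1},a_{2}]$; since $a_{1}\leq 2$ and $a_{2}\geq n-1$, for each $t\in\{2,\ldots,n\}$ at least one of $t-1,t$ lies in $[a_{1},a_{2}]$, whence every product $\hat{u}_{t}(y)\hat{u}_{t-1}(y)$ vanishes and $P(y)=0=Q(y)$, so $y\in N_{1}$.

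Next I record closedness and the stated characterization. As $\hat{u}_{t}$ is linear, $P$ and $Q$ are continuous, so $N_{1}=(|P|-Q)^{-1}(\{0\})$ is closed (and likewise $N_{0}=Q^{-1}(\{0\})$). To identify $N_{1}$, split $\mathbb{R}^{n}$ according to $Q$: on $N_{0}=\{Q=0\}$ the estimator $\hat{\rho}$ is undefined by Assumption \ref{ARER}, and we have just seen $N_{0}\subseteq N_{1}$; on $\{Q>0\}$ one has $\hat{\rho}(y)=P(y)/Q(y)$, so that $|\hat{\rho}(y)|=1\iff|P(y)|=Q(y)\iff y\in N_{1}$. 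Hence $N_{1}$ is precisely the union of $\{\hat{\rho}\text{ undefined}\}$ and $\{|\hat{\rho}|=1\}$. Continuity of $\hat{\rho}$ on $\mathbb{R}^{n}\setminus N_{0}$ is immediate, since there $\hat{\rho}=P/Q$ is a ratio of continuous functions with non-vanishing denominator, and $\mathbb{R}^{n}\setminus N_{0}\supseteq\mathbb{R}^{n}\setminus N_{1}$ is just the complement of $N_{0}\subseteq N_{1}$.

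The substantive step, and the one I expect to be the main obstacle, is the null-set claim under $k\leq a_{2}-a_{1}$. Because $Q\geq 0$, the equation $|P|=Q$ is equivalent to $P^{2}=Q^{2}$, so $N_{1}=\{y:(P^{2}-Q^{2})(y)=0\}$ is the zero set of the degree-four polynomial $P^{2}-Q^{2}$; such a set is a $\lambda_{\mathbb{R}^{n}}$-null set unless $P^{2}-Q^{2}\equiv 0$, i.e.\ unless $N_{1}=\mathbb{R}^{n}$. It therefore suffices to exhibit a single $y_{0}$ with $|P(y_{0})|\neq Q(y_{0})$, and this is where the hypothesis enters. Let $U=\limfunc{span}\{e_{t}(n):a_{1}\leq t\leq a_{2}\}$, so $\dim U=a_{2}-a_{1}+1\geq k+1$; then
\[
\dim\left(U\cap\mathfrak{M}^{\perp}\right)\geq\dim U+\dim\mathfrak{M}^{\perp}-n=(a_{2}-a_{1}+1)-k\geq 1,
\]
so there is a nonzero $r\in\mathfrak{M}^{\perp}$ with support contained in $[a_{1},a_{2}]$. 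Taking $y_{0}=r$ gives $\hat{u}(y_{0})=r$ and $Q(y_{0})=\|r\|^{2}>0$, while the triangle inequality together with $2|r_{t}r_{t-1}|\leq r_{t}^{2}+r_{t-1}^{2}$ yields
\[
|P(y_{0})|\leq\sum_{t=2}^{n}|r_{t}r_{t-1}|\leq\|r\|^{2}-\tfrac{1}{2}\left(r_{1}^{2}+r_{n}^{2}\right)\leq\|r\|^{2}=Q(y_{0}),
\]
and the inequality is strict: equality throughout would force $|r_{t}|=|r_{t-1}|$ for all $t$ and $r_{1}=r_{n}=0$, hence $r=0$. Thus $|P(y_{0})|<Q(y_{0})$, so $P^{2}-Q^{2}\not\equiv 0$ and $N_{1}$ is $\lambda_{\mathbb{R}^{n}}$-null. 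The only delicate points are that the dimension count genuinely produces a residual vector supported inside $[a_{1},a_{2}]$ (exactly what $k\leq a_{2}-a_{1}$ buys) and the strictness of the off-diagonal bound (equivalently, that the tridiagonal form underlying $P$ has operator norm $\cos(\pi/(n+1))<1$); the remainder is bookkeeping.
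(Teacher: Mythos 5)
Your proof is correct, and it follows the same overall architecture as the paper's: establish the inclusions directly, observe that $N_{1}(a_{1},a_{2})$ is (contained in) the zero set of a multivariate polynomial in $y$, and then exhibit a single witness $y_{0}$ outside $N_{1}(a_{1},a_{2})$ to rule out the degenerate case. The two places where you deviate are worth noting. First, the paper splits off the Yule--Walker case $a_{1}=1$, $a_{2}=n$ and disposes of it by the identity $N_{1}(1,n)=N_{0}(1,n)=\mathfrak{M}$, treating only the remaining cases by the polynomial argument; your argument handles all four $(a_{1},a_{2})$ configurations uniformly, which is slightly cleaner. Second, and more substantively, the paper constructs the same kind of witness (a nonzero element of $\mathfrak{M}^{\perp}$ whose coordinates vanish outside $[a_{1},a_{2}]$, obtained by a rank count on a submatrix of a basis of $\mathfrak{M}^{\perp}$ — equivalent to your dimension count) but then verifies $y_{0}\notin N_{1}(a_{1},a_{2})$ by observing that $\hat{\rho}(y_{0})=\hat{\rho}_{YW}(y_{0})$ and invoking the classical bound $|\hat{\rho}_{YW}|<1$ from Remark \ref{rho_hat}(i), which is itself justified there by a compactness-and-continuity argument. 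You instead prove the strict inequality $|P(y_{0})|<Q(y_{0})$ directly via the termwise bound $2|r_{t}r_{t-1}|\leq r_{t}^{2}+r_{t-1}^{2}$ and an analysis of the equality case; this is an elementary, self-contained rederivation of the Yule--Walker bound at the specific point $y_{0}$, and it makes the lemma independent of Remark \ref{rho_hat}(i). Both routes are sound; yours buys self-containment at the cost of a few lines of computation, while the paper's delegates the analytic content to an already-recorded fact.
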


While for the Yule-Walker estimator $N_{1}(1,n)=N_{0}(1,n)$ holds as a
consequence of Remark \ref{rho_hat}(i), for the other estimators $\hat{\rho}$
the corresponding set $N_{1}(a_{1},a_{2})$ can be a proper superset of $%
N_{0}(a_{1},a_{2})$.

Given an estimator $\hat{\rho}$ satisfying Assumption \ref{ARER} we now
introduce the test statistic%
\begin{equation*}
T_{FGLS}\left( y\right) =%
\begin{cases}
(R\tilde{\beta}(y)-r)^{\prime }\tilde{\Omega}^{-1}(y)(R\tilde{\beta}(y)-r) & 
\text{ if }y\in \mathbb{R}^{n}\backslash N_{2}^{\ast }(a_{1},a_{2}), \\ 
0 & \text{ else.}%
\end{cases}%
\end{equation*}%
where 
\begin{equation*}
\tilde{\beta}(y)=(X^{\prime }\Lambda ^{-1}(\hat{\rho}(y))X)^{-1}X^{\prime
}\Lambda ^{-1}(\hat{\rho}(y))y,
\end{equation*}%
\begin{equation*}
\tilde{\sigma}^{2}(y)=(n-k)^{-1}(y-X\tilde{\beta}(y))^{\prime }\Lambda ^{-1}(%
\hat{\rho}(y))(y-X\tilde{\beta}(y)),
\end{equation*}%
\begin{equation*}
\tilde{\Omega}\left( y\right) =\tilde{\sigma}^{2}(y)R(X^{\prime }\Lambda
^{-1}(\hat{\rho}(y))X)^{-1}R^{\prime }.
\end{equation*}%
Here $N_{2}^{\ast }(a_{1},a_{2})$ is defined via%
\begin{equation*}
\mathbb{R}^{n}\backslash N_{2}^{\ast }(a_{1},a_{2})=\left\{ y\in \mathbb{R}%
^{n}\backslash N_{2}(a_{1},a_{2}):\tilde{\sigma}^{2}(y)\neq 0,\det \left(
R(X^{\prime }\Lambda ^{-1}(\hat{\rho}(y))X)^{-1}R^{\prime }\right) \neq
0\right\} ,
\end{equation*}%
where $N_{2}(a_{1},a_{2})$ is given by%
\begin{equation*}
\mathbb{R}^{n}\backslash N_{2}(a_{1},a_{2})=\left\{ y\in \mathbb{R}%
^{n}\backslash N_{1}(a_{1},a_{2}):\det \left( X^{\prime }\Lambda ^{-1}(\hat{%
\rho}(y))X\right) \neq 0\right\} .
\end{equation*}%
Note that $\tilde{\beta}$, $\tilde{\sigma}^{2}$, and $\tilde{\Omega}$ are
well-defined on $\mathbb{R}^{n}\backslash N_{2}(a_{1},a_{2})$, with $\tilde{%
\Omega}\left( y\right) $ being nonsingular if and only if $y\in \mathbb{R}%
^{n}\backslash N_{2}^{\ast }(a_{1},a_{2})$, see Lemma \ref{lem_GLS} in
Appendix \ref{App_B}. Furthermore, define%
\begin{equation*}
T_{OLS}\left( y\right) =%
\begin{cases}
(R\hat{\beta}(y)-r)^{\prime }\hat{\Omega}^{-1}(y)(R\hat{\beta}(y)-r) & \text{
if }y\in \mathbb{R}^{n}\backslash N_{0}^{\ast }(a_{1},a_{2}), \\ 
0 & \text{ else,}%
\end{cases}%
\end{equation*}%
where $\hat{\beta}(y)$ is the OLS-estimator, $\hat{\sigma}^{2}(y)=(n-k)^{-1}%
\hat{u}^{\prime }(y)\hat{u}(y)$, and%
\begin{equation*}
\hat{\Omega}\left( y\right) =\hat{\sigma}^{2}(y)R(X^{\prime
}X)^{-1}X^{\prime }\Lambda (\hat{\rho}(y))X(X^{\prime }X)^{-1}R^{\prime }.
\end{equation*}%
Here $N_{0}^{\ast }(a_{1},a_{2})$ is defined via%
\begin{equation*}
\mathbb{R}^{n}\backslash N_{0}^{\ast }(a_{1},a_{2})=\left\{ y\in \mathbb{R}%
^{n}\backslash N_{0}(a_{1},a_{2}):\det \left( R(X^{\prime }X)^{-1}X^{\prime
}\Lambda (\hat{\rho}(y))X(X^{\prime }X)^{-1}R^{\prime }\right) \neq
0\right\} .
\end{equation*}%
Of course, $\hat{\beta}$ and $\hat{\sigma}^{2}$ are well-defined on all of $%
\mathbb{R}^{n}$, while $\hat{\Omega}$ is well-defined on $\mathbb{R}%
^{n}\backslash N_{0}(a_{1},a_{2})\supseteq \mathbb{R}^{n}\backslash
N_{0}^{\ast }(a_{1},a_{2})$. Furthermore, $\hat{\Omega}\left( y\right) $ is
nonsingular for $y\in \mathbb{R}^{n}\backslash N_{0}^{\ast }(a_{1},a_{2})$,
see Lemma \ref{lem_GLS} in Appendix \ref{App_B}. We note that the
exceptional sets $N_{0}^{\ast }(a_{1},a_{2})$ and $N_{2}^{\ast
}(a_{1},a_{2}) $, respectively, appearing in the definition of the test
statistics are $\lambda _{\mathbb{R}^{n}}$-null sets provided $k\leq
a_{2}-a_{1}$ holds, see Lemma \ref{lem_GLS}. [For the case of the
Yule-Walker estimator actually $N_{2}^{\ast
}(1,n)=N_{2}(1,n)=N_{1}(1,n)=N_{0}^{\ast }(1,n)=N_{0}(1,n)=\mathfrak{M}$
holds, because $\Lambda (\hat{\rho}_{YW}\left( y\right) )$ is positive
definite for every $y\notin N_{0}(1,n)=\mathfrak{M}$ in view of $\left\vert 
\hat{\rho}_{YW}\left( y\right) \right\vert <1$, cf. Remark \ref{rho_hat}(i).]

As already noted in Remark \ref{rho_hat}, except for the Yule-Walker
estimator we can not rule out that $\hat{\rho}\left( y\right) $ is larger
than one in absolute value. For such values of $y$ the matrix $\Lambda (\hat{%
\rho}\left( y\right) )$, although being nonsingular, is indefinite. [To see
this, note that $\det \Lambda (\hat{\rho}\left( y\right) )=(1-\hat{\rho}%
^{2}\left( y\right) )^{n-1}$, which is negative for $\left\vert \hat{\rho}%
\left( y\right) \right\vert >1$ if $n$ is even. Hence there must exist a
negative and a positive eigenvalue. For odd $n$ $>1$ the claim then follows
from Cauchy's interlacing theorem.] In fact, if $\left\vert \hat{\rho}\left(
y\right) \right\vert >1$ occurs for some $y$, then it occurs on a set of
positive $\lambda _{\mathbb{R}^{n}}$-measure in view of continuity of $\hat{%
\rho}$. As a consequence, $\tilde{\Omega}\left( y\right) $ and $\hat{\Omega}%
\left( y\right) $ are not guaranteed to be $\lambda _{\mathbb{R}^{n}}$%
-almost everywhere nonnegative definite (except if the Yule-Walker estimator
is being used), although they are $\lambda _{\mathbb{R}^{n}}$-almost
everywhere nonsingular in case $k\leq a_{2}-a_{1}$. Of course, the
probability of the event $\left\vert \hat{\rho}\left( y\right) \right\vert
>1 $ will go to zero as sample size goes to infinity, but this is not
relevant for the present finite-sample analysis and the complications
ensuing from $\left\vert \hat{\rho}\left( y\right) \right\vert >1$ have to
be dealt with. Fortunately, the theory in Subsection \ref{Impclass} does not
require the estimated variance covariance matrices to be nonnegative
definite almost everywhere but only requires some weaker properties to be
satisfied which are formalized in Assumptions \ref{omega1} and \ref{omega2}
in Subsection \ref{Impclass}. Lemma \ref{lem_GLS_2} in Appendix \ref{App_B}
shows that $\tilde{\Omega}$ and $\hat{\Omega}$ satisfy these assumptions.

The subsequent theorem provides a negative result that is similar in spirit
to Theorem \ref{thmlrv}.

\begin{theorem}
\label{thmGLS} Suppose Assumptions \ref{AAR(1)} and \ref{ARER} are satisfied
and $k\leq a_{2}-a_{1}$ holds. Let $W_{FGLS}(C)=\left\{ y\in \mathbb{R}%
^{n}:T_{FGLS}(y)\geq C\right\} $ and $W_{OLS}(C)=\left\{ y\in \mathbb{R}%
^{n}:T_{OLS}(y)\geq C\right\} $ be the rejection regions corresponding to
the test statistics $T_{FGLS}$ and $T_{OLS}$, respectively, where $C$ is a
real number satisfying $0<C<\infty $. Then the following holds:

\begin{enumerate}
\item Suppose $e_{+}\notin N_{2}^{\ast }\left( a_{1},a_{2}\right) $ and $%
T_{FGLS}(e_{+}+\mu _{0}^{\ast })>C$ hold for some (and hence all) $\mu
_{0}^{\ast }\in \mathfrak{M}_{0}$, or $e_{-}\notin N_{2}^{\ast }\left(
a_{1},a_{2}\right) $ and $T_{FGLS}(e_{-}+\mu _{0}^{\ast })>C$ hold for some
(and hence all) $\mu _{0}^{\ast }\in \mathfrak{M}_{0}$. Then%
\begin{equation*}
\sup\limits_{\Sigma \in \mathfrak{C}}P_{\mu _{0},\sigma ^{2}\Sigma }\left(
W_{FGLS}\left( C\right) \right) =1
\end{equation*}%
holds for every $\mu _{0}\in \mathfrak{M}_{0}$ and every $0<\sigma
^{2}<\infty $. In particular, the size of the test is equal to one.

\item Suppose $e_{+}\notin N_{2}^{\ast }\left( a_{1},a_{2}\right) $ and $%
T_{FGLS}(e_{+}+\mu _{0}^{\ast })<C$ hold for some (and hence all) $\mu
_{0}^{\ast }\in \mathfrak{M}_{0}$, or $e_{-}\notin N_{2}^{\ast }\left(
a_{1},a_{2}\right) $ and $T_{FGLS}(e_{-}+\mu _{0}^{\ast })<C$ hold for some
(and hence all) $\mu _{0}^{\ast }\in \mathfrak{M}_{0}$. Then 
\begin{equation*}
\inf_{\Sigma \in \mathfrak{C}}P_{\mu _{0},\sigma ^{2}\Sigma }\left(
W_{FGLS}\left( C\right) \right) =0
\end{equation*}%
holds for every $\mu _{0}\in \mathfrak{M}_{0}$ and every $0<\sigma
^{2}<\infty $, and hence%
\begin{equation*}
\inf_{\mu _{1}\in \mathfrak{M}_{1}}\inf_{\Sigma \in \mathfrak{C}}P_{\mu
_{1},\sigma ^{2}\Sigma }\left( W_{FGLS}\left( C\right) \right) =0
\end{equation*}%
holds for every $0<\sigma ^{2}<\infty $. In particular, the test is biased.
Furthermore, the nuisance-infimal rejection probability at every point $\mu
_{1}\in \mathfrak{M}_{1}$ is zero, i.e.,%
\begin{equation*}
\inf\limits_{0<\sigma ^{2}<\infty }\inf\limits_{\Sigma \in \mathfrak{C}%
}P_{\mu _{1},\sigma ^{2}\Sigma }(W_{FGLS}\left( C\right) )=0.
\end{equation*}%
In particular, the infimal power of the test is equal to zero.

\item Suppose that $e_{+}\in \mathfrak{M}$ and $R\hat{\beta}(e_{+})\neq 0$
hold. Then there exists a constant $K_{FGLS}\left( e_{+}\right) $, which
depends only on $e_{+}$, $R$, and $X$, such that for every $\mu _{0}\in 
\mathfrak{M}_{0}$, every $\sigma $ with $0<\sigma <\infty $, and every $%
M\geq 0$ we have%
\begin{equation*}
\inf_{\gamma \in \mathbb{R},\left\vert \gamma \right\vert \geq
M}\inf_{\Sigma \in \mathfrak{C}}P_{\mu _{0}+\gamma e_{+},\sigma ^{2}\Sigma
}\left( W_{FGLS}(C)\right) \leq K_{FGLS}\left( e_{+}\right) \leq
\sup_{\Sigma \in \mathfrak{C}}P_{\mu _{0},\sigma ^{2}\Sigma }\left(
W_{FGLS}(C)\right) ;
\end{equation*}%
Note that $\mu _{0}+\gamma e_{+}\in \mathfrak{M}_{1}$ for $\gamma \neq 0$.
Furthermore, if $\hat{\rho}\equiv \hat{\rho}_{YW}$, then $K_{FGLS}\left(
e_{+}\right) =1$ and hence 
\begin{equation}
\sup\limits_{\Sigma \in \mathfrak{C}}P_{\mu _{0},\sigma ^{2}\Sigma }\left(
W_{FGLS}\left( C\right) \right) =1  \label{same_conc}
\end{equation}%
holds for every $\mu _{0}\in \mathfrak{M}_{0}$ and every $0<\sigma
^{2}<\infty $. If $e_{-}\in \mathfrak{M}$ and $R\hat{\beta}(e_{-})\neq 0$
hold then the analogous statements hold with $e_{+}$ replaced by $e_{-}$
where the constant $K_{FGLS}\left( e_{-}\right) $ now depends only on $e_{-}$%
, $R$, and $X$.

\item Statements analogous to 1.-3. hold true if $T_{FGLS}$ is replaced by $%
T_{OLS}$, $W_{FGLS}\left( C\right) $ is replaced by $W_{OLS}(C)$, the set $%
N_{2}^{\ast }\left( a_{1},a_{2}\right) $ is replaced by $N_{0}^{\ast }\left(
a_{1},a_{2}\right) $, and the constants $K_{FGLS}\left( \cdot \right) $ are
replaced by constants $K_{OLS}\left( \cdot \right) $.
\end{enumerate}
\end{theorem}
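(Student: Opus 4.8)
The plan is to derive Theorem~\ref{thmGLS} by the same concentration mechanism that drives Theorem~\ref{thmlrv}, after checking that $\tilde\Omega$ and $\hat\Omega$ fit the abstract equivariance framework. Assumption~\ref{AAR(1)} supplies sequences $\Lambda(\rho_m)\in\mathfrak C$ with $\Lambda(\rho_m)\to e_+e_+'$ (as $\rho_m\to1$) and $\Lambda(\rho_m)\to e_-e_-'$ (as $\rho_m\to-1$), so $P_{\mu_0,\sigma^2\Lambda(\rho_m)}$ converges weakly to the singular Gaussian law $P_{\mu_0,\sigma^2e_\pm e_\pm'}$ supported on the line $L_\pm=\mu_0+\operatorname{span}(e_\pm)$. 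Everything hinges on the behaviour of $T_{FGLS}$ (resp.\ $T_{OLS}$) along $L_\pm$. First I would record, via Lemma~\ref{lem_GLS_2}, that $\tilde\Omega$ and $\hat\Omega$ satisfy Assumptions~\ref{omega1} and~\ref{omega2}, so that the continuity/invariance machinery of Subsection~\ref{Impclass} (the analogue of Lemma~\ref{aux_100}) applies and the rejection probabilities depend on $(\mu,\sigma^2,\Sigma)$ only through $\bigl((R\beta-r)/\sigma,\Sigma\bigr)$.

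For Parts~1,~2 (and the $T_{OLS}$ half of Part~4) the hypothesis $e_\pm\notin N_2^{\ast}$ (resp.\ $e_\pm\notin N_0^{\ast}$) forces $e_\pm\notin\mathfrak M$, since $\mathfrak M\subseteq N_0\subseteq N_1\subseteq N_2\subseteq N_2^{\ast}$. Hence along $L_\pm$ the residual $\hat u(\mu_0+se_\pm)=s\,\Pi_{\mathfrak M^{\perp}}e_\pm$ is nonzero for $s\neq0$; by degree-zero homogeneity $\hat\rho$ is constant there, and a direct computation shows $R\tilde\beta-r$ scales like $s$ while $\tilde\sigma^2$ scales like $s^2$, so the factors cancel and $T_{FGLS}(\mu_0+se_\pm)\equiv T_{FGLS}(e_\pm+\mu_0^{\ast})$ for all $s\neq0$ (identically for $T_{OLS}$). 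Using continuity of $\hat\rho$ on $\mathbb R^n\setminus N_0$ (Lemma~\ref{LDEF}) and nonsingularity of the relevant matrices off $N_2^{\ast}$ (Lemma~\ref{lem_GLS}), $T_{FGLS}$ is continuous on an open neighbourhood of $L_\pm\setminus\{s=0\}$, which carries full limiting mass. The Portmanteau theorem then upgrades constancy to convergence of $P_{\mu_0,\sigma^2\Lambda(\rho_m)}(W_{FGLS}(C))$ to $1$ when $T_{FGLS}(e_\pm+\mu_0^{\ast})>C$ (Part~1) and to $0$ when $T_{FGLS}(e_\pm+\mu_0^{\ast})<C$ (Part~2). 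The remaining Part~2 assertions follow from this first claim and the invariance above, driving the standardized signal $(R\beta-r)/\sigma\to0$ along the offending covariance sequence.

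Part~3 is the genuinely different and hardest case. Here $e_+\in\mathfrak M$, so $L_+\subseteq\mathfrak M\subseteq N_2^{\ast}$, the statistic carries the arbitrary value $0$ on $L_+$, and the weak limit sits exactly where $T_{FGLS}$ is arbitrary---so the Portmanteau argument is unavailable (this is the failure of the naive heuristic of Subsection~\ref{prelim}). Instead I would pass to the reduced experiment on $\mathfrak M^{\perp}$: writing $\mathbf U=\sigma\mathbf a\,e_++\sigma\mathbf W_\rho$ with $\mathbf a$ the $e_+$-coefficient and $\mathbf W_\rho\perp e_+$, one gets $\hat u(\mathbf Y)=\sigma\Pi_{\mathfrak M^{\perp}}\mathbf W_\rho=O_p(\sqrt{1-\rho})\to0$ while $R\tilde\beta(\mathbf Y)-r\to\sigma\mathbf a\,R\hat\beta(e_+)\neq0$ almost surely. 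For the Yule--Walker estimator, $\sup|\hat\rho_{YW}|<1$ (Remark~\ref{rho_hat}(i)) keeps $\Lambda^{-1}(\hat\rho)$ bounded, so $\tilde\sigma^2\to0$ and $T_{FGLS}\to\infty$ in probability, giving $K_{FGLS}(e_+)=1$ and size one. For general $\hat\rho$ one cannot exclude $|\hat\rho|\ge1$ (whence $\Lambda(\hat\rho)$ is indefinite), and \emph{this is the main obstacle}: I would define $K_{FGLS}(e_+)$ as the limiting null rejection probability along $\rho\to1$, its existence and dependence on $e_+,R,X$ alone being secured by joint weak convergence of the reduced statistics plus a no-boundary-mass argument, both licensed by Assumptions~\ref{omega1},~\ref{omega2} through Lemma~\ref{lem_GLS_2}. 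The bound $K_{FGLS}(e_+)\le\sup_\Sigma P_{\mu_0,\sigma^2\Sigma}$ is immediate since each approximating term is dominated by the supremum; the lower bound exploits that, as $e_+\in\mathfrak M$, the shift $\gamma e_+$ is invisible to the residuals and hence to $\hat\rho$ and $\tilde\sigma^2$, so the far-alternative rejection probabilities share the same concentration limit and can be brought to $K_{FGLS}(e_+)$.

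Finally, Part~4 runs in parallel: the line-constancy and continuity arguments of Parts~1--2 transfer verbatim with $\hat\beta,\hat\sigma^2,\hat\Omega,N_0^{\ast}$ replacing $\tilde\beta,\tilde\sigma^2,\tilde\Omega,N_2^{\ast}$, and the Part~3 analysis is identical except that $\hat\sigma^2=(n-k)^{-1}\|\hat u\|^2\to0$ automatically (no $\Lambda^{-1}$ enters the scale), so for Yule--Walker $T_{OLS}\to\infty$ and $K_{OLS}(e_+)=1$. I expect the reduced-experiment bookkeeping for non-Yule--Walker $\hat\rho$---controlling the statistic when $\Lambda(\hat\rho)$ fails to be positive definite---to be the only step demanding real care, everything else reducing to homogeneity, continuity, and Portmanteau.
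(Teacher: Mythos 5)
Your proposal follows essentially the same route as the paper: Parts 1, 2, and the corresponding portion of Part 4 are exactly an application of the concentration/Portmanteau mechanism that the paper packages as Corollary \ref{CW} (after verifying Assumption \ref{AE} via Lemma \ref{lem_GLS_1} and Assumptions \ref{omega1}--\ref{omega2} via Lemma \ref{lem_GLS_2}), and your observation that $e_{\pm}\notin N_{2}^{\ast}$ forces $e_{\pm}\notin\mathfrak{M}$, so that $T$ is constant and continuous on $\left(\mu_{0}+\limfunc{span}(e_{\pm})\right)\setminus\{\mu_{0}\}$, is precisely the content of Remark \ref{remnew}(i) combined with Lemma \ref{LWT}. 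For Part 3 you correctly identify both the obstruction (the concentration line lies inside $N_{2}^{\ast}$, so the first-order Portmanteau argument is unavailable) and the remedy (a second-order analysis of the fluctuations orthogonal to $\limfunc{span}(e_{+})$, rescaled by $s_{m}$, with the no-boundary-mass step supplied by Assumption \ref{omega2}); this is exactly the paper's Theorem \ref{prop_101}, proved via Lemma \ref{lem_100}. The one place your proposal falls short of a proof is that you defer rather than execute this limit theorem for non--Yule--Walker $\hat{\rho}$: the existence and identification of the limiting rejection probabilities, the joint weak convergence of the rescaled numerator and denominator, and the reason the upper bound for the far-alternative power coincides with the lower bound for the size (in the paper, $K_{1}=K_{2}$ because $\bar{\xi}(\gamma)=\gamma^{2}\bar{\xi}(1)$ when $l=1$, so $\Pr(\bar{\xi}(\gamma)\geq 0)$ is constant in $\gamma\neq 0$) all require the bookkeeping of Lemma \ref{lem_100}, which you flag as "demanding real care" but do not supply. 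Your direct argument for the Yule--Walker case ($\sup|\hat{\rho}_{YW}|<1$ keeps $\Lambda^{-1}(\hat{\rho})$ bounded, hence $\tilde{\sigma}^{2}\rightarrow 0$ and $T_{FGLS}\rightarrow\infty$) is a valid and somewhat more elementary substitute for the paper's observation that $\tilde{\Omega}$ is then positive definite off $\mathfrak{M}$, forcing $K_{1}=K_{2}=1$.
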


The meaning of Parts 1 and 2 of the preceding theorem is similar to the
meaning of the corresponding parts of Theorem \ref{thmlrv}. We note that in
the case where the Yule-Walker estimator $\hat{\rho}_{YW}$ is used the
exceptional null sets appearing in Parts 1 and 2 (and in the corresponding
portion of Part 4) satisfy $N_{2}^{\ast }\left( 1,n\right) =N_{0}^{\ast
}\left( 1,n\right) =\mathfrak{M}$. Part 3 differs somewhat from the
corresponding part of the earlier theorem, and tells us that, given the
conditions in Part 3 are met, there exist points in the alternative,
arbitrarily far away from the null hypothesis, at which power is not larger
than the size of the test. The reason for the difference between Part 3 of
Theorem \ref{thmlrv} and Part 3 of the preceding theorem lies in the fact
that the variance covariance matrix estimator $\tilde{\Omega}$ used in the
present subsection can be indefinite and that the concentration direction $%
e_{+}$ ($e_{-}$, respectively) belongs to the null set on which $\tilde{%
\Omega}$ is not defined. This requires one in the proof of the preceding
theorem to resort to Theorem \ref{prop_101} rather than to using Part 3 of
Corollary \ref{CW} (even when the Yule-Walker estimator $\hat{\rho}_{YW}$ is
used). A similar remark applies also to the corresponding portion of Part 4
of the preceding theorem. In view of the general results in Subsection \ref%
{Impclass} there is little doubt that similar negative results can also be
obtained for FGLS or OLS\ based tests that are constructed on the basis of
higher order autoregressive AR models or of other more profligate parametric
models (as long as $\mathfrak{C}\supseteq \mathfrak{C}_{AR(1)}$ is assumed).
Hence it is to be expected that autocorrelation robust tests based on
autoregressive estimates (cf. \cite{Berk1974}, \cite{HL97}, \cite%
{SunKapl2012}) will also suffer from severe size and power problems.

The results given in the preceding theorem reveal serious size and power
problems of the tests based on $T_{FGLS}$ and $T_{OLS}$. Note that these
problems arise even if $\mathfrak{C}=\mathfrak{C}_{AR(1)}$, i.e., even if
the construction of the test statistics makes use of the correct covariance
model. If $\mathfrak{C}=\mathfrak{C}_{AR(1)}$ holds, it is interesting to
contrast the above results with the size and power properties of the
corresponding infeasible tests based on $T_{GLS}^{\ast }$ and $T_{OLS}^{\ast
}$ which are defined in a similar way as $T_{FGLS}$ and $T_{OLS}$ are, but
with $\hat{\rho}$ replaced by the true value of $\rho $: These tests are
standard $F$-tests (except for not being standardized by $q$), have
well-known and reasonable size and power properties, and do not suffer from
the size and power problems exhibited by their feasible counterparts.

Similar to the situation in Subsection \ref{HAR}, the conditions in Parts
1-3 of the preceding theorem only depend on $a_{1}$ and $a_{2}$ (i.e., on
the choice of estimator $\hat{\rho}$), the design matrix $X$, the
restriction $\left( R,r\right) $, the vector $e_{+}$ ($e_{-}$,
respectively), and the critical value $C$. Hence, in any particular
application it can be decided whether or not (and which of) these conditions
are satisfied. We furthermore note that remarks analogous to Remarks \ref%
{rem_thmlrv} and \ref{rem_thmlrv_2} also apply mutatis mutandis to the
preceding theorem. We also note that a result analogous to Theorem \ref%
{thmlrv2} could be given here, but we do not spell out the details.

We next show that the conditions of Theorem \ref{thmGLS} involving the
design matrix $X$ are generically satisfied. The first part of the
subsequent proposition shows that these conditions are generically satisfied
in the class of all possible design matrices of rank $k$. Parts 2 and 3 show
a corresponding result if we impose that the regression model has to contain
an intercept. In the proposition the dependence of several quantities like $%
T_{FGLS}$, $T_{OLS}$, $N_{2}^{\ast }\left( a_{1},a_{2}\right) $, etc on the
design matrix $X$ will be important and thus we shall write $T_{FGLS,X}$, $%
T_{OLS,X}$, $N_{2,X}^{\ast }\left( a_{1},a_{2}\right) $, etc for these
quantities in the result to follow.

\begin{proposition}
\label{generic_2}Suppose Assumption \ref{AAR(1)} holds. Fix $\left(
R,r\right) $ with $\limfunc{rank}\left( R\right) =q$, fix $0<C<\infty $, and
fix $a_{1}\in \left\{ 1,2\right\} $ and $a_{2}\in \left\{ n-1,n\right\} $ in
Assumption \ref{ARER}. Suppose\ $k\leq a_{2}-a_{1}$ holds. Let $T_{FGLS,X}$
and $T_{OLS,X}$ be the test statistics defined above and let $\mu _{0}^{\ast
}\in \mathfrak{M}_{0}$ be arbitrary.

\begin{enumerate}
\item With $\mathfrak{X}_{0}$ defined in Proposition \ref{generic} define
now 
\begin{eqnarray*}
\mathfrak{X}_{1,FGLS}\left( e_{+}\right) &=&\left\{ X\in \mathfrak{X}%
_{0}:e_{+}\in N_{2,X}^{\ast }\left( a_{1},a_{2}\right) \right\} , \\
\mathfrak{X}_{2,FGLS}\left( e_{+}\right) &=&\left\{ X\in \mathfrak{X}%
_{0}\backslash \mathfrak{X}_{1,FGLS}\left( e_{+}\right)
:T_{FGLS,X}(e_{+}+\mu _{0}^{\ast })=C\right\} ,
\end{eqnarray*}%
and similarly define $\mathfrak{X}_{1,FGLS}\left( e_{-}\right) $, $\mathfrak{%
X}_{2,FGLS}\left( e_{-}\right) $. [Note that $\mathfrak{X}_{2,FGLS}\left(
e_{+}\right) $ and $\mathfrak{X}_{2,FGLS}\left( e_{-}\right) $ do not depend
on the choice of $\mu _{0}^{\ast }$.] Then $\mathfrak{X}_{1,FGLS}\left(
e_{+}\right) $ and $\mathfrak{X}_{1,FGLS}\left( e_{-}\right) $ are $\lambda
_{\mathbb{R}^{n\times k}}$-null sets. The same is true for $\mathfrak{X}%
_{2,FGLS}\left( e_{+}\right) $ ($\mathfrak{X}_{2,FGLS}\left( e_{-}\right) $,
respectively) under the provision that it is a proper subset of $\mathfrak{X}%
_{0}\backslash \mathfrak{X}_{1,FGLS}\left( e_{+}\right) $ ($\mathfrak{X}%
_{0}\backslash \mathfrak{X}_{1,FGLS}\left( e_{-}\right) $, respectively).
The set of all design matrices $X\in \mathfrak{X}_{0}$ for which Theorem \ref%
{thmGLS} does not apply is a subset of 
\begin{equation*}
\left( \mathfrak{X}_{1,FGLS}\left( e_{+}\right) \cup \mathfrak{X}%
_{2,FGLS}\left( e_{+}\right) \right) \cap \left( \mathfrak{X}_{1,FGLS}\left(
e_{-}\right) \cup \mathfrak{X}_{2,FGLS}\left( e_{-}\right) \right) .
\end{equation*}%
Hence it is a $\lambda _{\mathbb{R}^{n\times k}}$-null set provided the
preceding provision holds for at least one of $\mathfrak{X}_{2,FGLS}\left(
e_{+}\right) $ or $\mathfrak{X}_{2,FGLS}\left( e_{-}\right) $; it thus is a
"negligible" subset of $\mathfrak{X}_{0}$ in view of the fact that $%
\mathfrak{X}_{0}$ differs from$\ \mathbb{R}^{n\times k}$ only by a $\lambda
_{\mathbb{R}^{n\times k}}$-null set.

\item Suppose $k\geq 2$ and $n\geq 4$ hold and suppose $X$ has $e_{+}$ as
its first column, i.e., $X=\left( e_{+},\tilde{X}\right) $. With $\mathfrak{%
\tilde{X}}_{0}$ defined in Proposition \ref{generic} define%
\begin{eqnarray*}
\mathfrak{\tilde{X}}_{1,FGLS}\left( e_{-}\right) &=&\left\{ \tilde{X}\in 
\mathfrak{\tilde{X}}_{0}:e_{-}\in N_{2,\left( e_{+},\tilde{X}\right) }^{\ast
}\left( a_{1},a_{2}\right) \right\} , \\
\mathfrak{\tilde{X}}_{2,FGLS}\left( e_{-}\right) &=&\left\{ \tilde{X}\in 
\mathfrak{\tilde{X}}_{0}\backslash \mathfrak{\tilde{X}}_{1,FGLS}\left(
e_{-}\right) :T_{FGLS,\left( e_{+},\tilde{X}\right) }(e_{-}+\mu _{0}^{\ast
})=C\right\} ,
\end{eqnarray*}%
and note that $\mathfrak{\tilde{X}}_{2,FGLS}\left( e_{-}\right) $ does not
depend on the choice of $\mu _{0}^{\ast }$. Then $\mathfrak{\tilde{X}}%
_{1,FGLS}\left( e_{-}\right) $ is a $\lambda _{\mathbb{R}^{n\times \left(
k-1\right) }}$-null set. The set $\mathfrak{\tilde{X}}_{2,FGLS}\left(
e_{-}\right) $ is a $\lambda _{\mathbb{R}^{n\times \left( k-1\right) }}$%
-null set under the provision that it is a proper subset of $\mathfrak{%
\tilde{X}}_{0}\backslash \mathfrak{\tilde{X}}_{1,FGLS}\left( e_{-}\right) $.
[The analogously defined sets $\mathfrak{\tilde{X}}_{1,FGLS}\left(
e_{+}\right) $ and $\mathfrak{\tilde{X}}_{2,FGLS}\left( e_{+}\right) $
satisfy $\mathfrak{\tilde{X}}_{1,FGLS}\left( e_{+}\right) =\mathfrak{\tilde{X%
}}_{0}$ and $\mathfrak{\tilde{X}}_{2,FGLS}\left( e_{+}\right) =\emptyset $.]
The set of all matrices $\tilde{X}\in \mathfrak{\tilde{X}}_{0}$ such that
Theorem \ref{thmGLS} does not apply to the design matrix $X=\left( e_{+},%
\tilde{X}\right) $ is a subset of $\mathfrak{\tilde{X}}_{1,FGLS}\left(
e_{-}\right) \cup \mathfrak{\tilde{X}}_{2,FGLS}\left( e_{-}\right) $ and
hence is a $\lambda _{\mathbb{R}^{n\times \left( k-1\right) }}$-null set
under the preceding provision; it thus is a "negligible" subset of $%
\mathfrak{\tilde{X}}_{0}$ in view of the fact that $\mathfrak{\tilde{X}}_{0}$
differs from$\ \mathbb{R}^{n\times \left( k-1\right) }$ only by a $\lambda _{%
\mathbb{R}^{n\times \left( k-1\right) }}$-null set.

\item Define $\mathfrak{X}_{1,OLS}\left( \cdot \right) $ and $\mathfrak{X}%
_{2,OLS}\left( \cdot \right) $ analogously, but with $N_{0,X}^{\ast }\left(
a_{1},a_{2}\right) $ replacing $N_{2,X}^{\ast }\left( a_{1},a_{2}\right) $
and $T_{OLS,X}$ replacing $T_{FGLS,X}$. Similarly define $\mathfrak{\tilde{X}%
}_{1,OLS}\left( \cdot \right) $ and $\mathfrak{\tilde{X}}_{2,OLS}\left(
\cdot \right) $. Then Part 1 (Part 2, respectively) holds analogously for $%
\mathfrak{X}_{1,OLS}\left( \cdot \right) $ and $\mathfrak{X}_{2,OLS}\left(
\cdot \right) $ ($\mathfrak{\tilde{X}}_{1,OLS}\left( \cdot \right) $ and $%
\mathfrak{\tilde{X}}_{2,OLS}\left( \cdot \right) $, respectively) with
obvious changes.

\item Suppose $X=\left( e_{+},\tilde{X}\right) $, and suppose the first
column of $R$ is nonzero. Then Part 3 of Theorem \ref{thmGLS} applies to the
design matrix $X=\left( e_{+},\tilde{X}\right) $ for every $\tilde{X}\in 
\mathfrak{\tilde{X}}_{0}$ (for the FGLS- as well as for the OLS-based test).
\end{enumerate}
\end{proposition}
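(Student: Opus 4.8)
The plan is to follow the template of the proof of Proposition \ref{generic}, reducing every genericity claim to the standard fact that a polynomial on $\mathbb{R}^{n\times k}$ (or $\mathbb{R}^{n\times(k-1)}$) which does not vanish identically has a Lebesgue-null zero set. The enabling observation is that, once $y$ is frozen at $e_+$ or $e_-$, all objects defining the two test statistics become rational functions of the entries of $X$: the residual $\hat u(e_\pm)=\Pi_{\limfunc{span}(X)^{\bot}}e_\pm$ is rational in $X$ (with denominator a power of $\det(X'X)$), hence so is $\hat\rho(e_\pm)$ by Assumption \ref{ARER}; since the entries of $\Lambda^{-1}(\rho)$ are rational in $\rho$ with denominator $1-\rho^2$, the quantities $\tilde\beta(e_\pm)$, $\tilde\sigma^2(e_\pm)$, $\tilde\Omega(e_\pm)$ and thus $T_{FGLS,X}(e_\pm+\mu_0^*)$ are rational in $X$ on the open set where they are well-defined, and the same holds for the OLS objects $\hat\sigma^2(e_\pm)$, $\hat\Omega(e_\pm)$, $T_{OLS,X}(e_\pm+\mu_0^*)$. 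Consequently, after clearing denominators, each exceptional set in the proposition is contained in the zero locus of a polynomial in the entries of $X$, and it suffices to exhibit, in each case, a single design matrix at which that polynomial does not vanish.

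For the sets $\mathfrak{X}_{1,FGLS}(e_\pm)$ I would unfold the condition $e_\pm\in N_{2,X}^*(a_1,a_2)$ into the disjunction of its constituent degeneracies, namely $e_\pm\in N_1(a_1,a_2)$ (which by Lemma \ref{LDEF} amounts to $|\sum_{t=2}^{n}\hat u_t(e_\pm)\hat u_{t-1}(e_\pm)|=\sum_{t=a_1}^{a_2}\hat u_t^2(e_\pm)$), the singularity of $X'\Lambda^{-1}(\hat\rho(e_\pm))X$, the vanishing of $\tilde\sigma^2(e_\pm)$, and the singularity of $R(X'\Lambda^{-1}(\hat\rho(e_\pm))X)^{-1}R'$; each is a polynomial condition on $X$. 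To conclude that $\mathfrak{X}_{1,FGLS}(e_\pm)$ is null it is then enough to produce one full-rank $X$ for which none of these degeneracies occurs, i.e. for which $e_\pm\notin N_{2,X}^*(a_1,a_2)$. For $\mathfrak{X}_{2,FGLS}(e_\pm)$ I would observe that on $\mathfrak{X}_0\setminus\mathfrak{X}_{1,FGLS}(e_\pm)$ the map $X\mapsto T_{FGLS,X}(e_\pm+\mu_0^*)$ is rational, so $\{T_{FGLS,X}(e_\pm+\mu_0^*)=C\}$ is cut out by the polynomial obtained by clearing the denominator from $T_{FGLS,X}(e_\pm+\mu_0^*)-C$; the stated provision that $\mathfrak{X}_{2,FGLS}(e_\pm)$ be a proper subset of $\mathfrak{X}_0\setminus\mathfrak{X}_{1,FGLS}(e_\pm)$ is precisely nontriviality of this polynomial, whence the set is null. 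The inclusion of the failure set in the stated intersection follows from the logical structure of Theorem \ref{thmGLS}: if $X\notin\mathfrak{X}_{1,FGLS}(e_+)\cup\mathfrak{X}_{2,FGLS}(e_+)$ then $T_{FGLS,X}(e_++\mu_0^*)$ is well-defined and $\neq C$, so Part 1 of that theorem applies if it exceeds $C$ and Part 2 if it falls below $C$; the same dichotomy holds for $e_-$, so the theorem fails only when $X$ lies in both unions simultaneously.

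For Part 2 I would first record the deterministic fact that, when $X=(e_+,\tilde X)$, one has $e_+\in\limfunc{span}(X)$, so $\hat u(e_+)=0$ and hence $e_+\in N_0(a_1,a_2)\subseteq N_{2,X}^*(a_1,a_2)$ for every $\tilde X$; this yields $\mathfrak{\tilde{X}}_{1,FGLS}(e_+)=\mathfrak{\tilde{X}}_0$ and $\mathfrak{\tilde{X}}_{2,FGLS}(e_+)=\emptyset$, so Parts 1--2 of Theorem \ref{thmGLS} can never fire through $e_+$ and the analysis must proceed through $e_-$. The treatment of $\mathfrak{\tilde{X}}_{1,FGLS}(e_-)$ and $\mathfrak{\tilde{X}}_{2,FGLS}(e_-)$ then repeats Part 1 verbatim on the parameter space $\mathfrak{\tilde{X}}_0\subseteq\mathbb{R}^{n\times(k-1)}$, the hypotheses $k\geq 2$, $n\geq 4$, and $k\leq a_2-a_1$ being what is needed to secure a nondegenerate witness $\tilde X$, and the failure-set inclusion follows as before (when the first column of $R$ is nonzero, Part 3 of the theorem additionally fires through $e_+$, which only shrinks the failure set). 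Part 3 of the proposition is identical with $N_{0,X}^*$, $T_{OLS,X}$, and $\hat\Omega$ in place of $N_{2,X}^*$, $T_{FGLS,X}$, and $\tilde\Omega$. Finally, Part 4 is purely deterministic: for $X=(e_+,\tilde X)$ we have $e_+\in\mathfrak{M}$ and $\hat\beta(e_+)=e_1(k)$, so $R\hat\beta(e_+)$ equals the first column of $R$, which is nonzero by hypothesis; thus the premises of Part 3 of Theorem \ref{thmGLS} hold and that part applies directly.

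The step I expect to be the main obstacle is the witness construction underlying the nontriviality of $\mathfrak{X}_{1,FGLS}(e_\pm)$ (and its tilde analogue): one must exhibit, in each relevant parameter space, an explicit full-rank matrix at which $\hat\rho(e_\pm)$ is well-defined with $|\hat\rho(e_\pm)|\neq 1$, $X'\Lambda^{-1}(\hat\rho(e_\pm))X$ is invertible, $\tilde\sigma^2(e_\pm)\neq 0$, and $R(X'\Lambda^{-1}(\hat\rho(e_\pm))X)^{-1}R'$ is nonsingular, all at once. The difficulty is that these conditions are coupled through the single scalar $\hat\rho(e_\pm)$ and that the pole locus $\{1-\hat\rho^2=0\}$ of $\Lambda^{-1}$ must be kept away so that the rational expressions are genuinely defined on an open set; the dimension bound $k\leq a_2-a_1$ (and $n\geq 4$ in Part 2) is exactly what guarantees enough residual degrees of freedom to build such a witness while keeping $X$ full rank.
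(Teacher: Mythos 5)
Your overall strategy is the paper's: freeze $y$ at $e_\pm$, observe that everything is rational in the entries of $X$ with denominators that are powers of $\det(X'X)$ and of $\sum_{t=a_1}^{a_2}\hat u_t^2(e_\pm)$, clear denominators to land in the zero locus of a multivariate polynomial, and then exhibit one witness matrix where the polynomial does not vanish. The reduction of $e_\pm\in N_{2,X}^{\ast}(a_1,a_2)$ to a disjunction of polynomial degeneracies, the use of the properness provision as exactly the nontriviality of the polynomial cutting out $\mathfrak{X}_{2,FGLS}(e_\pm)$, the observation that $e_+\in\limfunc{span}(X)$ forces $\mathfrak{\tilde{X}}_{1,FGLS}(e_+)=\mathfrak{\tilde{X}}_0$, and the deterministic Part 4 all match the paper's argument.

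The genuine gap is the one you yourself flag: the witness construction is not carried out, and it is the only substantive step. The paper's device for decoupling the conditions is to choose $X$ so that the residual vector $\hat u_X(e_\pm)$ is nonzero but has \emph{vanishing first and last coordinates}. Concretely, for $e_+$ and $n\geq 3$ take the first column of $X^{\ast}$ to be $(1,0,\ldots,0,1)'$ and the remaining $k-1$ columns linearly independent in the orthogonal complement of that vector and of $e_+$; then $\hat u_{X^{\ast}}(e_+)=(0,1,\ldots,1,0)'$. Because the first and last residuals vanish, the four choices of $(a_1,a_2)$ all give the \emph{same} value of $\hat\rho$, namely the Yule--Walker value, which satisfies $|\hat\rho_{YW}|<1$ unconditionally (Remark \ref{rho_hat}(i)); hence $\Lambda(\hat\rho(e_+))$ is positive definite, and all four nondegeneracies ($e_+\notin N_1$, invertibility of $X'\Lambda^{-1}X$, $\tilde\sigma^2>0$ since $e_+\notin\limfunc{span}(X^{\ast})$, and nonsingularity of $R(X'\Lambda^{-1}X)^{-1}R'$) follow simultaneously rather than having to be arranged separately. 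The residual case $n=2$ forces $k=1$ and, via $k\leq a_2-a_1$, forces $a_1=1$, $a_2=n$, i.e.\ Yule--Walker, where $N_{2,X}^{\ast}=\limfunc{span}(X)$ and any $X$ independent of $e_+$ works. For $e_-$ one uses $(-1,0,\ldots,0,(-1)^n)'$ as the first column, and for Part 2 the witness is $X^{\ast}=(e_+,a,\ldots)$ with $a=(-1,0,\ldots,0,(-1)^n)'$ and the remaining columns orthogonal to $e_+$, $e_-$, and $a$; here $n\geq 4$ is what guarantees $\hat u_{X^{\ast}}(e_-)\neq 0$. Without this endpoint-vanishing trick your conditions really are coupled through the single scalar $\hat\rho(e_\pm)$, as you note, and there is no obvious alternative way to certify $|\hat\rho(e_\pm)|\neq 1$ for the non--Yule--Walker estimators, which are not bounded in modulus in general. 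So the proposal is a correct skeleton, but the proof is incomplete until this construction (or an equivalent one) is supplied.
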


The preceding genericity result maintains in Part 1 the provision that $%
\mathfrak{X}_{2,FGLS}\left( e_{+}\right) $ is a proper subset of $\mathfrak{X%
}_{0}\backslash \mathfrak{X}_{1,FGLS}\left( e_{+}\right) $ or that $%
\mathfrak{X}_{2,FGLS}\left( e_{-}\right) $ is a proper subset of $\mathfrak{X%
}_{0}\backslash \mathfrak{X}_{1,FGLS}\left( e_{-}\right) $. Note that the
provision depends on the critical value $C$. If the provision is satisfied
for the given $C$, we can conclude from Part 1 that the set of all design
matrices $X\in \mathfrak{X}_{0}$ for which Theorem \ref{thmGLS} is not
applicable to the test statistic $T_{FGLS}$ is "negligible". If the
provision is not satisfied, i.e., if $\mathfrak{X}_{2,FGLS}\left(
e_{+}\right) =\mathfrak{X}_{0}\backslash \mathfrak{X}_{1,FGLS}\left(
e_{+}\right) $ \emph{and }$\mathfrak{X}_{2,FGLS}\left( e_{-}\right) =%
\mathfrak{X}_{0}\backslash \mathfrak{X}_{1,FGLS}\left( e_{-}\right) $ holds,
and thus we cannot draw the desired conclusion for the given value of $C$,
we immediately see that the provision must then be satisfied for any \emph{%
other} choice $C^{\prime }$ of the critical value; hence, negligibility of
the set of design matrices for which Theorem \ref{thmGLS} is not applicable
to the test statistic $T_{FGLS}$ can then be concluded for any $C^{\prime
}\neq C$. Summarizing we see that the provision is always satisfied except
possible for one particular choice of the critical value. A similar comment
applies to Parts 2 and 3 of the proposition.\footnote{%
For example, if $T_{OLS}$ is used, $a_{1}=1$, $a_{2}=n$ (Yule-Waker
estimator), and $X$ is not restricted to be of the form $\left( e_{+},\tilde{%
X}\right) $, it is not difficult to show that the provision is in fact
satisfied for $\emph{every}$ choice of $C$. This can also be shown for other
choices of $a_{1}$ and $a_{2}$ and/or for the case where $X=\left( e_{+},%
\tilde{X}\right) $ under additional assumptions on $R$. It may actually be
true in general, but we do not want to pursue this.}

Similar as in Subsection \ref{HAR}, we next discuss an exceptional case to
which Theorem \ref{thmGLS} does not apply and which allows for a positive
result, at least if the covariance model $\mathfrak{C}$ is assumed to be $%
\mathfrak{C}_{AR(1)}$ or is approximated by $\mathfrak{C}_{AR(1)}$ near the
singular points (in the sense of Remark \ref{rem_appr}(i)).

\begin{theorem}
\label{excGLS2} Suppose $\mathfrak{C}=\mathfrak{C}_{AR(1)}$, Assumption \ref%
{ARER} is satisfied, and $k\leq a_{2}-a_{1}$ holds. Let $W_{FGLS}(C)=\left\{
y\in \mathbb{R}^{n}:T_{FGLS}(y)\geq C\right\} $ and $W_{OLS}(C)=\left\{ y\in 
\mathbb{R}^{n}:T_{OLS}(y)\geq C\right\} $ be the rejection regions
corresponding to the test statistics $T_{FGLS}$ and $T_{OLS}$, respectively,
where $C$ is a real number satisfying $0<C<\infty $. If $e_{+},e_{-}\in 
\mathfrak{M}$ and $R\hat{\beta}(e_{+})=R\hat{\beta}(e_{-})=0$ is satisfied,
then the following holds for $W(C)=W_{FGLS}(C)$ as well as $W(C)=W_{OLS}(C)$:

\begin{enumerate}
\item The size of the rejection region $W(C)$ is strictly less than $1$,
i.e.,%
\begin{equation*}
\sup\limits_{\mu _{0}\in \mathfrak{M}_{0}}\sup\limits_{0<\sigma ^{2}<\infty
}\sup\limits_{-1<\rho <1}P_{\mu _{0},\sigma ^{2}\Lambda (\rho )}\left(
W(C)\right) <1.
\end{equation*}%
Furthermore,%
\begin{equation*}
\inf_{\mu _{0}\in \mathfrak{M}_{0}}\inf_{0<\sigma ^{2}<\infty }\inf_{-1<\rho
<1}P_{\mu _{0},\sigma ^{2}\Lambda (\rho )}\left( W(C)\right) >0.
\end{equation*}

\item The infimal power is bounded away from zero, i.e., 
\begin{equation*}
\inf_{\mu _{1}\in \mathfrak{M}_{1}}\inf\limits_{0<\sigma ^{2}<\infty
}\inf\limits_{-1<\rho <1}P_{\mu _{1},\sigma ^{2}\Lambda (\rho )}(W(C))>0.
\end{equation*}

\item Suppose that $a_{1}=1$ and $a_{2}=n$. Then for every $0<c<\infty $%
\begin{equation*}
\inf_{\substack{ \mu _{1}\in \mathfrak{M}_{1},0<\sigma ^{2}<\infty  \\ %
d\left( \mu _{1},\mathfrak{M}_{0}\right) /\sigma \geq c}}P_{\mu _{1},\sigma
^{2}\Lambda (\rho _{m})}(W(C))\rightarrow 1
\end{equation*}%
holds for $m\rightarrow \infty $ and for any sequence $\rho _{m}\in (-1,1)$
satisfying $\left\vert \rho _{m}\right\vert \rightarrow 1$. Furthermore, for
every sequence $0<c_{m}<\infty $ and every $0<\varepsilon <1$%
\begin{equation*}
\inf_{\substack{ \mu _{1}\in \mathfrak{M}_{1},  \\ d\left( \mu _{1},%
\mathfrak{M}_{0}\right) \geq c_{m}}}\inf_{-1+\varepsilon \leq \rho \leq
1-\varepsilon }P_{\mu _{1},\sigma _{m}^{2}\Lambda (\rho )}(W(C))\rightarrow 1
\end{equation*}%
holds for $m\rightarrow \infty $ whenever $0<\sigma _{m}^{2}<\infty $ and $%
c_{m}/\sigma _{m}\rightarrow \infty $. [The very last statement holds even
without the conditions $e_{+},e_{-}\in \mathfrak{M}$ and $R\hat{\beta}%
(e_{+})=R\hat{\beta}(e_{-})=0$.]

\item For every $\delta $, $0<\delta <1$, there exists a $C(\delta )$, $%
0<C(\delta )<\infty $, such that%
\begin{equation*}
\sup\limits_{\mu _{0}\in \mathfrak{M}_{0}}\sup\limits_{0<\sigma ^{2}<\infty
}\sup\limits_{-1<\rho <1}P_{\mu _{0},\sigma ^{2}\Lambda (\rho )}(W(C(\delta
)))\leq \delta .
\end{equation*}
\end{enumerate}
\end{theorem}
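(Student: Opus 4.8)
The plan is to reduce the whole theorem to the behaviour of the single function $f(\rho)=P_{\mu_0,\sigma^2\Lambda(\rho)}(W(C))$ of $\rho\in(-1,1)$ and to follow the template of the proof of Theorem \ref{TU_2}, the only genuinely new feature being that the concentration directions $e_+$ and $e_-$ now lie in the exceptional set on which $\hat\rho$ is undefined and $\tilde\Omega$ may be indefinite. I would treat $W(C)=W_{FGLS}(C)$ in detail and obtain $W_{OLS}(C)$ by the same argument with $\tilde\beta,\tilde\sigma^2,\tilde\Omega,N_2^{\ast}$ replaced by $\hat\beta,\hat\sigma^2,\hat\Omega,N_0^{\ast}$. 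By Lemma \ref{lem_GLS_2} both $\tilde\Omega$ and $\hat\Omega$ satisfy Assumptions \ref{omega1} and \ref{omega2}, so the machinery of Subsection \ref{Impclass} and the general positive result Theorem \ref{TU_1} are available. First I would record the invariance: since $\hat\rho$ depends only on the direction of $\Pi_{\mathfrak{M}^{\perp}}y$, it is unchanged under $y\mapsto cy+X\gamma$ with $c\neq0$ and $R\gamma=0$, and $\tilde\beta,\tilde\sigma^2,\tilde\Omega$ are correspondingly equivariant; hence (cf. Proposition \ref{inv_rej_prob}) the null rejection probability depends on $(\mu_0,\sigma^2,\rho)$ only through $\rho$, and the power at $\mu_1\in\mathfrak{M}_1$ depends only on $\delta:=(R\beta^{(1)}-r)/\sigma$ and $\rho$. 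This collapses Parts 1, 2, 4 to statements about $f(\rho)$ and $g(\delta,\rho):=P_{\mu_1,\sigma^2\Lambda(\rho)}(W(C))$.

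Second, for $\rho\in(-1,1)$ the matrix $\Lambda(\rho)$ is positive definite and $\rho\mapsto\Lambda(\rho)$ is continuous, so the full-rank Gaussian laws $P_{\mu_0,\sigma^2\Lambda(\rho)}$ move continuously in total variation and $f$, as well as each $g(\delta,\cdot)$, is continuous on $(-1,1)$. The entire difficulty is therefore concentrated in the boundary behaviour as $\rho\to\pm1$.

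This boundary analysis is the main obstacle. As $\rho\to1$ one has $\Lambda(\rho)\to e_+e_+'$, so writing $\mathbf{Y}=\mu_0+\sigma\Lambda^{1/2}(\rho)\mathbf{Z}$ the data concentrate on the line $\mu_0+\operatorname{span}(e_+)$, which by $e_+\in\mathfrak{M}$ and $R\hat\beta(e_+)=0$ lies \emph{entirely} in $\mathfrak{M}_0$ (similarly $e_-$ as $\rho\to-1$); this is exactly what the structural hypotheses buy. Because this limiting line sits inside the set where $\hat\rho$ is undefined and $\tilde\Omega$ indefinite, I cannot pass to the limit in $T_{FGLS}$ by continuity. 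Instead I would study the limit from outside the exceptional set: the randomness of $\hat\rho(\mathbf{Y})$, and hence of $T_{FGLS}(\mathbf{Y})$, is governed by the \emph{direction} of the transverse residual $\Pi_{\mathfrak{M}^{\perp}}\Lambda^{1/2}(\rho)\mathbf{Z}$, whose suitably normalized version converges in distribution to a non-degenerate law on a subspace of $\mathfrak{M}^{\perp}$ determined by the AR(1) local structure near $\rho=\pm1$. The crux — that this forces $T_{FGLS}(\mathbf{Y})\distconv T_\infty$ with $P(T_\infty=C)=0$ and $0<P(T_\infty\ge C)<1$, equivalently that the hypotheses of Theorem \ref{TU_1} hold along the concentration spaces $\operatorname{span}(e_\pm)$ — is delicate here precisely because $e_\pm$ belong to the problematic null set (the same feature that, for the negative result, forced the use of Theorem \ref{prop_101} rather than Corollary \ref{CW}). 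Granting this, $f$ extends continuously to the compact interval $[-1,1]$ with all values in $(0,1)$, so it attains $\sup_\rho f<1$ and $\inf_\rho f>0$, which is Part 1.

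The remaining parts then follow by soft arguments. For Part 4, for each fixed $\rho$ (including the boundary values, where the limiting law $T_\infty$ is almost surely finite) one has $f_C(\rho)=P(T\ge C)\downarrow0$ as $C\to\infty$; since the convergence is monotone to the constant $0$ on the compact $[-1,1]$, Dini's theorem makes it uniform, so $\sup_\rho f_C(\rho)\to0$ and a suitable $C(\delta)$ exists. For Part 2, $g(\delta,\rho)$ extends continuously to $\delta\in\mathbb{R}^q$, $\rho\in[-1,1]$: as $\delta\to0$ it tends to the strictly positive null value from Part 1, on compact sets of $\delta$ bounded away from $0$ it is continuous and positive, and the divergence regimes are covered by Part 3, so the infimum over $\delta\neq0$ and $\rho$ is strictly positive. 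Part 3 itself is two consistency statements: when $\|\delta\|$ is bounded below and $|\rho_m|\to1$ the noise collapses onto $e_\pm\in\mathfrak{M}_0$ while the transverse signal $R\beta^{(1)}-r\neq0$ persists, driving $T\to\infty$ in probability; when $\|\delta\|\to\infty$ with $\rho$ in a compact subinterval, uniform positive definiteness of $\Lambda(\rho)$ gives the standard divergence of the statistic — and this last statement needs neither $e_\pm\in\mathfrak{M}$ nor $R\hat\beta(e_\pm)=0$, as asserted. The OLS claims hold verbatim under the substitutions noted above.
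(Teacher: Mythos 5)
Your overall architecture is right --- reduce to $\rho$ by invariance, control the boundary $\rho\to\pm1$, and invoke the general positive result --- but the one step you explicitly ``grant'' is the entire content of the theorem, and you misdiagnose where the work lies. The paper's proof is a direct verification of the hypotheses of Theorem \ref{TU_1}: the crucial invariance condition (\ref{T_inv_wrt_z}) is \emph{not} delicate and does not require any distributional limit analysis of $T_{FGLS}$ near $\rho=\pm1$. It follows in two lines: $J(\mathfrak{C}_{AR(1)})=\limfunc{span}(e_+)\cup\limfunc{span}(e_-)$ (Lemma \ref{AR_1}), the hypotheses $e_\pm\in\mathfrak{M}$ and $R\hat\beta(e_\pm)=0$ give $J(\mathfrak{C}_{AR(1)})\subseteq\mathfrak{M}_0-\mu_0$, and $T$ is $G(\mathfrak{M}_0)$-invariant by Lemma \ref{LWT}, so $T(y+z)=T(y)$ for all $z\in J(\mathfrak{C})$. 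The fact that $e_\pm$ lie in the exceptional set where $\hat\rho$ is undefined is irrelevant for the positive result, because the mechanism behind Theorem \ref{TU_1} (Lemma \ref{LUNIF}) integrates out the degenerate direction using exactly this invariance and never evaluates $T$ on the concentration space; your analogy with the negative result (where Theorem \ref{prop_101} had to replace Corollary \ref{CW}) does not transfer to the positive one.

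What actually requires case-specific work, and what you omit, are the side conditions of Parts 2 and 3 of Theorem \ref{TU_1}. Part 2 needs $\lambda_{\mathbb{R}^n}\left(\left\{y:v'\check\Omega^{-1}(y)v\geq 0\right\}\right)>0$ for every unit vector $v$; for non--Yule-Walker $\hat\rho$ the matrix $\tilde\Omega$ can be indefinite on a set of positive measure, and the paper verifies the condition by exhibiting the point $y_0$ constructed in the proof of Lemma \ref{LDEF}, at which $\hat\rho(y_0)=\hat\rho_{YW}(y_0)$, so that $\tilde\Omega$ and $\hat\Omega$ are positive definite on a neighborhood of $y_0$. Part 3 of Theorem \ref{TU_1} needs $\check\Omega$ nonnegative definite $\lambda_{\mathbb{R}^n}$-almost everywhere, which holds only when $a_1=1$ and $a_2=n$ --- this is precisely why Part 3 of the theorem carries that restriction, a restriction your sketch neither uses nor explains. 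Your Dini/compactness arguments for Parts 1, 2 and 4 are serviceable substitutes for the paper's subsequence arguments (Theorems \ref{TU} and \ref{TUU}) once the behaviour at $\rho=\pm1$ is in hand, but as written they all rest on the granted step.
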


A discussion similar to the one following Theorem \ref{TU_2} applies also
here. Furthermore, a result paralleling Theorem \ref{TU_3} can again be
obtained by a combined application of Theorem \ref{TU_1} and Proposition \ref%
{enforce_inv}. The so-obtained result shows how adjusted test statistics $%
\bar{T}_{FGLS}$ and $\bar{T}_{OLS}$ can be constructed that have size/power
properties as given in the preceding theorem also in many cases which fall
under the wrath of Theorem \ref{thmGLS} (and for which the tests based on $%
T_{FGLS}$ and $T_{OLS}$ suffer from extreme size or power deficiencies). The
adjustment mechanism again amounts to using a "working model" that always
adds the regressors $e_{+}$ and/or $e_{-}$ to the design matrix. We abstain
from providing details.

\subsection{Some remarks on the $F$-test without correction for
autocorrelation \label{Krae}}

As mentioned in the introduction, a considerable body of literature is
concerned with the properties of the \emph{standard} $F$-test (i.e., the $F$%
-test without correction for autocorrelation) in the presence of
autocorrelation. Much of this literature concentrates on the case where the
errors follow a stationary autoregressive process of order $1$, i.e., $%
\mathfrak{C}=\mathfrak{C}_{AR(1)}$. As the correlation in the errors is not
accounted for in the standard $F$-test, bad performance of the standard $F$%
-test for large values of the correlation $\rho $ can be expected. This has
been demonstrated formally in \cite{Kr89}, \cite{KKB90}, and subsequently in 
\cite{Ban00}: These papers determine the limit as $\rho \rightarrow 1$ of
the error of the first kind of the standard $F$-test and show that (i) this
limit is $1$ if the regression contains an intercept and the restrictions to
be tested involve the intercept (i.e., the $n\times 1$ vector $e_{+}=\left(
1,\ldots ,1\right) ^{\prime }$ belongs to the span of the design matrix and $%
R\hat{\beta}(e_{+})\neq 0$ holds) or if the regression does not contain an
intercept (i.e., $e_{+}$ does not belong to the span of the design matrix)
and a certain observable quantity, $A$ say, is positive, (ii) it is $0$ if
the regression does not contain an intercept and the observable quantity $A$
is negative, and (iii) it is a value between $0$ and $1$ if the regression
contains an intercept but the restrictions to be tested do not involve the
intercept (i.e., $e_{+}$ belongs to the span of the design matrix and $R\hat{%
\beta}(e_{+})=0$ holds).\footnote{\cite{Ban00} claim in their Theorem 5 that
the expression $\Pr \left( F(0)>\delta \right) $ converges to zero if $%
Mi\neq 0$ and $\bar{F}(0)\leq \delta $. In case $\bar{F}(0)=\delta $ the
argument given there is, however, incorrect, because $F(0)\rightarrow \bar{F}%
(0)=\delta $ in probability does not imply $\Pr \left( F(0)>\delta \right)
\rightarrow 0$ in general.} It perhaps comes as a surprise that
autocorrelation robust tests, which have built into them a correction for
autocorrelation, exhibit a similar behavior as shown in Section \ref{tsr} of
the present paper. We mention that, due to the relatively simple structure
of the standard $F$-test statistic as a ratio of quadratic forms, the method
of proof in \cite{Kr89}, \cite{KKB90}, and \cite{Ban00} is by direct
computation of the limit (as $\rho \rightarrow 1$)\ of the test statistic.
In contrast, the results for the much more complicated test statistics
considered in the present paper rely on quite different methods which make
use of invariance considerations and are of a more geometric flavor.
Needless to say, the just mentioned results in \cite{Kr89}, \cite{KKB90},
and \cite{Ban00} can be rederived through a straightforward application of
the general results in Subsection \ref{Impclass} to the standard $F$-test.

In light of the fact that the standard $F$-test makes no correction for
autocorrelation at all, a perhaps surprising observation is that
nevertheless an analogue to Theorems \ref{TU_2} and \ref{excGLS2} can be
established for the standard $F$-test by a simple application of Theorem \ref%
{TU_1}. Even more, the adjustment procedure described in Proposition \ref%
{enforce_inv} can be applied to the standard $F$-test leading to a result
analogous to Theorem \ref{TU_3}. While these results show that the size and
power of the so-adjusted standard $F$-test do not "break down" completely
for extreme correlations, they do not tell us much about the performance of
the adjusted test for moderate correlations.

\section{Size and Power of Tests of Linear Restrictions in Regression Models
with Heteroskedastic Disturbances\label{Het}}

We next turn to size and power properties of commonly used
heteroskedasticity robust tests. To this end we allow for heteroskedasticity
of unknown form as is common in the literature and thus allow that the
errors in the regression model have a variance covariance matrix $\sigma
^{2}\Sigma $ where $\Sigma $ is an element of the covariance model given by

\begin{equation*}
\mathfrak{C}_{Het}=\left\{ \limfunc{diag}(\tau _{1}^{2},\ldots ,\tau
_{n}^{2}):\tau _{i}^{2}>0,i=1,\ldots ,n,\sum_{i=1}^{n}\tau
_{i}^{2}=1\right\} .
\end{equation*}%
The normalization for $\Sigma $ chosen is of course arbitrary and could
equally well be replaced, e.g., by the normalization $\tau _{1}^{2}=1$. The
heteroskedasticity robust test statistic considered is given by%
\begin{equation}
T_{Het}\left( y\right) =\left\{ 
\begin{array}{cc}
(R\hat{\beta}\left( y\right) -r)^{\prime }\hat{\Omega}_{Het}^{-1}\left(
y\right) (R\hat{\beta}\left( y\right) -r) & \text{if }\det \hat{\Omega}%
_{Het}\left( y\right) \neq 0, \\ 
0 & \text{if }\det \hat{\Omega}_{Het}\left( y\right) =0,%
\end{array}%
\right.  \label{T_het}
\end{equation}%
where $\hat{\Omega}_{Het}=R\hat{\Psi}_{Het}R^{\prime }$ and $\hat{\Psi}%
_{Het} $ is a heteroskedasticity robust estimator. Such estimators were
introduced in \cite{E63,E67} and have later found their way into the
econometrics literature (e.g., \cite{W80}). They are of the form%
\begin{equation*}
\hat{\Psi}_{Het}\left( y\right) =(X^{\prime }X)^{-1}X^{\prime }\limfunc{diag}%
\left( d_{1}\hat{u}_{1}^{2}\left( y\right) ,\ldots ,d_{n}\hat{u}%
_{n}^{2}\left( y\right) \right) X(X^{\prime }X)^{-1}
\end{equation*}%
where the constants $d_{i}>0$ may depend on the design matrix. Typical
choices for $d_{i}$ are $d_{i}=1$, $d_{i}=n/(n-k)$, $d_{i}=\left(
1-h_{ii}\right) ^{-1}$, or $d_{i}=\left( 1-h_{ii}\right) ^{-2}$ where $%
h_{ii} $ denotes the $i$-th diagonal element of the projection matrix $%
X(X^{\prime }X)^{-1}X^{\prime }$, see \cite{LE2000} for an overview. Another
suggestion is $d_{i}=\left( 1-h_{ii}\right) ^{-\delta _{i}}$ for suitable
choice of $\delta _{i}$, see \cite{Crib2004}. For the last three choices of $%
d_{i}$ we use the convention that we set $d_{i}=1$ in case $h_{ii}=1$. Note
that $h_{ii}=1$ implies $\hat{u}_{i}\left( y\right) =0$ for every $y$, and
hence it is irrelevant which real value is assigned to $d_{i}$ in case $%
h_{ii}=1$.

Similar as in Subsection \ref{HAR} we need to ensure that $\hat{\Omega}%
_{Het}\left( y\right) $ is nonsingular $\lambda _{\mathbb{R}^{n}}$-almost
everywhere. As shown in the subsequent lemma this is the case provided
Assumption \ref{R_and_X} introduced in Subsection \ref{HAR} is satisfied.
The lemma also shows that in case this assumption is violated the matrix $%
\hat{\Omega}_{Het}\left( y\right) $ is singular \emph{everywhere}, leading
to a complete and trivial breakdown of the test. Recall the definition of
the matrix $B\left( y\right) $ given in (\ref{B_matrix}) and note that it is
independent of the constants $d_{i}$.

\begin{lemma}
\label{Definiteness_het}

\begin{enumerate}
\item $\hat{\Omega}_{Het}\left( y\right) $ is nonnegative definite for every 
$y\in \mathbb{R}^{n}$.

\item $\hat{\Omega}_{Het}\left( y\right) $ is singular if and only if $%
\limfunc{rank}\left( B(y)\right) <q$.

\item $\hat{\Omega}_{Het}\left( y\right) =0$ if and only if $B(y)=0$.

\item The set of all $y\in \mathbb{R}^{n}$ for which $\hat{\Omega}%
_{Het}\left( y\right) $ is singular (or, equivalently, for which $\limfunc{%
rank}\left( B(y)\right) <q$) is either a $\lambda _{\mathbb{R}^{n}}$-null
set or the entire sample space $\mathbb{R}^{n}$. The latter occurs if and
only if Assumption \ref{R_and_X} is violated.
\end{enumerate}
\end{lemma}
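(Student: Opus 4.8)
The plan is to mimic the proof of Lemma~\ref{LRVPD} by exhibiting a square-root factorization of $\hat{\Omega}_{Het}$ that reduces all four claims either to elementary rank identities or directly to Lemma~\ref{LRVPD}. Writing $A:=R(X^{\prime }X)^{-1}X^{\prime }$, the definition of $\hat{\Psi}_{Het}$ gives
\[
\hat{\Omega}_{Het}(y)=A\,\limfunc{diag}\!\left( d_{1}\hat{u}_{1}^{2}(y),\ldots ,d_{n}\hat{u}_{n}^{2}(y)\right) A^{\prime },\qquad A:=R(X^{\prime }X)^{-1}X^{\prime }.
\]
Since each $d_{i}$ is strictly positive (for the three variance-correction choices the convention $d_{i}=1$ when $h_{ii}=1$ secures this), the diagonal matrix $\Delta :=\limfunc{diag}(\sqrt{d_{1}},\ldots ,\sqrt{d_{n}})$ is invertible with $\Delta ^{2}=\limfunc{diag}(d_{1},\ldots ,d_{n})$, so the inner diagonal matrix may be written as $\limfunc{diag}(\hat{u}_{i}(y))\,\Delta ^{2}\,\limfunc{diag}(\hat{u}_{i}(y))$. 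Putting
\[
M(y):=A\,\limfunc{diag}\!\left( \hat{u}_{1}(y),\ldots ,\hat{u}_{n}(y)\right) \Delta =B(y)\Delta ,\qquad \hat{\Omega}_{Het}(y)=M(y)M(y)^{\prime },
\]
Part~1 is immediate, as $M(y)M(y)^{\prime }$ is nonnegative definite for every $y$.

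The key bookkeeping observation is that $M(y)$ differs from $B(y)$ of (\ref{B_matrix}) only by the invertible diagonal factor $\Delta$, namely $M(y)=B(y)\Delta$. Hence right-multiplication by $\Delta$ preserves rank and the kernel of the transpose, giving $\limfunc{rank}(M(y))=\limfunc{rank}(B(y))$ and $M(y)=0$ if and only if $B(y)=0$. For Part~2 I would invoke the standard real-matrix identity $\limfunc{rank}(M(y)M(y)^{\prime })=\limfunc{rank}(M(y))$, which follows from $\ker (M(y)M(y)^{\prime })=\ker (M(y)^{\prime })$ (seen by noting that $x^{\prime }M(y)M(y)^{\prime }x=\left\Vert M(y)^{\prime }x\right\Vert ^{2}$); thus the $q\times q$ matrix $\hat{\Omega}_{Het}(y)$ is singular precisely when $\limfunc{rank}(M(y))<q$, i.e.\ when $\limfunc{rank}(B(y))<q$. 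Part~3 is of the same flavour: the $i$-th diagonal entry of $M(y)M(y)^{\prime }$ is the squared Euclidean norm of the $i$-th row of $M(y)$, so $M(y)M(y)^{\prime }=0$ forces $M(y)=0$, which by the displayed relation is equivalent to $B(y)=0$.

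Part~4 then requires no fresh argument. The matrix $B(y)$ occurring here is literally the matrix of (\ref{B_matrix}) analysed in Lemma~\ref{LRVPD}; by Part~2 above the singularity set of $\hat{\Omega}_{Het}$ coincides with $\{y\in \mathbb{R}^{n}:\limfunc{rank}(B(y))<q\}$, and Part~4 of Lemma~\ref{LRVPD} already asserts that this set is either a $\lambda _{\mathbb{R}^{n}}$-null set or all of $\mathbb{R}^{n}$, the latter occurring if and only if Assumption~\ref{R_and_X} is violated. The conclusion therefore transfers verbatim.

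I do not anticipate a genuine obstacle, since the whole content is linear algebra; the only point demanding a line of care is the strict positivity of the $d_{i}$ under the stated conventions, which is exactly what keeps $\Delta$ invertible and so lets the weights drop out of the rank and definiteness analysis. Structurally the argument is identical to that of Lemma~\ref{LRVPD}, and is in fact the positive definite special case of Lemma~\ref{LRVPD_2} with weighting matrix $\limfunc{diag}(d_{1},\ldots ,d_{n})$; one could alternatively read off Parts~1--3 from that result (with $B(y)\mathcal{W}_{n}^{\ast }$ reducing to $B(y)$ because the weighting matrix is invertible), but I favour the self-contained factorization above, retaining the citation of Lemma~\ref{LRVPD} only for the dichotomy in Part~4.
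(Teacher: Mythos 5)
Your proof is correct and follows exactly the route the paper intends: the paper omits the proof of this lemma with the remark that it is completely analogous to that of Lemma \ref{LRVPD}, and your factorization $\hat{\Omega}_{Het}(y)=B(y)\limfunc{diag}(d_{1},\ldots ,d_{n})B(y)^{\prime }$ with a positive definite diagonal weight matrix is precisely the analogue of the identity $\hat{\Omega}_{w}(y)=B(y)\mathcal{W}_{n}B(y)^{\prime }$ used there, with Part 4 then reducing verbatim to Part 4 of Lemma \ref{LRVPD} since the singularity set is the same set $\{y:\limfunc{rank}(B(y))<q\}$. The one point needing care, strict positivity of the $d_{i}$ under the stated conventions, is correctly handled.
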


The proof of the preceding lemma is completely analogous to the proof of
Lemma \ref{LRVPD} and hence is omitted. We are now in the position to state
the result on size and power of tests based on the statistic $T_{Het}$ given
in (\ref{T_het}).

\begin{theorem}
\label{thmhet} Suppose $\mathfrak{C}\supseteq \mathfrak{C}_{Het}$ holds and
Assumption \ref{R_and_X} is satisfied. Let $T_{Het}$ be the test statistic
defined in (\ref{T_het}) and let $W_{Het}(C)=\left\{ y\in \mathbb{R}%
^{n}:T(y)\geq C\right\} $ be the rejection region where $C$ is a real number
satisfying $0<C<\infty $. Then the following holds:

\begin{enumerate}
\item Suppose for some $i$, $1\leq i\leq n$, we have $\limfunc{rank}\left(
B(e_{i}\left( n\right) )\right) =q$ and $T_{Het}(e_{i}\left( n\right) +\mu
_{0}^{\ast })>C$ for some (and hence all) $\mu _{0}^{\ast }\in \mathfrak{M}%
_{0}$. Then%
\begin{equation*}
\sup\limits_{\Sigma \in \mathfrak{C}}P_{\mu _{0},\sigma ^{2}\Sigma }\left(
W_{Het}\left( C\right) \right) =1
\end{equation*}%
holds for every $\mu _{0}\in \mathfrak{M}_{0}$ and every $0<\sigma
^{2}<\infty $. In particular, the size of the test is equal to one.

\item Suppose for some $i$, $1\leq i\leq n$, we have $\limfunc{rank}\left(
B(e_{i}\left( n\right) )\right) =q$ and $T_{Het}(e_{i}\left( n\right) +\mu
_{0}^{\ast })<C$ for some (and hence all) $\mu _{0}^{\ast }\in \mathfrak{M}%
_{0}$. Then 
\begin{equation*}
\inf_{\Sigma \in \mathfrak{C}}P_{\mu _{0},\sigma ^{2}\Sigma }\left(
W_{Het}\left( C\right) \right) =0
\end{equation*}%
holds for every $\mu _{0}\in \mathfrak{M}_{0}$ and every $0<\sigma
^{2}<\infty $, and hence%
\begin{equation*}
\inf_{\mu _{1}\in \mathfrak{M}_{1}}\inf_{\Sigma \in \mathfrak{C}}P_{\mu
_{1},\sigma ^{2}\Sigma }\left( W_{Het}\left( C\right) \right) =0
\end{equation*}%
holds for every $0<\sigma ^{2}<\infty $. In particular, the test is biased.
Furthermore, the nuisance-infimal rejection probability at every point $\mu
_{1}\in \mathfrak{M}_{1}$ is zero, i.e.,%
\begin{equation*}
\inf\limits_{0<\sigma ^{2}<\infty }\inf\limits_{\Sigma \in \mathfrak{C}%
}P_{\mu _{1},\sigma ^{2}\Sigma }(W_{Het}\left( C\right) )=0.
\end{equation*}%
In particular, the infimal power of the test is equal to zero.

\item Suppose for some $i$, $1\leq i\leq n$, we have $B(e_{i}\left( n\right)
)=0$ and $R\hat{\beta}(e_{i}\left( n\right) )\neq 0$. Then 
\begin{equation*}
\sup\limits_{\Sigma \in \mathfrak{C}}P_{\mu _{0},\sigma ^{2}\Sigma }\left(
W_{Het}\left( C\right) \right) =1
\end{equation*}%
holds for every $\mu _{0}\in \mathfrak{M}_{0}$ and every $0<\sigma
^{2}<\infty $. In particular, the size of the test is equal to one.
\end{enumerate}
\end{theorem}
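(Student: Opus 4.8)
The plan is to run the Het-analogue of the proof of Theorem \ref{thmlrv}, with the two concentration directions $e_+,e_-$ replaced by the $n$ coordinate directions $e_i(n)$ (abbreviated $e_i$ below). The entry point is that $e_ie_i'$ is a singular limit point of $\mathfrak{C}_{Het}\subseteq\mathfrak{C}$: taking $\Sigma_m=\limfunc{diag}(\tau_1^2(m),\ldots,\tau_n^2(m))$ with $\tau_i^2(m)=1-(n-1)/m$ and $\tau_j^2(m)=1/m$ for $j\neq i$ gives $\Sigma_m\in\mathfrak{C}_{Het}$ and $\Sigma_m\to e_ie_i'$, so $\limfunc{span}(e_i)$ is a concentration space of $\mathfrak{C}$. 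Hence, for fixed $\mu_0\in\mathfrak{M}_0$ and $\sigma^2$, the Gaussians $P_{\mu_0,\sigma^2\Sigma_m}$ converge weakly (convergence of the covariance matrices suffices via L\'evy's continuity theorem) to the degenerate Gaussian $\nu:=P_{\mu_0,\sigma^2e_ie_i'}$, which is the law of $\mu_0+\sigma\mathbf{g}e_i$ with $\mathbf{g}$ standard normal and is therefore carried by the line $L_i:=\mu_0+\limfunc{span}(e_i)$, with $\nu(L_i\setminus\{\mu_0\})=1$.

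Next I would record the homogeneity of the relevant objects along $L_i$. Since $\mu_0\in\mathfrak{M}$ and $R\hat\beta(\mu_0)=r$, one has $\Pi_{\mathfrak{M}^{\bot}}(\mu_0+se_i)=s\,\Pi_{\mathfrak{M}^{\bot}}e_i$, whence $B(\mu_0+se_i)=sB(e_i)$, $\hat\Omega_{Het}(\mu_0+se_i)=s^2\hat\Omega_{Het}(e_i)$, and $R\hat\beta(\mu_0+se_i)-r=sR\hat\beta(e_i)$. When $\limfunc{rank}(B(e_i))=q$ (Parts 1 and 2), Lemma \ref{Definiteness_het} makes $\hat\Omega_{Het}(e_i)$ nonsingular, the factors $s^2$ cancel, and $T_{Het}$ is constant on $L_i\setminus\{\mu_0\}$, equal to $T_{Het}(e_i+\mu_0^*)$; this constancy (and the analogous computation with $e_i$ replaced by $-e_i$) also justifies the phrase ``for some (and hence all) $\mu_0^*$''. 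Moreover $T_{Het}$ is continuous near each such point. In Part 1 the hypothesis $T_{Het}(e_i+\mu_0^*)>C$ then puts $L_i\setminus\{\mu_0\}$ into the open set $\limfunc{int}(W_{Het}(C))$, so the Portmanteau inequality for open sets gives $\liminf_m P_{\mu_0,\sigma^2\Sigma_m}(W_{Het}(C))\geq\nu(\limfunc{int}(W_{Het}(C)))\geq\nu(L_i\setminus\{\mu_0\})=1$, establishing $\sup_{\Sigma}P_{\mu_0,\sigma^2\Sigma}(W_{Het}(C))=1$ and hence size one. Part 2 is symmetric: $T_{Het}(e_i+\mu_0^*)<C$ places $L_i\setminus\{\mu_0\}$ into $\limfunc{int}((W_{Het}(C))^c)$, so $P_{\mu_0,\sigma^2\Sigma_m}(W_{Het}(C))\to0$ and $\inf_{\Sigma}P_{\mu_0,\sigma^2\Sigma}(W_{Het}(C))=0$.

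For Part 3 the denominator degenerates: $B(e_i)=0$ forces $\hat\Omega_{Het}\equiv0$ on all of $L_i$ by Lemma \ref{Definiteness_het}(3), so $T_{Het}=0$ on $L_i$ by convention, while the numerator $R\hat\beta(\mu_0+se_i)-r=sR\hat\beta(e_i)$ stays bounded away from $0$ for $s\neq0$. The device is a blow-up argument: let $S=\{y:\limfunc{rank}(B(y))<q\}$, a $\lambda_{\mathbb{R}^n}$-null set under Assumption \ref{R_and_X} by Lemma \ref{Definiteness_het}(4). For $y\to p\in L_i\setminus\{\mu_0\}$ with $y\notin S$ one has $\hat\Omega_{Het}(y)\to\hat\Omega_{Het}(p)=0$ while $\|R\hat\beta(y)-r\|\to\|sR\hat\beta(e_i)\|>0$, so $T_{Het}(y)\geq\|R\hat\beta(y)-r\|^2/\lambda_{\max}(\hat\Omega_{Het}(y))\to+\infty$; by continuity of $R\hat\beta$ and of $\hat\Omega_{Het}$ this holds uniformly over a small neighborhood of $p$ intersected with $S^c$. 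Consequently, for the modified region $A:=W_{Het}(C)\cup S$ (which differs from $W_{Het}(C)$ by a null set and hence carries the same mass under each absolutely continuous $P_{\mu_0,\sigma^2\Sigma_m}$) every point of $L_i\setminus\{\mu_0\}$ is interior, and the same Portmanteau step yields $P_{\mu_0,\sigma^2\Sigma_m}(W_{Het}(C))=P_{\mu_0,\sigma^2\Sigma_m}(A)\to1$, giving size one.

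Finally I would upgrade the null conclusion of Part 2 to the stated power and bias assertions. Biasedness is immediate once the size is seen to be positive (e.g.\ under $\Sigma=n^{-1}I_n\in\mathfrak{C}_{Het}$, where $T_{Het}\geq C$ with positive probability), since the infimal power is $0$. The claim $\inf_{\mu_1\in\mathfrak{M}_1}\inf_{\Sigma}P_{\mu_1,\sigma^2\Sigma}(W_{Het}(C))=0$ follows from $\inf_{\Sigma}P_{\mu_0,\sigma^2\Sigma}(W_{Het}(C))=0$ by choosing $\mu_1^{(m)}\in\mathfrak{M}_1$ close to $\mu_0$ (possible since $\mathfrak{M}_1$ is dense in $\mathfrak{M}$) and using continuity of $\mu\mapsto P_{\mu,\sigma^2\Sigma_m}(W_{Het}(C))$ at fixed nonsingular $\Sigma_m$. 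For the nuisance-infimal power at a fixed $\mu_1=X\beta_1\in\mathfrak{M}_1$, I would use the scale invariance $T_{Het}(a(y-\mu_0^*)+\mu_0^*)=T_{Het}(y)$ for $a>0$, $\mu_0^*\in\mathfrak{M}_0$: with $a=\sigma_0/\sigma$ this gives $P_{\mu_1,\sigma^2\Sigma}(W_{Het}(C))=P_{a(\mu_1-\mu_0^*)+\mu_0^*,\,\sigma_0^2\Sigma}(W_{Het}(C))$, and letting $\sigma\to\infty$ drives the mean into $\mathfrak{M}_0$, so by continuity and the null result $\inf_{\sigma^2}\inf_{\Sigma}P_{\mu_1,\sigma^2\Sigma}(W_{Het}(C))\leq\inf_{\Sigma}P_{\mu_0^*,\sigma_0^2\Sigma}(W_{Het}(C))=0$. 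I expect the main obstacle to be Part 3's blow-up combined with the null-set modification: one must verify that $T_{Het}$ exceeds $C$ \emph{uniformly} over a full neighborhood of each $p\in L_i\setminus\{\mu_0\}$ off $S$ (so that $L_i\setminus\{\mu_0\}\subseteq\limfunc{int}(A)$), rather than merely along the line, whereas the invariance bookkeeping of the last step is routine.
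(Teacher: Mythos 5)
Your proof is correct and follows essentially the same route as the paper: the paper simply invokes Corollary \ref{CW} (itself built on the concentration--Portmanteau argument of Theorem \ref{inv}) with the concentration spaces $\limfunc{span}(e_{i}(n))$, whereas you have inlined that machinery — the weak convergence of $P_{\mu_{0},\sigma^{2}\Sigma_{m}}$ to the degenerate Gaussian on $\mu_{0}+\limfunc{span}(e_{i}(n))$, the interiority/null-set modification, the lower-semicontinuity blow-up for Part 3, and the $G(\{\mu_{0}\})$-invariance step for the nuisance-infimal power. The only cosmetic difference is that the paper justifies positivity of the size via Part 5 of Lemma \ref{LWT} where you assert it directly, but the substance is identical.
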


We note that Remark \ref{rem_thmlrv} as well as most of the discussion
following Theorem \ref{thmlrv} apply mutatis mutandis also here. Similar as
in Subsection \ref{HAR} it is also not difficult to show (for typical
choices of $d_{i}$) that the set of design matrices $X$ for which the
conditions in Theorem \ref{thmhet} are not satisfied is a negligible set. We
omit a formal statement. In contrast to the case considered in Subsection %
\ref{HAR}, however, no (nontrivial) analogues to the positive results given
in Theorems \ref{TU_2} and \ref{TU_3} are possible due to the fact that in
the present setting there are now too many concentration spaces (which
together in fact span all of $\mathbb{R}^{n}$). Furthermore, the above
theorem and its proof exploits only the one-dimensional concentration spaces 
$\mathcal{Z}_{i}=$span$(e_{i}\left( n\right) )$. While every linear space of
the form $\limfunc{span}(e_{i_{1}}\left( n\right) ,\ldots ,e_{i_{p}}\left(
n\right) )$ for $0<p<n$ and $1\leq i_{1}<\ldots <i_{p}\leq n$ is a
concentration space of the model $\mathfrak{C}$, using all these
concentration spaces in conjunction with Corollary \ref{CW} will often not
deliver additional obstructions to good size or power properties, the reason
being that each of these spaces already contains a concentration space $%
\mathcal{Z}_{i}$ as a subset. As a further point of interest we note that
the assumptions imposed in \cite{E63,E67} require all variances $\sigma
^{2}\tau _{i}^{2}$ to be bounded away from zero in order to achieve
uniformity in the convergence to the limiting distribution. Hence, Eicker's
assumptions rule out the concentration effect that drives the above result.%
\footnote{%
Imposing the assumption that all elements $\Sigma $ of $\mathfrak{C}%
\subseteq \mathfrak{C}_{Het}$ have all their diagonal elements bounded from
below by a given positive constant $\varepsilon $ is only a partial cure.
While it saves the heteroskedasticity robust test from the \emph{extreme}
size and power distortions as described in Theorem \ref{thmhet}, substantial
size/power distortions will nevertheless be present if $\varepsilon $ is
small (relative to sample size). Cf. the discussion in Subsection \ref{Disc}.%
} It appears that this insight in \cite{E63,E67} has not been fully
appreciated in the ensuing econometrics literature.

In connection with the preceding theorem, which points out size distortions
and/or power deficiencies of heteroskedasticity robust tests even under a
normality assumption,\ a result in Section 4.2 of \cite{Dufour2003} needs to
be mentioned which shows that the size of heteroskedasticity robust tests is
always $1$ if one allows for a sufficiently \emph{large nonparametric} class
of distributions for the errors $\mathbf{U}$.

We briefly discuss the standard $F$-test statistic without any correction
for heteroskedasticity. Let%
\begin{equation*}
T_{uncorr}\left( y\right) =\left\{ 
\begin{array}{cc}
\left( \left( n-k\right) /q\right) (R\hat{\beta}\left( y\right) -r)^{\prime
}\left( R\left( X^{\prime }X\right) ^{-1}R^{\prime }\right) ^{-1}(R\hat{\beta%
}\left( y\right) -r)/\left( \hat{u}^{\prime }\left( y\right) \hat{u}\left(
y\right) \right) & \text{if }y\notin \mathfrak{M} \\ 
0 & \text{if }y\in \mathfrak{M}%
\end{array}%
\right.
\end{equation*}%
and define $W_{uncorr}(C)$ in the obvious way. It is then easy to see that a
variant of Theorem \ref{thmhet} also holds with $T_{uncorr}$ and $%
W_{uncorr}(C)$ replacing $T_{Het}$ and $W_{Het}(C)$, respectively, if in
this variant of the theorem Assumption \ref{R_and_X} is dropped, the
condition $\limfunc{rank}\left( B(e_{i}\left( n\right) )\right) =q$ is
replaced by the condition $e_{i}\left( n\right) \notin \mathfrak{M}$, and
the condition $B(e_{i}\left( n\right) )=0$ is replaced by the condition $%
e_{i}\left( n\right) \in \mathfrak{M}$. In a recent paper \cite%
{IbragMuell2010} consider the standard $t$-test for testing $\mu =0$ versus $%
\mu \neq 0$ in a Gaussian location model and discuss a result by \cite%
{BakiSzek2005} to the effect that the size of this test under
heteroskedasticity of unknown form equals the nominal significance level $%
\delta $ as long as $n\geq 2$ and $\delta \leq 0.08326$. It is not difficult
to see that in this location problem $T_{uncorr}\left( e_{i}\left( n\right)
\right) =1$ holds for every $i$ (note that $\mu _{0}^{\ast }=0$) and thus
the inequality $T_{uncorr}\left( e_{i}\left( n\right) \right) <C$ always
holds whenever $C>1$. Hence Case 1 of the variant of Theorem \ref{thmhet}
just discussed does not arise whenever $C>1$ which is in line with the
results in \cite{BakiSzek2005}. However, note that Case 2 of that theorem
then always applies (since obviously $e_{i}\left( n\right) \notin \mathfrak{M%
}=\limfunc{span}\left( e_{+}\right) $), showing that the standard $t$-test
suffers from severe power deficiencies under heteroskedasticity of unknown
form in case $n\geq 2$ and $\delta \leq 0.08326$ (noting that the squared
standard $t$-statistic is the standard $F$-statistic).

\section{General Principles Underlying Size and Power Results for Tests of
Linear Restrictions in Regression Models with Nonspherical Disturbances\label%
{General}}

The results on size and power properties given in the previous sections are
obtained as special cases of a more general theory that applies to a large
class of tests and to general covariance models $\mathfrak{C}$ (which thus
are not restricted to covariance structures resulting from stationary
disturbances or from heteroskedasticity). This theory is provided in the
present section. We use the notation and assumptions of Section \ref%
{HTFramework}. Since invariance properties of tests will play an important r%
\^{o}le in some of the results to follow, the next subsection collects some
relevant results related to invariance. In Subsection \ref{sec_neg} we
provide conditions under which the tests considered have highly unpleasant
size or power properties. This result is based on a "concentration" effect.
In contrast, Subsection \ref{sec_pos} provides conditions under which tests
do not suffer from the size and power problems just mentioned. Subsection %
\ref{Impclass} then specializes the results of the preceding subsections to
a class of tests which can be described as nonsphericity-corrected $F$-type
tests. This class of tests contains virtually all so-called
heteroskedasticity and autocorrelation robust tests available in the
literature as special cases. Furthermore, Subsection \ref{Impclass} also
contains another negative result, the derivation of which exploits the
particular structure of these tests.

\subsection{Some preliminaries on groups and invariance\label{sec_inv}}

Let $G$ be a group of bijective Borel-measurable transformations of $\mathbb{%
R}^{n}$ into itself, the group operation being the composition of
transformations. A function $S$ defined on $\mathbb{R}^{n}$ is said to be
invariant under the group $G$ if $S(g(y))=S(y)$ for all $y\in \mathbb{R}^{n}$
and all $g\in G$. A subset $A$ of $\mathbb{R}^{n}$ is said to be invariant
under $G$ if $g(A)\subseteq A$ holds for every $g\in G$. Since with $g$ also 
$g^{-1}$ belongs to $G$, this is equivalent to $g(A)=A$ for every $g\in G$,
and thus to invariance of the indicator function of $A$ as defined before.%
\footnote{%
If $G$ is only a collection of bijective transformations on $\mathbb{R}^{n}$
but is not a group, then invariance of $A$ does not imply $g(A)=A$ in
general, and in particular does not coincide with the notion of invariance
of the indicator function of $A$.} Clearly, invariance of $S:\mathbb{R}%
^{n}\rightarrow \overline{\mathbb{R}}$, the extended real line, under the
group $G$ implies invariance of the super-level sets $W=\left\{ y:S(y)\geq
C\right\} $. Furthermore, a function $S$ defined on $\mathbb{R}^{n}$ is said
to be almost invariant under the group $G$ if $S(g(y))=S(y)$ holds for all $%
g\in G$ and all $y\in \mathbb{R}^{n}\backslash N(g)$ with Borel-sets $N(g)$
satisfying $\lambda _{\mathbb{R}^{n}}\left( N(g)\right) =0$ and also $%
\lambda _{\mathbb{R}^{n}}\left( g^{\prime -1}(N(g))\right) =0$ for all $%
g^{\prime }\in G$.\footnote{%
The additional requirement $\lambda _{\mathbb{R}^{n}}\left( g^{\prime
-1}(N(g))\right) =0$ for all $g^{\prime }\in G$ of course implies $\lambda _{%
\mathbb{R}^{n}}\left( N(g)\right) =0$ and may appear artificial at first
sight. However, it arises naturally in the context of testing problems that
are invariant under the group $G$ and for which the relevant family of
probability measures is equivalent to $\lambda _{\mathbb{R}^{n}}$, cf. \cite%
{LR05}, Section 6.5. Regardless of this, the additional requirement already
follows from $\lambda _{\mathbb{R}^{n}}\left( N(g)\right) =0$ in case the
group $G$ is a group of affine transformations on $\mathbb{R}^{n}$, which
will be the groups we are interested in.} A subset $A$ of $\mathbb{R}^{n}$
is said to be almost invariant if $g(A)\subseteq A\cup N(g)$ holds for every 
$g\in G$ with the Borel-sets $N(g)$ satisfying $\lambda _{\mathbb{R}%
^{n}}\left( N(g)\right) =0$ and $\lambda _{\mathbb{R}^{n}}\left( g^{\prime
-1}(N(g))\right) =0$ for all $g^{\prime }\in G$. It is easy to see that this
is equivalent to $g(A)\bigtriangleup A\subseteq N^{\ast }(g)$ for every $%
g\in G$, with Borel-sets $N^{\ast }(g)$ satisfying $\lambda _{\mathbb{R}%
^{n}}\left( N^{\ast }(g)\right) =0$ and $\lambda _{\mathbb{R}^{n}}\left(
g^{\prime -1}(N^{\ast }(g))\right) =0$ for all $g^{\prime }\in G$; thus it
is equivalent to almost invariance of the indicator function of $A$.
Clearly, almost invariance of $S:\mathbb{R}^{n}\rightarrow \overline{\mathbb{%
R}}$ under the group $G$ implies almost invariance of the super-level sets $%
W=\left\{ y:S(y)\geq C\right\} $.

We are interested in some particular groups of affine transformations. For
an affine subspace $\mathfrak{N}$ of $\mathbb{R}^{n}$ let 
\begin{equation*}
G(\mathfrak{N})=\left\{ g_{\alpha ,\nu ,\nu ^{\prime }}:\alpha \neq 0\text{, 
}\nu ^{\prime }\in \mathfrak{N}\right\}
\end{equation*}%
for some fixed but arbitrary $\nu \in \mathfrak{N}$, where the affine map $%
g_{\alpha ,\nu ,\nu ^{\prime }}$ is given by $g_{\alpha ,\nu ,\nu ^{\prime
}}(y)=\alpha (y-\nu )+\nu ^{\prime }$ with $\alpha \in \mathbb{R}$. Observe
that $G(\mathfrak{N})$ does not depend on the choice of $\nu $ (in
particular, if $\mathfrak{N}$ is a linear subspace, one may choose $\nu =0$%
). Hence, $G(\mathfrak{N})$ can also be written in a redundant way as 
\begin{equation*}
G(\mathfrak{N})=\left\{ g_{\alpha ,\nu ,\nu ^{\prime }}:\alpha \neq 0\text{, 
}\nu \in \mathfrak{N}\text{, }\nu ^{\prime }\in \mathfrak{N}\right\} \text{.}
\end{equation*}%
It is easy to see that $G(\mathfrak{N})$ is a group w.r.t. composition which
is non-abelian except if $\mathfrak{N}$ is a singleton. For later use we
also note that $\mathfrak{N}$ as well as $\mathbb{R}^{n}\backslash \mathfrak{%
N}$ are invariant under $G(\mathfrak{N})$, and that $G(\mathfrak{N})$ acts
transitively on $\mathfrak{N}$ (but not on $\mathbb{R}^{n}\backslash 
\mathfrak{N}$ in general). Furthermore, note that the elements of $G(%
\mathfrak{N})$ can also be written as $g_{\alpha ,\nu ,\nu ^{\prime
}}(y)=\alpha y+(1-\alpha )\nu +(\nu ^{\prime }-\nu )$.

\begin{remark}
We make an observation on the structure of $G(\mathfrak{N})$. Let $G_{1}(%
\mathfrak{N})$ denote the collection of transformations $g_{\alpha ,\nu ,\nu
}(y)$ for every $\alpha \neq 0$ and every $\nu \in \mathfrak{N}$, and let $%
G_{2}(\mathfrak{N})$ denote the collection of transformations $g_{1,\nu ,\nu
^{\prime }}(y)$ for every pair $\nu ,\nu ^{\prime }\in \mathfrak{N}$.
Obviously, $G_{1}(\mathfrak{N})$ as well as $G_{2}(\mathfrak{N})$ are
subsets of $G(\mathfrak{N})$, and every element of $G(\mathfrak{N})$ is the
composition of an element in $G_{2}(\mathfrak{N})$ with an element of $G_{1}(%
\mathfrak{N})$. While $G_{2}(\mathfrak{N})$ is a subgroup, $G_{1}(\mathfrak{N%
})$ is not (as it is not closed under composition) except in the trivial
case where $\mathfrak{N}$ is a singleton. However, the group generated by $%
G_{1}(\mathfrak{N})$ is precisely $G(\mathfrak{N})$. As a consequence, any
function $S$ which is invariant under the elements of $G_{1}(\mathfrak{N})$
(meaning that $S(g(y))=S(y)$ for all $y\in \mathbb{R}^{n}$ and all $g\in
G_{1}(\mathfrak{N})$) is already invariant under the entire group $G(%
\mathfrak{N})$, and a similar statement holds for almost invariance.
\end{remark}

\begin{proposition}
\label{max_inv}A maximal invariant for $G(\mathfrak{N})$ is given by 
\begin{equation*}
h(y)=\left\langle \Pi _{\left( \mathfrak{N}-\nu _{\ast }\right) ^{\bot
}}(y-\nu _{\ast })/\left\Vert \Pi _{\left( \mathfrak{N}-\nu _{\ast }\right)
^{\bot }}(y-\nu _{\ast })\right\Vert \right\rangle ,
\end{equation*}%
where $\nu _{\ast }$ is an arbitrary element of $\mathfrak{N}$. The maximal
invariant $h$ in fact does not depend on the choice of $\nu _{\ast }\in 
\mathfrak{N}$. [Here we use the convention $x/\left\Vert x\right\Vert =0$ if 
$x=0$.]
\end{proposition}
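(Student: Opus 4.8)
The plan is to establish the two defining properties of a maximal invariant for the action of $G(\mathfrak{N})$ on $\mathbb{R}^{n}$: first that $h$ is invariant, and second that $h$ separates orbits, i.e. $h(y_{1})=h(y_{2})$ forces $y_{1}$ and $y_{2}$ to lie in a common orbit. Throughout I would write $\mathcal{L}=\mathfrak{N}-\nu _{\ast }$ for the linear subspace parallel to $\mathfrak{N}$ (this does not depend on the choice of $\nu _{\ast }\in \mathfrak{N}$) and abbreviate $P=\Pi _{\mathcal{L}^{\bot }}$, so that $\mathcal{L}=\kernel{P}$. The single fact I would isolate at the outset, and then invoke at every juncture, is that $P$ annihilates exactly the directions of $\mathfrak{N}$: for any two points $\nu ,\nu _{\ast }\in \mathfrak{N}$ we have $\nu -\nu _{\ast }\in \mathcal{L}=\kernel{P}$. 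In particular $P(y-\nu _{\ast })$ is unchanged if $\nu _{\ast }$ is replaced by any other element of $\mathfrak{N}$, which already yields the asserted independence of $h$ from the choice of $\nu _{\ast }$.

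For invariance, the first main step is the identity $P(g(y)-\nu _{\ast })=\alpha P(y-\nu _{\ast })$ for every $g=g_{\alpha ,\nu ,\nu ^{\prime }}\in G(\mathfrak{N})$. This follows by writing $g(y)-\nu _{\ast }=\alpha (y-\nu )+(\nu ^{\prime }-\nu _{\ast })$, noting that $\nu ^{\prime }-\nu _{\ast }\in \mathcal{L}$ is killed by $P$, and that $P(y-\nu )=P(y-\nu _{\ast })$ because $\nu -\nu _{\ast }\in \mathcal{L}$. When $P(y-\nu _{\ast })\neq 0$ the normalized vector appearing in $h$ is therefore multiplied by the scalar $\alpha /\left\vert \alpha \right\vert \in \{+1,-1\}$, and since $\langle \cdot \rangle $ identifies a vector with its negation, $h(g(y))=h(y)$; when $P(y-\nu _{\ast })=0$ both sides vanish. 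This disposes of property one.

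For maximality, suppose $h(y_{1})=h(y_{2})$ and split according to whether the common value is $0$. Because $\langle x\rangle =0$ iff $x=0$, the value is $0$ precisely when $P(y_{i}-\nu _{\ast })=0$, i.e. when $y_{i}\in \mathfrak{N}$; in that case both points lie in $\mathfrak{N}$, on which $G(\mathfrak{N})$ acts transitively (for instance $g_{1,y_{1},y_{2}}$ sends $y_{1}$ to $y_{2}$), so they share an orbit. In the remaining case set $v_{i}=P(y_{i}-\nu _{\ast })\neq 0$; equality of the $\langle \cdot \rangle $-values of the two unit vectors forces $v_{1}/\norm{v_{1}}=\pm v_{2}/\norm{v_{2}}$, hence $v_{2}=\alpha v_{1}$ with $\alpha =\pm \norm{v_{2}}/\norm{v_{1}}\neq 0$ (the sign matching the one above). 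I would then construct the group element explicitly by taking $\nu =\nu _{\ast }$ and $\nu ^{\prime }=y_{2}-\alpha (y_{1}-\nu _{\ast })$; membership $\nu ^{\prime }\in \mathfrak{N}$ reduces to $P(\nu ^{\prime }-\nu _{\ast })=v_{2}-\alpha v_{1}=0$, and with these choices $g_{\alpha ,\nu _{\ast },\nu ^{\prime }}(y_{1})=\alpha (y_{1}-\nu _{\ast })+\nu ^{\prime }=y_{2}$.

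The linear-algebra identities are routine; the step I expect to require the most care, and would flag as the main obstacle, is the bookkeeping in the maximality argument: converting the statement that $\langle \cdot \rangle $ of the two normalized projections coincide into the clean relation $v_{2}=\alpha v_{1}$ with the correct sign, and then checking that the constructed translation parameter $\nu ^{\prime }$ really lands in $\mathfrak{N}$ and not merely in the ambient space. Both points rest on the observation that $\kernel{P}=\mathcal{L}$ is exactly the direction space of $\mathfrak{N}$, which is why I would foreground that fact and reuse it rather than re-derive the relevant cancellations each time.
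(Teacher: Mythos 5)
Your proposal is correct and follows essentially the same route as the paper's proof: invariance via the identity $\Pi_{(\mathfrak{N}-\nu_\ast)^\bot}(g_{\alpha,\nu,\nu'}(y)-\nu_\ast)=\alpha\,\Pi_{(\mathfrak{N}-\nu_\ast)^\bot}(y-\nu_\ast)$ together with the sign-killing property of $\langle\cdot\rangle$, and maximality by converting $h(y_1)=h(y_2)$ into an explicit group element, treating the zero and nonzero cases separately. Your bookkeeping (isolating $\ker\Pi_{(\mathfrak{N}-\nu_\ast)^\bot}=\mathfrak{N}-\nu_\ast$ once and reusing it) is a slightly tidier packaging of the same cancellations the paper carries out inline.
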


\begin{remark}
Specializing to the case $\mathfrak{N}=\mathfrak{M}_{0}$ it is obvious that $%
\Pi _{\left( \mathfrak{M}_{0}-\mu _{0}\right) ^{\bot }}(y-\mu _{0})$ can be
computed as $y-X\hat{\beta}_{rest}(y)$, where $\hat{\beta}_{rest}$ denotes
the restricted ordinary least squares estimator. It follows that any test
that is invariant under $G(\mathfrak{M}_{0})$ depends only on the normalized
restricted least squares residuals, in fact only on $\left\langle y-X\hat{%
\beta}_{rest}(y)/\left\Vert y-X\hat{\beta}_{rest}(y)\right\Vert
\right\rangle $. [For the tests considered in Subsection \ref{Impclass} one
can obtain this result also directly from the definition of the tests.]
\end{remark}

\bigskip

Consider now the problem of testing $H_{0}$ versus $H_{1}$ as defined in (%
\ref{testing problem}). First observe that the sets $\mathfrak{M}_{0}$ and $%
\mathfrak{M}_{1}$ are invariant under the transformations in $G(\mathfrak{M}%
_{0})$. This implies that the parameter spaces $\mathfrak{M}_{i}\times
(0,\infty )\times \mathfrak{C}$ corresponding to $H_{i}$ (for $i=0,1$) are
each invariant under the associated group $\overline{G(\mathfrak{M}_{0})}$,
i.e., the group consisting of all transformations $\bar{g}_{\alpha ,\mu
_{0},\mu _{0}^{\prime }}$ defined on $\mathfrak{M}\times (0,\infty )\times 
\mathfrak{C}$ given by 
\begin{equation*}
\bar{g}_{\alpha ,\mu _{0},\mu _{0}^{\prime }}(\mu ,\sigma ^{2},\Sigma
)=(\alpha (\mu -\mu _{0})+\mu _{0}^{\prime },\alpha ^{2}\sigma ^{2},\Sigma )
\end{equation*}%
where $\alpha \neq 0$, $\mu _{0}\in \mathfrak{M}_{0}$, $\mu _{0}^{\prime
}\in \mathfrak{M}_{0}$. [Note that the associated group strictly speaking
also depends on $\mathfrak{C}$, but we suppress this in the notation.]
Second, the probability measures associated with $H_{0}$ and $H_{1}$ clearly
satisfy%
\begin{equation}
P_{\mu ,\sigma ^{2}\Sigma }\left( A\right) =P_{\alpha (\mu -\mu _{0})+\mu
_{0}^{\prime },\alpha ^{2}\sigma ^{2}\Sigma }\left( \alpha (A-\mu _{0})+\mu
_{0}^{\prime }\right)  \label{transf_formula}
\end{equation}%
for every $(\mu ,\sigma ^{2},\Sigma )\in \mathfrak{M}\times (0,\infty
)\times \mathfrak{C}$ and every Borel set $A\subseteq \mathbb{R}^{n}$. This
shows that the testing problem considered in (\ref{testing problem}) is
invariant under the group $G(\mathfrak{M}_{0})$ in the sense of \cite{LR05},
Chapters 6 and 8. While trivial, it will be useful to note that (\ref%
{transf_formula}) continues to hold if $\Sigma \in \mathfrak{C}$ is replaced
by an arbitrary nonnegative definite symmetric $n\times n$ matrix $\Phi $.
The next proposition discusses invariance properties of the rejection
probabilities of an almost invariant test $\varphi $ that will be needed in
subsequent subsections. As will be seen later, it is useful to consider in
that proposition the rejection probabilities $E_{\mu ,\sigma ^{2}\Phi
}(\varphi )$ also for $\Phi $ a positive (or sometimes only nonnegative)
definite symmetric $n\times n$ matrix not necessarily belonging to the
assumed covariance model $\mathfrak{C}$.

\begin{proposition}
\label{inv_rej_prob}Let $\varphi :\mathbb{R}^{n}\rightarrow \lbrack 0,1]$ be
a Borel-measurable function that is almost invariant under $G(\mathfrak{M}%
_{0})$.

\begin{enumerate}
\item For every $(\mu ,\sigma ^{2})\in \mathfrak{M}\times (0,\infty )$ and
for every positive definite symmetric $n\times n$ matrix $\Phi $ the
rejection probabilities satisfy%
\begin{equation}
E_{\mu ,\sigma ^{2}\Phi }(\varphi )=E_{\alpha (\mu -\mu _{0})+\mu
_{0}^{\prime },\alpha ^{2}\sigma ^{2}\Phi }(\varphi )  \label{power_inv}
\end{equation}%
for all $\alpha \neq 0$, $\mu _{0}\in \mathfrak{M}_{0}$, $\mu _{0}^{\prime
}\in \mathfrak{M}_{0}$.

\item For every $(\mu ,\sigma ^{2})\in \mathfrak{M}\times (0,\infty )$ and
every positive definite symmetric $n\times n$ matrix $\Phi $ we have the
representation%
\begin{equation}
E_{\mu ,\sigma ^{2}\Phi }(\varphi )=E_{\Pi _{\left( \mathfrak{M}_{0}-\mu
_{0}\right) ^{\bot }}(\mu -\mu _{0})/\sigma +\mu _{0},\Phi }(\varphi
)=E_{\left\langle \Pi _{\left( \mathfrak{M}_{0}-\mu _{0}\right) ^{\bot
}}(\mu -\mu _{0})/\sigma \right\rangle +\mu _{0},\Phi }(\varphi )
\label{power_inv_2}
\end{equation}%
where $\mu _{0}$ is an arbitrary element of $\mathfrak{M}_{0}$. [Note that $%
\Pi _{\left( \mathfrak{M}_{0}-\mu _{0}\right) ^{\bot }}(\mu -\mu
_{0})/\sigma $ actually does not depend on the choice of $\mu _{0}$, and $%
\Pi _{\left( \mathfrak{M}_{0}-\mu _{0}\right) ^{\bot }}(\mu -\mu _{0})$ can
be computed as $\mu -X\hat{\beta}_{rest}(\mu )$.]

\item The rejection probability $E_{\mu ,\sigma ^{2}\Phi }(\varphi )$
depends on $\left( \mu ,\sigma ^{2}\right) \in \mathfrak{M}\times (0,\infty
) $ and $\Phi $ ($\Phi $ symmetric and positive definite) only through $%
\left( \left\langle \Pi _{\left( \mathfrak{M}_{0}-\mu _{0}\right) ^{\bot
}}(\mu -\mu _{0})/\sigma \right\rangle ,\Phi \right) $. Furthermore, $\Pi
_{\left( \mathfrak{M}_{0}-\mu _{0}\right) ^{\bot }}(\mu -\mu _{0})/\sigma $
is in a bijective correspondence with $\left( R\beta -r\right) /\sigma $
where $\beta $ denotes the coordinates of $\mu $ in the basis given by the
columns of $X$. Thus the rejection probability $E_{\mu ,\sigma ^{2}\Phi
}(\varphi )$ depends on $\left( \mu ,\sigma ^{2}\right) \in \mathfrak{M}%
\times (0,\infty ) $ and $\Phi $ only through $\left( \left\langle \left(
R\beta -r\right) /\sigma \right\rangle ,\Phi \right) $.

\item If $\varphi $ is invariant under $G(\mathfrak{M}_{0})$, then (\ref%
{power_inv}) and (\ref{power_inv_2}) hold even if $\Phi $ is only
nonnegative definite and symmetric (and consequently in this case also the
claim in Part 3 continues to hold for such $\Phi $).
\end{enumerate}
\end{proposition}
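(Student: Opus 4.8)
The plan is to derive all four parts from the transformation formula (\ref{transf_formula})---which, as the text notes, persists for arbitrary nonnegative definite symmetric $\Phi$---using almost invariance of $\varphi$, with Part 1 as the engine. For Part 1 I read (\ref{transf_formula}) probabilistically: if $\mathbf{Y}\sim P_{\mu,\sigma^{2}\Phi}$ and $g=g_{\alpha,\mu_0,\mu_0'}\in G(\mathfrak{M}_0)$, then $g(\mathbf{Y})=\alpha(\mathbf{Y}-\mu_0)+\mu_0'$ is Gaussian with mean $\alpha(\mu-\mu_0)+\mu_0'$ and covariance $\alpha^{2}\sigma^{2}\Phi$, so that $E_{\alpha(\mu-\mu_0)+\mu_0',\alpha^{2}\sigma^{2}\Phi}(\varphi)=E_{\mu,\sigma^{2}\Phi}(\varphi\circ g)$. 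Since $\Phi$ is positive definite, $P_{\mu,\sigma^{2}\Phi}$ is absolutely continuous w.r.t. $\lambda_{\mathbb{R}^{n}}$, hence the Lebesgue-null exceptional set $N(g)$ from the definition of almost invariance is $P_{\mu,\sigma^{2}\Phi}$-null, giving $\varphi\circ g=\varphi$ a.s. and thus (\ref{power_inv}).

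For Part 2 I specialize the parameters in Part 1. Writing $L_0=\mathfrak{M}_0-\mu_0$, the admissible choice $\alpha=1/\sigma$, $\mu_0'=\mu_0-\Pi_{L_0}(\mu-\mu_0)/\sigma$ (note $\mu_0'\in\mathfrak{M}_0$ because $\Pi_{L_0}(\mu-\mu_0)/\sigma\in L_0$) sends the covariance to $\Phi$ and the mean to $\mu_0+\Pi_{L_0^{\bot}}(\mu-\mu_0)/\sigma$, which is the first equality in (\ref{power_inv_2}); along the way one checks $\Pi_{L_0^{\bot}}(\mu-\mu_0)=(\mu-\mu_0)-\Pi_{L_0}(\mu-\mu_0)\in\mathfrak{M}$, so the new mean lies in $\mathfrak{M}$ and Part 1 indeed applies. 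To pass to the bracketed form I apply Part 1 once more to this new mean, now with $\alpha=-1$, $\mu_0'=\mu_0$: this flips the sign of the $L_0^{\bot}$-component while fixing $\mu_0$, showing the rejection probability is unchanged under $\Pi_{L_0^{\bot}}(\mu-\mu_0)/\sigma\mapsto -\Pi_{L_0^{\bot}}(\mu-\mu_0)/\sigma$. As $\left\langle x\right\rangle$ equals $x$ or $-x$, this yields the second equality in (\ref{power_inv_2}).

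Part 3 splits in two. The dependence only through $(\left\langle \Pi_{L_0^{\bot}}(\mu-\mu_0)/\sigma\right\rangle,\Phi)$ is read off the right-hand side of (\ref{power_inv_2}) (with $\mu_0,\varphi$ fixed). For the reformulation in terms of $(R\beta-r)/\sigma$ I would verify that the linear map $v\mapsto R\hat{\beta}(v)$ restricts to a bijection of $\mathfrak{M}\cap L_0^{\bot}$ onto $\mathbb{R}^{q}$---the dimensions agree since $\dim L_0=k-q$, and its kernel inside $\mathfrak{M}$ is exactly $L_0=\{X\gamma:R\gamma=0\}$---and that it carries $\Pi_{L_0^{\bot}}(\mu-\mu_0)$ to $R\beta-r$, by splitting $\mu-\mu_0$ along $L_0\oplus(\mathfrak{M}\cap L_0^{\bot})$ and using $R\hat{\beta}\equiv 0$ on $L_0$. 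Being linear, this map commutes with $\pm$, and since $\left\langle\cdot\right\rangle$ is sign-invariant ($\left\langle x\right\rangle=\left\langle -x\right\rangle$), equality of $\left\langle(R\beta-r)/\sigma\right\rangle$ at two parameter points forces equality of $\left\langle\Pi_{L_0^{\bot}}(\mu-\mu_0)/\sigma\right\rangle$, hence equal rejection probabilities by the first half. I expect this bijective-correspondence step, together with the careful tracking of the $\left\langle\cdot\right\rangle$ sign through it, to be the only genuinely delicate point---everything else is bookkeeping.

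Finally, for Part 4, if $\varphi$ is fully (not merely almost) invariant then $\varphi\circ g=\varphi$ holds pointwise, so the almost-sure step in Part 1 never used absolute continuity of $P_{\mu,\sigma^{2}\Phi}$; as (\ref{transf_formula}) itself remains valid for nonnegative definite symmetric $\Phi$, the arguments for Part 1, Part 2, and the first half of Part 3 go through verbatim for such $\Phi$, which is exactly the assertion of Part 4.
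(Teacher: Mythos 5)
Your proposal is correct and follows essentially the same route as the paper: Part 1 via the transformation formula (\ref{transf_formula}) plus almost invariance and absolute continuity, Part 2 by specializing $(\alpha,\mu_{0},\mu_{0}^{\prime})$ in Part 1, Part 3 via a bijective linear correspondence between $\Pi_{\left(\mathfrak{M}_{0}-\mu_{0}\right)^{\bot}}(\mu-\mu_{0})$ and $R\beta-r$, and Part 4 by noting that full invariance dispenses with the null-set argument. The only differences are cosmetic: you collapse the paper's two applications of Part 1 in Part 2 into one by choosing $\mu_{0}^{\prime}=\mu_{0}-\Pi_{\mathfrak{M}_{0}-\mu_{0}}(\mu-\mu_{0})/\sigma$, and in Part 3 you argue via kernel and dimension count where the paper writes out the restricted least-squares residual explicitly as a full-column-rank linear image of $R\beta-r$.
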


\begin{remark}
(i) For $\Phi =\Sigma \in \mathfrak{C}$ relation (\ref{power_inv}) expresses
the fact that the rejection probability of the almost invariant test $%
\varphi $ is invariant under the associated group $\overline{G(\mathfrak{M}%
_{0})}$.

(ii) Setting $\alpha =1$ in (\ref{power_inv}) and holding $\sigma ^{2}$ and $%
\Phi $ fixed, we see that the rejection probability is, in particular,
constant along that translation of $\mathfrak{M}_{0}$ which passes through $%
\mu $.

(iii) If $\mu \in \mathfrak{M}_{0}$, choosing $\mu _{0}=\mu $, $\alpha
=\sigma ^{-1}$ in (\ref{power_inv}), and fixing $\mu _{0}^{\prime }\in 
\mathfrak{M}_{0}$, shows that $E_{\mu ,\sigma ^{2}\Phi }(\varphi )=E_{\mu
_{0}^{\prime },\Phi }(\varphi )$. Hence, for $\mu \in \mathfrak{M}_{0}$, the
rejection probability is constant in $\left( \mu ,\sigma ^{2}\right) $ and
only depends on $\Phi $.

(iv) Occasionally we consider tests $\varphi $ that are only required to be
almost invariant under the subgroup of transformations $y\mapsto \alpha
y+\left( 1-\alpha \right) \mu _{0}$ for a fixed $\mu _{0}\in \mathfrak{M}%
_{0} $, i.e., under the group $G\left( \left\{ \mu _{0}\right\} \right) $.
The results in the above propositions can be easily adapted to this case and
we refrain from spelling out the details. We only note that the analogue to (%
\ref{power_inv}) in this case is given by%
\begin{equation}
E_{\mu ,\sigma ^{2}\Phi }(\varphi )=E_{\alpha (\mu -\mu _{0})+\mu
_{0},\alpha ^{2}\sigma ^{2}\Phi }(\varphi )  \label{power_inv_3}
\end{equation}%
for all $\alpha \neq 0$.
\end{remark}

\bigskip

Part 2 of the above proposition has shown that the rejection probability
depends on the parameters only through $\left( \left\langle \Pi _{\left( 
\mathfrak{M}_{0}-\mu _{0}\right) ^{\bot }}(\mu -\mu _{0})/\sigma
\right\rangle ,\Sigma \right) $. This quantity is recognized as a maximal
invariant in the next result.

\begin{proposition}
\label{prop_3}Let $\mu _{0}\in \mathfrak{M}_{0}$ be arbitrary. Then $\left(
\left\langle \Pi _{\left( \mathfrak{M}_{0}-\mu _{0}\right) ^{\bot }}(\mu
-\mu _{0})/\sigma \right\rangle ,\Sigma \right) $ is a maximal invariant for
the associated group $\overline{G(\mathfrak{M}_{0})}$.
\end{proposition}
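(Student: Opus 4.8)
The plan is to verify directly the two defining properties of a maximal invariant, exploiting the explicit description of the associated group $\overline{G(\mathfrak{M}_{0})}$. Throughout I write $\mathcal{L}=\mathfrak{M}_{0}-\mu _{0}$ for the linear subspace underlying the affine space $\mathfrak{M}_{0}$; since $\mathfrak{M}_{0}$ is affine, $\mathcal{L}$ does not depend on the choice of $\mu _{0}\in \mathfrak{M}_{0}$. Two observations will be used repeatedly. First, any two elements $\nu ,\nu ^{\prime }\in \mathfrak{M}_{0}$ differ by an element of $\mathcal{L}$, whence $\Pi _{\mathcal{L}^{\bot }}(\nu )=\Pi _{\mathcal{L}^{\bot }}(\mu _{0})=\Pi _{\mathcal{L}^{\bot }}(\nu ^{\prime })$. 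Second, by the definition of $\left\langle \cdot \right\rangle $ one has $\left\langle cx\right\rangle =\left\vert c\right\vert \left\langle x\right\rangle $ for every real $c$, so that $\left\langle cx\right\rangle =\left\langle x\right\rangle $ whenever $\left\vert c\right\vert =1$; in particular $\left\langle x\right\rangle =\left\langle y\right\rangle $ holds if and only if $y\in \{x,-x\}$. It is convenient to abbreviate the candidate maximal invariant as $T(\mu ,\sigma ^{2},\Sigma )=(\left\langle v\right\rangle ,\Sigma )$ with $v=\Pi _{\mathcal{L}^{\bot }}(\mu -\mu _{0})/\sigma $.

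For invariance I would take a generic group element $\bar{g}_{\alpha ,\nu ,\nu ^{\prime }}$ and compute the $\mathcal{L}^{\bot }$-projection of the transformed mean. Writing $\mu ^{\prime }=\alpha (\mu -\nu )+\nu ^{\prime }$ and using the first observation to cancel the contributions of $\nu $ and $\nu ^{\prime }$, one finds $\Pi _{\mathcal{L}^{\bot }}(\mu ^{\prime }-\mu _{0})=\alpha \,\Pi _{\mathcal{L}^{\bot }}(\mu -\mu _{0})$; dividing by $\sigma ^{\prime }=\left\vert \alpha \right\vert \sigma $ turns the factor $\alpha $ into $\operatorname{sign}(\alpha )$, and the second observation then gives $\left\langle \Pi _{\mathcal{L}^{\bot }}(\mu ^{\prime }-\mu _{0})/\sigma ^{\prime }\right\rangle =\left\langle v\right\rangle $. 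Since the $\Sigma $-coordinate is left unchanged by the group, $T$ is invariant. This is the routine direction.

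The substantive direction is maximality: given $T(\mu _{1},\sigma _{1}^{2},\Sigma _{1})=T(\mu _{2},\sigma _{2}^{2},\Sigma _{2})$ I must produce a group element mapping the first parameter point to the second. Equality of the second coordinates yields $\Sigma _{1}=\Sigma _{2}$, which is automatically preserved by every group element. Equality of the first coordinates reads $\left\langle v_{1}\right\rangle =\left\langle v_{2}\right\rangle $, hence $v_{2}=\pm v_{1}$ by the characterization above. I would then set $\left\vert \alpha \right\vert =\sigma _{2}/\sigma _{1}$ and choose the sign of $\alpha $ so that $\operatorname{sign}(\alpha )\,v_{1}=v_{2}$; this is exactly the freedom needed to accommodate the two cases $v_{2}=\pm v_{1}$. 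With this $\alpha $ one checks that $\Pi _{\mathcal{L}^{\bot }}\!\left( \mu _{2}-\mu _{0}-\alpha (\mu _{1}-\mu _{0})\right) =\sigma _{2}v_{2}-\alpha \sigma _{1}v_{1}=0$, so that $\ell :=\mu _{2}-\mu _{0}-\alpha (\mu _{1}-\mu _{0})$ lies in $(\mathcal{L}^{\bot })^{\bot }=\mathcal{L}$. Taking $\nu =\mu _{0}$ and $\nu ^{\prime }=\mu _{0}+\ell \in \mathfrak{M}_{0}$, the element $\bar{g}_{\alpha ,\nu ,\nu ^{\prime }}$ sends $(\mu _{1},\sigma _{1}^{2},\Sigma _{1})$ to $(\alpha (\mu _{1}-\mu _{0})+\mu _{0}+\ell ,\,\alpha ^{2}\sigma _{1}^{2},\,\Sigma _{1})=(\mu _{2},\sigma _{2}^{2},\Sigma _{2})$, as required.

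The one point deserving care—and the only place where the argument could fail—is the compatibility between the mean and variance coordinates in the maximality step: a single scalar $\alpha $ must simultaneously satisfy the variance relation $\sigma _{2}=\left\vert \alpha \right\vert \sigma _{1}$ and the projected-mean relation $\sigma _{2}v_{2}=\alpha \sigma _{1}v_{1}$. The reason this is not over-determined is that $\left\langle \cdot \right\rangle $ is norm-preserving, so $\left\langle v_{1}\right\rangle =\left\langle v_{2}\right\rangle $ already forces $\Vert v_{1}\Vert =\Vert v_{2}\Vert $; when $v_{1}\neq 0$ the choice $\left\vert \alpha \right\vert =\sigma _{2}/\sigma _{1}$ is then precisely the value demanded by both relations, while when $v_{1}=0$ both projections vanish and $\alpha =\sigma _{2}/\sigma _{1}$ works trivially. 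Once this is recognized the remaining computations are immediate, and the result is the parameter-space counterpart of the sample-space maximal invariant exhibited in Proposition \ref{max_inv}.
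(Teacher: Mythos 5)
Your proof is correct and follows essentially the same route as the paper's: invariance by direct computation of the projected, scaled mean under a generic group element, and maximality by exploiting $\left\langle v_{1}\right\rangle =\left\langle v_{2}\right\rangle \Rightarrow v_{2}=\pm v_{1}$ together with the sign freedom in $\alpha $ to build an explicit group element carrying one parameter point to the other. Your version is merely more self-contained (the paper defers the computations to the proof of Proposition \ref{max_inv}), and your remark on the norm-preservation of $\left\langle \cdot \right\rangle $ correctly identifies why a single $\alpha $ can reconcile the variance and mean coordinates.
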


\subsection{Negative results\label{sec_neg}}

We next establish a negative result providing conditions under which (i) the
size of a test is $1$, and/or (ii) the power function of a test gets
arbitrarily close to zero. The theorem is based on a "concentration effect"
that we explain now: Suppose one can find a sequence $\Sigma _{m}\in 
\mathfrak{C}$ converging to a singular matrix $\bar{\Sigma}$ and let $%
\mathcal{Z}$ denote the span of the columns of $\bar{\Sigma}$. Let $\mu
_{0}\in \mathfrak{M}_{0}$. Since the probability measures $P_{\mu
_{0},\sigma ^{2}\Sigma _{m}}$ converge weakly to $P_{\mu _{0},\sigma ^{2}%
\bar{\Sigma}}$, which has support $\mu _{0}+\mathcal{Z}$, they concentrate
their mass more and more around $\mu _{0}+\mathcal{Z}$. Suppose first that
one can show that $\mu _{0}+\mathcal{Z}$ is essentially contained in the
interior of the rejection region $W$ in the sense that the set of points in $%
\mu _{0}+\mathcal{Z}$ which are not interior points of $W$ has $\lambda
_{\mu _{0}+\mathcal{Z}}$-measure zero. It then follows that $P_{\mu
_{0},\sigma ^{2}\Sigma _{m}}\left( W\right) $ converges to $P_{\mu
_{0},\sigma ^{2}\bar{\Sigma}}\left( W\right) \geq P_{\mu _{0},\sigma ^{2}%
\bar{\Sigma}}\left( \mu _{0}+\mathcal{Z}\right) =1$, establishing that the
size of the test is $1$. Now, in some cases of interest it turns out that $%
\mu _{0}+\mathcal{Z}$ fails to satisfy the just mentioned "interiority"
condition with respect to the rejection region $W$, but it also turns out
that it does satisfy the "interiority" condition with respect to an
"equivalent" rejection region $W^{\prime }$, which is obtained by adjoining
a $\lambda _{\mathbb{R}^{n}}$-null set to $W$ (for example, for $W^{\prime
}=W\cup \left( \mu _{0}+\mathcal{Z}\right) $). Since the rejection
probabilities corresponding to $W$ and $W^{\prime }$ are identical (as any $%
\Sigma \in \mathfrak{C}$ is positive definite) and thus the two tests have
the same size, the above reasoning can then be applied to $W^{\prime }$,
again showing that the size of the test based on $W$ is $1$ for these cases.
Part 1 of Theorem \ref{inv} below formalizes this reasoning. The same
"concentration effect" reasoning applied to $\mathbb{R}^{n}\backslash W$
instead of $W$ then gives (\ref{inf}). [The remaining claims in Part 2 as
well as Part 3 are then consequences of (\ref{inf}) combined with continuity
or invariance properties of the power function.] It should, however, be
stressed that weak convergence of $P_{\mu _{0},\sigma ^{2}\Sigma _{m}}$ to $%
P_{\mu _{0},\sigma ^{2}\bar{\Sigma}}$ together with the inclusion $\mu _{0}+%
\mathcal{Z}\subseteq W$ (except possibly for a $\lambda _{\mu _{0}+\mathcal{Z%
}}$-null set) alone is \emph{not} sufficient to allow one to draw the
conclusion -- as tempting as it may be -- that $P_{\mu _{0},\sigma
^{2}\Sigma _{m}}\left( W\right) \rightarrow 1$ although "in the limit" $%
P_{\mu _{0},\sigma ^{2}\bar{\Sigma}}\left( W\right) =1$ holds.
Counterexamples where $P_{\mu _{0},\sigma ^{2}\Sigma _{m}}$ converges weakly
to $P_{\mu _{0},\sigma ^{2}\bar{\Sigma}}$ and $\mu _{0}+\mathcal{Z}\subseteq
W$ (and thus $P_{\mu _{0},\sigma ^{2}\bar{\Sigma}}\left( W\right) =1$)
holds, but where $P_{\mu _{0},\sigma ^{2}\Sigma _{m}}\left( W\right) $
converges to a positive number less than $1$ are easily found with the help
of Theorem \ref{TU}. We furthermore note that in a different testing context 
\cite{Mart10} provides a result which also makes use of a "concentration
effect", but his result is not correct as given. For a discussion of these
issues and corrected results see \cite{Prein2013}.

The "concentration effect" reasoning underlying Theorem \ref{inv} of course
hinges crucially on the "interiority" condition (either w.r.t. $W$ or w.r.t. 
$\mathbb{R}^{n}\backslash W$), raising the question why we should expect
this to be satisfied in the applications we have in mind, rather than expect
that $\mu _{0}+\mathcal{Z}$ intersects with both $W$ and $\mathbb{R}%
^{n}\backslash W$ in such a way that the "interiority" condition is neither
satisfied w.r.t. $W$ nor w.r.t. $\mathbb{R}^{n}\backslash W$. Consider the
case where $\mathcal{Z}$ is one-dimensional, a case of paramount importance
in the applications, and suppose also that $W$ is invariant under the group $%
G\left( \mathfrak{M}_{0}\right) $. Then we have the dichotomy that $\left(
\mu _{0}+\mathcal{Z}\right) \backslash \left\{ \mu _{0}\right\} $ either
lies entirely in $W$ or in $\mathbb{R}^{n}\backslash W$, showing that --
except possibly for the point $\mu _{0}$ -- the set $\mu _{0}+\mathcal{Z}$
never intersects both $W$ and $\mathbb{R}^{n}\backslash W$. Moreover, if an
element of $\left( \mu _{0}+\mathcal{Z}\right) \backslash \left\{ \mu
_{0}\right\} $ belongs to the interior of $W$ (of $\mathbb{R}^{n}\backslash
W $, respectively), then $\left( \mu _{0}+\mathcal{Z}\right) \backslash
\left\{ \mu _{0}\right\} $ in its entirety is a subset of the interior of $W$
(of $\mathbb{R}^{n}\backslash W$, respectively). Hence, under the mentioned
invariance and for one-dimensional $\mathcal{Z}$, one can expect the
"interiority" conditions in the subsequent theorem to be satisfied not
infrequently.

\begin{theorem}
\label{inv} Let $W$ be a Borel set in $\mathbb{R}^{n}$, the rejection region
of a test. Furthermore, assume that $\mathcal{Z}$ is a concentration space
of the covariance model $\mathfrak{C}$. Then the following holds:

\begin{enumerate}
\item If $\mu _{0}\in \mathfrak{M}_{0}$ satisfies%
\begin{equation}
\lambda _{\mu _{0}+\mathcal{Z}}\left( \limfunc{bd}\left( W\cup \left( \mu
_{0}+\mathcal{Z}\right) \right) \right) =0,  \label{int_cond}
\end{equation}%
then for every $0<\sigma ^{2}<\infty $%
\begin{equation*}
\sup\limits_{\Sigma \in \mathfrak{C}}P_{\mu _{0},\sigma ^{2}\Sigma }(W)=1
\end{equation*}%
holds; in particular, the size of the test equals $1$. [In case $W$ is of
the form $\left\{ y\in \mathbb{R}^{n}:T(y)\geq C\right\} $ for some
Borel-measurable function $T:\mathbb{R}^{n}\mapsto \overline{\mathbb{R}}$
and $0<C<\infty $, a sufficient condition for (\ref{int_cond}) is that for $%
\lambda _{\mathcal{Z}}$-almost every $z\in \mathcal{Z}$ the test statistic $%
T $ satisfies $T(\mu _{0}+z)>C$ and is lower semicontinuous at $\mu _{0}+z$.]

\item If $\mu _{0}\in \mathfrak{M}_{0}$ satisfies%
\begin{equation}
\lambda _{\mu _{0}+\mathcal{Z}}\left( \limfunc{bd}\left( \left( \mathbb{R}%
^{n}\backslash W\right) \cup \left( \mu _{0}+\mathcal{Z}\right) \right)
\right) =0,  \label{int_cond_2}
\end{equation}%
then for every $0<\sigma ^{2}<\infty $%
\begin{equation}
\inf\limits_{\Sigma \in \mathfrak{C}}P_{\mu _{0},\sigma ^{2}\Sigma }(W)=0,
\label{inf}
\end{equation}%
and hence%
\begin{equation*}
\inf_{\mu _{1}\in \mathfrak{M}_{1}}\inf\limits_{\Sigma \in \mathfrak{C}%
}P_{\mu _{1},\sigma ^{2}\Sigma }(W)=0,
\end{equation*}%
holds for every $0<\sigma ^{2}<\infty $. In particular, the test is biased
(except in the trivial case where its size is zero). [In case $W$ is of the
form $\left\{ y\in \mathbb{R}^{n}:T(y)\geq C\right\} $ for some
Borel-measurable function $T:\mathbb{R}^{n}\mapsto \overline{\mathbb{R}}$
and $0<C<\infty $, a sufficient condition for (\ref{int_cond_2}) is that for 
$\lambda _{\mathcal{Z}}$-almost every $z\in \mathcal{Z}$ the test statistic $%
T$ satisfies $T(\mu _{0}+z)<C$ and is upper semicontinuous at $\mu _{0}+z$.]

\item Suppose that condition (\ref{inf}) is satisfied for some $\mu _{0}\in 
\mathfrak{M}_{0}$ and some $0<\sigma ^{2}<\infty $. Furthermore, assume that 
$W$ is almost invariant under the group $G\left( \left\{ \mu _{0}\right\}
\right) $. Then for every $\mu _{1}\in \mathfrak{M}_{1}$ we have 
\begin{equation*}
\inf\limits_{0<\sigma ^{2}<\infty }\inf\limits_{\Sigma \in \mathfrak{C}%
}P_{\mu _{1},\sigma ^{2}\Sigma }(W)=0.
\end{equation*}%
[In case $W$ is of the form $\left\{ y\in \mathbb{R}^{n}:T(y)\geq C\right\} $
for some Borel-measurable function $T:\mathbb{R}^{n}\mapsto \overline{%
\mathbb{R}}$ and $0<C<\infty $, almost invariance of $W$ under the group $%
G\left( \left\{ \mu _{0}\right\} \right) $ follows from almost invariance of 
$T$ under $G\left( \left\{ \mu _{0}\right\} \right) $.]
\end{enumerate}
\end{theorem}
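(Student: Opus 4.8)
The plan is to make the ``concentration effect'' sketched before the theorem rigorous through the Portmanteau theorem, after first repairing the rejection region by a Lebesgue-null modification. Throughout, fix the concentration space $\mathcal{Z}$ together with a sequence $\Sigma_{m}\in\mathfrak{C}$ with $\Sigma_{m}\to\bar{\Sigma}$ and $\limfunc{span}(\bar{\Sigma})=\mathcal{Z}$, as supplied by Definition \ref{CD}. Since the $\Sigma_{m}$ are symmetric positive definite, $\bar{\Sigma}$ is symmetric nonnegative definite and is positive definite on its range $\mathcal{Z}$. For fixed $\mu_{0}$ and $\sigma^{2}$, convergence of the corresponding Gaussian characteristic functions yields $P_{\mu_{0},\sigma^{2}\Sigma_{m}}\to P_{\mu_{0},\sigma^{2}\bar{\Sigma}}$ weakly; the limit is a degenerate Gaussian with support $\mu_{0}+\mathcal{Z}$ that is absolutely continuous with respect to $\lambda_{\mu_{0}+\mathcal{Z}}$ on that affine subspace.

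For Part 1, I would set $W^{\prime}=W\cup(\mu_{0}+\mathcal{Z})$. Because $\dim\mathcal{Z}<n$, the affine subspace $\mu_{0}+\mathcal{Z}$ is a $\lambda_{\mathbb{R}^{n}}$-null set, so $W$ and $W^{\prime}$ differ only by a Lebesgue-null set; as every $\Sigma\in\mathfrak{C}$ is positive definite, $P_{\mu_{0},\sigma^{2}\Sigma}(W)=P_{\mu_{0},\sigma^{2}\Sigma}(W^{\prime})$ for all $\Sigma$. The crux is to verify that $W^{\prime}$ is a continuity set for the limit measure. Since $P_{\mu_{0},\sigma^{2}\bar{\Sigma}}$ lives on $\mu_{0}+\mathcal{Z}$ and is absolutely continuous there, its boundary mass equals $\lambda_{\mu_{0}+\mathcal{Z}}(\limfunc{bd}(W^{\prime})\cap(\mu_{0}+\mathcal{Z}))$, which is precisely the quantity $\lambda_{\mu_{0}+\mathcal{Z}}(\limfunc{bd}(W^{\prime}))$ appearing in (\ref{int_cond}) (as $\lambda_{\mu_{0}+\mathcal{Z}}$ is concentrated on $\mu_{0}+\mathcal{Z}$). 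Hence, under (\ref{int_cond}), the Portmanteau theorem gives $P_{\mu_{0},\sigma^{2}\Sigma_{m}}(W^{\prime})\to P_{\mu_{0},\sigma^{2}\bar{\Sigma}}(W^{\prime})=1$, the last equality holding because the limit measure is supported on $\mu_{0}+\mathcal{Z}\subseteq W^{\prime}$. This forces $\sup_{\Sigma}P_{\mu_{0},\sigma^{2}\Sigma}(W)=1$. For the stated sufficient condition, lower semicontinuity of $T$ at a point $\mu_{0}+z$ with $T(\mu_{0}+z)>C$ places $\mu_{0}+z$ in $\limfunc{int}(W)\subseteq\limfunc{int}(W^{\prime})$, hence outside $\limfunc{bd}(W^{\prime})$; so the exceptional subset of $\mu_{0}+\mathcal{Z}$ where (\ref{int_cond}) could fail is $\lambda_{\mu_{0}+\mathcal{Z}}$-negligible.

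Part 2 is the exact dual: applying the Part 1 reasoning to $\mathbb{R}^{n}\backslash W$ in place of $W$ yields, under (\ref{int_cond_2}), $P_{\mu_{0},\sigma^{2}\Sigma_{m}}(\mathbb{R}^{n}\backslash W)\to 1$, i.e.\ (\ref{inf}). The passage to the infimum over $\mathfrak{M}_{1}$ then uses continuity of the rejection probability in the mean: for a \emph{fixed} positive definite covariance $\sigma^{2}\Sigma_{m}$ the map $\nu\mapsto P_{\nu,\sigma^{2}\Sigma_{m}}(W)$ is continuous (translating a Gaussian of fixed nondegenerate covariance is continuous in total variation). Given $\varepsilon>0$, pick $m$ with $P_{\mu_{0},\sigma^{2}\Sigma_{m}}(W)<\varepsilon$; since $\mu_{0}\in\mathfrak{M}_{0}$ lies in the closure of $\mathfrak{M}_{1}=\mathfrak{M}\backslash\mathfrak{M}_{0}$ ($\mathfrak{M}_{0}$ being a proper affine subspace of $\mathfrak{M}$), continuity supplies $\mu_{1}\in\mathfrak{M}_{1}$ with $P_{\mu_{1},\sigma^{2}\Sigma_{m}}(W)<\varepsilon$, so $\inf_{\mu_{1}\in\mathfrak{M}_{1}}\inf_{\Sigma}P_{\mu_{1},\sigma^{2}\Sigma}(W)=0$. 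As the size exceeds $0$ in the nontrivial case while this infimum is $0$, the test is biased.

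Part 3 converts (\ref{inf}) at $\mu_{0}$ into the pointwise nuisance-infimal statement at an arbitrary $\mu_{1}\in\mathfrak{M}_{1}$ by invoking the invariance identity (\ref{power_inv_3}) of Proposition \ref{inv_rej_prob}, applicable because $W$ is almost invariant under $G(\{\mu_{0}\})$ and each $\Sigma$ is positive definite. Taking $\varphi=\boldsymbol{1}_{W}$ and $\alpha=\sigma/\tilde{\sigma}$ in (\ref{power_inv_3}) gives, with the fixed $\sigma^{2}$ from (\ref{inf}),
\[
P_{\mu_{1},\tilde{\sigma}^{2}\Sigma}(W)=P_{(\sigma/\tilde{\sigma})(\mu_{1}-\mu_{0})+\mu_{0},\,\sigma^{2}\Sigma}(W).
\]
Letting $\tilde{\sigma}\to\infty$ slides the mean to $\mu_{0}$ while holding the covariance at $\sigma^{2}\Sigma$, so continuity in the mean yields $\inf_{\tilde{\sigma}^{2}>0}P_{\mu_{1},\tilde{\sigma}^{2}\Sigma}(W)\le P_{\mu_{0},\sigma^{2}\Sigma}(W)$; taking the infimum over $\Sigma\in\mathfrak{C}$ and using (\ref{inf}) closes the argument. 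The almost invariance of $W=\{T\ge C\}$ needed here follows from that of $T$, as super-level sets of almost invariant functions are almost invariant. The step I expect to be the main obstacle is the boundary bookkeeping in Part 1: one must argue that the degenerate limit charges no Lebesgue-null modification of $W$ and that its boundary mass coincides with the intrinsic $\lambda_{\mu_{0}+\mathcal{Z}}$-measure in (\ref{int_cond}). The discussion preceding the theorem (and the counterexamples available through Theorem \ref{TU}) warns that weak convergence together with $\mu_{0}+\mathcal{Z}\subseteq W$ is by itself insufficient, so this continuity-set verification cannot be shortcut and is the genuine heart of the proof.
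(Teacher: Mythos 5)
Your proposal is correct and follows essentially the same route as the paper's proof: the null-set modification $W\cup(\mu_{0}+\mathcal{Z})$ combined with weak convergence and the Portmanteau theorem (the paper's Lemma \ref{conc}) for Part 1 and the first claim of Part 2, Scheff\'{e}/total-variation continuity in the mean for the passage to $\mathfrak{M}_{1}$, and the scaling identity under $G(\{\mu_{0}\})$ plus the same continuity argument for Part 3. The only cosmetic difference is that you route Part 3 through (\ref{power_inv_3}) of Proposition \ref{inv_rej_prob} rather than re-deriving the identity from almost invariance of $W$ and the reproductive property of the Gaussian, which amounts to the same computation.
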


\begin{remark}
\label{trivial}(i) The conclusions of the above theorem immediately also
apply to every test statistic $T^{\prime }$ that is $\lambda _{\mathbb{R}%
^{n}}$-almost everywhere equal to a test statistic $T$ satisfying the
assumptions of the theorem.

(ii) Let $\varphi :\mathbb{R}^{n}\mapsto \lbrack 0,1]$ be Borel-measurable,
i.e., a test. If the set $\left\{ y:\varphi (y)=1\right\} $ satisfies the
assumptions on $W$ in Part 1 of the above theorem, then for every $0<\sigma
^{2}<\infty $%
\begin{equation*}
\sup\limits_{\Sigma \in \mathfrak{C}}E_{\mu _{0},\sigma ^{2}\Sigma }\left(
\varphi \right) =1
\end{equation*}%
holds. If the set $\left\{ y:\varphi (y)=0\right\} $ satisfies the
assumptions on $\mathbb{R}^{n}\backslash W$ in Part 2 of the above theorem
then for every $0<\sigma ^{2}<\infty $%
\begin{equation*}
\inf\limits_{\Sigma \in \mathfrak{C}}E_{\mu _{0},\sigma ^{2}\Sigma }\left(
\varphi \right) =0
\end{equation*}%
holds. A similar remark applies to Part 3 of the theorem, provided $\varphi $
is almost invariant under $G\left( \left\{ \mu _{0}\right\} \right) $.
\end{remark}

\begin{remark}
\label{special_AR_1}If the covariance model $\mathfrak{C}$ contains AR(1)
correlation matrices $\Lambda (\rho _{m})$ for some sequence $\rho _{m}\in
\left( -1,1\right) $ with $\rho _{m}\rightarrow 1$ ($\rho _{m}\rightarrow -1$%
, respectively), then $\limfunc{span}\left( e_{+}\right) $ ($\limfunc{span}%
\left( e_{-}\right) $, respectively) is a concentration space of $\mathfrak{C%
}$ (cf. Lemma \ref{AR_1} in Appendix \ref{AR_100}). Hence Theorem \ref{inv}
applies with $\mathcal{Z}=\limfunc{span}\left( e_{+}\right) $ ($\mathcal{Z}=%
\limfunc{span}\left( e_{-}\right) $, respectively). In particular, if $%
\mathfrak{C}$ contains $\mathfrak{C}_{AR(1)}$, then Theorem \ref{inv}
applies with $\mathcal{Z}=\limfunc{span}\left( e_{+}\right) $ as well as
with $\mathcal{Z}=\limfunc{span}\left( e_{-}\right) $.
\end{remark}

\subsection{Positive results\label{sec_pos}}

The next theorem isolates conditions under which a test does not suffer from
the extreme size and power problems encountered in the preceding subsection.
In particular, we provide conditions which guarantee that the size is
bounded away from one and that the power function is bounded away from zero.
The theorem assumes that the test $\varphi $ -- apart from being (almost)
invariant under the group $G(\mathfrak{M}_{0})$ -- is also invariant under
addition of elements of $J(\mathfrak{C})$ defined below. This additional
invariance assumption will be automatically satisfied in the important
special case where $\varphi $ is invariant under the group $G(\mathfrak{M}%
_{0})$ and where $J(\mathfrak{C})\subseteq \mathfrak{M}_{0}-\mu _{0}$ for
some $\mu _{0}\in \mathfrak{M}_{0}$ (and hence for all $\mu _{0}\in 
\mathfrak{M}_{0}$) as then the maps $x\mapsto x+z$ for $z\in J(\mathfrak{C})$
are elements of $G(\mathfrak{M}_{0})$; see also Proposition \ref{enforce_inv}
and the attending discussion in Subsection \ref{Impclass}. A second
assumption of the subsequent theorem is that the covariance model $\mathfrak{%
C}$ is bounded which is typically a harmless assumption in applications as
it is, e.g., always satisfied if the elements of $\mathfrak{C}$ are
normalized such that the largest diagonal element is $1$, or such that the
trace is $1$. The theorem also maintains a further assumption on the
covariance model $\mathfrak{C}$ related to the way sequences of elements in $%
\mathfrak{C}$ approach singular matrices. This condition has to be verified
for the covariance model $\mathfrak{C}$ in any particular application. A
verification for $\mathfrak{C}_{AR(1)}$ is given in Appendix \ref{AR_100},
cf. also Remarks \ref{rem_AR_1} and \ref{special_AR_1_1}.

For a covariance model $\mathfrak{C}$ define now 
\begin{equation*}
J(\mathfrak{C})=\bigcup \left\{ \limfunc{span}(\bar{\Sigma}):\det \bar{\Sigma%
}=0\text{, }\bar{\Sigma}=\lim_{m\rightarrow \infty }\Sigma _{m}\text{ for a
sequence }\Sigma _{m}\in \mathfrak{C}\right\} ,
\end{equation*}%
i.e., $J(\mathfrak{C})$\ is the union of all concentration spaces of the
covariance model $\mathfrak{C}$. [Note that the subsequent results remain
valid in the case where $J(\mathfrak{C})$ is empty.]

\begin{theorem}
\label{TU}Let $\varphi :\mathbb{R}^{n}\rightarrow \lbrack 0,1]$ be a
Borel-measurable function that is almost invariant under $G(\mathfrak{M}%
_{0}) $. Suppose that $\varphi $ is neither $\lambda _{\mathbb{R}^{n}}$%
-almost everywhere equal to $1$ nor $\lambda _{\mathbb{R}^{n}}$-almost
everywhere equal to $0$. Suppose further that 
\begin{equation}
\varphi (x+z)=\varphi (x)\qquad \text{for every }x\in \mathbb{R}^{n}\text{
and every }z\in J(\mathfrak{C}).  \label{phi_inv_wrt_z}
\end{equation}%
Assume that $\mathfrak{C}$ is bounded (as a subset of $\mathbb{R}^{n\times
n} $). Assume also that for every sequence $\Sigma _{m}\in \mathfrak{C}$
converging to a singular $\bar{\Sigma}$ there exists a subsequence $%
(m_{i})_{i\in \mathbb{N}}$ and a sequence of positive real numbers $%
s_{m_{i}} $ such that the sequence of matrices $D_{m_{i}}=\Pi _{\limfunc{span%
}(\bar{\Sigma})^{\bot }}\Sigma _{m_{i}}\Pi _{\limfunc{span}(\bar{\Sigma}%
)^{\bot }}/s_{m_{i}}$ converges to a matrix $D$ which is regular on the
orthogonal complement of $\limfunc{span}(\bar{\Sigma})$ (meaning that the
linear map corresponding to $D$ is injective when restricted to the
orthogonal complement of $\limfunc{span}(\bar{\Sigma})$)\footnote{%
Of course, $D$ maps every element of $\limfunc{span}(\bar{\Sigma})$ into
zero by construction.}. Then the following holds:

\begin{enumerate}
\item The size of the test $\varphi $ is strictly less than $1$, i.e.,%
\begin{equation*}
\sup\limits_{\mu _{0}\in \mathfrak{M}_{0}}\sup\limits_{0<\sigma ^{2}<\infty
}\sup\limits_{\Sigma \in \mathfrak{C}}E_{\mu _{0},\sigma ^{2}\Sigma
}(\varphi )<1.
\end{equation*}%
Furthermore,%
\begin{equation*}
\inf_{\mu _{0}\in \mathfrak{M}_{0}}\inf_{0<\sigma ^{2}<\infty }\inf_{\Sigma
\in \mathfrak{C}}E_{\mu _{0},\sigma ^{2}\Sigma }(\varphi )>0.
\end{equation*}

\item Suppose additionally that for every sequence $\nu _{m}\in \Pi _{\left( 
\mathfrak{M}_{0}-\mu _{0}\right) ^{\bot }}(\mathfrak{M}_{1}-\mu _{0})$ with $%
\left\Vert \nu _{m}\right\Vert \rightarrow \infty $ and for every sequence $%
\Phi _{m}$ of positive definite symmetric $n\times n$ matrices with $\Phi
_{m}\rightarrow \Phi $, $\Phi $ positive definite, we have 
\begin{equation}
\liminf_{m\rightarrow \infty }E_{\nu _{m}+\mu _{0},\Phi _{m}}(\varphi )>0,
\label{power_ass}
\end{equation}%
where $\mu _{0}$ is an element of $\mathfrak{M}_{0}$. [This condition
clearly does not depend on the particular choice of $\mu _{0}\in \mathfrak{M}%
_{0}$.]. Then the infimal power is bounded away from zero, i.e., 
\begin{equation*}
\inf_{\mu _{1}\in \mathfrak{M}_{1}}\inf\limits_{0<\sigma ^{2}<\infty
}\inf\limits_{\Sigma \in \mathfrak{C}}E_{\mu _{1},\sigma ^{2}\Sigma
}(\varphi )>0.
\end{equation*}

\item Suppose that the limit inferior in (\ref{power_ass}) is $1$ for every
sequence $\nu _{m}$ and $\Phi _{m}$ as specified above. Then for every $%
0<c<\infty $%
\begin{equation}
\inf_{\substack{ \mu _{1}\in \mathfrak{M}_{1},0<\sigma ^{2}<\infty  \\ %
d\left( \mu _{1},\mathfrak{M}_{0}\right) /\sigma \geq c}}E_{\mu _{1},\sigma
^{2}\Sigma _{m}}(\varphi )\rightarrow 1  \label{unit_power}
\end{equation}%
holds for $m\rightarrow \infty $ and for any sequence $\Sigma _{m}\in 
\mathfrak{C}$ satisfying $\Sigma _{m}\rightarrow \bar{\Sigma}$ with $\bar{%
\Sigma}$ a singular matrix. Furthermore, for every sequence $0<c_{m}<\infty $%
\begin{equation}
\inf_{\substack{ \mu _{1}\in \mathfrak{M}_{1},  \\ d\left( \mu _{1},%
\mathfrak{M}_{0}\right) \geq c_{m}}}E_{\mu _{1},\sigma _{m}^{2}\Sigma
_{m}}(\varphi )\rightarrow 1  \label{unit_power_2}
\end{equation}%
holds for $m\rightarrow \infty $ whenever $0<\sigma _{m}^{2}<\infty $, $%
c_{m}/\sigma _{m}\rightarrow \infty $, and the sequence $\Sigma _{m}\in 
\mathfrak{C}$ satisfies $\Sigma _{m}\rightarrow \bar{\Sigma}$ with $\bar{%
\Sigma}$ a positive definite matrix. [The very last statement even holds
without recourse to condition (\ref{phi_inv_wrt_z}) and the condition on $%
\mathfrak{C}$ following (\ref{phi_inv_wrt_z}).]
\end{enumerate}
\end{theorem}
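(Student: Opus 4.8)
The plan is to use the invariance reduction of Proposition \ref{inv_rej_prob} to collapse the three nested suprema/infima into a single supremum of $E_{\mu_0,\Sigma}(\varphi)$ (resp.\ a single infimum of $E_{\mu_0+\nu,\Sigma}(\varphi)$) over $\Sigma\in\mathfrak{C}$ (and over projected alternative directions), and then to exclude the value $1$ (resp.\ $0$) by a compactness-plus-concentration argument. For Part 1, fix $\mu_0\in\mathfrak{M}_0$; since for null means an almost invariant test has a rejection probability that does not depend on $\sigma^2$ (Proposition \ref{inv_rej_prob}), the size equals $\sup_{\Sigma\in\mathfrak{C}}E_{\mu_0,\Sigma}(\varphi)$. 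Assuming this were $1$, I would choose $\Sigma_m\in\mathfrak{C}$ with $E_{\mu_0,\Sigma_m}(\varphi)\to1$ and, using boundedness of $\mathfrak{C}$, pass to a subsequence with $\Sigma_m\to\bar{\Sigma}$. If $\bar{\Sigma}$ is nonsingular, the Gaussian densities converge pointwise, hence in $L^1(\lambda_{\mathbb{R}^n})$ by Scheff\'{e}, so $E_{\mu_0,\Sigma_m}(\varphi)\to\int\varphi\,dP_{\mu_0,\bar{\Sigma}}<1$, because $P_{\mu_0,\bar{\Sigma}}$ is then equivalent to Lebesgue measure and $\varphi$ is not $\lambda_{\mathbb{R}^n}$-a.e.\ equal to $1$. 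If $\bar{\Sigma}$ is singular, set $\mathcal{Z}=\limfunc{span}(\bar{\Sigma})$; since $\mathcal{Z}\subseteq J(\mathfrak{C})$, condition \eqref{phi_inv_wrt_z} gives $\varphi(y)=\varphi(\Pi_{\mathcal{Z}^{\bot}}y)$ for all $y$, so $\varphi$ factors as $\tilde{\varphi}\circ\Pi_{\mathcal{Z}^{\bot}}$, and a Fubini argument shows $\tilde{\varphi}$ is not $\lambda_{\mathcal{Z}^{\bot}}$-a.e.\ equal to $1$. The scaling part of Proposition \ref{inv_rej_prob} (with $\alpha^2=1/s_m$) replaces $\Sigma_m$ by $\Sigma_m/s_m$ without altering the rejection probability, and the law of $\Pi_{\mathcal{Z}^{\bot}}\mathbf{Y}$ becomes $N(\Pi_{\mathcal{Z}^{\bot}}\mu_0,D_m)$ with $D_m\to D$ regular on $\mathcal{Z}^{\bot}$; Scheff\'{e} on $\mathcal{Z}^{\bot}$ then forces $E_{\mu_0,\Sigma_m}(\varphi)\to\int_{\mathcal{Z}^{\bot}}\tilde{\varphi}\,dN(\Pi_{\mathcal{Z}^{\bot}}\mu_0,D)<1$, a contradiction. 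The second claim of Part 1 follows by applying the first to $1-\varphi$, which satisfies all the hypotheses.

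For Part 2, Proposition \ref{inv_rej_prob} reduces the infimal power to $\inf_{\nu,\Sigma}E_{\mu_0+\nu,\Sigma}(\varphi)$, where $\nu$ ranges over the nonzero elements of the cone $L=\Pi_{(\mathfrak{M}_0-\mu_0)^{\bot}}(\mathfrak{M}-\mu_0)$, which is closed under positive scaling. Assuming the infimum were $0$, I pick $\nu_m,\Sigma_m$ driving it to $0$ and pass to subsequences with $\Sigma_m\to\bar{\Sigma}$ and $\|\nu_m\|$ either bounded or divergent. When $\bar{\Sigma}$ is nonsingular and $\|\nu_m\|$ is bounded, density convergence yields a strictly positive limit (as $\varphi$ is not a.e.\ $0$); when $\bar{\Sigma}$ is nonsingular and $\|\nu_m\|\to\infty$, hypothesis \eqref{power_ass} with $\Phi_m=\Sigma_m$ directly forbids the limit $0$. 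When $\bar{\Sigma}$ is singular, the same reduction to $\mathcal{Z}^{\bot}$ gives, after scaling, $E_{\mu_0+\nu_m,\Sigma_m}(\varphi)=\int_{\mathcal{Z}^{\bot}}\tilde{\varphi}\,dN(\Pi_{\mathcal{Z}^{\bot}}\mu_0+\Pi_{\mathcal{Z}^{\bot}}\nu_m/\sqrt{s_m},D_m)$; if the reduced mean is bounded, density convergence gives a positive limit, while if it diverges I would lift back to $\mathbb{R}^n$ by taking the block-diagonal covariance $\Phi_m$ equal to $D_m$ on $\mathcal{Z}^{\bot}$ and the identity on $\mathcal{Z}$ (positive definite, converging to a positive definite $\Phi$) together with the mean offset $\nu_m/\sqrt{s_m}\in L\setminus\{0\}$, so that the integral equals $E_{\mu_0+\nu_m/\sqrt{s_m},\Phi_m}(\varphi)$, whence \eqref{power_ass} again rules out the limit $0$.

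Part 3 sharpens this. Its hypothesis (limit inferior in \eqref{power_ass} equal to $1$) forces $L\cap\mathcal{Z}=\{0\}$ for every concentration space $\mathcal{Z}$: otherwise $\nu_m=m\nu$ with $0\neq\nu\in L\cap\mathcal{Z}$ would give, via \eqref{phi_inv_wrt_z}, $E_{\mu_0+\nu_m,\Phi_m}(\varphi)=E_{\mu_0,\Phi_m}(\varphi)\to E_{\mu_0,\Phi}(\varphi)<1$, contradicting the strengthened assumption. Injectivity of $\Pi_{\mathcal{Z}^{\bot}}$ on $L$ then bounds $\|\Pi_{\mathcal{Z}^{\bot}}\nu\|$ below by a multiple of $\|\nu\|$, so the reduced mean diverges uniformly over $\|\nu\|\geq c$; the lift-back argument with the sharpened \eqref{power_ass} then yields \eqref{unit_power}. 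For \eqref{unit_power_2} one is in the nonsingular-limit regime with $\|\nu\|\geq c_m/\sigma_m\to\infty$, so the sharpened \eqref{power_ass} applies verbatim with $\Phi_m=\Sigma_m$, using neither \eqref{phi_inv_wrt_z} nor the structural condition on $\mathfrak{C}$, exactly as the bracketed remark records.

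I expect the main obstacle to be the singular-$\bar{\Sigma}$ analysis shared by all three parts: transferring the almost-invariance of $\varphi$ (valid only up to $G(\mathfrak{M}_0)$-exceptional null sets) to the exact $J(\mathfrak{C})$-quotient so that $\tilde{\varphi}$ is genuinely well defined and still "not a.e.\ constant", and then combining the normalization $\Sigma_m\mapsto\Sigma_m/s_m$ with a lift back to a convergent positive definite $\Phi_m$ on $\mathbb{R}^n$ so that the ambient hypothesis \eqref{power_ass} can be invoked on the reduced problem. The careful null-set bookkeeping in the almost-invariance, together with verifying the lower bound on $\Pi_{\mathcal{Z}^{\bot}}$ restricted to $L$ (via $L\cap\mathcal{Z}=\{0\}$) needed for the uniform divergence in Part 3, are the points demanding the most care.
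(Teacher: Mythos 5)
Your proposal is correct and follows essentially the same route as the paper's proof: an invariance reduction via Proposition \ref{inv_rej_prob}, compactness of $\mathfrak{C}$ to extract a convergent $\Sigma_m\rightarrow\bar{\Sigma}$, Scheff\'{e} in the nonsingular case, and in the singular case the use of (\ref{phi_inv_wrt_z}) to reduce to the normalized projected covariance $D_m$ and then restore positive definiteness before invoking (\ref{power_ass}) — your "quotient to $\mathcal{Z}^{\bot}$ and lift back with a block-diagonal $\Phi_m$" is exactly the augmentation step in the paper's Lemmas \ref{Lunif_1} and \ref{LUNIF}. The only cosmetic difference is your detour through $L\cap\mathcal{Z}=\{0\}$ in Part 3, which is harmless but unnecessary: condition (\ref{power_ass}) only requires $\left\Vert \nu _{m}/\sqrt{s_{m}}\right\Vert \rightarrow \infty $, which already follows from $\left\Vert \nu _{m}\right\Vert \geq c$ and $s_{m}\rightarrow 0$, so no lower bound on $\left\Vert \Pi _{\mathcal{Z}^{\bot }}\nu \right\Vert $ is needed.
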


The first two parts of the preceding theorem provide conditions under which
the size is strictly less than $1$ and the infimal power is strictly
positive, while the third part provides conditions under which the power
approaches $1$ in certain parts of the parameter space, the parts being
characterized by the property that either $\left\Vert \left( R\beta
^{(1)}-r\right) /\sigma \right\Vert $ is bounded away from zero and $\Sigma
_{m}$ approaches a singular matrix, or that $\left\Vert \left( R\beta
^{(1)}-r\right) /\sigma \right\Vert \rightarrow \infty $ and $\Sigma _{m}$
approaches a positive definite matrix. Here $\beta ^{(1)}$ is the parameter
vector corresponding to $\mu _{1}$. Note that $d\left( \mu _{1},\mathfrak{M}%
_{0}\right) $ is bounded from above as well as from below by multiples of $%
\left\Vert R\beta ^{(1)}-r\right\Vert $, where the constants involved are
positive and depend only on $X$, $R$, and $r$.

\begin{remark}
\label{rem_positive}(i) Because $J(\mathfrak{C})$ as a union of linear
spaces is homogenous, condition (\ref{phi_inv_wrt_z}) is equivalent to the
condition that $\varphi (x+z)=\varphi (x)$ holds for every $x\in \mathbb{R}%
^{n}$ and every $z\in \limfunc{span}\left( J(\mathfrak{C})\right) $.

(ii) If condition (\ref{power_ass}) in Theorem \ref{TU} is replaced by the
weaker condition%
\begin{equation}
\liminf_{m\rightarrow \infty }E_{d_{m}(\mu _{1}-\mu _{0})+\mu _{0},\Phi
_{m}}(\varphi )>0,  \label{power_ass_weaker}
\end{equation}%
for every $\mu _{1}\in \mathfrak{M}_{1}$, for every $d_{m}\rightarrow \infty 
$ and every sequence $\Phi _{m}$ of positive definite symmetric $n\times n$
matrices with $\Phi _{m}\rightarrow \Phi $, $\Phi $ a positive definite
matrix, then we can only establish for every $\mu _{1}\in \mathfrak{M}_{1}$
that 
\begin{equation*}
\inf\limits_{0<\sigma ^{2}<\infty }\inf\limits_{\Sigma \in \mathfrak{C}%
}E_{\mu _{1},\sigma ^{2}\Sigma }(\varphi )>0.
\end{equation*}%
If the limes inferior in (\ref{power_ass_weaker}) is $1$ for every $\mu _{1}$%
, $d_{m}$, and $\Phi _{m}$ as specified above, then for every $\mu _{1}\in 
\mathfrak{M}_{1}$ and every $0<\sigma _{\ast }^{2}<\infty $ we have 
\begin{equation*}
\inf_{0<\sigma ^{2}\leq \sigma _{\ast }^{2}}E_{\mu _{1},\sigma ^{2}\Sigma
_{m}}(\varphi )\rightarrow 1
\end{equation*}%
for any sequence $\Sigma _{m}\in \mathfrak{C}$ satisfying $\Sigma
_{m}\rightarrow \bar{\Sigma}$ with $\bar{\Sigma}$ a singular matrix; and
also $E_{\mu _{1},\sigma _{m}^{2}\Sigma _{m}}(\varphi )\rightarrow 1$ holds
whenever $\sigma _{m}^{2}\rightarrow 0$ and the sequence $\Sigma _{m}\in 
\mathfrak{C}$ satisfies $\Sigma _{m}\rightarrow \bar{\Sigma}$ with $\bar{%
\Sigma}$ a positive definite matrix. [The very last statement even holds
without recourse to condition (\ref{phi_inv_wrt_z}) and the condition on $%
\mathfrak{C}$ following (\ref{phi_inv_wrt_z}).]
\end{remark}

\bigskip

The subsequent theorem elaborates on Part 1 of Theorem \ref{TU} and shows
that under the additional assumptions one can not only guarantee that the
size of the test is smaller than $1$, but one can, for any prescribed
significance level $\delta $ ($0<\delta <1$), construct the test in such a
way that it has size not exceeding $\delta $. The result applies in
particular to the important case where the tests are of the form $\varphi
_{C}=\boldsymbol{1}\left( T\geq C\right) $ for some test statistic $T$. Note
that for any $C_{k}\uparrow \infty $ the sequence of tests $\varphi _{C_{k}}$
clearly satisfies condition (\ref{monotone}) in the subsequent theorem
provided $\left\{ y:T(y)=\infty \right\} $ is a $\lambda _{\mathbb{R}^{n}}$%
-null set. Thus in this case the theorem shows that for any given
significance level $\delta $, $0<\delta <1$, we can find a critical value $%
C(\delta )$ such that the test $\varphi _{C(\delta )}$ has a size not
exceeding $\delta $.

\begin{theorem}
\label{TUU}Let $\varphi _{k}:\mathbb{R}^{n}\rightarrow \lbrack 0,1]$ for $%
k\geq 1$ be a sequence of Borel-measurable functions each of which satisfies
the assumptions for Part 1 of Theorem \ref{TU}, and let $\mathfrak{C}$ also
satisfy the assumptions of that theorem. Furthermore assume that the
sequence $\varphi _{k}$ satisfies%
\begin{equation}
E_{\mu _{0}^{\ast },\Phi }(\varphi _{k})\downarrow 0  \label{monotone}
\end{equation}%
as $k\uparrow \infty $ for some $\mu _{0}^{\ast }\in \mathfrak{M}_{0}$ and
all positive definite symmetric $n\times n$ matrices $\Phi $. Then for every 
$\delta $, $0<\delta <1$, there exists a $k_{0}=k_{0}(\delta )$ such that%
\begin{equation*}
\sup\limits_{\mu _{0}\in \mathfrak{M}_{0}}\sup\limits_{0<\sigma ^{2}<\infty
}\sup\limits_{\Sigma \in \mathfrak{C}}E_{\mu _{0},\sigma ^{2}\Sigma
}(\varphi _{k_{0}})\leq \delta .
\end{equation*}
\end{theorem}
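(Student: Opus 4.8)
The plan is to prove the (formally stronger) statement that the sizes
$s_{k}:=\sup_{\mu _{0}\in \mathfrak{M}_{0}}\sup_{0<\sigma ^{2}<\infty }\sup_{\Sigma \in \mathfrak{C}}E_{\mu _{0},\sigma ^{2}\Sigma }(\varphi _{k})$
converge to $0$ as $k\to \infty $; since each $\varphi _{k}$ satisfies the hypotheses of Part 1 of Theorem \ref{TU} we already know $s_{k}<1$, and the assertion of the theorem is exactly that $s_{k}$ can be pushed below any prescribed $\delta $. The first, routine, reduction is to observe that by Proposition \ref{inv_rej_prob} (Part 3, together with the fact that for $\mu _{0}\in \mathfrak{M}_{0}$ the rejection probability is constant in $(\mu _{0},\sigma ^{2})$) one has $E_{\mu _{0},\sigma ^{2}\Sigma }(\varphi _{k})=E_{\mu _{0}^{\ast },\Sigma }(\varphi _{k})$ for the fixed $\mu _{0}^{\ast }$ appearing in (\ref{monotone}), so that $s_{k}=\sup_{\Sigma \in \mathfrak{C}}E_{\mu _{0}^{\ast },\Sigma }(\varphi _{k})=:\sup_{\Sigma \in \mathfrak{C}}f_{k}(\Sigma )$.

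I would then argue by contradiction: assume there is a $\delta _{0}\in (0,1)$ with $s_{k}>\delta _{0}$ for all $k$, and pick $\Sigma _{k}\in \mathfrak{C}$ with $f_{k}(\Sigma _{k})>\delta _{0}$. Since $\mathfrak{C}$ is bounded, pass to a subsequence (not relabelled) with $\Sigma _{k}\to \bar{\Sigma}\in \limfunc{cl}(\mathfrak{C})$. Because (\ref{monotone}) is a monotonicity of expectations valid for \emph{every} positive definite $\Phi $, for $k\geq j$ we get $\delta _{0}<f_{k}(\Sigma _{k})=E_{\mu _{0}^{\ast },\Sigma _{k}}(\varphi _{k})\leq E_{\mu _{0}^{\ast },\Sigma _{k}}(\varphi _{j})=f_{j}(\Sigma _{k})$. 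The strategy is to compute $g_{j}:=\lim_{k}f_{j}(\Sigma _{k})$ for each fixed $j$, conclude $g_{j}\geq \delta _{0}$, and then contradict this by showing $g_{j}\to 0$.

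If $\bar{\Sigma}$ is positive definite, $g_{j}$ is immediate: $N(\mu _{0}^{\ast },\Sigma _{k})\to N(\mu _{0}^{\ast },\bar{\Sigma})$ in total variation, whence $g_{j}=E_{\mu _{0}^{\ast },\bar{\Sigma}}(\varphi _{j})$, and (\ref{monotone}) applied to the positive definite matrix $\bar{\Sigma}$ yields $g_{j}\downarrow 0$. The hard part is the singular case $\mathcal{Z}:=\limfunc{span}(\bar{\Sigma})\neq \{0\}$, where the covariances degenerate in the concentration directions and neither continuity nor (\ref{monotone}) is directly available. Here I would combine the two invariances at hand. By the regularity assumption on $\mathfrak{C}$ (the standing hypothesis of Theorem \ref{TU}) pass to a further subsequence with $s_{k}>0$ and $D_{k}:=\Pi _{\mathcal{Z}^{\bot }}\Sigma _{k}\Pi _{\mathcal{Z}^{\bot }}/s_{k}\to D$, with $D$ positive definite on $\mathcal{Z}^{\bot }$ (note $s_{k}\to 0$). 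Scale invariance of the rejection probability (Proposition \ref{inv_rej_prob}, Part 1, with $\mu =\mu _{0}=\mu _{0}^{\prime }=\mu _{0}^{\ast }$ and $\alpha ^{2}=1/s_{k}$, legitimate since $\Sigma _{k}$ is positive definite) gives $f_{j}(\Sigma _{k})=E_{\mu _{0}^{\ast },\Sigma _{k}/s_{k}}(\varphi _{j})$. On the other hand, since $\mathcal{Z}\subseteq J(\mathfrak{C})$ and $\varphi _{j}$ satisfies the \emph{exact} invariance (\ref{phi_inv_wrt_z}), $\varphi _{j}$ factors through $\Pi _{\mathcal{Z}^{\bot }}$, so $E_{\mu _{0}^{\ast },\Phi }(\varphi _{j})$ depends on $\Phi $ only through the marginal law of $\Pi _{\mathcal{Z}^{\bot }}\mathbf{Y}$, that is, only through the block $\Pi _{\mathcal{Z}^{\bot }}\Phi \Pi _{\mathcal{Z}^{\bot }}$. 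As this block of $\Sigma _{k}/s_{k}$ equals $D_{k}$, I may replace $\Sigma _{k}/s_{k}$ (whose $\mathcal{Z}$-block blows up) by the positive definite matrix $D_{k}+\Pi _{\mathcal{Z}}$, obtaining $f_{j}(\Sigma _{k})=E_{\mu _{0}^{\ast },D_{k}+\Pi _{\mathcal{Z}}}(\varphi _{j})$. Now $D_{k}+\Pi _{\mathcal{Z}}\to D+\Pi _{\mathcal{Z}}$, both positive definite, so by total-variation continuity $g_{j}=E_{\mu _{0}^{\ast },D+\Pi _{\mathcal{Z}}}(\varphi _{j})$, and positive definiteness of $D+\Pi _{\mathcal{Z}}$ lets (\ref{monotone}) give $g_{j}\downarrow 0$. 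In both cases $g_{j}\geq \delta _{0}$ for all $j$ yet $g_{j}\to 0$, a contradiction; hence $s_{k}\to 0$ and the theorem follows.

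The crux, and the only genuinely delicate point, is the singular case: the weak limit of $P_{\mu _{0}^{\ast },\Sigma _{k}}$ is a degenerate Gaussian against which $E(\varphi _{j})$ is not obviously continuous and to which (\ref{monotone}) does not apply. The device that resolves this is to \emph{trade the degeneration for a positive definite limit}: scale invariance removes the overall shrinkage $s_{k}\to 0$ in the non-concentration directions, while the exact $J(\mathfrak{C})$-invariance lets me discard the part of $\Sigma _{k}/s_{k}$ that explodes along $\mathcal{Z}$, leaving a convergent sequence of positive definite matrices on which both total-variation continuity and the monotone hypothesis are usable. One must be careful to invoke scale invariance only through Proposition \ref{inv_rej_prob} (proved for merely almost invariant $\varphi $ and positive definite covariances) and never by manipulating $\varphi _{j}$ pointwise against a singular measure, since the Lebesgue-null exceptional sets of an almost invariant test need not be null for a degenerate Gaussian.
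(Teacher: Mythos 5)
Your proof is correct and follows essentially the same route as the paper's: after the invariance reduction you extract a convergent subsequence of worst-case covariances from the bounded set $\mathfrak{C}$, split on whether the limit is singular, and in the singular case your combination of scale invariance with the exact $J(\mathfrak{C})$-invariance to replace $\Sigma _{k}/s_{k}$ by the positive definite $D_{k}+\Pi _{\mathcal{Z}}$ is precisely the content of the paper's Lemma \ref{LUNIF} (which augments by $\bar{\Sigma}$ rather than $\Pi _{\mathcal{Z}}$, an immaterial difference). The only presentational differences are that you argue by contradiction rather than via a direct $\limsup$ bound, and that you overload the symbol $s_{k}$ for both the size and the normalizing constants.
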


\begin{remark}
(i) The assumption in Theorem \ref{TU} that $\varphi _{k}$ is not $\lambda _{%
\mathbb{R}^{n}}$-almost everywhere equal to $0$ is of course irrelevant for
the result in Theorem \ref{TUU}.

(ii) Of course, the second part of Part 1 of Theorem \ref{TU} immediately
applies to $\varphi _{k_{0}}$; and Parts 2 and 3 of that theorem also apply
to $\varphi _{k_{0}}$ provided $\varphi _{k_{0}}$ satisfies the respective
additional conditions.
\end{remark}

\begin{remark}
\label{rem_AR_1}(i) In case the covariance model $\mathfrak{C}$ equals $%
\mathfrak{C}_{AR(1)}$, the boundedness condition in Theorems \ref{TU} and %
\ref{TUU} is clearly satisfied and $J(\mathfrak{C})$ reduces to $\limfunc{%
span}\left( e_{+}\right) \cup \limfunc{span}\left( e_{-}\right) $.
Furthermore, the condition on the covariance model $\mathfrak{C}$ in those
theorems expressed in terms of the matrices $D_{m}$ is then also satisfied
as shown in Lemma \ref{AR_1} in Appendix \ref{AR_100}. Also note that in
this case the sequences $\Sigma _{m}$ in Part 3 of Theorem \ref{TU}
converging to a singular matrix are of the form $\Lambda \left( \rho
_{m}\right) $ with $\rho _{m}\rightarrow 1$ or $\rho _{m}\rightarrow -1$.

(ii) More generally suppose that $\mathfrak{C}$ is norm-bounded, has $%
e_{+}e_{+}^{\prime }$ and $e_{-}e_{-}^{\prime }$ as the only singular
accumulation points, and has the property that for every sequence $\Sigma
_{m}\in \mathfrak{C}$ converging to one of these limit points there exists a
sequence $(\rho _{m})_{m\in \mathbb{N}}$ in $(-1,1)$ such that $\Lambda
^{-1/2}(\rho _{m})\Sigma _{m}\Lambda ^{-1/2}(\rho _{m})\rightarrow I_{n}$
for $m\rightarrow \infty $ (that is, near the "singular boundary" the
covariance model $\mathfrak{C}$ behaves similar to $\mathfrak{C}_{AR(1)}$).
Then $J(\mathfrak{C})$ is as in (i) and again the conditions on the
covariance model $\mathfrak{C}$ in Theorems \ref{TU} and \ref{TUU} are
satisfied.
\end{remark}

\subsection{Size and power properties of a common class of tests:
Nonsphericity-corrected $F$-type tests\label{Impclass}}

In this subsection we specialize the preceding results to a broad class of
tests of linear restrictions in linear regression models with nonspherical
errors and derive a further result specific to this class. The class
considered in this subsection contains the vast majority of tests proposed
in the literature for this testing problem. We start with a pair of
estimators $\check{\beta}$ and $\check{\Omega}$, where $\check{\Omega}$
typically has the interpretation of an estimator of the variance covariance
matrix of $R\check{\beta}-r$ under the null hypothesis. Similar as in
previous sections, the estimators are viewed as functions of $y\in \mathbb{R}%
^{n}$, but it proves useful to allow for cases where the estimators are not
defined for some exceptional values of $y$. We impose the following
assumption on the estimators.

\begin{assumption}
\label{AE} (i) The estimators$\ \check{\beta}:\mathbb{R}^{n}\backslash
N\rightarrow \mathbb{R}^{k}$ and $\check{\Omega}:\mathbb{R}^{n}\backslash
N\rightarrow \mathbb{R}^{q\times q}$ are well-defined and continuous on the
complement of a closed $\lambda _{\mathbb{R}^{n}}$-null set $N$ in the
sample space $\mathbb{R}^{n}$, with $\check{\Omega}$ also being symmetric on 
$\mathbb{R}^{n}\backslash N$.

(ii) The set $\mathbb{R}^{n}\backslash N$ is invariant under the group $G(%
\mathfrak{M})$, i.e., $y\in \mathbb{R}^{n}\backslash N$ implies $\alpha
y+X\gamma \in \mathbb{R}^{n}\backslash N$ for every $\alpha \neq 0$ and
every $\gamma \in \mathbb{R}^{k}$.

(iii) The estimators satisfy the equivariance properties $\check{\beta}%
(\alpha y+X\gamma )=\alpha \check{\beta}(y)+\gamma $ and $\check{\Omega}%
(\alpha y+X\gamma )=\alpha ^{2}\check{\Omega}(y)$ for every $y\in \mathbb{R}%
^{n}\backslash N$, for every $\alpha \neq 0$, and for every $\gamma \in 
\mathbb{R}^{k}$.

(iv) $\check{\Omega}$ is $\lambda _{\mathbb{R}^{n}}$-almost everywhere
nonsingular on $\mathbb{R}^{n}\backslash N$.
\end{assumption}

We make a few obvious observations: First, the invariance of $\mathbb{R}%
^{n}\backslash N$ under the group $G(\mathfrak{M})$ expressed in Assumption %
\ref{AE} is equivalent to the same invariance property of $N$ itself.
Second, since $N$ is closed by Assumption \ref{AE}, it follows that either $%
N $ is empty or otherwise must at least contain $\mathfrak{M}$ (to see this
note that $y\in N$ implies $\alpha y\in N$ for $\alpha $ arbitrarily close
to zero which in turn implies $0\in N$ by closedness of $N$). Third, given
Assumption \ref{AE} holds, the sets $\left\{ y\in \mathbb{R}^{n}\backslash
N:\det {\check{\Omega}(y)}=0\right\} $ and $\left\{ y\in \mathbb{R}%
^{n}\backslash N:\det {\check{\Omega}(y)}\neq 0\right\} $ are invariant
under the transformations in $G(\mathfrak{M})$, and the set 
\begin{equation}
N^{\ast }=N\cup \left\{ y\in \mathbb{R}^{n}\backslash N:\det {\check{\Omega}%
(y)}=0\right\}  \label{Nstar}
\end{equation}%
is a closed $\lambda _{\mathbb{R}^{n}}$-null set that is also invariant
under the transformations in $G(\mathfrak{M})$; cf. Lemma \ref{aux} in
Appendix \ref{App_E}. Hence, the set $\left\{ y\in \mathbb{R}^{n}\backslash
N:\det {\check{\Omega}(y)}=0\right\} $ could in principle have been absorbed
into $N$ in the above assumption; however, we shall not do so since keeping
the exceptional set $N$ as small as possible will lead to stronger results.
Furthermore, $\mathfrak{M}\subseteq N^{\ast }$ always holds. To see this
note that $\mathfrak{M}\subseteq N\subseteq N^{\ast }$ holds if $N$ is not
empty as noted above; in case $N$ is empty, $\check{\Omega}(y)$ is
well-defined for every $y$ and $\check{\Omega}(0)=\check{\Omega}(\alpha
0)=\alpha ^{2}\check{\Omega}(0)$ must hold, implying $\check{\Omega}(0)=0$
and thus also $\check{\Omega}(X\gamma )=\check{\Omega}(\alpha 0+X\gamma
)=\alpha ^{2}\check{\Omega}(0)=0$. In particular, this shows that either $%
\check{\Omega}$ is not defined on $\mathfrak{M}$ or is zero on $\mathfrak{M}$%
.

Given estimators $\check{\beta}$ and $\check{\Omega}$ satisfying Assumption %
\ref{AE} we define the test statistic 
\begin{equation}
T(y)=%
\begin{cases}
(R\check{\beta}(y)-r)^{\prime }\check{\Omega}^{-1}(y)(R\check{\beta}(y)-r),
& y\in \mathbb{R}^{n}\backslash N^{\ast }, \\ 
0, & y\in N^{\ast }\text{.}%
\end{cases}
\label{DT}
\end{equation}

We note that assigning the test statistic the value zero at points $y\in 
\mathbb{R}^{n}$ for which either $y\in N$ or $\det (\check{\Omega})(y)=0$
holds is arbitrary, but has no effect on the rejection probabilities of the
test, since $N^{\ast }$ is a $\lambda _{\mathbb{R}^{n}}$-null set as noted
above and since all relevant probability measures $P_{\mu ,\sigma ^{2}\Sigma
}$ are absolutely continuous w.r.t. Lebesgue measure on $\mathbb{R}^{n}$.

In line with the interpretation of $\check{\Omega}$ as an estimator for a
variance covariance matrix, the leading case is when $\check{\Omega}$ is
positive definite almost everywhere (which under Assumption \ref{AE} is
equivalent to nonnegative definiteness almost everywhere). However,
sometimes we encounter situations where this is not guaranteed for a given
fixed sample size (cf. Subsection \ref{GLS}), although typically the
probability of being positive definite will go to one for each fixed value
of the parameters as sample size increases. In order to be able to
accommodate also such cases, Assumption \ref{AE} does not contain a
requirement that $\check{\Omega}$ is positive definite almost everywhere.
Nevertheless, in light of what has just been said, we shall consider the
rejection region to be of the form $\left\{ y\in \mathbb{R}^{n}:T(y)\geq
C\right\} $ for $C$ a real number satisfying $0<C<\infty $.

For some of the results that follow we shall need further conditions on $%
\check{\Omega}$ which, however, are much weaker than the almost everywhere
positive definiteness requirement just mentioned.

\begin{assumption}
\label{omega1} There exists $v\in \mathbb{R}^{q}$, $v\neq 0$, and a $y\in 
\mathbb{R}^{n}\backslash N^{\ast }$ such that $v^{\prime }\check{\Omega}%
^{-1}(y)v>0$ holds.
\end{assumption}

Since under Assumption \ref{AE} the matrix $\check{\Omega}^{-1}(y)$ is
continuous on $\mathbb{R}^{n}\backslash N^{\ast }$, it follows that
Assumption \ref{omega1} in fact implies that $v^{\prime }\check{\Omega}%
^{-1}(y)v>0$ holds on an open set of $y$'s. The condition expressed in the
next assumption is also certainly satisfied if $\check{\Omega}$ is positive
definite almost everywhere. At first glance it may seem that this condition
rules out the case where $\check{\Omega}(y)$ is allowed to be indefinite on
a set of positive Lebesgue measure, but this is not so as $v$ is not allowed
to depend on $y$ in this condition.

\begin{assumption}
\label{omega2}For every $v\in \mathbb{R}^{q}$ with $v\neq 0$ we have $%
\lambda _{\mathbb{R}^{n}}\left( \left\{ y\in \mathbb{R}^{n}\backslash
N^{\ast }:v^{\prime }\check{\Omega}^{-1}(y)v=0\right\} \right) =0$.
\end{assumption}

The following lemma collects some properties of the test statistic that will
be useful in the sequel.

\begin{lemma}
\label{LWT} Suppose Assumption \ref{AE} is satisfied and let $T$ be the test
statistic defined in (\ref{DT}). Then the following holds:

\begin{enumerate}
\item The set $\mathbb{R}^{n}\backslash N^{\ast }$ is invariant under the
elements of $G(\mathfrak{M})$.

\item The test statistic $T$ is continuous on $\mathbb{R}^{n}\backslash
N^{\ast }$; in particular, $T$ is $\lambda _{\mathbb{R}^{n}}$-almost
everywhere continuous on $\mathbb{R}^{n}$.

\item The test statistic $T$ is invariant under the group $G(\mathfrak{M}%
_{0})$. Consequently, the rejection region $W(C)=\left\{ y\in \mathbb{R}%
^{n}:T(y)\geq C\right\} $ and its complement are invariant under $G(%
\mathfrak{M}_{0})$.

\item The set $\left\{ y\in \mathbb{R}^{n}:T(y)=C\right\} $ is a $\lambda _{%
\mathbb{R}^{n}}$-null set for every $0<C<\infty $.

\item Suppose $0<C<\infty $ holds. Then $\left\{ y\in \mathbb{R}%
^{n}\backslash N^{\ast }:T(y)>C\right\} (=\left\{ y\in \mathbb{R}%
^{n}:T(y)>C\right\} )$ is an open set in $\mathbb{R}^{n}$, which is
guaranteed to be non-empty under Assumption \ref{omega1}. Consequently,
under Assumption \ref{omega1} the rejection region $W(C)$ contains a
non-empty open set and thus satisfies $\lambda _{\mathbb{R}^{n}}(W(C))>0$.

\item Suppose $0<C<\infty $ holds. Then $\left\{ y\in \mathbb{R}%
^{n}\backslash N^{\ast }:T(y)<C\right\} $ is a non-empty open set in $%
\mathbb{R}^{n}$. Consequently, the complement of the rejection region $W(C)$
contains a non-empty open set and thus satisfies $\lambda _{\mathbb{R}^{n}}(%
\mathbb{R}^{n}\backslash W(C))>0$.

\item Suppose Assumption \ref{omega2} and $0<C<\infty $ hold. Then, for
every $\mu _{0}\in \mathfrak{M}_{0}$, every sequence $\nu _{m}\in \Pi
_{\left( \mathfrak{M}_{0}-\mu _{0}\right) ^{\bot }}(\mathfrak{M}_{1}-\mu
_{0})$ with $\left\Vert \nu _{m}\right\Vert \rightarrow \infty $, and for
every sequence $\Phi _{m}$ of positive definite symmetric $n\times n$
matrices with $\Phi _{m}\rightarrow \Phi $, $\Phi $ a positive definite
matrix, we have that%
\begin{eqnarray}
\liminf_{m\rightarrow \infty }P_{\nu _{m}+\mu _{0},\Phi _{m}}(W(C))
&=&\inf_{v\in A(\left( \nu _{m}\right) _{m\geq 1})}\Pr \left( {v}^{\prime }%
\check{\Omega}^{-1}(\Phi ^{1/2}\mathbf{G})v\geq 0\right)  \notag \\
&=&\inf_{v\in A(\left( \nu _{m}\right) _{m\geq 1})}\Pr \left( {v}^{\prime }%
\check{\Omega}^{-1}(\Phi ^{1/2}\mathbf{G})v>0\right)  \label{power_far_away}
\end{eqnarray}%
where $A(\left( \nu _{m}\right) _{m\geq 1})$ is the set of all accumulation
points of the sequence 
\begin{equation*}
R\left( X^{\prime }X\right) ^{-1}X^{\prime }\nu _{m}/\left\Vert R\left(
X^{\prime }X\right) ^{-1}X^{\prime }\nu _{m}\right\Vert ,
\end{equation*}%
and where $\mathbf{G}$ is a standard normal $n$-vector. A lower bound that
does not depend on the sequence $\nu _{m}$ is as follows: 
\begin{eqnarray}
\liminf_{m\rightarrow \infty }P_{\nu _{m}+\mu _{0},\Phi _{m}}(W(C)) &\geq
&\inf_{v\in \mathbb{R}^{q},\left\Vert v\right\Vert =1}\Pr \left( {v}^{\prime
}\check{\Omega}^{-1}(\Phi ^{1/2}\mathbf{G})v\geq 0\right)  \notag \\
&=&\inf_{v\in \mathbb{R}^{q},\left\Vert v\right\Vert =1}\Pr \left( {v}%
^{\prime }\check{\Omega}^{-1}(\Phi ^{1/2}\mathbf{G})v>0\right)  \notag \\
&\geq &\Pr \left( \check{\Omega}(\Phi ^{1/2}\mathbf{G})\text{ is nonnegative
definite}\right) .  \label{lower_bound_power_far_away}
\end{eqnarray}%
In particular, if $\check{\Omega}$ is nonnegative definite $\lambda _{%
\mathbb{R}^{n}}$-almost everywhere (implying that Assumption \ref{omega2} is
satisfied), this lower bound is $1$.
\end{enumerate}
\end{lemma}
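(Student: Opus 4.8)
The plan is to realize all the measures $P_{\nu_m+\mu_0,\Phi_m}$ on one probability space through a single standard normal vector $\mathbf{G}$, setting $\mathbf{Y}_m=\nu_m+\mu_0+\Phi_m^{1/2}\mathbf{G}$, so that $P_{\nu_m+\mu_0,\Phi_m}(W(C))=\mathbb{E}\,\boldsymbol{1}(T(\mathbf{Y}_m)\geq C)$ and almost-sure arguments become available. First I would strip off the base point: choosing $\beta_0$ with $\mu_0=X\beta_0$ and $R\beta_0=r$, the equivariance of Assumption~\ref{AE}(iii) gives $R\check{\beta}(\mathbf{Y}_m)-r=R\check{\beta}(z_m)$ and $\check{\Omega}(\mathbf{Y}_m)=\check{\Omega}(z_m)$ with $z_m=\nu_m+\Phi_m^{1/2}\mathbf{G}$, whence $T(\mathbf{Y}_m)=S(z_m)$, where $S(y):=(R\check{\beta}(y))'\check{\Omega}^{-1}(y)(R\check{\beta}(y))$. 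Since $N^{\ast}$ is a $G(\mathfrak{M})$-invariant $\lambda_{\mathbb{R}^{n}}$-null set and $\mathbf{Y}_m$, $z_m$, $\Phi_m^{1/2}\mathbf{G}$ differ pairwise by elements of $\mathfrak{M}$, all three avoid $N^{\ast}$ a.s.\ simultaneously, so these identities are a.s.\ legitimate.

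The second, and central, step exploits that $\nu_m\in\mathfrak{M}$ (being a projection of an element of $\mathfrak{M}-\mu_0$ onto $(\mathfrak{M}_0-\mu_0)^{\bot}$, it lands in $L:=\mathfrak{M}\cap(\mathfrak{M}_0-\mu_0)^{\bot}\subseteq\mathfrak{M}$). Decomposing $z_m$ along $\mathfrak{M}\oplus\mathfrak{M}^{\bot}$ and applying translation-equivariance once more yields $\check{\Omega}(z_m)=\check{\Omega}(\Phi_m^{1/2}\mathbf{G})$ and $R\check{\beta}(z_m)=\|\nu_m\|\,b_m+\zeta_m$, where $b_m=R\hat{\beta}(u_m)$, $u_m=\nu_m/\|\nu_m\|$, and $\zeta_m=R\check{\beta}(\Pi_{\mathfrak{M}^{\bot}}\Phi_m^{1/2}\mathbf{G})+R\hat{\beta}(\Phi_m^{1/2}\mathbf{G})$ converges a.s.\ (hence is a.s.\ bounded). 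On the compact unit sphere of $L$ the continuous map $u\mapsto R\hat{\beta}(u)$ never vanishes (a zero would force $u\in(\mathfrak{M}_0-\mu_0)\cap(\mathfrak{M}_0-\mu_0)^{\bot}=\{0\}$), so $\|b_m\|$ is bounded away from $0$ and $\infty$, and $\hat{b}_m:=b_m/\|b_m\|=R(X'X)^{-1}X'\nu_m/\|R(X'X)^{-1}X'\nu_m\|$ is precisely the sequence whose accumulation points form $A((\nu_m)_{m\geq1})$. Expanding $S$ then gives $S(z_m)=\|\nu_m\|^2\|b_m\|^2 Q_m+O(\|\nu_m\|)$ with $Q_m=\hat{b}_m'\check{\Omega}^{-1}(\Phi_m^{1/2}\mathbf{G})\hat{b}_m$.

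For the limit I would use $\Phi_m^{1/2}\to\Phi^{1/2}$, the fact that $\Phi^{1/2}\mathbf{G}\notin N^{\ast}$ a.s.\ (as $N^{\ast}$ is $\lambda_{\mathbb{R}^{n}}$-null and $\Phi^{1/2}\mathbf{G}$ is absolutely continuous), and continuity of $\check{\Omega}^{-1}$ off the closed set $N^{\ast}$ to obtain $\check{\Omega}^{-1}(\Phi_m^{1/2}\mathbf{G})\to\check{\Omega}^{-1}(\Phi^{1/2}\mathbf{G})$ a.s. Passing to a subsequence along which $\hat{b}_m\to\hat{b}\in A$, we get $Q_m\to Q:=\hat{b}'\check{\Omega}^{-1}(\Phi^{1/2}\mathbf{G})\hat{b}$; since $\|\nu_m\|^2\|b_m\|^2\to\infty$ dominates the $O(\|\nu_m\|)$ remainder, $S(z_m)\to+\infty$ on $\{Q>0\}$ and $S(z_m)\to-\infty$ on $\{Q<0\}$ (the quadratic form may be negative because $\check{\Omega}$ is permitted to be indefinite). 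Assumption~\ref{omega2} forces $\Pr(Q=0)=0$, so $\boldsymbol{1}(S(z_m)\geq C)\to\boldsymbol{1}(Q\geq0)$ a.s.\ along the subsequence, and dominated convergence gives $\mathbb{E}\,\boldsymbol{1}(S(z_m)\geq C)\to\Pr(\hat{b}'\check{\Omega}^{-1}(\Phi^{1/2}\mathbf{G})\hat{b}\geq0)$. Running this argument once along a subsequence attaining the $\liminf$ (yielding $\geq\inf_{A}$) and once along a subsequence with $\hat b_m\to v$ for each fixed $v\in A$ (yielding $\liminf\leq\Pr(v'\cdots\geq0)$, hence $\leq\inf_{A}$) delivers the equality $\liminf_m P_{\nu_m+\mu_0,\Phi_m}(W(C))=\inf_{v\in A}\Pr(v'\check{\Omega}^{-1}(\Phi^{1/2}\mathbf{G})v\geq0)$; the "$\geq0$" and "$>0$" forms coincide since Assumption~\ref{omega2} makes $\Pr(v'\check{\Omega}^{-1}(\Phi^{1/2}\mathbf{G})v=0)=0$ for each fixed unit $v$.

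The remaining bounds are then immediate. Because $A$ lies in the unit sphere, enlarging the infimum to all unit $v$ only lowers it, giving the first inequality in \eqref{lower_bound_power_far_away}. Whenever $\check{\Omega}(\Phi^{1/2}\mathbf{G})$ is nonnegative definite it is in fact positive definite (being nonsingular off $N^{\ast}$), so its inverse is positive definite and $v'\check{\Omega}^{-1}(\Phi^{1/2}\mathbf{G})v>0$ for every $v\neq0$; hence $\{\check{\Omega}(\Phi^{1/2}\mathbf{G})\text{ is nonnegative definite}\}$ is contained (up to a null set) in $\{v'\check{\Omega}^{-1}(\Phi^{1/2}\mathbf{G})v\geq0\}$ for each $v$, which yields the final lower bound, and if $\check{\Omega}$ is nonnegative definite $\lambda_{\mathbb{R}^{n}}$-a.e.\ this probability equals $1$ (and Assumption~\ref{omega2} then holds automatically). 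The only genuinely delicate point is the second step: the mean escapes to infinity in a direction lying inside the singular set $\mathfrak{M}\subseteq N^{\ast}$, where $\check{\Omega}$ degenerates and $T$ is discontinuous, so the limit cannot be read off by naive continuity. The equivariance factorization is exactly what tames this, separating the divergent scalar $\|\nu_m\|^2\|b_m\|^2$ from the bounded, a.s.-convergent form $Q_m$, while Assumption~\ref{omega2} supplies precisely what is needed to discard the borderline event $\{Q=0\}$.
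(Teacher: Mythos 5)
Your proposal proves only Part 7 of the lemma; Parts 1--6 are not addressed at all. For Part 7 the argument is correct and is essentially the paper's own proof: both realize all the measures through a single standard normal vector $\mathbf{G}$, use the equivariance in Assumption \ref{AE}(iii) to reduce $T(\nu_m+\mu_0+\Phi_m^{1/2}\mathbf{G})$ to a quadratic form in $R\gamma_m+R\check{\beta}(\Phi_m^{1/2}\mathbf{G})$ with $\gamma_m=(X'X)^{-1}X'\nu_m$, normalize by $\Vert R\gamma_m\Vert^{2}$, apply the continuous mapping theorem off the closed $\lambda_{\mathbb{R}^n}$-null set $N^{\ast}$, invoke Assumption \ref{omega2} to discard the boundary event $\{v'\check{\Omega}^{-1}(\Phi^{1/2}\mathbf{G})v=0\}$, and run the two-sided subsequence argument over the accumulation points $A((\nu_m)_{m\geq 1})$. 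Your additional decomposition of $\Phi_m^{1/2}\mathbf{G}$ along $\mathfrak{M}\oplus\mathfrak{M}^{\bot}$ is harmless but unnecessary (continuity of $\check{\beta}$ on $\mathbb{R}^n\backslash N^{\ast}$ already gives the a.s.\ convergence of $R\check{\beta}(\Phi_m^{1/2}\mathbf{G})$), while your observation that $u\mapsto R\hat{\beta}(u)$ is bounded away from zero on the unit sphere of $\mathfrak{M}\cap(\mathfrak{M}_0-\mu_0)^{\bot}$ is a useful explicit justification of the paper's bald claim that $\Vert R\gamma_m\Vert\rightarrow\infty$. The closing bounds (enlarging the infimum to the whole unit sphere, and the fact that nonnegative definiteness plus nonsingularity off $N^{\ast}$ forces positive definiteness) are also handled correctly.

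The gap is coverage: six of the seven assertions remain unproven. Parts 1--3 follow from the invariance and equivariance bookkeeping you already use together with Lemma \ref{aux}. Parts 5 and 6 require exhibiting points with $T>C$ and with $T<C$; the paper does this by moving along $y+X\gamma$ from a fixed $y\in\mathbb{R}^n\backslash N^{\ast}$, choosing $\gamma$ so that $R\check{\beta}(y+X\gamma)-r$ is either zero or a large multiple of the vector $v$ supplied by Assumption \ref{omega1}. The one part requiring a genuinely separate idea is Part 4: the set $\{y:T(y)=C\}$ is sliced via Fubini over $\mathfrak{M}\times\mathfrak{M}^{\bot}$, and each nonempty slice is the zero set of a quadratic polynomial in $\gamma\in\mathbb{R}^{k}$ that is not identically zero (because $\check{\Omega}^{-1}(y_2)$ is well defined and symmetric and $R$ has full row rank $q$, the quadratic form is unbounded), hence a null set in $\mathbb{R}^{k}$. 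None of this is difficult, but as written the proposal establishes only the final, albeit most substantial, claim of the lemma.
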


\begin{remark}
\label{omega_rem}(i) Because $A(\left( \nu _{m}\right) _{m\geq 1})$ is a
closed subset of the unit ball in $\mathbb{R}^{q}$ and because the map $%
v\mapsto \Pr \left( {v}^{\prime }\check{\Omega}^{-1}(\Phi ^{1/2}\mathbf{G}%
)v\geq 0\right) $ is continuous on the unit ball under Assumption \ref%
{omega2}, we see that the expressions in (\ref{power_far_away}) are positive
if and only if 
\begin{equation}
\lambda _{\mathbb{R}^{n}}\left( \left\{ y\in \mathbb{R}^{n}\backslash
N^{\ast }:{v}^{\prime }\check{\Omega}^{-1}(y)v\geq 0\right\} \right) >0
\label{lam}
\end{equation}%
holds for every $v\in A(\left( \nu _{m}\right) _{m\geq 1})$. Under
Assumption \ref{omega2} we have $\lambda _{\mathbb{R}^{n}}\left( \left\{
y\in \mathbb{R}^{n}\backslash N^{\ast }:{v}^{\prime }\check{\Omega}%
^{-1}(y)v\geq 0\right\} \right) =\lambda _{\mathbb{R}^{n}}\left( \left\{
y\in \mathbb{R}^{n}\backslash N^{\ast }:{v}^{\prime }\check{\Omega}%
^{-1}(y)v>0\right\} \right) $ for every $v\neq 0$ and hence, by continuity
of $\check{\Omega}^{-1}(y)$ on $\mathbb{R}^{n}\backslash N^{\ast }$,
condition (\ref{lam}) for some $v\neq 0$ is in turn equivalent to ${v}%
^{\prime }\check{\Omega}^{-1}(y)v>0$ for some $y=y(v)\in \mathbb{R}%
^{n}\backslash N^{\ast }$.

(ii) Let $\check{\beta}$ and $\check{\Omega}$ satisfy Assumption \ref{AE},
let $T$ be the test statistic defined in (\ref{DT}), and suppose that we now
use a "random" critical value $\check{C}=\check{C}(y)>0$ for $y\in \mathbb{R}%
^{n}$. Suppose that $\check{C}$ is continuous on $\mathbb{R}^{n}\backslash N$
and satisfies the invariance condition $\check{C}(\alpha y+X\gamma )=\check{C%
}(y)$ for every $y\in \mathbb{R}^{n}\backslash N$, every $\alpha \neq 0$,
and for every $\gamma \in \mathbb{R}^{k}$. Rewriting the rejection region $%
\left\{ y\in \mathbb{R}^{n}:T(y)\geq \check{C}\right\} $ as $\left\{ y\in 
\mathbb{R}^{n}:T(y)/\check{C}\geq 1\right\} $ and observing that $\bar{\Omega%
}(y)=\check{C}(y)\check{\Omega}(y)$ satisfies Assumption \ref{AE} shows that
the results of this subsection also apply to the test with rejection region $%
\left\{ y\in \mathbb{R}^{n}:T(y)\geq \check{C}\right\} $.
\end{remark}

\bigskip

As a corollary to Theorem \ref{inv}, we now obtain negative size and power
results for tests of the form (\ref{DT}). The semicontinuity conditions in
Theorem \ref{inv} are implied by continuity properties of the estimators $%
\check{\Omega}$ and $\check{\beta}$ used in the construction of the test.
The sufficient conditions so obtained are easy to verify in practice and
become particularly simple in the practically relevant case where $\dim
\left( \mathcal{Z}\right) =1$, cf. the remark following the corollary.

\begin{corollary}
\label{CW} Let $\check{\beta}$ and $\check{\Omega}$ satisfy Assumption \ref%
{AE} and let $T$ be the test statistic defined in (\ref{DT}). Furthermore,
let $W(C)=\left\{ y\in \mathbb{R}^{n}:T(y)\geq C\right\} $ with $0<C<\infty $
be the rejection region. Suppose that $\mathcal{Z}$ is a concentration space
of the covariance model $\mathfrak{C}$. Recall that $N$ is the exceptional
set in Assumption \ref{AE} and that $N^{\ast }$ is given by (\ref{Nstar}).
Then the following holds:

\begin{enumerate}
\item Suppose we have for some $\mu _{0}^{\ast }\in \mathfrak{M}_{0}$ that $%
z\in \mathbb{R}^{n}\backslash N^{\ast }$ and $T(\mu _{0}^{\ast }+z)>C$ hold
simultaneously $\lambda _{\mathcal{Z}}$-almost everywhere. Then 
\begin{equation*}
\sup\limits_{\Sigma \in \mathfrak{C}}P_{\mu _{0},\sigma ^{2}\Sigma }(W(C))=1
\end{equation*}%
holds for every $\mu _{0}\in \mathfrak{M}_{0}$ and every $0<\sigma
^{2}<\infty $. In particular, the size of the test is equal to one.

\item Suppose we have for some $\mu _{0}^{\ast }\in \mathfrak{M}_{0}$ that $%
z\in \mathbb{R}^{n}\backslash N^{\ast }$ and $T(\mu _{0}^{\ast }+z)<C$ hold
simultaneously $\lambda _{\mathcal{Z}}$-almost everywhere. Then%
\begin{equation*}
\inf\limits_{\Sigma \in \mathfrak{C}}P_{\mu _{0},\sigma ^{2}\Sigma }(W(C))=0
\end{equation*}%
holds for every $\mu _{0}\in \mathfrak{M}_{0}$ and every $0<\sigma
^{2}<\infty $, and hence%
\begin{equation*}
\inf_{\mu _{1}\in \mathfrak{M}_{1}}\inf\limits_{\Sigma \in \mathfrak{C}%
}P_{\mu _{1},\sigma ^{2}\Sigma }(W(C))=0,
\end{equation*}%
holds for every $0<\sigma ^{2}<\infty $. In particular, the test is biased
(except in the trivial case where its size is zero). Furthermore, the
nuisance-infimal rejection probability at every point $\mu _{1}\in \mathfrak{%
M}_{1}$ is zero, i.e., 
\begin{equation*}
\inf\limits_{0<\sigma ^{2}<\infty }\inf\limits_{\Sigma \in \mathfrak{C}%
}P_{\mu _{1},\sigma ^{2}\Sigma }(W(C))=0.
\end{equation*}%
In particular, the infimal power of the test is equal to zero.

\item Suppose $\check{\Omega}$ is nonnegative definite on $\mathbb{R}%
^{n}\backslash N$. If $z\in \mathbb{R}^{n}\backslash N$, $\check{\Omega}%
(z)=0 $, and $R\check{\beta}(z)\neq 0$ hold simultaneously $\lambda _{%
\mathcal{Z}}$-almost everywhere, then 
\begin{equation*}
\sup\limits_{\Sigma \in \mathfrak{C}}P_{\mu _{0},\sigma ^{2}\Sigma }(W(C))=1
\end{equation*}%
holds for every $\mu _{0}\in \mathfrak{M}_{0}$ and every $0<\sigma
^{2}<\infty $. In particular, the size of the test is equal to one.
\end{enumerate}
\end{corollary}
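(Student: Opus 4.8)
The plan is to deduce all three parts from Theorem \ref{inv} by verifying its hypotheses with the structural facts about $T$ collected in Lemma \ref{LWT}. Throughout I would invoke that $T$ is continuous on $\mathbb{R}^{n}\backslash N^{\ast }$ and invariant under $G(\mathfrak{M}_{0})$, that $W(C)$ and its complement are $G(\mathfrak{M}_{0})$-invariant, and that the set $N^{\ast }$ in (\ref{Nstar}) is a closed $\lambda _{\mathbb{R}^{n}}$-null set invariant under $G(\mathfrak{M})$ (hence, in particular, under every translation $y\mapsto y+v$ with $v\in \mathfrak{M}_{0}-\mu _{0}^{\ast }$ and under $y\mapsto y+\mu _{0}^{\ast }$ since $\mu _{0}^{\ast }\in \mathfrak{M}=\limfunc{span}(X)$). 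Two reductions then apply once and for all. First, invariance of $T$ under the translations in $\mathfrak{M}_{0}-\mu _{0}^{\ast }$ gives $T(\mu _{0}^{\ast \ast }+z)=T(\mu _{0}^{\ast }+z)$ for any two $\mu _{0}^{\ast },\mu _{0}^{\ast \ast }\in \mathfrak{M}_{0}$, so that each condition of the form $T(\mu _{0}^{\ast }+z)\gtrless C$ is independent of the choice of $\mu _{0}^{\ast }$, justifying the ``some (and hence all)'' phrasing. Second, since $\boldsymbol{1}_{W(C)}$ is $G(\mathfrak{M}_{0})$-invariant, Part 3 of Proposition \ref{inv_rej_prob} (cf.\ also Remark (iii) following it) shows that $P_{\mu _{0},\sigma ^{2}\Sigma }(W(C))$ is constant in $(\mu _{0},\sigma ^{2})\in \mathfrak{M}_{0}\times (0,\infty )$ for each fixed $\Sigma $; hence it suffices to establish each conclusion for one fixed $\mu _{0}^{\ast }$ and arbitrary $\sigma ^{2}$, and then to pass to the supremum or infimum over $\Sigma $.

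For Part 1 I would fix $\mu _{0}^{\ast }\in \mathfrak{M}_{0}$ and check the sufficient condition attached to Part 1 of Theorem \ref{inv} with $\mu _{0}=\mu _{0}^{\ast }$. For $\lambda _{\mathcal{Z}}$-almost every $z$ the hypothesis yields $z\in \mathbb{R}^{n}\backslash N^{\ast }$, hence $\mu _{0}^{\ast }+z\in \mathbb{R}^{n}\backslash N^{\ast }$ by invariance of $N^{\ast }$, so $T$ is continuous (in particular lower semicontinuous) at $\mu _{0}^{\ast }+z$, and $T(\mu _{0}^{\ast }+z)>C$. Theorem \ref{inv}(1) gives $\sup _{\Sigma }P_{\mu _{0}^{\ast },\sigma ^{2}\Sigma }(W(C))=1$, and the invariance reduction extends this to all $\mu _{0},\sigma ^{2}$. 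Part 2 is entirely parallel: continuity on $\mathbb{R}^{n}\backslash N^{\ast }$ supplies upper semicontinuity, and $T(\mu _{0}^{\ast }+z)<C$ lets me invoke Theorem \ref{inv}(2), which directly yields (\ref{inf}), the biasedness statement, and $\inf _{\mu _{1}\in \mathfrak{M}_{1}}\inf _{\Sigma }P_{\mu _{1},\sigma ^{2}\Sigma }(W(C))=0$. The remaining claim of Part 2, namely $\inf _{\sigma ^{2}}\inf _{\Sigma }P_{\mu _{1},\sigma ^{2}\Sigma }(W(C))=0$ at each fixed $\mu _{1}\in \mathfrak{M}_{1}$, I would obtain from Theorem \ref{inv}(3): condition (\ref{inf}) has just been verified for some $\mu _{0}$ and some $\sigma ^{2}$, and $W(C)$, being $G(\mathfrak{M}_{0})$-invariant, is a fortiori almost invariant under $G(\{\mu _{0}\})\subseteq G(\mathfrak{M}_{0})$.

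The delicate case is Part 3, where the convention $T\equiv 0$ on $N^{\ast }$ works against us. Using the equivariance of Assumption \ref{AE} together with $\mu _{0}^{\ast }\in \mathfrak{M}=\limfunc{span}(X)$, one computes $\check{\Omega}(\mu _{0}^{\ast }+z)=\check{\Omega}(z)=0$ and $R\check{\beta}(\mu _{0}^{\ast }+z)-r=R\check{\beta}(z)\neq 0$, so $\mu _{0}^{\ast }+z\in N^{\ast }$ and $T(\mu _{0}^{\ast }+z)=0<C$: the point itself lies outside $W(C)$, and the simple semicontinuity criterion of Theorem \ref{inv}(1) is unavailable. Instead I would verify (\ref{int_cond}) directly for the equivalent rejection region $\tilde{W}=W(C)\cup N^{\ast }$, which has the same rejection probabilities as $W(C)$ because $N^{\ast }$ is a $\lambda _{\mathbb{R}^{n}}$-null set and all $P_{\mu ,\sigma ^{2}\Sigma }$ are absolutely continuous (cf.\ Remark \ref{trivial}). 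The crux is that $T$ blows up near $\mu _{0}^{\ast }+z$: since $\check{\Omega}$ is nonnegative definite on $\mathbb{R}^{n}\backslash N$ and nonsingular on $\mathbb{R}^{n}\backslash N^{\ast }$, it is positive definite there, whence $T(y)\geq \Vert R\check{\beta}(y)-r\Vert ^{2}/\lambda _{\max }(\check{\Omega}(y))$ for $y\in \mathbb{R}^{n}\backslash N^{\ast }$. By continuity of $\check{\beta}$ and $\check{\Omega}$ there is an open neighborhood $U\subseteq \mathbb{R}^{n}\backslash N$ of $\mu _{0}^{\ast }+z$ on which $\Vert R\check{\beta}-r\Vert $ stays bounded away from $0$ while $\lambda _{\max }(\check{\Omega})\rightarrow 0$, forcing $T>C$ on $U\backslash N^{\ast }$. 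Hence $U\backslash N^{\ast }\subseteq W(C)$ and $U\cap N^{\ast }\subseteq N^{\ast }\subseteq \tilde{W}$, so $U\subseteq \tilde{W}$ and $\mu _{0}^{\ast }+z$ is an interior point of $\tilde{W}$ for $\lambda _{\mathcal{Z}}$-almost every $z$; consequently $\lambda _{\mu _{0}^{\ast }+\mathcal{Z}}(\limfunc{bd}(\tilde{W}\cup (\mu _{0}^{\ast }+\mathcal{Z})))=0$, Theorem \ref{inv}(1) applies to $\tilde{W}$, and transferring back to $W(C)$ and extending over $(\mu _{0},\sigma ^{2})$ by invariance finishes the argument. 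I expect this neighborhood/blow-up step, together with the careful bookkeeping of the null set $N^{\ast }$, to be the only genuine obstacle; Parts 1 and 2 reduce to routine checks of the semicontinuity hypotheses of Theorem \ref{inv}.
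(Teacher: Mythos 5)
Your proposal is correct and takes essentially the same route as the paper: Parts 1 and 2 verify the semicontinuity conditions of Theorem \ref{inv} via Lemma \ref{LWT} exactly as in the paper's proof, and then extend over $(\mu_{0},\sigma^{2})$ by invariance. For Part 3, your modified rejection region $W(C)\cup N^{\ast}$ is precisely the super-level set of the paper's modified statistic $T^{\ast}$ (defined to be $\infty$ on $N^{\ast}$), and your neighborhood argument rests on the same blow-up inequality $T(y)\geq \lambda_{\max}^{-1}(\check{\Omega}(y))\left\Vert R\check{\beta}(y)-r\right\Vert ^{2}$ that the paper uses to establish lower semicontinuity of $T^{\ast}$, so the two arguments are equivalent.
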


\begin{remark}
\label{remnew}(i) Since $T$ in the above corollary is invariant under $G(%
\mathfrak{M}_{0})$, the condition in the corollary does not depend on the
particular choice of $\mu _{0}^{\ast }\in \mathfrak{M}_{0}$. Furthermore, if 
$\mathcal{Z}$ is one-dimensional, the invariance of $T$ shows that $T(\mu
_{0}^{\ast }+z)>C$ already holds for \emph{all} $z\in \mathcal{Z}$ with $%
z\neq 0$ provided it holds for one $z\in \mathcal{Z}$ with $z\neq 0$. In a
similar vein, Part 1 of Lemma \ref{LWT} implies for one-dimensional $%
\mathcal{Z}$ that $z\in \mathbb{R}^{n}\backslash N^{\ast }$ holds for \emph{%
all} $z\in \mathcal{Z}$ with $z\neq 0$ if and only if $z\in \mathbb{R}%
^{n}\backslash N^{\ast }$ holds for at least one $z\in \mathcal{Z}$ with $%
z\neq 0$. In view of Assumption \ref{AE} a similar statement also applies to
the relations $z\in \mathbb{R}^{n}\backslash N$, $\check{\Omega}(z)=0$, and $%
R\check{\beta}(z)\neq 0$.

(ii) We note that the rejection probabilities under the null hypothesis,
i.e., $P_{\mu _{0},\sigma ^{2}\Sigma }(W(C))$, do not depend on $\left( \mu
_{0},\sigma ^{2}\right) \in \mathfrak{M}_{0}\times \left( 0,\infty \right) $%
. Hence Remark \ref{rem_thmlrv}(ii) applies here.

(iii) In case the covariance model $\mathfrak{C}$ contains AR(1) correlation
matrices, a remark analogous to Remark \ref{special_AR_1} also applies here.
Furthermore, note that the concentration spaces derived from the AR(1)
correlation matrices are one-dimensional, and hence the discussion in (i)
above applies.
\end{remark}

\bigskip

The negative result in the preceding corollary does not apply if substantial
portions of $\mathcal{Z}$ belong to the exceptional set $N$ (which in
particular occurs if $\mathcal{Z}\subseteq \mathfrak{M}$ holds and $N$ is
not empty as then $\mathcal{Z}\subseteq \mathfrak{M}\subseteq N$). For this
case we provide a further negative result which is applicable provided (\ref%
{test_conc_100}) given below holds. For example, if $\mathcal{Z}=\limfunc{%
span}\left( e_{+}\right) $ and the design matrix contains an intercept, we
immediately obtain $\mathcal{Z}\subseteq \mathfrak{M}$, and (\ref%
{test_conc_100}) holds if and only if the column in $R$ corresponding to the
intercept is nonzero. The significance of the subsequent theorem is that it
provides an \emph{upper} bound $K_{1}$ for the power in certain directions
which is less than or equal to a \emph{lower} bound for the size. This will
typically imply biasedness of the test (except if equality holds in (\ref%
{Power_size_ineq})). Furthermore, note that the result implies that the test
has size $1$ in case $\check{\Omega}$ is positive definite $\lambda _{%
\mathbb{R}^{n}}$-almost everywhere since then $K_{1}=K_{2}=1$ follows. The
condition on the covariance model $\mathfrak{C}$ is often satisfied, see
Remark \ref{special_AR_1_1} following the theorem.

\begin{theorem}
\label{prop_101}Let $\check{\beta}$ and $\check{\Omega}$ satisfy Assumptions %
\ref{AE} and \ref{omega2}, let $T$ be the test statistic defined in (\ref{DT}%
), and let $W(C)=\left\{ y\in \mathbb{R}^{n}:T(y)\geq C\right\} $ with $%
0<C<\infty $ be the rejection region. Assume that there is a sequence $%
\Sigma _{m}\in \mathfrak{C}$ such that $\Sigma _{m}\rightarrow \bar{\Sigma}$
for $m\rightarrow \infty $ where $\bar{\Sigma}$ is singular with $l:=\dim 
\limfunc{span}(\bar{\Sigma})>0$. Suppose that for some sequence of positive
real numbers $s_{m}$ the matrix $D_{m}=\Pi _{\limfunc{span}(\bar{\Sigma}%
)^{\bot }}\Sigma _{m}\Pi _{\limfunc{span}(\bar{\Sigma})^{\bot }}/s_{m}$
converges to a matrix $D$, which is regular on $\limfunc{span}(\bar{\Sigma}%
)^{\bot }$, and that $\Pi _{\limfunc{span}(\bar{\Sigma})^{\bot }}\Sigma
_{m}\Pi _{\limfunc{span}(\bar{\Sigma})}/s_{m}^{1/2}\rightarrow 0$. Suppose
further that $\limfunc{span}(\bar{\Sigma})\subseteq \mathfrak{M}$, and let $%
Z $ be a matrix, the columns of which form a basis for $\limfunc{span}(\bar{%
\Sigma})$. Assume also that%
\begin{equation}
R\hat{\beta}(z)\neq 0\qquad \lambda _{\limfunc{span}(\bar{\Sigma})}\text{-}%
a.e.  \label{test_conc_100}
\end{equation}%
is satisfied. Then for every $\mu _{0}\in \mathfrak{M}_{0}$, every $\sigma $
with $0<\sigma <\infty $, and every $M\geq 0$ we have%
\begin{equation}
\inf_{\gamma \in \mathbb{R}^{l},\left\Vert \gamma \right\Vert \geq
M}\inf_{\Sigma \in \mathfrak{C}}P_{\mu _{0}+Z\gamma ,\sigma ^{2}\Sigma
}\left( W(C)\right) \leq K_{1}\leq K_{2}\leq \sup_{\Sigma \in \mathfrak{C}%
}P_{\mu _{0},\sigma ^{2}\Sigma }\left( W(C)\right) .  \label{Power_size_ineq}
\end{equation}%
The constants $K_{1}$ and $K_{2}$ are given by%
\begin{equation*}
K_{1}=\inf_{\gamma \in \mathbb{R}^{l}}\Pr \left( \bar{\xi}\left( \gamma
\right) \geq 0\right) =\inf_{\left\Vert \gamma \right\Vert =1}\Pr \left( 
\bar{\xi}\left( \gamma \right) \geq 0\right)
\end{equation*}%
and%
\begin{equation*}
K_{2}=\int \Pr \left( \bar{\xi}\left( \gamma \right) \geq 0\right)
dP_{0,A}(\gamma )
\end{equation*}%
with the random variable $\bar{\xi}\left( \gamma \right) $ given by%
\begin{equation*}
\bar{\xi}\left( \gamma \right) =\left( R\hat{\beta}\left( Z\gamma \right)
\right) ^{\prime }\check{\Omega}^{-1}\left( \left( \bar{\Sigma}%
^{1/2}+D^{1/2}\right) \boldsymbol{G}\right) R\hat{\beta}\left( Z\gamma
\right)
\end{equation*}%
on the event $\left\{ \left( \bar{\Sigma}^{1/2}+D^{1/2}\right) \boldsymbol{G}%
\in \mathbb{R}^{n}\backslash N^{\ast }\right\} $ and by $\bar{\xi}\left(
\gamma \right) =0$ otherwise, where $\boldsymbol{G}$ is a standard normal $n$%
-vector. The matrix $A$ denotes $\left( Z^{\prime }Z\right) ^{-1}Z^{\prime }%
\bar{\Sigma}Z\left( Z^{\prime }Z\right) ^{-1}$, which is nonsingular, and $%
P_{0,A}$ denotes the Gaussian distribution on $\mathbb{R}^{l}$ with mean
zero and variance covariance matrix $A$.
\end{theorem}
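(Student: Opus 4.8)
The plan is to exploit the $G(\mathfrak{M})$-equivariance of the estimators (Assumption \ref{AE}) together with the concentration of $P_{\mu,\sigma^{2}\Sigma_{m}}$ around $\mu+\limfunc{span}(\bar{\Sigma})$, and to track not the \emph{magnitude} but the \emph{sign} of a limiting quadratic form in the (possibly indefinite) matrix $\check{\Omega}^{-1}$. First I would reduce both outer sides of (\ref{Power_size_ineq}) to statements about $T$ evaluated at a single Gaussian vector. Fix $\mu_{0}\in\mathfrak{M}_{0}$ and write $\mathcal{Z}=\limfunc{span}(\bar{\Sigma})\subseteq\mathfrak{M}$. For a mean $\mu_{0}+Z\gamma\in\mathfrak{M}$ (with $\gamma=0$ the null case) the data under $P_{\mu_{0}+Z\gamma,\sigma^{2}\Sigma_{m}}$ equals $\mu_{0}+Z\gamma+\sigma\Sigma_{m}^{1/2}\boldsymbol{G}$; since $T$ is $G(\mathfrak{M}_{0})$-invariant (Lemma \ref{LWT}) and $\mu_{0}+Z\gamma\in\mathfrak{M}$, the relations $\check{\beta}(y+X\delta)=\check{\beta}(y)+\delta$, $\check{\Omega}(y+X\delta)=\check{\Omega}(y)$ and the scaling relations strip off the deterministic mean, reducing $T$ to a function of $\Sigma_{m}^{1/2}\boldsymbol{G}$ alone, with $R\hat{\beta}(Z\gamma)$ surviving as the only trace of $\gamma$ (here $R\beta_{0}=r$ causes the term $-r$ to cancel).

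Second, I would decompose $\Sigma_{m}^{1/2}\boldsymbol{G}=V_{m}+W_{m}$, $V_{m}=\Pi_{\mathcal{Z}}\Sigma_{m}^{1/2}\boldsymbol{G}\in\mathcal{Z}$, $W_{m}=\Pi_{\mathcal{Z}^{\bot}}\Sigma_{m}^{1/2}\boldsymbol{G}$, and set $U_{m}=W_{m}/s_{m}^{1/2}$. The hypotheses on $\mathfrak{C}$ ($D_{m}\to D$ regular on $\mathcal{Z}^{\bot}$ and the cross term divided by $s_{m}^{1/2}$ tending to $0$), together with Gaussianity, give $(V_{m},U_{m})\xrightarrow{d}(\bar{\Sigma}^{1/2}\boldsymbol{G},D^{1/2}\boldsymbol{G})$ with the two limit components \emph{independent}, their cross covariance $\bar{\Sigma}^{1/2}D^{1/2}$ vanishing because $\bar{\Sigma}^{1/2}$ and $D^{1/2}$ act on the orthogonal ranges $\mathcal{Z}$ and $\mathcal{Z}^{\bot}$. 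By scale equivariance $R\check{\beta}(\Sigma_{m}^{1/2}\boldsymbol{G})=s_{m}^{1/2}R\check{\beta}(U_{m})+R\hat{\beta}(V_{m})$ and $\check{\Omega}(\Sigma_{m}^{1/2}\boldsymbol{G})=\check{\Omega}(W_{m})=s_{m}\check{\Omega}(U_{m})$, so $s_{m}T$ is a quadratic form whose leading part involves $R\hat{\beta}(V_{m})$ and $\check{\Omega}^{-1}(U_{m})$, the cross and remainder terms being of order $s_{m}^{1/2}$ and $s_{m}$. The crucial device is to replace $\check{\Omega}^{-1}(U_{m})$ by the equal quantity $\check{\Omega}^{-1}(U_{m}+V_{m})$ (translation invariance, $V_{m}\in\mathfrak{M}$): since $U_{m}+V_{m}\xrightarrow{d}(\bar{\Sigma}^{1/2}+D^{1/2})\boldsymbol{G}$, a vector with a density on all of $\mathbb{R}^{n}$ because $\bar{\Sigma}^{1/2}+D^{1/2}$ is nonsingular, the limit lies off the closed null set $N^{\ast}$ almost surely and the continuous mapping theorem applies to the $\lambda_{\mathbb{R}^{n}}$-a.e.-continuous $\check{\Omega}^{-1}$. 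This is precisely why $\bar{\xi}$ is written with the argument $(\bar{\Sigma}^{1/2}+D^{1/2})\boldsymbol{G}$.

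Third, I would read off the two outer inequalities. Because $s_{m}\to0$ and $s_{m}T\xrightarrow{d}$ a limiting quadratic form whose zero set has probability $0$ (Assumption \ref{omega2} applied to the full-density vector $(\bar{\Sigma}^{1/2}+D^{1/2})\boldsymbol{G}$ and the nonzero $v=R\hat{\beta}(Z\gamma)$, nonzero $\lambda_{\mathcal{Z}}$-a.e.\ by (\ref{test_conc_100})), a Slutsky argument yields $P(W(C))=P(T\geq C)=P(s_{m}T\geq s_{m}C)\to\Pr(\text{limit}>0)$. In the null case the surviving term is $R\hat{\beta}(V_{m})\to R\hat{\beta}(Z\gamma^{\ast})$ with $\gamma^{\ast}=(Z^{\prime}Z)^{-1}Z^{\prime}\bar{\Sigma}^{1/2}\boldsymbol{G}\sim P_{0,A}$, independent of $\check{\Omega}^{-1}((\bar{\Sigma}^{1/2}+D^{1/2})\boldsymbol{G})$ (one depends on the $\mathcal{Z}$-part of $\boldsymbol{G}$, the other, after the invariance reduction, on the $\mathcal{Z}^{\bot}$-part); conditioning on $\gamma^{\ast}$ gives $P_{\mu_{0},\sigma^{2}\Sigma_{m}}(W(C))\to\int\Pr(\bar{\xi}(\gamma)\geq0)\,dP_{0,A}(\gamma)=K_{2}$, whence $K_{2}\leq\sup_{\Sigma}P_{\mu_{0},\sigma^{2}\Sigma}(W(C))$. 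For the alternative, taking $\gamma=t\gamma_{0}$ with $t\to\infty$ makes $R\hat{\beta}(Zt\gamma_{0})$ dominate the $O(1)$ contribution of $\sigma R\hat{\beta}(\bar{\Sigma}^{1/2}\boldsymbol{G})$, so after dividing by $t^{2}$ the sign of the limit is governed by $\bar{\xi}(\gamma_{0})$; a diagonal choice $t_{j},m_{j}\to\infty$ gives $P_{\mu_{0}+Zt_{j}\gamma_{0},\sigma^{2}\Sigma_{m_{j}}}(W(C))\to\Pr(\bar{\xi}(\gamma_{0})\geq0)$, and since $\Vert\gamma\Vert\geq M$ eventually, $\inf_{\Vert\gamma\Vert\geq M}\inf_{\Sigma}P\leq\Pr(\bar{\xi}(\gamma_{0})\geq0)$ for every direction $\gamma_{0}$, hence $\leq K_{1}$. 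The middle inequality $K_{1}\leq K_{2}$ is immediate since $K_{2}$ is a $P_{0,A}$-average of $\Pr(\bar{\xi}(\gamma)\geq0)$ while $K_{1}$ is their infimum; the identity $K_{1}=\inf_{\Vert\gamma\Vert=1}$ follows because $\gamma\mapsto\bar{\xi}(\gamma)$ is homogeneous of degree two (linearity of $\gamma\mapsto R\hat{\beta}(Z\gamma)$) and $\bar{\xi}(0)=0$.

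The main obstacle is Step two: the two competing scales, with the $\mathcal{Z}$-component staying $O(1)$ while the orthogonal component shrinks like $s_{m}^{1/2}$, mean that a naive limit pushes the argument of $\check{\Omega}^{-1}$ into the degenerate set $\mathcal{Z}\subseteq\mathfrak{M}\subseteq N^{\ast}$ where the statistic is undefined. Extracting a usable limit requires the rescaling by $s_{m}^{-1}$, the invariance reduction that re-attaches $V_{m}$ to $U_{m}$ so that $\check{\Omega}^{-1}$ is evaluated at a full-density vector off $N^{\ast}$, and the verification via Assumption \ref{omega2} that the limiting quadratic form is almost surely nonzero (so that tracking its sign is legitimate even though $\check{\Omega}$ may be indefinite). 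Establishing the joint weak convergence with the correct independence structure, and checking that the $o(s_{m}^{-1})$ cross and remainder terms cannot flip the sign in the limit, are the remaining points demanding care.
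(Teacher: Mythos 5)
Your proposal is correct and follows essentially the same route as the paper's proof (via its Lemma \ref{lem_100}): the equivariance reduction, the two-scale decomposition along $\limfunc{span}(\bar{\Sigma})$ and its orthogonal complement with rescaling by $s_{m}$, the re-attachment of the $\limfunc{span}(\bar{\Sigma})$-component so that $\check{\Omega}^{-1}$ is evaluated at the full-rank limit $(\bar{\Sigma}^{1/2}+D^{1/2})\boldsymbol{G}$ off $N^{\ast}$, Assumption \ref{omega2} together with (\ref{test_conc_100}) to rule out a zero limit, the independence/conditioning argument for $K_{2}$, and the directional $\left\Vert \gamma \right\Vert \rightarrow \infty$ limit for $K_{1}$. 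The only cosmetic difference is that the paper evaluates both $\check{\beta}$ and $\check{\Omega}$ at the single vector $L_{m}=\bar{\Sigma}^{1/2}\boldsymbol{G}+s_{m}^{-1/2}\Pi _{\limfunc{span}(\bar{\Sigma})^{\bot }}\Sigma _{m}^{1/2}\boldsymbol{G}$, whereas you split the arguments; this changes nothing of substance.
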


\begin{remark}
\label{special_AR_1_1}Suppose the covariance model $\mathfrak{C}$ contains $%
\mathfrak{C}_{AR(1)}$, or, more generally, $\mathfrak{C}$ contains AR(1)
correlation matrices $\Lambda (\rho _{m})$ for some sequence $\rho _{m}\in
\left( -1,1\right) $ with $\rho _{m}\rightarrow 1$ ($\rho _{m}\rightarrow -1$%
, respectively). Then all the conditions on the covariance model in the
preceding theorem are satisfied with $\bar{\Sigma}=e_{+}e_{+}^{\prime }$, $%
\limfunc{span}(\bar{\Sigma})=\limfunc{span}(e_{+})$, and $Z=e_{+}$ ($\bar{%
\Sigma}=e_{-}e_{-}^{\prime }$, $\limfunc{span}(\bar{\Sigma})=\limfunc{span}%
(e_{-})$, and $Z=e_{-}$, respectively); cf. Lemma \ref{AR_1} in Appendix \ref%
{AR_100}. Furthermore, condition (\ref{test_conc_100}) simplifies to $R\hat{%
\beta}(e_{+})\neq 0$ ($R\hat{\beta}(e_{-})\neq 0$, respectively).
\end{remark}

\bigskip

The subsequent theorem specializes the positive result given in Theorems \ref%
{TU} and \ref{TUU} to the class of tests considered in the present
subsection.

\begin{theorem}
\label{TU_1} Let $\check{\beta}$ and $\check{\Omega}$ satisfy Assumptions %
\ref{AE}, \ref{omega1}, and \ref{omega2}. Let $T$ be the test statistic
defined in (\ref{DT}). Furthermore, let $W(C)=\left\{ y\in \mathbb{R}%
^{n}:T(y)\geq C\right\} $ with $0<C<\infty $ be the rejection region.
Suppose further that 
\begin{equation}
T(y+z)=T(y)\qquad \text{for every }y\in \mathbb{R}^{n}\text{ and every }z\in
J(\mathfrak{C}).  \label{T_inv_wrt_z}
\end{equation}%
Assume that $\mathfrak{C}$ is bounded (as a subset of $\mathbb{R}^{n\times
n} $). Assume also that for every sequence $\Sigma _{m}\in \mathfrak{C}$
converging to a singular $\bar{\Sigma}$ there exists a subsequence $%
(m_{i})_{i\in \mathbb{N}}$ and a sequence of positive real numbers $%
s_{m_{i}} $ such that the sequence of matrices $D_{m_{i}}=\Pi _{\limfunc{span%
}(\bar{\Sigma})^{\bot }}\Sigma _{m_{i}}\Pi _{\limfunc{span}(\bar{\Sigma}%
)^{\bot }}/s_{m_{i}}$ converges to a matrix $D$ which is regular on the
orthogonal complement of $\limfunc{span}(\bar{\Sigma})$. Then the following
holds:

\begin{enumerate}
\item The size of the rejection region $W(C)$ is strictly less than $1$,
i.e.,%
\begin{equation*}
\sup\limits_{\mu _{0}\in \mathfrak{M}_{0}}\sup\limits_{0<\sigma ^{2}<\infty
}\sup\limits_{\Sigma \in \mathfrak{C}}P_{\mu _{0},\sigma ^{2}\Sigma }\left(
W(C)\right) <1.
\end{equation*}%
Furthermore,%
\begin{equation*}
\inf_{\mu _{0}\in \mathfrak{M}_{0}}\inf_{0<\sigma ^{2}<\infty }\inf_{\Sigma
\in \mathfrak{C}}P_{\mu _{0},\sigma ^{2}\Sigma }\left( W(C)\right) >0.
\end{equation*}

\item Suppose that $\lambda _{\mathbb{R}^{n}}\left( \left\{ y\in \mathbb{R}%
^{n}\backslash N^{\ast }:{v}^{\prime }\check{\Omega}^{-1}(y)v\geq 0\right\}
\right) >0$ for every $v\in \mathbb{R}^{q}$ with $\left\Vert v\right\Vert =1$%
. Then the infimal power is bounded away from zero, i.e., 
\begin{equation*}
\inf_{\mu _{1}\in \mathfrak{M}_{1}}\inf\limits_{0<\sigma ^{2}<\infty
}\inf\limits_{\Sigma \in \mathfrak{C}}P_{\mu _{1},\sigma ^{2}\Sigma
}(W(C))>0.
\end{equation*}

\item Suppose that $\check{\Omega}$ is nonnegative definite $\lambda _{%
\mathbb{R}^{n}}$-almost everywhere. Then for every $0<c<\infty $%
\begin{equation*}
\inf_{\substack{ \mu _{1}\in \mathfrak{M}_{1},0<\sigma ^{2}<\infty  \\ %
d\left( \mu _{1},\mathfrak{M}_{0}\right) /\sigma \geq c}}P_{\mu _{1},\sigma
^{2}\Sigma _{m}}(W(C))\rightarrow 1
\end{equation*}%
holds for $m\rightarrow \infty $ and for any sequence $\Sigma _{m}\in 
\mathfrak{C}$ satisfying $\Sigma _{m}\rightarrow \bar{\Sigma}$ with $\bar{%
\Sigma}$ a singular matrix. Furthermore, for every sequence $0<c_{m}<\infty $
\begin{equation*}
\inf_{\substack{ \mu _{1}\in \mathfrak{M}_{1},  \\ d\left( \mu _{1},%
\mathfrak{M}_{0}\right) \geq c_{m}}}P_{\mu _{1},\sigma _{m}^{2}\Sigma
_{m}}(W(C))\rightarrow 1
\end{equation*}%
holds for $m\rightarrow \infty $ whenever $0<\sigma _{m}^{2}<\infty $, $%
c_{m}/\sigma _{m}\rightarrow \infty $, and the sequence $\Sigma _{m}\in 
\mathfrak{C}$ satisfies $\Sigma _{m}\rightarrow \bar{\Sigma}$ with $\bar{%
\Sigma}$ a positive definite matrix. [The very last statement even holds
without recourse to condition (\ref{T_inv_wrt_z}) and the condition on $%
\mathfrak{C}$ following (\ref{T_inv_wrt_z}).]

\item For every $\delta $, $0<\delta <1$, there exists a $C(\delta )$, $%
0<C(\delta )<\infty $, such that%
\begin{equation*}
\sup\limits_{\mu _{0}\in \mathfrak{M}_{0}}\sup\limits_{0<\sigma ^{2}<\infty
}\sup\limits_{\Sigma \in \mathfrak{C}}P_{\mu _{0},\sigma ^{2}\Sigma
}(W(C(\delta )))\leq \delta .
\end{equation*}
\end{enumerate}
\end{theorem}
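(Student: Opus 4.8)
The plan is to derive all four parts by verifying, for the indicator $\varphi=\boldsymbol{1}_{W(C)}$ of the rejection region, the hypotheses of Theorems \ref{TU} and \ref{TUU}, and then quoting their conclusions verbatim; the work is entirely in checking hypotheses, since Lemma \ref{LWT} has already recorded the structural properties of $T$. First I would assemble the common preliminaries. By Part 3 of Lemma \ref{LWT} the statistic $T$ is invariant under $G(\mathfrak{M}_{0})$, so $W(C)$ is invariant and $\varphi$ is (almost) invariant under $G(\mathfrak{M}_{0})$, as required throughout. Condition (\ref{phi_inv_wrt_z}) of Theorem \ref{TU} is immediate from the assumed invariance (\ref{T_inv_wrt_z}): if $T(y+z)=T(y)$ for every $z\in J(\mathfrak{C})$, then the super-level set $W(C)$, and hence $\varphi$, is invariant under the same translations. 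Boundedness of $\mathfrak{C}$ and the $D_m$-condition are assumed verbatim. Finally, Parts 5 and 6 of Lemma \ref{LWT} show that, under Assumption \ref{omega1}, both $W(C)$ and its complement contain nonempty open sets and thus have positive Lebesgue measure, so $\varphi$ is neither $\lambda_{\mathbb{R}^{n}}$-a.e. $1$ nor $\lambda_{\mathbb{R}^{n}}$-a.e. $0$. With these checks, Part 1 of Theorem \ref{TU} applies and yields both assertions of the present Part 1.

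For Part 2 I would verify the power condition (\ref{power_ass}). Fix sequences $\nu_m$ and $\Phi_m\to\Phi$ as in that condition. Since $E_{\nu_m+\mu_0,\Phi_m}(\varphi)=P_{\nu_m+\mu_0,\Phi_m}(W(C))$, Part 7 of Lemma \ref{LWT} identifies the relevant liminf and, crucially, supplies the $\nu_m$-free lower bound $\inf_{\|v\|=1}\Pr(v'\check{\Omega}^{-1}(\Phi^{1/2}\mathbf{G})v\geq0)$. By Remark \ref{omega_rem}(i) (which uses Assumption \ref{omega2} and continuity of $\check{\Omega}^{-1}$ on $\mathbb{R}^{n}\backslash N^{\ast}$), the hypothesis of Part 2, namely $\lambda_{\mathbb{R}^{n}}(\{y\in\mathbb{R}^{n}\backslash N^{\ast}:v'\check{\Omega}^{-1}(y)v\geq0\})>0$ for every unit $v$, is exactly the statement that each such probability is positive; since $v\mapsto\Pr(\cdots)$ is continuous on the compact unit sphere, its infimum is attained and positive. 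Thus (\ref{power_ass}) holds and Part 2 of Theorem \ref{TU} gives the conclusion. For Part 3 I would instead note that when $\check{\Omega}$ is nonnegative definite $\lambda_{\mathbb{R}^{n}}$-a.e., the final line of Lemma \ref{LWT} Part 7 forces all these probabilities to equal $1$, so the liminf in (\ref{power_ass}) is $1$; Part 3 of Theorem \ref{TU} then delivers (\ref{unit_power}) and (\ref{unit_power_2}), which are precisely the two asserted limits (the very last statement requiring neither (\ref{T_inv_wrt_z}) nor the $D_m$-condition, matching the bracketed remark).

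For Part 4 I would pass to a sequence of critical values $C_k\uparrow\infty$ and set $\varphi_k=\boldsymbol{1}(T\geq C_k)$. Each $\varphi_k$ satisfies the Part 1 hypotheses by the verification above (nonemptiness of $\{T>C_k\}$ for every finite $C_k$ is guaranteed by Lemma \ref{LWT} Part 5 under Assumption \ref{omega1}), and $\mathfrak{C}$ satisfies the theorem's assumptions. Since $T$ is finite everywhere ($T$ is continuous, hence finite, on $\mathbb{R}^{n}\backslash N^{\ast}$ and vanishes on $N^{\ast}$), the sets $\{T\geq C_k\}$ decrease to $\{T=\infty\}=\emptyset$, so $E_{\mu_0^{\ast},\Phi}(\varphi_k)=P_{\mu_0^{\ast},\Phi}(T\geq C_k)\downarrow0$ for every positive definite $\Phi$, which is the monotonicity hypothesis (\ref{monotone}). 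Theorem \ref{TUU} then produces, for each $\delta\in(0,1)$, an index $k_0(\delta)$, and taking $C(\delta)=C_{k_0(\delta)}$ gives the size bound.

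The main obstacle is the verification in Part 2: translating the measure-theoretic hypothesis on $\check{\Omega}^{-1}$ into positivity of the asymptotic far-away rejection probability. This hinges on the identities in Lemma \ref{LWT} Part 7 and the equivalences in Remark \ref{omega_rem}(i), and on using Assumption \ref{omega2} both to pass freely between the ``$\geq0$'' and ``$>0$'' events and to secure the continuity of $v\mapsto\Pr(v'\check{\Omega}^{-1}(\Phi^{1/2}\mathbf{G})v\geq0)$ that turns pointwise positivity into a positive infimum over the unit sphere. Everything else is bookkeeping that matches the specialized hypotheses of the present theorem against the abstract hypotheses of Theorems \ref{TU} and \ref{TUU}.
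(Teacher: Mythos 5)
Your proposal is correct and follows essentially the same route as the paper's own proof: reduce all four parts to Theorems \ref{TU} and \ref{TUU} applied to $\varphi=\boldsymbol{1}_{W(C)}$ (resp. $\varphi_k=\boldsymbol{1}_{W(C_k)}$ with $C_k\uparrow\infty$), with the hypothesis checks supplied by Lemma \ref{LWT} (Parts 3, 5, 6, 7) and Remark \ref{omega_rem}(i). The only difference is that you spell out the verifications in more detail than the paper does, which is fine.
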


\begin{remark}
(i) In case the covariance model $\mathfrak{C}$ equals $\mathfrak{C}_{AR(1)}$%
, a remark analogous to Remark \ref{rem_AR_1} also applies here.

(ii) Under the assumptions of the preceding theorem, the additional
condition in Part 2 of the theorem is equivalent to ${v}^{\prime }\check{%
\Omega}^{-1}(y)v>0$ for every $v\in \mathbb{R}^{q}$ with $\left\Vert
v\right\Vert =1$ and a suitable $y=y(v)\in \mathbb{R}^{n}\backslash N^{\ast
} $. Cf. Remark \ref{omega_rem}(i).
\end{remark}

\bigskip

We now discuss when the preceding theorem can be expected to apply and how
the crucial condition (\ref{T_inv_wrt_z}) can be enforced. As already noted
prior to Theorem \ref{TU}, a sufficient condition for (\ref{T_inv_wrt_z}) to
be satisfied for \emph{any} test statistic $T$ of the form (\ref{DT}), based
on estimators $\check{\beta}$ and $\check{\Omega}$ satisfying Assumption \ref%
{AE}, is that $J(\mathfrak{C})\subseteq \mathfrak{M}_{0}-\mu _{0}$ for some
(and hence all) $\mu _{0}\in \mathfrak{M}_{0}$ holds. This sufficient
condition is equivalent to $J(\mathfrak{C})\subseteq \mathfrak{M}$ and $R%
\hat{\beta}(z)=0$ for every $z\in J(\mathfrak{C})$, because $\mathfrak{M}%
_{0}-\mu _{0}$ coincides with the set $\left\{ \mu \in \mathfrak{M}:R\hat{%
\beta}(\mu )=0\right\} $. [Note that replacing $J(\mathfrak{C})$ by $%
\limfunc{span}\left( J(\mathfrak{C})\right) $ in the preceding two sentences
leads to equivalent statements because $\mathfrak{M}_{0}-\mu _{0}$ as well
as $\mathfrak{M}$ are linear spaces.] Now consider the general case where $J(%
\mathfrak{C})$, or equivalently $\limfunc{span}\left( J(\mathfrak{C})\right) 
$, may not be a subset of $\mathfrak{M}_{0}-\mu _{0}$: If there exists a $%
z\in \limfunc{span}\left( J(\mathfrak{C})\right) \cap \mathfrak{M}$ with $%
z\notin \mathfrak{M}_{0}-\mu _{0}$ (i.e., with $R\hat{\beta}(z)\neq 0$),
then \emph{any} test statistic $T$ of the form (\ref{DT}), based on
estimators $\check{\beta}$ and $\check{\Omega}$ satisfying Assumptions \ref%
{AE} and \ref{omega2}, does \emph{not} satisfy the invariance condition (\ref%
{T_inv_wrt_z}), see Lemma \ref{nec} in Appendix \ref{App_E}. Hence, $%
\limfunc{span}\left( J(\mathfrak{C})\right) \cap \mathfrak{M}\subseteq 
\mathfrak{M}_{0}-\mu _{0}$, or in other words $R\hat{\beta}(z)=0$ for every $%
z\in \limfunc{span}\left( J(\mathfrak{C})\right) \cap \mathfrak{M}$, is a
necessary condition for (\ref{T_inv_wrt_z}) to be satisfied for \emph{some }$%
T$ as above. We next show how a test statistic of the form (\ref{DT})\
satisfying the crucial invariance condition (\ref{T_inv_wrt_z}) can in fact
be constructed if we impose this necessary condition.

\begin{proposition}
\label{enforce_inv}Let $\mathfrak{C}$ be a covariance model and suppose that 
$\limfunc{span}\left( J(\mathfrak{C})\right) \cap \mathfrak{M}\subseteq 
\mathfrak{M}_{0}-\mu _{0}$ holds.

\begin{enumerate}
\item Let $\mathfrak{\bar{M}}$ be the linear space spanned by $J(\mathfrak{C}%
)\cup \mathfrak{M}$. Define $\bar{X}=\left( X,\bar{x}_{1},\ldots ,\bar{x}%
_{p}\right) $ where $\bar{x}_{i}\in \limfunc{span}\left( J(\mathfrak{C})\cup
\left( \mathfrak{M}_{0}-\mu _{0}\right) \right) $ are chosen in such a way
that the columns of $\bar{X}$ form a basis of $\mathfrak{\bar{M}}$. Assume
that $k<k+p<n$ holds. Suppose $\bar{\theta}$ and $\bar{\Omega}$ are
estimators satisfying the analogue of Assumption \ref{AE} obtained by
replacing $k$ by $k+p$, $X$ by $\bar{X}$, and $\mathfrak{M}$ by $\mathfrak{%
\bar{M}}$. Let $\bar{N}$ denote the null set appearing in that analogue of
Assumption \ref{AE} and $\bar{N}^{\ast }=\bar{N}\cup \left\{ y\in \mathbb{R}%
^{n}\backslash \bar{N}:\det {\bar{\Omega}(y)}=0\right\} $. Define $\bar{\beta%
}=\left( I_{k},0\right) \bar{\theta}$. Then $\bar{\beta}$ and ${\bar{\Omega}}
$ satisfy the original Assumption \ref{AE} (with $N$ given by $\bar{N}$),
and the test statistic $\bar{T}$ given by%
\begin{equation*}
\bar{T}(y)=%
\begin{cases}
(R\bar{\beta}(y)-r)^{\prime }\bar{\Omega}^{-1}(y)(R\bar{\beta}(y)-r), & y\in 
\mathbb{R}^{n}\backslash \bar{N}^{\ast }, \\ 
0, & y\in \bar{N}^{\ast }\text{.}%
\end{cases}%
\end{equation*}%
satisfies the invariance condition (\ref{T_inv_wrt_z}).

\item Let $\mathfrak{\bar{M}}$ and $\bar{X}$ be as above and $k<k+p<n$.
Suppose $\bar{\theta}\left( y\right) =\left( \bar{X}^{\prime }\bar{X}\right)
^{-1}\bar{X}^{\prime }y$ is the least squares estimator based on $\bar{X}$.
Then the requirements on $\bar{\theta}$ postulated in the above mentioned
analogue of Assumption \ref{AE} are satisfied, and $R\bar{\beta}\left(
z\right) =0$ holds for every $z\in \limfunc{span}\left( J(\mathfrak{C}%
)\right) $. Furthermore, if $X^{\ast }=\left( X,x_{1}^{\ast },\ldots
,x_{p}^{\ast }\right) $ is obtained in the same way as is $\bar{X}$ but for
another choice of elements $x_{i}^{\ast }\in \limfunc{span}\left( J(%
\mathfrak{C})\cup \left( \mathfrak{M}_{0}-\mu _{0}\right) \right) $ and if $%
\theta ^{\ast }$ denotes the least squares estimator w.r.t. the design
matrix $X^{\ast }$, then $R\bar{\beta}(y)=R\beta ^{\ast }(y)$ holds for
every $y\in \mathbb{R}^{n}$ with $\beta ^{\ast }$ denoting $\left(
I_{k},0\right) \theta ^{\ast }$.
\end{enumerate}
\end{proposition}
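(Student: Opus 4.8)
The plan is to reduce both parts to a single piece of linear algebra governing how $R\bar{\beta}$ responds to the enlarged design. Because the columns of $\bar{X}$ form a basis of $\mathfrak{\bar{M}}=\limfunc{span}(J(\mathfrak{C})\cup \mathfrak{M})$, each $w\in \mathfrak{\bar{M}}$ admits a unique representation $w=\bar{X}\binom{a}{b}$ with $a\in \mathbb{R}^{k}$ and $b\in \mathbb{R}^{p}$, and I would introduce the linear map $L:\mathfrak{\bar{M}}\rightarrow \mathbb{R}^{q}$, $L(w)=Ra$. The entire proposition rests on the single claim that $L$ vanishes on $\limfunc{span}(J(\mathfrak{C}))$; granting this, the remaining assertions reduce to bookkeeping with the equivariance built into (the analogue of) Assumption \ref{AE}.

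For Part 1 I would first verify that $\bar{\beta}=(I_{k},0)\bar{\theta}$ and $\bar{\Omega}$ satisfy the original Assumption \ref{AE} with $N=\bar{N}$. Continuity, symmetry of $\bar{\Omega}$, and its almost everywhere nonsingularity carry over verbatim from the analogue assumptions. The required $G(\mathfrak{M})$-invariance and equivariance follow from $\mathfrak{M}\subseteq \mathfrak{\bar{M}}=\limfunc{span}(\bar{X})$: any $X\gamma$ equals $\bar{X}\binom{\gamma}{0}$, and inserting $\binom{\gamma}{0}$ into the $G(\mathfrak{\bar{M}})$-equivariance of $\bar{\theta}$ and $\bar{\Omega}$ produces $\bar{\beta}(\alpha y+X\gamma)=\alpha \bar{\beta}(y)+\gamma$ and $\bar{\Omega}(\alpha y+X\gamma)=\alpha ^{2}\bar{\Omega}(y)$. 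For the invariance condition (\ref{T_inv_wrt_z}), fix $z\in J(\mathfrak{C})\subseteq \mathfrak{\bar{M}}$ and write $z=\bar{X}\bar{\gamma}_{z}$. The translation $y\mapsto y+z$ belongs to $G(\mathfrak{\bar{M}})$, so the closed null set $\bar{N}^{\ast}$ (which is $G(\mathfrak{\bar{M}})$-invariant by the general discussion following Assumption \ref{AE}) is preserved and $\bar{T}(y+z)=0=\bar{T}(y)$ on $\bar{N}^{\ast}$. Off $\bar{N}^{\ast}$, equivariance with $\alpha =1$ and $\gamma =\bar{\gamma}_{z}$ gives $\bar{\Omega}(y+z)=\bar{\Omega}(y)$ and $R\bar{\beta}(y+z)=R\bar{\beta}(y)+L(z)$, so $\bar{T}(y+z)=\bar{T}(y)$ exactly when $L(z)=0$.

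The hard part is the claim $L\equiv 0$ on $\limfunc{span}(J(\mathfrak{C}))$, and this is where I expect the genuine work to lie. I would pin down $\ker L$ by a dimension count. Each column satisfies $\bar{x}_{i}=\bar{X}\binom{0}{e_{i}}$, so $L(\bar{x}_{i})=0$; and for $X\beta \in \mathfrak{M}_{0}-\mu _{0}=\{X\beta :R\beta =0\}$ one has $L(X\beta)=R\beta =0$. Since $\limfunc{span}(\bar{x}_{1},\ldots ,\bar{x}_{p})$ meets $\mathfrak{M}$ trivially, the sum $(\mathfrak{M}_{0}-\mu _{0})\oplus \limfunc{span}(\bar{x}_{1},\ldots ,\bar{x}_{p})$ has dimension $(k-q)+p$; as $L$ is onto (because $L(X\beta)=R\beta$ and $\limfunc{rank}(R)=q$), also $\dim \ker L=k+p-q$, whence $\ker L=(\mathfrak{M}_{0}-\mu _{0})\oplus \limfunc{span}(\bar{x}_{1},\ldots ,\bar{x}_{p})$. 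To show $V:=\limfunc{span}(J(\mathfrak{C}))\subseteq \ker L$, take $v\in V$ and write $v=X\beta _{v}+s$ with $s\in \limfunc{span}(\bar{x}_{1},\ldots ,\bar{x}_{p})$. As each $\bar{x}_{i}\in V+(\mathfrak{M}_{0}-\mu _{0})$, we may write $s=v^{\prime}+m^{\prime}$ with $v^{\prime}\in V$, $m^{\prime}\in \mathfrak{M}_{0}-\mu _{0}$; then $v-v^{\prime}=X\beta _{v}+m^{\prime}\in V\cap \mathfrak{M}$. The standing hypothesis $V\cap \mathfrak{M}\subseteq \mathfrak{M}_{0}-\mu _{0}$ now forces $X\beta _{v}=(v-v^{\prime})-m^{\prime}\in \mathfrak{M}_{0}-\mu _{0}$, i.e. $R\beta _{v}=0$ and $L(v)=0$. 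This is precisely the step where the hypothesis cannot be dispensed with, dovetailing with the necessity recorded in Lemma \ref{nec}.

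For Part 2 the least squares estimator $\bar{\theta}(y)=(\bar{X}^{\prime}\bar{X})^{-1}\bar{X}^{\prime}y$ plainly satisfies the analogue of Assumption \ref{AE}: $\bar{X}$ has full column rank $k+p<n$, so $\bar{\theta}$ is everywhere defined, continuous, and affine-equivariant by a one-line computation. Since $\bar{\theta}(z)=\bar{\gamma}_{z}$ for $z\in \mathfrak{\bar{M}}$, we get $R\bar{\beta}(z)=L(z)$, so $R\bar{\beta}(z)=0$ on $\limfunc{span}(J(\mathfrak{C}))$ is exactly the claim already established. For invariance of $R\bar{\beta}$ under the choice of purging regressors, I would use the projection form $R\bar{\beta}(y)=L(\Pi _{\mathfrak{\bar{M}}}y)$ and, for the competing design $X^{\ast}$ spanning the same $\mathfrak{\bar{M}}$, $R\beta ^{\ast}(y)=L^{\ast}(\Pi _{\mathfrak{\bar{M}}}y)$, so it suffices to prove $L=L^{\ast}$. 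Both maps send $X\beta \mapsto R\beta$ on $\mathfrak{M}$ and, by the kernel computation above applied to each design (a dimension count again invoking $V\cap \mathfrak{M}\subseteq \mathfrak{M}_{0}-\mu _{0}$), share the kernel $(\mathfrak{M}_{0}-\mu _{0})+V$. Since $\mathfrak{\bar{M}}=\mathfrak{M}+[(\mathfrak{M}_{0}-\mu _{0})+V]$, decomposing $w=m+u$ accordingly yields $L(w)=L(m)=L^{\ast}(m)=L^{\ast}(w)$, which finishes Part 2.
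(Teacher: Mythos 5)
Your proof is correct. The decisive computation is the same as the paper's: decompose $z\in \limfunc{span}\left( J(\mathfrak{C})\right) $ as $X\gamma $ plus a combination of the purging regressors $\bar{x}_{i}$, rewrite that combination as an element of $\limfunc{span}\left( J(\mathfrak{C})\right) $ plus an element of $\mathfrak{M}_{0}-\mu _{0}$, and invoke the hypothesis $\limfunc{span}\left( J(\mathfrak{C})\right) \cap \mathfrak{M}\subseteq \mathfrak{M}_{0}-\mu _{0}$ to conclude $R\gamma =0$. What you do differently is package this as a statement about the linear map $L$ on $\mathfrak{\bar{M}}$ sending $\bar{X}\left( a^{\prime },b^{\prime }\right) ^{\prime }$ to $Ra$, and identify $\ker L=\left( \mathfrak{M}_{0}-\mu _{0}\right) +\limfunc{span}\left( J(\mathfrak{C})\right) $ by a surjectivity-plus-dimension count. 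That extra structure buys you a cleaner Part 2: since this kernel description does not depend on the choice of purging regressors and $L$, $L^{\ast }$ agree on $\mathfrak{M}$, the identity $R\bar{\beta}(y)=R\beta ^{\ast }(y)$ drops out from the decomposition $\mathfrak{\bar{M}}=\mathfrak{M}+\ker L$, whereas the paper reruns the Part 1 decomposition argument on the difference $X\bar{\beta}(y)-X\beta ^{\ast }(y)=\sum_{i}x_{i}^{\ast }\theta _{k+i}^{\ast }(y)-\sum_{i}\bar{x}_{i}\bar{\theta}_{k+i}(y)$. The paper's route avoids the (routine but necessary) verifications that $L$ is onto and that $\left( \mathfrak{M}_{0}-\mu _{0}\right) \oplus \limfunc{span}\left( \bar{x}_{1},\ldots ,\bar{x}_{p}\right) $ is indeed a direct sum of the right dimension; yours makes explicit the basis-free content of the construction. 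The remaining steps (transfer of Assumption \ref{AE} from the barred to the unbarred setting, the reduction of (\ref{T_inv_wrt_z}) to $L(z)=0$ via equivariance, and the exact recovery $\bar{\theta}(z)=\bar{\gamma}_{z}$ for the least squares estimator) all match the paper.
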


We next discuss ways of choosing $\bar{x}_{1},\ldots ,\bar{x}_{p}$ such that
they satisfy the requirements in the preceding proposition: One natural way
is to first find $z_{1}\ldots ,z_{r}$ in $J(\mathfrak{C})$ that form a basis
of $\limfunc{span}J(\mathfrak{C})$. From these vectors then select $\bar{x}%
_{1}=z_{i_{1}}\ldots ,\bar{x}_{p}=z_{i_{p}}$ to complement the columns of $X$
to a basis of $\mathfrak{\bar{M}}$. An alternative way is based on the
observation that adding elements of $\mathfrak{M}_{0}-\mu _{0}$ to each of
the previously found $z_{i_{j}}$ obviously gives rise to another feasible
choice of $\bar{x}_{i}$. It hence follows that an alternative feasible
choice for the $\bar{x}_{i}$ is to use the projections of the $z_{i_{j}}$
onto the orthogonal complement of $\mathfrak{M}_{0}-\mu _{0}$. Of course, if
the estimator $\bar{\theta}$ is chosen to be the least squares estimator,
then Part 2 of the preceding proposition informs us that the particular
choice of the $\bar{x}_{i}$ has no effect on $R\bar{\beta}(y)$ since it is
invariant under the choice of the $\bar{x}_{i}$.

Part 2 of Proposition \ref{enforce_inv} provides a particular estimator $%
\bar{\theta}$ that satisfies the assumptions on $\bar{\theta}$ maintained in
Part 1 of this proposition. Because no particular covariance model $%
\mathfrak{C}$ has been specified in Proposition \ref{enforce_inv}, we can
not provide a similar concrete construction of $\bar{\Omega}$ in that
proposition. The construction of an appropriate $\bar{\Omega}$ has to be
done on a case by case basis, depending on the covariance model employed in
the particular application. For an example of such a construction in the
context of autocorrelation robust testing see Theorem \ref{TU_3}. We
furthermore note that similar to the results in Part 2 of Proposition \ref%
{enforce_inv} such estimators $\bar{\Omega}$ will typically be unchanged
whether they are constructed on the basis of the design matrices $\bar{X}$
or $X^{\ast }$. In particular, this is the case for the estimator
constructed in Theorem \ref{TU_3}.

To summarize, the significance of Proposition \ref{enforce_inv} is that it
tells us (in conjunction with Theorem \ref{TU_1}) when and how we can
construct an adjusted test based on an auxiliary model that does not suffer
from the severe size and power distortions (i.e., size $1$ and/or infimal
power $0$), the adjustment consisting of adding appropriate auxiliary
regressors to the model. For a concrete implementation see Theorem \ref{TU_3}%
.

\begin{remark}
(i) Suppose that the assumptions of Proposition \ref{enforce_inv} hold,
except that now $p=0$ holds. Then $J(\mathfrak{C})\subseteq \mathfrak{M}$
and hence $R\hat{\beta}(z)=0$ holds for every $z\in J(\mathfrak{C})$,
implying that actually the sufficient condition mentioned prior to the
proposition is satisfied. Consequently, as discussed above, the invariance
condition (\ref{T_inv_wrt_z}) is already satisfied for \emph{every} $T$ of
the form (\ref{DT}) based on estimators $\check{\beta}$ and $\check{\Omega}$
satisfying Assumption \ref{AE}.

(ii) Suppose that the assumptions of Proposition \ref{enforce_inv} hold,
except that now $k+p=n$ holds (note that $k+p\leq n$ always holds). Suppose
further that $T$ is a test statistic of the form (\ref{DT}) based on
estimators $\check{\beta}$ and $\check{\Omega}$ satisfying Assumptions \ref%
{AE} and \ref{omega1}. Then $T$ can never satisfy (\ref{T_inv_wrt_z}) and
hence Theorem \ref{TU_1} does not apply in this situation. This can be seen
as follows: Because of $k+p=n$ it follows that every $y\in \mathbb{R}^{n}$
can be written as a linear combination of finitely many $z_{i}\in J(%
\mathfrak{C})$ plus an element $\mu $ in $\mathfrak{M}$. Because invariance
w.r.t. addition of elements $z\in J(\mathfrak{C})$ is equivalent to
invariance w.r.t. addition of elements $z\in \limfunc{span}\left( J(%
\mathfrak{C})\right) $ (cf. Remark \ref{rem_positive}(i)) we see that $%
T(y)=T(\mu )$ would have to hold under (\ref{T_inv_wrt_z}). As noted after
the introduction of Assumption \ref{AE}, either $\mathfrak{M}\subseteq
N\subseteq N^{\ast }$ holds or $N$ is empty. In the second case we have that 
$\check{\Omega}(\mu )=0$ as a consequence of equivariance. Hence in both
cases we arrive at $\mu \in N^{\ast }$ and thus at $T(\mu )=0$. But this
shows that $T$ is constant equal to zero, contradicting Part 5 of Lemma \ref%
{LWT}.

(iii) Proposition \ref{enforce_inv} uses the auxiliary matrix $\bar{X}$ and
the associated estimators $\bar{\theta}$ to construct an estimator $\bar{%
\beta}$ for the parameter $\beta $ in the originally given regression model (%
\ref{lm}) and this estimator $\bar{\beta}$ is then used to construct a test
statistic $\bar{T}$ for the testing problem (\ref{testing problem}) to which
Theorem \ref{TU_1} can be applied. In an alternative view we can consider
the auxiliary model $\mathbf{Y}=\bar{X}\theta +\mathbf{U}$ with $\theta
=\left( \beta ^{\prime },\zeta ^{\prime }\right) ^{\prime }$ as a model in
its own right. [Of course, if we maintain model (\ref{lm}) then $\zeta =0$
must hold in the auxiliary model.] Define the $q\times \left( k+p\right) $
matrix $\bar{R}=R\left( I_{k},0\right) $, define $\mathfrak{\bar{M}}%
_{0}=\left\{ \mu \in \mathfrak{\bar{M}}:\mu =\bar{X}\theta ,\bar{R}\theta
=r\right\} $ and set $\mathfrak{\bar{M}}_{1}=\mathfrak{\bar{M}}\backslash 
\mathfrak{\bar{M}}_{0}$, and define a null hypothesis $\bar{H}_{0}$ and an
alternative hypothesis $\bar{H}_{1}$ analogously as in (\ref{testing problem}%
). Proposition \ref{enforce_inv} can now be viewed as stating that condition
(\ref{T_inv_wrt_z}) is satisfied for the test statistic which is obtained by
using (\ref{DT}) based on the restriction matrix $\bar{R}$ and on the
estimators $\bar{\theta}$ and $\bar{\Omega}$ figuring in Proposition \ref%
{enforce_inv}. Consequently, Theorem \ref{TU_1} can be directly applied to
this test statistic (provided $\bar{\Omega}$ satisfies Assumptions \ref%
{omega1} and \ref{omega2}). It should be noted that the so-obtained result
now applies to the problem of testing $\bar{H}_{0}$ versus $\bar{H}_{1}$.
However, since $\mathfrak{M}_{0}\subseteq \mathfrak{\bar{M}}_{0}$ and $%
\mathfrak{M}_{1}\subseteq \mathfrak{\bar{M}}_{1}$ hold and since $T$ is
invariant under translation by elements in $\limfunc{span}\left( J(\mathfrak{%
C})\right) $, we essentially recover the same result as before.
\end{remark}

\subsection{Non-Gaussian distributions\label{other_distr}}

As already noted in Section \ref{HTFramework}, the negative results given in
this paper immediately extend in a trivial way without imposing the
Gaussianity assumption on the error vector $\mathbf{U}$ in (\ref{lm}) as
long as the assumptions on the feasible error distributions is weak enough
to ensure that the implied set of distributions for $\mathbf{Y}$ contains
the set $\left\{ P_{\mu ,\sigma ^{2}\Sigma }:\mu \in \mathfrak{M},0<\sigma
^{2}<\infty ,\Sigma \in \mathfrak{C}\right\} $, but possibly contains also
other distributions.

Another, less trivial, extension is as follows: Suppose that $\mathbf{U}$ is
elliptically distributed in the sense that it has the same distribution as $%
\mathbf{\varrho }\sigma \Sigma ^{1/2}\mathbf{E}$ where $0<\sigma <\infty $, $%
\Sigma \in \mathfrak{C}$, $\mathbf{E}$ is a random vector uniformly
distributed on the unit sphere $S^{n-1}$, and $\mathbf{\varrho }$ is a
random variable distributed independently of $\mathbf{E}$ satisfying $\Pr (%
\mathbf{\varrho }>0)=1$. [If $\mathbf{\varrho }$ is distributed as the
square root of a chi-square with $n$ degrees of freedom we recover the
Gaussian situation described in Section \ref{HTFramework}.] If $\varphi $ is
a test that is invariant under the group $G(\mathfrak{M}_{0})$ then it is
easy to see that for $\mu _{0}\in \mathfrak{M}_{0}$%
\begin{equation*}
\mathbb{E}(\varphi (\mu _{0}+\mathbf{\varrho }\sigma \Sigma ^{1/2}\mathbf{E}%
))=\mathbb{E}(\varphi (\mu _{0}+\Sigma ^{1/2}\mathbf{E}))
\end{equation*}%
holds.\footnote{%
Under an additional absolute continuity assumption this is also true for
almost invariant tests $\varphi $.} Since this does not depend on the
distribution of $\mathbf{\varrho }$ at all, we learn that the rejection
probability under the null hypothesis is therefore the same as in the
Gaussian case. As a consequence, all results concerning only the null
behavior of $\varphi $ obtained under Gaussianity in the paper extend
immediately to regression models in which the disturbance vector $\mathbf{U}$
is elliptically distributed in the above sense. Furthermore, all results
concerning rejection probabilities under the alternative which are obtained
from the behavior of the null rejection probabilities by an approximation
argument (e.g., Parts 2 and 3 of Theorem \ref{inv} as well as of Corollary %
\ref{CW}, and the corresponding applications of these results in Sections %
\ref{tsr} and \ref{Het}) also go through in view of Scheff\'{e}'s lemma
provided the density of $\mathbf{\varrho E}$ exists and is continuous almost
everywhere.

\appendix

\section{Appendix: Proofs for Subsection \protect\ref{LRVPD} \label{App_A}}

\textbf{Proof of Lemma \ref{LRVPD}: }Observe that $\hat{\Omega}_{w}\left(
y\right) =B\left( y\right) \mathcal{W}_{n}B^{\prime }\left( y\right) $.
Given that $\mathcal{W}_{n}$ is positive definite due to Assumption \ref{AW}%
, this immediately establishes Parts 1-3 of the Lemma. It remains to prove
Part 4. Let $s$ be as in Assumption \ref{R_and_X} and consider first the
case where this assumption is satisfied, i.e., where $\limfunc{rank}\left(
R(X^{\prime }X)^{-1}X^{\prime }\left( \lnot (i_{1},\ldots i_{s})\right)
\right) =q$ holds. If now $y$ is such that $\hat{\Omega}_{w}\left( y\right) $
is singular it follows, in view of the equivalent condition $\limfunc{rank}%
\left( B(y)\right) <q$, that $\hat{u}_{l}(y)=0$ must hold at least for some $%
l\notin \left\{ i_{1},\ldots i_{s}\right\} $ where $l$ may depend on $y$.
But this means that $y$ satisfies $e_{l}^{\prime }(n)\left( I_{n}-X\left(
X^{\prime }X\right) ^{-1}X^{\prime }\right) y=0$. Since $e_{l}^{\prime
}(n)\left( I_{n}-X\left( X^{\prime }X\right) ^{-1}X^{\prime }\right) \neq 0$
by construction of $l$, it follows that the set of $y$ for which $\hat{\Omega%
}_{w}\left( y\right) $ is singular is contained in a finite union of proper
linear subspaces, and hence is a $\lambda _{\mathbb{R}^{n}}$-null set. Next
consider the case where Assumption \ref{R_and_X} is not satisfied. Observe
that then $s>0$ must hold. Note that $\hat{u}_{i}(y)=0$ holds for all $y\in 
\mathbb{R}^{n}$ and all $i\in \left\{ i_{1},\ldots i_{s}\right\} $ by
construction of $\left\{ i_{1},\ldots i_{s}\right\} $. But then for every $%
y\in \mathbb{R}^{n}$%
\begin{eqnarray*}
\limfunc{rank}\left( B\left( y\right) \right) &=&\limfunc{rank}\left(
R(X^{\prime }X)^{-1}X^{\prime }\left( \lnot (i_{1},\ldots i_{s})\right)
A(y)\right) \\
&\leq &\limfunc{rank}\left( R(X^{\prime }X)^{-1}X^{\prime }\left( \lnot
(i_{1},\ldots i_{s})\right) \right) <q
\end{eqnarray*}%
is satisfied where $A(y)$ is obtained from $\limfunc{diag}\left( \hat{u}%
_{1}(y),\ldots ,\hat{u}_{n}(y)\right) $ by deleting rows and columns $i$
with $i\in \left\{ i_{1},\ldots i_{s}\right\} $. This completes the proof. $%
\blacksquare $

\begin{lemma}
\label{aux_100}Suppose Assumptions \ref{AW} and \ref{R_and_X} are satisfied.
Then $\hat{\beta}$ and $\hat{\Omega}_{w}$ satisfy Assumption \ref{AE}, \ref%
{omega1}, and \ref{omega2} with $N=\emptyset $. In fact, $\hat{\Omega}%
_{w}\left( y\right) $ is nonnegative definite for every $y\in \mathbb{R}^{n}$%
, and is positive definite $\lambda _{\mathbb{R}^{n}}$-almost everywhere.
The test statistic $T$ defined in (\ref{tslrv}), with $\hat{\Psi}_{w}$ as in
(\ref{lrve}), is invariant under the group $G\left( \mathfrak{M}_{0}\right) $
and the rejection probabilities $P_{\mu ,\sigma ^{2}\Sigma }(T\geq C)$
depend on $\left( \mu ,\sigma ^{2},\Sigma \right) \in \mathfrak{M}\times
(0,\infty )\times \mathfrak{C}$ only through $\left( \left( R\beta -r\right)
/\sigma ,\Sigma \right) $ (in fact, only through $\left( \left\langle \left(
R\beta -r\right) /\sigma \right\rangle ,\Sigma \right) $), where $\beta $
corresponds to $\mu $ via $\mu =X\beta $.
\end{lemma}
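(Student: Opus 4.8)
The plan is to recognise that the statistic $T$ in (\ref{tslrv}), with $\hat{\Psi}_{w}$ as in (\ref{lrve}), is literally an instance of the general test statistic (\ref{DT}) of Subsection \ref{Impclass}, obtained by setting $\check{\beta}=\hat{\beta}$, $\check{\Omega}=\hat{\Omega}_{w}$, and $N=\emptyset$. Since $\hat{\beta}$ and $\hat{\Omega}_{w}$ are defined (and polynomial) on all of $\mathbb{R}^{n}$, the choice $N=\emptyset$ (a closed $\lambda_{\mathbb{R}^{n}}$-null set) is legitimate, and then $N^{\ast}=\{y\in\mathbb{R}^{n}:\det\hat{\Omega}_{w}(y)=0\}$. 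With this identification the two defining cases in (\ref{tslrv}) coincide with those in (\ref{DT}). Thus, once Assumption \ref{AE} is verified, Part 3 of the lemma is immediate from Part 3 of Lemma \ref{LWT}, and Part 4 from Part 3 of Proposition \ref{inv_rej_prob}; the bulk of the work is to check Assumptions \ref{AE}, \ref{omega1}, and \ref{omega2}.

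To verify Assumption \ref{AE} I would use the factorisation $\hat{\Omega}_{w}(y)=B(y)\mathcal{W}_{n}B^{\prime}(y)$ recorded at the start of the proof of Lemma \ref{LRVPD}. Since $\hat{u}_{i}(y)=e_{i}^{\prime}(n)\Pi_{\mathfrak{M}^{\bot}}y$ is linear in $y$, the map $B$ is linear and $\hat{\Omega}_{w}$ has polynomial, hence continuous, entries on $\mathbb{R}^{n}$; symmetry of $\hat{\Omega}_{w}$ follows from symmetry of $\mathcal{W}_{n}$, giving (i), and (ii) is trivial as $N=\emptyset$. For the equivariance (iii), the key observation is that $\hat{u}(\alpha y+X\gamma)=\Pi_{\mathfrak{M}^{\bot}}(\alpha y+X\gamma)=\alpha\Pi_{\mathfrak{M}^{\bot}}y=\alpha\hat{u}(y)$, because $X\gamma\in\mathfrak{M}$ is annihilated by $\Pi_{\mathfrak{M}^{\bot}}$. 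Hence $B(\alpha y+X\gamma)=\alpha B(y)$, so $\hat{\Omega}_{w}(\alpha y+X\gamma)=\alpha^{2}B(y)\mathcal{W}_{n}B^{\prime}(y)=\alpha^{2}\hat{\Omega}_{w}(y)$, while $\hat{\beta}(\alpha y+X\gamma)=\alpha\hat{\beta}(y)+\gamma$ is the familiar equivariance of least squares. Finally, (iv) is exactly Part 4 of Lemma \ref{LRVPD}: under Assumption \ref{R_and_X} the set where $\hat{\Omega}_{w}$ is singular is a $\lambda_{\mathbb{R}^{n}}$-null set.

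For Assumptions \ref{omega1} and \ref{omega2} I would exploit the definiteness structure already available. On $\mathbb{R}^{n}\setminus N^{\ast}$ the matrix $\hat{\Omega}_{w}(y)$ is nonnegative definite (Part 1 of Lemma \ref{LRVPD}) and nonsingular, hence positive definite, so $\hat{\Omega}_{w}^{-1}(y)$ is positive definite there; consequently $v^{\prime}\hat{\Omega}_{w}^{-1}(y)v>0$ holds for \emph{every} $v\neq 0$ and \emph{every} $y\in\mathbb{R}^{n}\setminus N^{\ast}$. As $N^{\ast}$ is a proper (null) subset, $\mathbb{R}^{n}\setminus N^{\ast}$ is nonempty and Assumption \ref{omega1} follows; and for each $v\neq 0$ the set $\{y\in\mathbb{R}^{n}\setminus N^{\ast}:v^{\prime}\hat{\Omega}_{w}^{-1}(y)v=0\}$ is empty, so Assumption \ref{omega2} holds a fortiori. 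The same observation settles the remaining assertion of Part 2, namely that $\hat{\Omega}_{w}$ is positive definite $\lambda_{\mathbb{R}^{n}}$-almost everywhere, by combining nonnegative definiteness with almost-everywhere nonsingularity.

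With all assumptions in force, invariance of $T$ under $G(\mathfrak{M}_{0})$ is Part 3 of Lemma \ref{LWT}, so that $\varphi=\boldsymbol{1}_{W(C)}$ is invariant under $G(\mathfrak{M}_{0})$. Applying Part 3 of Proposition \ref{inv_rej_prob} with $\Phi=\Sigma$ (legitimate since every $\Sigma\in\mathfrak{C}$ is positive definite) yields that $P_{\mu,\sigma^{2}\Sigma}(T\geq C)=E_{\mu,\sigma^{2}\Sigma}(\varphi)$ depends on $(\mu,\sigma^{2},\Sigma)$ only through $\bigl(\langle\Pi_{(\mathfrak{M}_{0}-\mu_{0})^{\bot}}(\mu-\mu_{0})/\sigma\rangle,\Sigma\bigr)$; the bijective correspondence between $\Pi_{(\mathfrak{M}_{0}-\mu_{0})^{\bot}}(\mu-\mu_{0})/\sigma$ and $(R\beta-r)/\sigma$ recorded there then converts this into dependence only through $(\langle(R\beta-r)/\sigma\rangle,\Sigma)$, as claimed. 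The only step requiring genuine care is the equivariance of $\hat{\Omega}_{w}$ in Assumption \ref{AE}(iii)—in particular that translation by $X\gamma\in\mathfrak{M}$ leaves the residuals, and hence $\hat{\Omega}_{w}$, unchanged up to the scalar $\alpha^{2}$—while everything else is a direct appeal to Lemmas \ref{LRVPD} and \ref{LWT} and to Proposition \ref{inv_rej_prob}.
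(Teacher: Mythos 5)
Your proposal is correct and follows essentially the same route as the paper's proof: verify Assumption \ref{AE} with $N=\emptyset$ (the paper dismisses the equivariance as "obvious" where you spell it out via $\hat{u}(\alpha y+X\gamma)=\alpha\hat{u}(y)$), obtain Assumptions \ref{omega1} and \ref{omega2} from everywhere-nonnegative-definiteness plus almost-everywhere nonsingularity via Lemma \ref{LRVPD}, and then read off the invariance and the dependence of the rejection probabilities from Lemma \ref{LWT} and Proposition \ref{inv_rej_prob}. No gaps.
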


\begin{proof}
Clearly, $\hat{\beta}$ and $\hat{\Omega}_{w}$ are well-defined and
continuous on $\mathbb{R}^{n}$, hence we may set $N=\emptyset $ in
Assumption \ref{AE}. Symmetry of $\hat{\Omega}_{w}$ as well as the required
equivariance properties of $\hat{\beta}$ and $\hat{\Omega}_{w}$ are
obviously satisfied. By Assumption \ref{AW} $\hat{\Omega}_{w}\left( y\right) 
$ is nonnegative definite for every $y\in \mathbb{R}^{n}$. By Assumptions %
\ref{AW} and \ref{R_and_X} and Lemma \ref{LRVPD} the matrix $\hat{\Omega}%
_{w} $ is nonsingular (and hence positive definite) $\lambda _{\mathbb{R}%
^{n}}$-almost everywhere. Hence Assumptions \ref{AE}, \ref{omega1}, and \ref%
{omega2} are satisfied which proves the first claim. The remaining claims
follow immediately from Lemma \ref{LWT} and Proposition \ref{inv_rej_prob}.
\end{proof}

\textbf{Proof of Theorem \ref{thmlrv}:} By Lemma \ref{aux_100} we know that $%
\hat{\beta}$ and $\hat{\Omega}_{w}$ satisfy Assumption \ref{AE} and that $%
\hat{\Omega}_{w}\left( y\right) $ is nonnegative definite for every $y\in 
\mathbb{R}^{n}$. Furthermore, in view of this lemma and because $N=\emptyset 
$, the set $N^{\ast }$ in Corollary \ref{CW} is precisely the set of $y$ for
which $\limfunc{rank}\left( B(y)\right) <q$, cf. Lemma \ref{LRVPD}. By
Assumption \ref{AAR(1)} the spaces $\mathcal{Z}_{+}=\text{span}(e_{+})$ and $%
\mathcal{Z}_{-}=\text{span}(e_{-})$ are concentration spaces of $\mathfrak{C}
$. The theorem now follows by applying Corollary \ref{CW} and Remark \ref%
{remnew}(i) to $\mathcal{Z}_{+}$ as well as to $\mathcal{Z}_{-}$ and by
noting that $e_{+}\in \mathbb{R}^{n}\backslash N^{\ast }$ translates into $%
\limfunc{rank}\left( B(e_{+})\right) =q$ with a similar translation if $%
e_{+} $ is replaced by $e_{-}$. Also note that the size of the test can not
be zero in view of Part 5 of Lemma \ref{LWT} and Lemma \ref{aux_100}. $%
\blacksquare $

\textbf{Proof of Proposition \ref{generic}: (}1) Define the matrix $%
B_{X}^{\ast }\left( y\right) =\left( \det (X^{\prime }X)\right)
^{2}B_{X}\left( y\right) $ and observe that (for given $y$) every element of
this matrix is a multivariate polynomial in the elements $x_{ti}$ of $X$
because $(X^{\prime }X)^{-1}$ can be written as $\left( \det (X^{\prime
}X)\right) ^{-1}\limfunc{adj}(X^{\prime }X)$ (with the convention that $%
\limfunc{adj}(X^{\prime }X)=1$ if $k=1$). Because $\det (X^{\prime }X)\neq 0$
for $X\in \mathfrak{X}_{0}$ holds, we have%
\begin{equation*}
\mathfrak{X}_{1}\left( e_{+}\right) =\mathfrak{X}_{0}\cap \left\{ X\in 
\mathbb{R}^{n\times k}:\det \left( B_{X}^{\ast }(e_{+})B_{X}^{\ast \prime
}(e_{+})\right) =0\right\} .
\end{equation*}%
The set to the right of the intersection operation in the above display is
obviously the zero-set of a multivariate polynomial in the variables $x_{ti}$%
. Thus it is an algebraic set, and hence is either a $\lambda _{\mathbb{R}%
^{n\times k}}$-null set or is all of $\mathbb{R}^{n\times k}$. However, the
latter case can not arise because we can choose an $n\times k$ matrix $%
X^{\#}\in \mathfrak{X}_{0}$, say, such that all its columns are orthogonal
to $e_{+}$ (this being possible since $k<n$ by assumption) and this matrix
then satisfies $\limfunc{rank}\left( B_{X^{\#}}^{\ast }(e_{+})\right) =q$.
This shows that $\mathfrak{X}_{1}\left( e_{+}\right) $ is a $\lambda _{%
\mathbb{R}^{n\times k}}$-null set. Next consider $\mathfrak{X}_{2}\left(
e_{+}\right) $: Observe that for $X\in \mathfrak{X}_{0}\backslash \mathfrak{X%
}_{1}\left( e_{+}\right) $ we have $\det (\hat{\Omega}_{w,X}\left(
e_{+}\right) )\neq 0$ and hence for $X\in \mathfrak{X}_{0}\backslash 
\mathfrak{X}_{1}\left( e_{+}\right) $ the relation $T_{X}(e_{+}+\mu
_{0}^{\ast })=C$ can equivalently be written as%
\begin{equation*}
(R\limfunc{adj}(X^{\prime }X)X^{\prime }e_{+})^{\prime }\limfunc{adj}\hat{%
\Omega}_{w,X}\left( e_{+}\right) (R\limfunc{adj}(X^{\prime }X)X^{\prime
}e_{+})-\left( \det (X^{\prime }X)\right) ^{2}\det (\hat{\Omega}_{w,X}\left(
e_{+}\right) )C=0.
\end{equation*}%
Furthermore, for $X\in \mathfrak{X}_{0}$ we can write $\hat{\Omega}%
_{w,X}\left( e_{+}\right) $ as $\left( \det (X^{\prime }X)\right)
^{-4}B_{X}^{\ast }\left( e_{+}\right) \mathcal{W}_{n}B_{X}^{\ast \prime
}\left( e_{+}\right) $. Note that $B_{X}^{\ast }\left( e_{+}\right) \mathcal{%
W}_{n}B_{X}^{\ast \prime }\left( e_{+}\right) $ is a multivariate polynomial
in the variables $x_{ti}$. Consequently, for $X\in \mathfrak{X}%
_{0}\backslash \mathfrak{X}_{1}\left( e_{+}\right) $ the relation $%
T_{X}(e_{+}+\mu _{0}^{\ast })=C$ can, after multiplication by $\left( \det
(X^{\prime }X)\right) ^{4q-2}$, which is nonzero for $X\in \mathfrak{X}_{0}$%
, equivalently be written as%
\begin{eqnarray*}
\left( \det (X^{\prime }X)\right) ^{2}(R\limfunc{adj}(X^{\prime }X)X^{\prime
}e_{+})^{\prime }\limfunc{adj}\left( B_{X}^{\ast }\left( e_{+}\right) 
\mathcal{W}_{n}B_{X}^{\ast \prime }\left( e_{+}\right) \right) (R\limfunc{adj%
}(X^{\prime }X)X^{\prime }e_{+}) && \\
-\det (B_{X}^{\ast }\left( e_{+}\right) \mathcal{W}_{n}B_{X}^{\ast \prime
}\left( e_{+}\right) )C &=&0.
\end{eqnarray*}

The left-hand side of the above display is now a multivariate polynomial in
the elements $x_{ti}$. The polynomial does not vanish on all of $\mathbb{R}%
^{n\times k}$ since the matrix $X^{\#}$ constructed before provides an
element in $\mathfrak{X}_{0}\backslash \mathfrak{X}_{1}\left( e_{+}\right) $
for which $T_{X^{\#}}(e_{+}+\mu _{0}^{\ast })=0<C$ holds. The proofs for $%
\mathfrak{X}_{1}\left( e_{-}\right) $ and $\mathfrak{X}_{2}\left(
e_{-}\right) $ are completely analogous, as is the proof for the fact that $%
\mathbb{R}^{n\times k}\backslash \mathfrak{X}_{0}$ is a $\lambda _{\mathbb{R}%
^{n\times k}}$-null set. Finally, that the set of all design matrices $X\in 
\mathfrak{X}_{0}$ for which Theorem \ref{thmlrv} does not apply is a subset
of $\left( \mathfrak{X}_{1}\left( e_{+}\right) \cup \mathfrak{X}_{2}\left(
e_{+}\right) \right) \cap \left( \mathfrak{X}_{1}\left( e_{-}\right) \cup 
\mathfrak{X}_{2}\left( e_{-}\right) \right) $ is obvious upon observing that
the set of all $X\in \mathfrak{X}_{0}$ which do not satisfy Assumption \ref%
{R_and_X} is contained in $\mathfrak{X}_{1}\left( e_{+}\right) $ as well as
in $\mathfrak{X}_{1}\left( e_{-}\right) $.

(2) Similar arguments as in the proof of Part 1 show that $\mathfrak{\tilde{X%
}}_{1}\left( e_{-}\right) $ and $\mathfrak{\tilde{X}}_{2}\left( e_{-}\right) 
$ are each contained in an algebraic set. Define the matrix $X^{\sharp
}=\left( e_{+},\tilde{X}^{\sharp }\right) $ where the columns of $\tilde{X}%
^{\sharp }$ are $k-1$ linearly independent unit vectors that are orthogonal
to $e_{+}$ as well as $e_{-}$. It is then easy to see that $\tilde{X}%
^{\sharp }\in \mathfrak{\tilde{X}}_{0}\backslash \mathfrak{\tilde{X}}%
_{1}\left( e_{-}\right) $, implying that $\mathfrak{\tilde{X}}_{1}\left(
e_{-}\right) $ does not coincide with all of $\mathfrak{\tilde{X}}_{0}$.
Furthermore, simple computation shows that $T_{X^{\sharp }}(e_{-}+\mu
_{0}^{\ast })=0<C$ by the assumption on $R$, which implies that $\mathfrak{%
\tilde{X}}_{2}\left( e_{-}\right) $ is a proper subset of $\mathfrak{\tilde{X%
}}_{0}\backslash \mathfrak{\tilde{X}}_{1}\left( e_{-}\right) $. It follows
now as above that $\mathfrak{\tilde{X}}_{1}\left( e_{-}\right) $ and $%
\mathfrak{\tilde{X}}_{2}\left( e_{-}\right) $ are $\lambda _{\mathbb{R}%
^{n\times \left( k-1\right) }}$-null sets. The rest of the proof now
proceeds as before.

(3) See Example \ref{EX1}. $\blacksquare $

\textbf{Proof of Theorem \ref{TU_2}: }We verify the assumptions of Theorem %
\ref{TU_1}. By Lemma \ref{aux_100} Assumptions \ref{AE}, \ref{omega1}, and %
\ref{omega2} are satisfied. Because of $\mathfrak{C}=\mathfrak{C}_{AR(1)}$
we have that $J\left( \mathfrak{C}\right) =\limfunc{span}(e_{+})\cup 
\limfunc{span}(e_{-})$, see Lemma \ref{AR_1}, and because $e_{+},e_{-}\in 
\mathfrak{M}$ is assumed we conclude that $J\left( \mathfrak{C}\right)
\subseteq \mathfrak{M}$. The assumption $R\hat{\beta}(e_{+})=R\hat{\beta}%
(e_{-})=0$ then implies that even $J\left( \mathfrak{C}\right) \subseteq 
\mathfrak{M}_{0}-\mu _{0}$ holds. The invariance condition (\ref{T_inv_wrt_z}%
) in Theorem \ref{TU_1} is thus satisfied, because $T$ is $G\left( \mathfrak{%
M}_{0}\right) $-invariant by Lemma \ref{LWT}. The assumptions on $\mathfrak{C%
}$ in Theorem \ref{TU_1} are satisfied in view of Lemma \ref{AR_1}. Finally
the assumptions on $\hat{\Omega}_{w}$ in Parts 2 and 3 of Theorem \ref{TU_1}
are satisfied because $\hat{\Omega}_{w}$ is positive definite $\lambda _{%
\mathbb{R}^{n}}$-almost everywhere as shown in Lemma \ref{aux_100}. The
theorem now follows from Theorem \ref{TU_1} using a standard subsequence
argument for Part 3. The claim in parenthesis in Part 3 follows from the
corresponding claim in parenthesis in Theorem \ref{TU_1} and the observation
that the conditions on $e_{+}$ and $e_{-}$ in the theorem were only used to
verify condition (\ref{T_inv_wrt_z}). $\blacksquare $

\textbf{Proof of Theorem \ref{TU_3}: }Similar as in the preceding proof
verify the assumptions of Theorem \ref{TU_1} but now for $\bar{\beta}$ and $%
\bar{\Omega}_{w}$ by additionally making use of Proposition \ref{enforce_inv}%
. Note that the condition $\limfunc{span}\left( J(\mathfrak{C})\right) \cap 
\mathfrak{M}\subseteq \mathfrak{M}_{0}-\mu _{0}$ is satisfied in all five
parts of the theorem. This is obvious for Parts 1-3. For Part 4 this follows
from the following argument: Observe that $e_{-}=\delta e_{+}+X\gamma $ must
hold by the assumptions of Part 4. Now suppose $m\in \limfunc{span}\left( J(%
\mathfrak{C})\right) \cap \mathfrak{M}$. Then $\alpha _{+}e_{+}+\alpha
_{-}e_{-}=m=X\gamma ^{\ast }$ must hold. These relations together imply $%
(\alpha _{+}+\alpha _{-}\delta )e_{+}=X(\gamma ^{\ast }-\alpha _{-}\gamma )$%
. Because $e_{+}\notin \mathfrak{M}$, it follows that $\gamma ^{\ast
}-\alpha _{-}\gamma =0$. Thus 
\begin{equation*}
R\left( X^{\prime }X\right) ^{-1}X^{\prime }m=R\gamma ^{\ast }=\alpha
_{-}R\gamma =\alpha _{-}\bar{R}(\gamma ^{\prime }:\delta )^{\prime }=\alpha
_{-}\bar{R}\left( \bar{X}^{\prime }\bar{X}\right) ^{-1}\bar{X}^{\prime
}e_{-}=0,
\end{equation*}%
which establishes that $m\in \mathfrak{M}_{0}-\mu _{0}$. The verification
for Part 5 is completely analogous. $\blacksquare $

\textbf{Proof of Lemma \ref{LRVPD_2}: }Since $\hat{\Omega}_{w}\left(
y\right) =nB\left( y\right) \mathcal{W}_{n}^{\ast }B^{\prime }\left(
y\right) $, Parts 1-3 of the Lemma follow immediately from nonnegative
definiteness of $\mathcal{W}_{n}^{\ast }$. To prove Part 4 observe that $%
\hat{\Omega}_{w}\left( y\right) $ is singular if and only if $\det \left(
B\left( y\right) \mathcal{W}_{n}^{\ast }B^{\prime }\left( y\right) \right)
=0 $. Now observe that the l.h.s. of this equation is a multivariate
polynomial in $y$, hence the solution set is an algebraic set and thus is
either a $\lambda _{\mathbb{R}^{n}}$-null set or all of $\mathbb{R}^{n}$. $%
\blacksquare $

\textbf{Proof of Theorem \ref{thmlrv2}:} The proof is completely analogous
to the proof of Theorem \ref{thmlrv} using Lemma \ref{cpar2}\textbf{\ }in
case $\nu \in \left( 0,\pi \right) $. $\blacksquare $

\section{Appendix: Proofs for Subsection \protect\ref{GLS} \label{App_B}}

\textbf{Proof of Lemma \ref{LDEF}:} The inclusion $\mathfrak{M}\subseteq
N_{0}(a_{1},a_{2})$ is trivial since $\hat{u}\left( y\right) =0$ for $y\in 
\mathfrak{M}$. Because $a_{1}\in \left\{ 1,2\right\} $, $a_{2}\in \left\{
n-1,n\right\} $ with $a_{1}\leq a_{2}$ holds, $N_{0}(a_{1},a_{2})$ is
contained in $N_{1}(a_{1},a_{2})$, establishing the first claim. Closedness
of $N_{1}(a_{1},a_{2})$ is obvious. Given the just established inclusion $%
N_{0}(a_{1},a_{2})\subseteq N_{1}(a_{1},a_{2})$ the alternative description
of $N_{1}(a_{1},a_{2})$ given in the second claim is also immediately seen
to be true. Continuity of $\hat{\rho}$ on $\mathbb{R}^{n}\backslash
N_{0}(a_{1},a_{2})$ is obvious. Assume now that $k\leq a_{2}-a_{1}$ holds.
If $a_{1}=1$, $a_{2}=n$, i.e., $\hat{\rho}=\hat{\rho}_{YW}$, we have $%
N_{1}(a_{1},a_{2})=N_{0}(a_{1},a_{2})=\mathfrak{M}$ because $\hat{\rho}_{YW}$
is well-defined and bounded away from one in modulus on $\mathbb{R}%
^{n}\backslash \mathfrak{M}$ as shown in Remark \ref{rho_hat}(i). Hence, $%
N_{1}(a_{1},a_{2})$ is a $\lambda _{\mathbb{R}^{n}}$- null set in this case
as $k<n$ holds by assumption. To establish this result also for the other
choices of $a_{1}$ and $a_{2}$ note that $N_{1}(a_{1},a_{2})$ is the zero
set of a multivariate polynomial in $y$. It hence is a $\lambda _{\mathbb{R}%
^{n}}$- null set, provided we can show that the polynomial is not
identically zero. Observe that we now have $n-k\geq n-a_{2}+a_{1}\geq 2$ (as
we have already disposed off the case $a_{1}=1$, $a_{2}=n$). Let $%
y^{(1)},\ldots ,y^{(n-k)}$ be a basis for $\mathfrak{M}^{\bot }$. The
submatrix obtained from $\left( y^{(1)},\ldots ,y^{(n-k)}\right) $ by
selecting the rows with index $j$ satisfying $j<a_{1}$ as well as the rows
with $j>a_{2}$ has dimension $\left( n-a_{2}+a_{1}-1\right) \times \left(
n-k\right) $ and thus has rank at most $n-a_{2}+a_{1}-1<n-k$. Consequently,
we can find constants $c_{1},\ldots ,c_{n-k}$, not all equal to zero, such
that the $j$-th component of $y_{0}=\sum_{i=1}^{n-k}c_{i}y^{(i)}$ is zero
whenever $j<a_{1}$ or $j>a_{2}$. Because $y_{0}\in \mathfrak{M}^{\bot }$ and 
$y_{0}\neq 0$ by construction, we have $y_{0}\in \mathbb{R}^{n}\backslash 
\mathfrak{M}=\mathbb{R}^{n}\backslash N_{0}(1,n)$. Because $y_{0}\neq 0$ and
because the $j$-th component of $y_{0}=\hat{u}(y_{0})$ is zero whenever $%
j<a_{1}$ or $j>a_{2}$, we also have $y_{0}\in \mathbb{R}^{n}\backslash
N_{0}(a_{1},a_{2})$. Hence, $\hat{\rho}\left( y_{0}\right) $ as well as $%
\hat{\rho}_{YW}\left( y_{0}\right) $ are well-defined. Furthermore, they
coincide in view of the construction of $y_{0}=\hat{u}(y_{0})$. By what was
said above for the Yule-Walker estimator it follows that $\left\vert \hat{%
\rho}\left( y_{0}\right) \right\vert =\left\vert \hat{\rho}_{YW}\left(
y_{0}\right) \right\vert <1$. Hence $y_{0}\in \mathbb{R}^{n}\backslash
N_{1}(a_{1},a_{2})$, and the polynomial is not identically equal to zero. $%
\blacksquare $

\begin{lemma}
\label{lem_GLS} Suppose $\hat{\rho}$ satisfies Assumption \ref{ARER}.

\begin{enumerate}
\item The sets $\mathbb{R}^{n}\backslash N_{0}(a_{1},a_{2})$, $\mathbb{R}%
^{n}\backslash N_{1}(a_{1},a_{2})$, and $\mathbb{R}^{n}\backslash
N_{2}(a_{1},a_{2})$ are invariant under the group of transformations $%
y\mapsto \alpha y+X\gamma $ where $\alpha \neq 0$, $\gamma \in \mathbb{R}%
^{k} $.

\item The estimators $\tilde{\beta}$, $\tilde{\sigma}^{2}$, and $\tilde{%
\Omega}$ are well-defined and continuous on $\mathbb{R}^{n}\backslash
N_{2}(a_{1},a_{2})$. They satisfy the equivariance conditions $\tilde{\beta}%
(\alpha y+X\gamma )=\alpha \tilde{\beta}(y)+\gamma $, $\tilde{\sigma}%
^{2}(\alpha y+X\gamma )=\alpha ^{2}\tilde{\sigma}^{2}(y)$, and $\tilde{\Omega%
}(\alpha y+X\gamma )=\alpha ^{2}\tilde{\Omega}(y)$ for $\alpha \neq 0$, $%
\gamma \in \mathbb{R}^{k}$, and $y\in \mathbb{R}^{n}\backslash
N_{2}(a_{1},a_{2})$. The estimator $\tilde{\Omega}\left( y\right) $ is
(well-defined and) nonsingular if and only if $y\in \mathbb{R}^{n}\backslash
N_{2}^{\ast }(a_{1},a_{2})$. The sets $N_{2}(a_{1},a_{2})$ and $N_{2}^{\ast
}(a_{1},a_{2})$ are closed. If $k\leq a_{2}-a_{1}$ holds, $%
N_{2}(a_{1},a_{2}) $ and $N_{2}^{\ast }(a_{1},a_{2})$ are $\lambda _{\mathbb{%
R}^{n}}$-null sets.

\item The estimator $\hat{\Omega}$ is well-defined and continuous on $%
\mathbb{R}^{n}\backslash N_{0}(a_{1},a_{2})$, whereas $\hat{\beta}$ and $%
\hat{\sigma}^{2}$ are well-defined and continuous on all of $\mathbb{R}^{n}$%
. They satisfy the equivariance conditions $\hat{\beta}(\alpha y+X\gamma
)=\alpha \hat{\beta}(y)+\gamma $, $\hat{\sigma}^{2}(\alpha y+X\gamma
)=\alpha ^{2}\hat{\sigma}^{2}(y)$ for $\alpha \neq 0$, $\gamma \in \mathbb{R}%
^{k}$, and $y\in \mathbb{R}^{n}$, as well as $\hat{\Omega}(\alpha y+X\gamma
)=\alpha ^{2}\hat{\Omega}(y)$ for $\alpha \neq 0$, $\gamma \in \mathbb{R}%
^{k} $, and $y\in \mathbb{R}^{n}\backslash N_{0}(a_{1},a_{2})$. Furthermore, 
$\hat{\sigma}^{2}(y)>0$ holds for $y\in \mathbb{R}^{n}\backslash \mathfrak{M}%
\supseteq \mathbb{R}^{n}\backslash N_{0}^{\ast }(a_{1},a_{2})$, and hence $%
\hat{\Omega}\left( y\right) $ is (well-defined and) nonsingular if and only
if $y\in \mathbb{R}^{n}\backslash N_{0}^{\ast }(a_{1},a_{2})$. The set $%
N_{0}^{\ast }(a_{1},a_{2})$ is closed. If $k\leq a_{2}-a_{1}$ holds, $%
N_{0}^{\ast }(a_{1},a_{2})$ is a $\lambda _{\mathbb{R}^{n}}$-null set.
[Recall from Remark \ref{rho_hat}(iv) that $N_{0}(a_{1},a_{2})$ is always a
closed set, and is a $\lambda _{\mathbb{R}^{n}}$-null set in case $k\leq
a_{2}-a_{1}$.]
\end{enumerate}
\end{lemma}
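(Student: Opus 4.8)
The plan is to reduce the entire lemma to one structural fact: the least squares residual map is equivariant, $\hat{u}(\alpha y+X\gamma )=\alpha \hat{u}(y)$ for $\alpha \neq 0$ and $\gamma \in \mathbb{R}^{k}$, which holds because $\hat{u}(y)=\Pi _{\mathfrak{M}^{\bot }}y$ and $X\gamma \in \mathfrak{M}$. Since the numerator $\sum_{t=2}^{n}\hat{u}_{t}\hat{u}_{t-1}$ and the denominator $\sum_{t=a_{1}}^{a_{2}}\hat{u}_{t}^{2}$ in Assumption \ref{ARER} are each quadratic in $\hat{u}(y)$, both scale by $\alpha ^{2}$, so $\hat{\rho}$ is \emph{invariant}: $\hat{\rho}(\alpha y+X\gamma )=\hat{\rho}(y)$ wherever defined. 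Part 1 then follows immediately. The defining equations of $N_{0}$ and $N_{1}$ equate two quantities each homogeneous of degree $2$ in $\hat{u}(y)$, so membership is preserved under $y\mapsto \alpha y+X\gamma $; and $\mathbb{R}^{n}\backslash N_{2}$ is cut out of the invariant set $\mathbb{R}^{n}\backslash N_{1}$ by the condition $\det (X^{\prime }\Lambda ^{-1}(\hat{\rho}(y))X)\neq 0$, which is preserved because $\hat{\rho}$, hence $\Lambda ^{-1}(\hat{\rho}(\cdot ))$, is invariant.

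For Part 2, I would first record well-definedness and continuity: on $\mathbb{R}^{n}\backslash N_{1}$ we have $|\hat{\rho}|\neq 1$, so $\Lambda (\hat{\rho}(y))$ is nonsingular, $\hat{\rho}$ is continuous there by Lemma \ref{LDEF}, and on $\mathbb{R}^{n}\backslash N_{2}$ the matrix $X^{\prime }\Lambda ^{-1}(\hat{\rho}(y))X$ is invertible; thus $\tilde{\beta}$, $\tilde{\sigma}^{2}$, $\tilde{\Omega}$ are continuous as compositions of continuous maps with matrix inversion. Writing $\Lambda ^{-1}:=\Lambda ^{-1}(\hat{\rho}(y))$ and using its invariance, direct substitution gives $\tilde{\beta}(\alpha y+X\gamma )=(X^{\prime }\Lambda ^{-1}X)^{-1}X^{\prime }\Lambda ^{-1}(\alpha y+X\gamma )=\alpha \tilde{\beta}(y)+\gamma $, whence the fitted residual $(\alpha y+X\gamma )-X\tilde{\beta}(\alpha y+X\gamma )=\alpha (y-X\tilde{\beta}(y))$ scales by $\alpha $, so $\tilde{\sigma}^{2}$ and $\tilde{\Omega}$ scale by $\alpha ^{2}$. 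Since $\tilde{\Omega}=\tilde{\sigma}^{2}R(X^{\prime }\Lambda ^{-1}X)^{-1}R^{\prime }$ is a scalar times a $q\times q$ matrix, it is nonsingular exactly when $\tilde{\sigma}^{2}\neq 0$ and $\det (R(X^{\prime }\Lambda ^{-1}X)^{-1}R^{\prime })\neq 0$, which is the definition of $\mathbb{R}^{n}\backslash N_{2}^{\ast }$.

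The part needing genuine work is topological and measure-theoretic. For closedness I note that $N_{1}$ is closed by Lemma \ref{LDEF}, so $\mathbb{R}^{n}\backslash N_{2}=\{y\in \mathbb{R}^{n}\backslash N_{1}:\det (X^{\prime }\Lambda ^{-1}(\hat{\rho}(y))X)\neq 0\}$ is open as the preimage of $\mathbb{R}\backslash \{0\}$ under a continuous function on an open set, giving $N_{2}$ closed; the same argument on $\mathbb{R}^{n}\backslash N_{2}$ yields $N_{2}^{\ast }$ closed. For the null-set claim when $k\leq a_{2}-a_{1}$, the key observation is $\Lambda ^{-1}(\rho )=(1-\rho ^{2})^{-1}\tilde{M}(\rho )$ with $\tilde{M}$ a tridiagonal matrix polynomial in $\rho $; substituting the rational function $\hat{\rho}(y)$ makes $y\mapsto \det (X^{\prime }\Lambda ^{-1}(\hat{\rho}(y))X)$ rational in $y$ with denominator nonvanishing on $\mathbb{R}^{n}\backslash N_{1}$, so within $\mathbb{R}^{n}\backslash N_{1}$ its zero set lies in the zero set of a polynomial, which is Lebesgue null provided that polynomial is not identically zero. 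I would use the witness point $y_{0}$ already produced in the proof of Lemma \ref{LDEF}, a point of $\mathbb{R}^{n}\backslash \mathfrak{M}$ with $|\hat{\rho}(y_{0})|=|\hat{\rho}_{YW}(y_{0})|<1$, so that $\Lambda (\hat{\rho}(y_{0}))$ is positive definite. At $y_{0}$ positive definiteness forces $X^{\prime }\Lambda ^{-1}X$, $R(X^{\prime }\Lambda ^{-1}X)^{-1}R^{\prime }$, and $\tilde{\sigma}^{2}(y_{0})$ (the latter since $y_{0}\notin \mathfrak{M}$) all to be nonzero or nonsingular. Hence none of the relevant rational functions vanishes identically, so $N_{2}\backslash N_{1}$ and $N_{2}^{\ast }\backslash N_{2}$ are null, and with $N_{1}$ null this gives $N_{2},N_{2}^{\ast }$ null.

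Part 3 runs identically with $\hat{\beta}$, $\hat{\sigma}^{2}$ the ordinary least squares quantities: now no inversion of $\Lambda $ is needed, so continuity and equivariance of $\hat{\beta},\hat{\sigma}^{2}$ are immediate on all of $\mathbb{R}^{n}$, and $\hat{\Omega}$ requires only $\hat{\rho}$, hence is defined on $\mathbb{R}^{n}\backslash N_{0}$; invariance of $\hat{\rho}$ again yields $\hat{\Omega}(\alpha y+X\gamma )=\alpha ^{2}\hat{\Omega}(y)$. Using $\hat{\sigma}^{2}(y)=(n-k)^{-1}\Vert \hat{u}(y)\Vert ^{2}>0$ for $y\notin \mathfrak{M}\supseteq$ identifies the nonsingularity set as $\mathbb{R}^{n}\backslash N_{0}^{\ast }$, and the same $y_{0}$ serves as the witness for the null-set argument, with $\Lambda (\hat{\rho}(y_{0}))$ positive definite making $R(X^{\prime }X)^{-1}X^{\prime }\Lambda (\hat{\rho}(y_{0}))X(X^{\prime }X)^{-1}R^{\prime }$ positive definite. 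The main obstacle, such as it is, is bookkeeping: keeping straight the nested domains on which each estimator lives, verifying that the determinant and variance functions are rational in $y$ with denominators controlled on the relevant open set so the dichotomy "algebraic zero set is null or all of $\mathbb{R}^{n}$" applies, and checking that the single point $y_{0}$ simultaneously certifies non-degeneracy of every rational function appearing in $N_{2}$, $N_{2}^{\ast }$, and $N_{0}^{\ast }$.
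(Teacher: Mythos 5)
Your proposal is correct and follows essentially the same route as the paper's proof: residual equivariance gives invariance of $\hat{\rho}$ and hence Parts 1--3's invariance/equivariance claims, continuity comes from Lemma \ref{LDEF} plus nonsingularity of $\Lambda(b)$ for $|b|\neq 1$, and the null-set claims are reduced to showing that certain polynomial (equivalently, denominator-cleared rational) functions of $y$ do not vanish identically, certified by the same witness point $y_0$ from the proof of Lemma \ref{LDEF} (with any $y\notin\mathfrak{M}$ serving in the Yule--Walker case). The only cosmetic difference is that you phrase the algebraic-set step via rational functions with controlled denominators and the explicit tridiagonal form of $\Lambda^{-1}$, whereas the paper multiplies through by explicit powers of $\sum_{t=a_1}^{a_2}\hat{u}_t^2(y)$ and uses $\operatorname{adj}/\det$; these are the same argument.
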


\begin{proof}
(1) The invariance of the first two sets follows since $\hat{u}(\alpha
y+X\gamma )=\alpha \hat{u}(y)$ holds for every $y\in \mathbb{R}^{n}$, $%
\alpha \neq 0$, and $\gamma \in \mathbb{R}^{k}$. This property of the
residual vector implies $\hat{\rho}\left( \alpha y+X\gamma \right) =\hat{\rho%
}\left( y\right) $ for every $\alpha \neq 0$, $\gamma \in \mathbb{R}^{k}$
and $y\in \mathbb{R}^{n}\backslash N_{0}(a_{1},a_{2})\supseteq \mathbb{R}%
^{n}\backslash N_{1}(a_{1},a_{2})$. Together with the already established
invariance of $\mathbb{R}^{n}\backslash N_{1}(a_{1},a_{2})$ this implies
invariance of $\mathbb{R}^{n}\backslash N_{2}(a_{1},a_{2})$ upon observing
that $\Lambda ^{-1}(\hat{\rho}\left( y\right) )$ is well-defined for $y\in 
\mathbb{R}^{n}\backslash N_{1}(a_{1},a_{2})$. The latter holds because for $%
b\in \mathbb{R}$, $\left\vert b\right\vert \neq 1$ the matrix $\Lambda (b)$
is nonsingular. [This can, e.g., be seen from the fact that its inverse is
given by the symmetric tridiagonal matrix with diagonal equal to $\left(
1,1+b^{2},\ldots ,1+b^{2},1\right) /\left( 1-b^{2}\right) $ and with the
elements next to the diagonal given by $-b/\left( 1-b^{2}\right) $.]

(2) Using Lemma \ref{LDEF} and the just established fact that $\Lambda ^{-1}(%
\hat{\rho}\left( y\right) )$ is well-defined for $y\in \mathbb{R}%
^{n}\backslash N_{1}(a_{1},a_{2})$, we see that $\tilde{\beta}$, $\tilde{%
\sigma}^{2}$, and $\tilde{\Omega}$ are well-defined and continuous on $%
\mathbb{R}^{n}\backslash N_{2}(a_{1},a_{2})\subseteq \mathbb{R}%
^{n}\backslash N_{1}(a_{1},a_{2})$. Observing that $\hat{\rho}(\alpha
y+X\gamma )=\hat{\rho}(y)$ holds for $\alpha \neq 0$, $\gamma \in \mathbb{R}%
^{k}$, and $y\in \mathbb{R}^{n}\backslash N_{0}(a_{1},a_{2})\supseteq 
\mathbb{R}^{n}\backslash N_{1}(a_{1},a_{2})$, the claimed equivariance of $%
\tilde{\beta}$, $\tilde{\sigma}^{2}$, and $\tilde{\Omega}$ follows. The
third claim is obvious, and the fourth claim follows easily from Lemma \ref%
{LDEF}.\ We next prove the last claim for the Yule-Walker estimator, i.e.,
for $a_{1}=1$ and $a_{2}=n$: For this it suffices to show that $N_{2}^{\ast
}(1,n)\subseteq \mathfrak{M}$ since $\mathfrak{M}$ is a proper subspace of $%
\mathbb{R}^{n}$ in view of the assumption $k<n$. Now for arbitrary $y\notin 
\mathfrak{M}=N_{0}(1,n)$ we have that $\hat{\rho}_{YW}(y)$ is well-defined
and satisfies $\left\vert \hat{\rho}_{YW}(y)\right\vert <1$ (cf. Remark \ref%
{rho_hat}(i)) implying $y\in \mathbb{R}^{n}\backslash N_{1}(1,n)$ as well as
positive definiteness of $\Lambda (\hat{\rho}_{YW}(y))$. But this gives
positive definiteness, and hence nonsingularity, of $X^{\prime }\Lambda
^{-1}(\hat{\rho}_{YW}(y))X$, implying that $y\in \mathbb{R}^{n}\backslash
N_{2}(1,2)$. It also delivers positive definiteness of $R(X^{\prime }\Lambda
^{-1}(\hat{\rho}(y))X)^{-1}R^{\prime }$. Furthermore, $y\notin \mathfrak{M}$
implies $y-X\tilde{\beta}(y)\neq 0$ and thus $\tilde{\sigma}^{2}(y)>0$ in
view of the just established positive definiteness of $\Lambda (\hat{\rho}%
_{YW}(y))$. But this gives $y\in \mathbb{R}^{n}\backslash N_{2}^{\ast }(1,2)$%
, completing the proof for the case $a_{1}=1$ and $a_{2}=n$. To prove the
claim for the remaining values of $a_{1}$ and $a_{2}$ we first show that $%
N_{2}(a_{1},a_{2})$ is a $\lambda _{\mathbb{R}^{n}}$-null set: observe that $%
N_{2}(a_{1},a_{2})$ is the union of $N_{1}(a_{1},a_{2})$ and $\left\{ y\in 
\mathbb{R}^{n}\backslash N_{1}(a_{1},a_{2}):\det \left( X^{\prime }\Lambda
^{-1}(\hat{\rho}(y))X\right) =0\right\} $. In view of Lemma \ref{LDEF} it
hence suffices to show that the latter set is a $\lambda _{\mathbb{R}^{n}}$%
-null set.\ Using the relation $D^{-1}=\limfunc{adj}\left( D\right) /\det
\left( D\right) $ (with the convention that $\limfunc{adj}\left( D\right) =1$
if $D$ is $1\times 1$) and noting that $\det \left( \Lambda (\hat{\rho}%
(y))\right) \neq 0$ for $y\in \mathbb{R}^{n}\backslash N_{1}(a_{1},a_{2})$
the set in question can be rewritten as 
\begin{equation*}
A=\left\{ y\in \mathbb{R}^{n}\backslash N_{1}(a_{1},a_{2}):\det \left(
X^{\prime }\limfunc{adj}\left( \Lambda (\hat{\rho}(y))\right) X\right)
=0\right\} .
\end{equation*}%
Note that the equation in the set in the above display is polynomial in $%
\hat{\rho}(y)$. Upon multiplying the equation defining $A$ by $\left(
\sum_{t=a_{1}}^{a_{2}}\hat{u}_{t}^{2}(y)\right) ^{d}$, which is non-zero on $%
\mathbb{R}^{n}\backslash N_{1}(a_{1},a_{2})$, where $d=(n-1)^{2}k$, the set $%
A$ is seen to be the intersection of $\mathbb{R}^{n}\backslash
N_{1}(a_{1},a_{2})$ with the zero-set of a multivariate polynomial in $y$.
Hence, $A$ is a $\lambda _{\mathbb{R}^{n}}$-null set provided we can
establish that the polynomial is not identically zero. For this it suffices
to find an $y\in \mathbb{R}^{n}\backslash N_{1}(a_{1},a_{2})$ such that $%
\det \left( X^{\prime }\Lambda ^{-1}(\hat{\rho}(y))X\right) \neq 0$: Set $%
y=y_{0}$ where $y_{0}$ has been constructed in the proof of Lemma \ref{LDEF}%
. Observe that $\hat{\rho}(y_{0})=\hat{\rho}_{YW}(y_{0})$ for the estimator $%
\hat{\rho}$ specified by $a_{1}$ and $a_{2}$ and hence $y_{0}\in \mathbb{R}%
^{n}\backslash N_{1}(a_{1},a_{2})\subseteq \mathbb{R}^{n}\backslash 
\mathfrak{M}$ since $\left\vert \hat{\rho}_{YW}(y_{0})\right\vert <1$ holds.
But then $\det \left( X^{\prime }\Lambda ^{-1}(\hat{\rho}(y_{0}))X\right)
\neq 0$ holds because $\Lambda (\hat{\rho}_{YW}(y))$ is always positive
definite (whenever it is defined) as has been established before. This shows
that $N_{2}(a_{1},a_{2})$ is a $\lambda _{\mathbb{R}^{n}}$-null set. It
remains to show that $N_{2}^{\ast }(a_{1},a_{2})$ is a $\lambda _{\mathbb{R}%
^{n}}$-null set. For this it suffices to show that 
\begin{equation*}
B=\left\{ y\in \mathbb{R}^{n}\backslash N_{2}(a_{1},a_{2}):\tilde{\sigma}%
^{2}(y)=0\right\}
\end{equation*}%
as well as 
\begin{equation*}
C=\left\{ y\in \mathbb{R}^{n}\backslash N_{2}(a_{1},a_{2}):\det \left(
R(X^{\prime }\Lambda ^{-1}(\hat{\rho}(y))X)^{-1}R^{\prime }\right) =0\right\}
\end{equation*}%
are $\lambda _{\mathbb{R}^{n}}$-null sets. Noting that $\det \left( \Lambda (%
\hat{\rho}(y))\right) \neq 0$ as well as $\det \left( X^{\prime }\limfunc{adj%
}\left( \Lambda (\hat{\rho}(y))\right) X\right) \neq 0$ hold for $y\in 
\mathbb{R}^{n}\backslash N_{2}(a_{1},a_{2})$, the set $B$ can be rewritten as%
\begin{equation*}
B=\left\{ y\in \mathbb{R}^{n}\backslash N_{2}(a_{1},a_{2}):\det \left(
\Lambda (\hat{\rho}(y))\right) \left[ \det \left( X^{\prime }\limfunc{adj}%
\left( \Lambda (\hat{\rho}(y))\right) X\right) \right] ^{2}\tilde{\sigma}%
^{2}(y)=0\right\} .
\end{equation*}%
Again the equation in the set in the above display is polynomial in $y$ and $%
\hat{\rho}(y)$. Upon multiplying this by $\left( \sum_{t=a_{1}}^{a_{2}}\hat{u%
}_{t}^{2}(y)\right) ^{d}$, which is non-zero on $\mathbb{R}^{n}\backslash
N_{2}(a_{1},a_{2})$, where $d=(n-1)^{2}(2k+1)$, one sees that $B$ is the
intersection of $\mathbb{R}^{n}\backslash N_{2}(a_{1},a_{2})$ with the
zero-set of a multivariate polynomial in $y$. To establish that $B$ is a $%
\lambda _{\mathbb{R}^{n}}$-null set it thus suffices to find an $y\in 
\mathbb{R}^{n}\backslash N_{2}(a_{1},a_{2})$ with $\tilde{\sigma}^{2}(y)>0$.
Choose $y_{0}$ as above. Then we know that $y_{0}\in \mathbb{R}%
^{n}\backslash N_{1}(a_{1},a_{2})$ and $\det \left( X^{\prime }\Lambda ^{-1}(%
\hat{\rho}(y_{0}))X\right) \neq 0$ hold, i.e., $y_{0}\in \mathbb{R}%
^{n}\backslash N_{2}(a_{1},a_{2})$. Furthermore, as shown before $\Lambda (%
\hat{\rho}(y_{0}))$ is positive definite (since $\Lambda (\hat{\rho}%
(y_{0}))=\Lambda (\hat{\rho}_{YW}(y_{0}))$) and $y_{0}-X\tilde{\beta}%
(y_{0})\neq 0$ holds (since $y_{0}\notin \mathfrak{M}$). Consequently, $%
\tilde{\sigma}^{2}(y_{0})>0$ holds. The proof for $C$ is very similar.

(3) Well-definedness is trivial and continuity follows from continuity of $%
\hat{\rho}$ on the open set $\mathbb{R}^{n}\backslash N_{0}(a_{1},a_{2})$
(cf. Lemma \ref{LDEF}). Equivariance of $\hat{\beta}$ and $\hat{\sigma}^{2}$
is obvious, while the equivariance property of $\hat{\Omega}$ follows from
invariance of $\mathbb{R}^{n}\backslash N_{0}(a_{1},a_{2})$ and the
equivariance of $\hat{\rho}$ established in (1). The third claim is obvious.
Closedness of $N_{0}^{\ast }(a_{1},a_{2})$ follows from the continuity
property of $\hat{\rho}$ established in Lemma \ref{LDEF}. To prove the final
claim observe that $N_{0}^{\ast }(a_{1},a_{2})$ is the union of the $\lambda
_{\mathbb{R}^{n}}$-null set $N_{0}(a_{1},a_{2})$ with%
\begin{equation*}
\left\{ y\in \mathbb{R}^{n}\backslash N_{0}(a_{1},a_{2}):\det \left(
R(X^{\prime }X)^{-1}X^{\prime }\Lambda (\hat{\rho}(y))X(X^{\prime
}X)^{-1}R^{\prime }\right) =0\right\} .
\end{equation*}%
Multiplying the equation defining this set by $\left( \sum_{t=a_{1}}^{a_{2}}%
\hat{u}_{t}^{2}(y)\right) ^{q(n-1)}$, which is non-zero on $\mathbb{R}%
^{n}\backslash N_{0}(a_{1},a_{2})$, one sees that the above set is the
intersection of $\mathbb{R}^{n}\backslash N_{0}(a_{1},a_{2})$ with the
zero-set of a multivariate polynomial in $y$. Again perusing $y_{0}$
constructed before shows that the polynomial is not identically zero, which
then delivers the desired result.
\end{proof}

The following lemma is an immediate consequence of Lemma \ref{lem_GLS}.

\begin{lemma}
\label{lem_GLS_1} Suppose $\hat{\rho}$ satisfies Assumption \ref{ARER} and $%
k\leq a_{2}-a_{1}$ holds. Then $\tilde{\beta}$ and $\tilde{\Omega}$ satisfy
Assumption \ref{AE} with $N=N_{2}(a_{1},a_{2})$, and the set $N^{\ast }$
(cf. equation (\ref{Nstar})) is given by $N_{2}^{\ast }(a_{1},a_{2})$.
Similarly, $\hat{\beta}$ and $\hat{\Omega}$ satisfy Assumption \ref{AE} with 
$N=N_{0}(a_{1},a_{2})$, and the set $N^{\ast }$ is given by $N_{0}^{\ast
}(a_{1},a_{2})$. The sets $N_{0}^{\ast }(a_{1},a_{2})$ and $N_{2}^{\ast
}(a_{1},a_{2})$ are invariant under the group of transformations $y\mapsto
\alpha y+X\gamma $ where $\alpha \neq 0$, $\gamma \in \mathbb{R}^{k}$.
\end{lemma}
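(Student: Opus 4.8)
The plan is to verify the four parts of Assumption \ref{AE} for each of the two pairs of estimators, reading essentially every required property directly off Lemma \ref{lem_GLS}, and then to identify the set $N^{\ast }$ defined in (\ref{Nstar}) with $N_{2}^{\ast }(a_{1},a_{2})$ (respectively $N_{0}^{\ast }(a_{1},a_{2})$). First I would treat the pair $\tilde{\beta}$, $\tilde{\Omega}$ with candidate exceptional set $N=N_{2}(a_{1},a_{2})$. Part 2 of Lemma \ref{lem_GLS} already supplies well-definedness and continuity of $\tilde{\beta}$ and $\tilde{\Omega}$ on $\mathbb{R}^{n}\backslash N_{2}$, closedness of $N_{2}$, its being a $\lambda _{\mathbb{R}^{n}}$-null set under $k\leq a_{2}-a_{1}$, and the equivariance relations $\tilde{\beta}(\alpha y+X\gamma )=\alpha \tilde{\beta}(y)+\gamma $ and $\tilde{\Omega}(\alpha y+X\gamma )=\alpha ^{2}\tilde{\Omega}(y)$. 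This yields Assumption \ref{AE}(i) and (iii), the only point needing a separate elementary remark being symmetry of $\tilde{\Omega}(y)=\tilde{\sigma}^{2}(y)R(X^{\prime }\Lambda ^{-1}(\hat{\rho}(y))X)^{-1}R^{\prime }$, which is immediate from symmetry of $\Lambda ^{-1}$. Part (ii) is the invariance of $\mathbb{R}^{n}\backslash N_{2}$ under $G(\mathfrak{M})$, i.e. precisely Part 1 of Lemma \ref{lem_GLS}. For (iv), Part 2 of Lemma \ref{lem_GLS} states that $\tilde{\Omega}(y)$ is nonsingular exactly for $y\in \mathbb{R}^{n}\backslash N_{2}^{\ast }$ and that $N_{2}^{\ast }$ is a null set; since $N_{2}\subseteq N_{2}^{\ast }$, the matrix $\tilde{\Omega}$ is nonsingular off the null set $N_{2}^{\ast }$, hence $\lambda _{\mathbb{R}^{n}}$-almost everywhere on $\mathbb{R}^{n}\backslash N_{2}$.

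To identify $N^{\ast }$, I would invoke the same characterization: for $y\in \mathbb{R}^{n}\backslash N_{2}$ one has $\det \tilde{\Omega}(y)=0$ if and only if $y\in N_{2}^{\ast }\backslash N_{2}$, so $N^{\ast }=N_{2}\cup (N_{2}^{\ast }\backslash N_{2})=N_{2}^{\ast }$, the last equality because $N_{2}\subseteq N_{2}^{\ast }$. The argument for $\hat{\beta}$, $\hat{\Omega}$ with $N=N_{0}(a_{1},a_{2})$ is verbatim the same, now drawing on Part 3 of Lemma \ref{lem_GLS} (and on Remark \ref{rho_hat}(iv) for closedness and the null-set property of $N_{0}$), giving $N^{\ast }=N_{0}^{\ast }$; symmetry of $\hat{\Omega}(y)=\hat{\sigma}^{2}(y)R(X^{\prime }X)^{-1}X^{\prime }\Lambda (\hat{\rho}(y))X(X^{\prime }X)^{-1}R^{\prime }$ is again clear from symmetry of $\Lambda $, and $\hat{\beta}$ is continuous on all of $\mathbb{R}^{n}$ a fortiori on $\mathbb{R}^{n}\backslash N_{0}$.

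Finally, the invariance of $N_{0}^{\ast }$ and $N_{2}^{\ast }$ asserted in the last sentence is then automatic: having verified Assumption \ref{AE} and identified $N^{\ast }$ with $N_{2}^{\ast }$ (respectively $N_{0}^{\ast }$), the general observation recorded after Assumption \ref{AE} (cf. Lemma \ref{aux} in Appendix \ref{App_E}) that $N^{\ast }$ is invariant under $G(\mathfrak{M})$ delivers the claim. Alternatively one can argue directly, since the $\alpha ^{2}$-equivariance of $\check{\Omega}$ forces $\det \check{\Omega}(\alpha y+X\gamma )=\alpha ^{2q}\det \check{\Omega}(y)$, so its zero set is $G(\mathfrak{M})$-invariant, and this combines with the invariance of $N_{0}$ (respectively $N_{2}$) from Part 1 of Lemma \ref{lem_GLS}. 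I do not anticipate any genuine obstacle here, since the content is entirely a repackaging of Lemma \ref{lem_GLS}; the only place demanding a moment's care is the bookkeeping identification $N^{\ast }=N_{2}^{\ast }$ (respectively $N_{0}^{\ast }$), where one must invoke the exact ``nonsingular if and only if $y\notin N_{\bullet }^{\ast }$'' statement rather than merely almost-everywhere nonsingularity.
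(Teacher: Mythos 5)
Your proposal is correct and follows essentially the same route as the paper, which simply cites Lemma \ref{lem_GLS} for everything except the final invariance claim and then Lemma \ref{aux} for that claim; you have merely unpacked the citation in detail, and you correctly flag the one point needing care, namely that the identification $N^{\ast }=N_{2}^{\ast }(a_{1},a_{2})$ (resp.\ $N_{0}^{\ast }(a_{1},a_{2})$) rests on the exact ``nonsingular if and only if'' characterization in Lemma \ref{lem_GLS} rather than on almost-everywhere nonsingularity.
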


\begin{proof}
The lemma except for the last claim follows from Lemma \ref{lem_GLS}. The
last claim then follows from Lemma \ref{aux} in Appendix \ref{App_E}, cf.
also the discussion following Assumption \ref{AE}.
\end{proof}

\begin{lemma}
\label{lem_GLS_2}Suppose $\hat{\rho}$ satisfies Assumption \ref{ARER} and $%
k\leq a_{2}-a_{1}$ holds. Then $\tilde{\Omega}$ and $\hat{\Omega}$ satisfy
Assumptions \ref{omega1} and \ref{omega2} with $N^{\ast }=N_{2}^{\ast
}(a_{1},a_{2})$ in case of $\tilde{\Omega}$ and with $N^{\ast }=N_{0}^{\ast
}(a_{1},a_{2})$ in case of $\hat{\Omega}$.
\end{lemma}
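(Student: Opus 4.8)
The plan is to verify Assumptions \ref{omega1} and \ref{omega2} for $\tilde{\Omega}$ (with $N^{\ast}=N_{2}^{\ast}(a_{1},a_{2})$) and for $\hat{\Omega}$ (with $N^{\ast}=N_{0}^{\ast}(a_{1},a_{2})$) in parallel, the key device being the vector $y_{0}\in\mathfrak{M}^{\bot}$ constructed in the proof of Lemma \ref{LDEF}. Recall that this $y_{0}$ satisfies $\hat{\rho}(y_{0})=\hat{\rho}_{YW}(y_{0})$ with $\left\vert\hat{\rho}_{YW}(y_{0})\right\vert<1$, so that $\Lambda(\hat{\rho}(y_{0}))$ is positive definite, and that $y_{0}\notin\mathfrak{M}$. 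The first step is to observe that at $y_{0}$ both estimated matrices are in fact positive definite: writing $\tilde{\Omega}(y)=\tilde{\sigma}^{2}(y)R(X^{\prime}\Lambda^{-1}(\hat{\rho}(y))X)^{-1}R^{\prime}$ and using $\tilde{\sigma}^{2}(y_{0})>0$ together with positive definiteness of $X^{\prime}\Lambda^{-1}(\hat{\rho}(y_{0}))X$ (hence of its inverse, and of its congruence transform by $R$, which has rank $q$), we get that $\tilde{\Omega}(y_{0})$ is positive definite; the identical argument applied to $\hat{\Omega}(y)=\hat{\sigma}^{2}(y)R(X^{\prime}X)^{-1}X^{\prime}\Lambda(\hat{\rho}(y))X(X^{\prime}X)^{-1}R^{\prime}$, using $\hat{\sigma}^{2}(y_{0})>0$ and that $X(X^{\prime}X)^{-1}R^{\prime}$ has full column rank $q$, shows $\hat{\Omega}(y_{0})$ is positive definite. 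Since $y_{0}\in\mathbb{R}^{n}\backslash N_{2}^{\ast}(a_{1},a_{2})$ and $y_{0}\in\mathbb{R}^{n}\backslash N_{0}^{\ast}(a_{1},a_{2})$ by Lemma \ref{lem_GLS}, Assumption \ref{omega1} follows immediately: for any $v\neq0$ we have $v^{\prime}\tilde{\Omega}^{-1}(y_{0})v>0$ and $v^{\prime}\hat{\Omega}^{-1}(y_{0})v>0$.

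For Assumption \ref{omega2} I would fix $v\in\mathbb{R}^{q}$ with $v\neq0$ and exploit the factorizations above. Since $\tilde{\sigma}^{2}(y)>0$ on $\mathbb{R}^{n}\backslash N_{2}^{\ast}(a_{1},a_{2})$, the sign of $v^{\prime}\tilde{\Omega}^{-1}(y)v$ equals that of $v^{\prime}M^{-1}(y)v$, where $M(y)=R(X^{\prime}\Lambda^{-1}(\hat{\rho}(y))X)^{-1}R^{\prime}$; likewise the sign of $v^{\prime}\hat{\Omega}^{-1}(y)v$ equals that of $v^{\prime}K^{-1}(y)v$ with $K(y)=R(X^{\prime}X)^{-1}X^{\prime}\Lambda(\hat{\rho}(y))X(X^{\prime}X)^{-1}R^{\prime}$. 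Hence it suffices to show that the sets on which $v^{\prime}M^{-1}(y)v=0$, respectively $v^{\prime}K^{-1}(y)v=0$, are $\lambda_{\mathbb{R}^{n}}$-null. Using $M^{-1}=\limfunc{adj}(M)/\det(M)$ and the fact that $\det M(y)\neq0$ off $N_{2}^{\ast}(a_{1},a_{2})$, the equation $v^{\prime}M^{-1}(y)v=0$ is equivalent there to $v^{\prime}\limfunc{adj}(M(y))v=0$, and an analogous reduction applies to $K$. The remaining step is the one already rehearsed several times in the proof of Lemma \ref{lem_GLS}: expressing $\Lambda^{-1}(\hat{\rho}(y))$ (respectively $\Lambda(\hat{\rho}(y))$) as a rational function of $\hat{\rho}(y)$, writing $\hat{\rho}(y)$ as a ratio of polynomials with denominator $\sum_{t=a_{1}}^{a_{2}}\hat{u}_{t}^{2}(y)$, and multiplying through by a sufficiently high power of this denominator (which is nonzero off $N_{0}(a_{1},a_{2})$, hence off both exceptional sets) together with the appropriate powers of the determinants $\det(X^{\prime}\limfunc{adj}(\Lambda(\hat{\rho}(y)))X)$ and $\det\Lambda(\hat{\rho}(y))$. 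This turns $v^{\prime}\limfunc{adj}(M(y))v=0$ into the vanishing of a single multivariate polynomial $P(y)$, so that the set in question is contained in the zero set $\left\{P=0\right\}$.

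It then remains only to confirm that $P$ is not identically zero, and this is where $y_{0}$ is used a second time: since $v^{\prime}\tilde{\Omega}^{-1}(y_{0})v>0$ (and $v^{\prime}\hat{\Omega}^{-1}(y_{0})v>0$) by the positive definiteness established above, the cleared expression evaluated at $y_{0}$ is nonzero, so $P$ is a nonzero polynomial and $\left\{P=0\right\}$ is a $\lambda_{\mathbb{R}^{n}}$-null set; this yields Assumption \ref{omega2}. I expect the main obstacle to be purely bookkeeping: verifying carefully that, after substituting the rational form of $\hat{\rho}(y)$ and clearing all denominators, every entry genuinely becomes a polynomial in $y$, and that the powers of $\sum_{t=a_{1}}^{a_{2}}\hat{u}_{t}^{2}(y)$ and of the relevant determinants chosen to clear denominators are large enough to absorb the degrees arising from the nested adjugate and determinant operations. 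No new idea beyond the polynomial/algebraic-set technique of Lemma \ref{lem_GLS} is needed; one must simply be attentive that all the denominators one divides by are indeed nonzero on $\mathbb{R}^{n}\backslash N_{2}^{\ast}(a_{1},a_{2})$ (respectively $\mathbb{R}^{n}\backslash N_{0}^{\ast}(a_{1},a_{2})$), which is guaranteed by Lemma \ref{lem_GLS}.
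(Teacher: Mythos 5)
Your proposal is correct and follows essentially the same route as the paper: positive definiteness of both estimators at the point $y_{0}$ constructed in the proof of Lemma \ref{LDEF} yields Assumption \ref{omega1}, and the adjugate/determinant rewriting with denominators cleared by powers of $\sum_{t=a_{1}}^{a_{2}}\hat{u}_{t}^{2}(y)$ reduces Assumption \ref{omega2} to the nontriviality of a multivariate polynomial, which is again verified at $y_{0}$ (the paper merely dispatches the Yule--Walker case separately, since there both matrices are positive definite off the null set and no polynomial argument is needed). One small slip: $\tilde{\sigma}^{2}(y)$ is only guaranteed to be \emph{nonzero}, not positive, on $\mathbb{R}^{n}\backslash N_{2}^{\ast}(a_{1},a_{2})$ because $\left\vert\hat{\rho}(y)\right\vert>1$ can occur and $\Lambda(\hat{\rho}(y))$ may then be indefinite; this does not harm your argument, since for Assumption \ref{omega2} only the vanishing of $v^{\prime}\tilde{\Omega}^{-1}(y)v$ matters and that is unaffected by multiplication by a nonzero scalar.
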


\begin{proof}
Consider first the case of the Yule-Walker estimator, i.e., $a_{1}=1$ and $%
a_{2}=n$. Then $\Lambda (\hat{\rho}_{YW}(y))$ is positive definite for every 
$y\notin N_{0}(a_{1},a_{2})$. Hence $\tilde{\Omega}\left( y\right) $ is
positive definite for $y\notin N_{2}^{\ast }(a_{1},a_{2})$ and $\hat{\Omega}%
\left( y\right) $ is positive definite for $y\notin N_{0}^{\ast
}(a_{1},a_{2})$. Consequently, Assumptions \ref{omega1} and \ref{omega2} are
clearly satisfied. Next consider the case where $a_{1}\neq 1$ or $a_{2}\neq
n $. Then $y_{0}$ constructed in the proof of Lemma \ref{LDEF} satisfies $%
y_{0}\in \mathbb{R}^{n}\backslash N_{0}^{\ast }(a_{1},a_{2})$ as well as $%
y_{0}\in \mathbb{R}^{n}\backslash N_{2}^{\ast }(a_{1},a_{2})$\ as shown in
the proof of Lemma \ref{lem_GLS}. Because of $\hat{\rho}(y_{0})=\hat{\rho}%
_{YW}(y_{0})$, we also see that $\tilde{\Omega}(y_{0})$ as well as $\hat{%
\Omega}(y_{0})$ are positive definite (as the variance covariance estimators
based on $\hat{\rho}$ coincide with the ones based on the Yule-Walker
estimator). This shows that Assumption \ref{omega1} is satisfied for $\tilde{%
\Omega}$ and $\hat{\Omega}$. It remains to establish Assumption \ref{omega2}%
: Let $v\neq 0$, $v\in \mathbb{R}^{q}$ be arbitrary. The preceding argument
has shown $y_{0}\in \mathbb{R}^{n}\backslash N_{2}^{\ast }(a_{1},a_{2})$ and 
$y_{0}\in \mathbb{R}^{n}\backslash N_{0}^{\ast }(a_{1},a_{2})$ and also
shows that $v^{\prime }\tilde{\Omega}^{-1}\left( y_{0}\right) v>0$ and $%
v^{\prime }\hat{\Omega}^{-1}\left( y_{0}\right) v>0$ hold. To complete the
proof it suffices to show that the set $\left\{ y\in \mathbb{R}%
^{n}\backslash N_{2}^{\ast }(a_{1},a_{2}):v^{\prime }\tilde{\Omega}%
^{-1}\left( y\right) v=0\right\} $ is the intersection of $y\in \mathbb{R}%
^{n}\backslash N_{2}^{\ast }(a_{1},a_{2})$ with the zero-set of a
multivariate polynomial, and similarly for $\left\{ y\in \mathbb{R}%
^{n}\backslash N_{0}^{\ast }(a_{1},a_{2}):v^{\prime }\hat{\Omega}^{-1}\left(
y\right) v=0\right\} $. This is proved in a similar manner as in the proof
of Lemma \ref{lem_GLS} by rewriting all inverse matrices appearing in $%
v^{\prime }\tilde{\Omega}^{-1}\left( y\right) v$ ($v^{\prime }\hat{\Omega}%
^{-1}\left( y\right) v$, respectively) in terms of the adjoints and
determinants and observing that the determinants are all non-zero for $y\in 
\mathbb{R}^{n}\backslash N_{2}^{\ast }(a_{1},a_{2})$ ($y\in \mathbb{R}%
^{n}\backslash N_{0}^{\ast }(a_{1},a_{2})$, respectively). This shows that $%
v^{\prime }\tilde{\Omega}^{-1}\left( y\right) v=0$ ($v^{\prime }\hat{\Omega}%
^{-1}\left( y\right) v=0$, respectively) can be rewritten as a polynomial
equation in $\hat{\rho}(y)$. Multiplying this polynomial equation by a
suitable power of $\sum_{t=a_{1}}^{a_{2}}\hat{u}_{t}^{2}(y)$, which is
non-zero on $\mathbb{R}^{n}\backslash N_{2}^{\ast }(a_{1},a_{2})$ ($\mathbb{R%
}^{n}\backslash N_{0}^{\ast }(a_{1},a_{2})$, respectively) shows that these
equations can be rewritten as polynomial equations in $y$.
\end{proof}

\textbf{Proof of Theorem \ref{thmGLS}:} We first verify the assumptions of
Corollary \ref{CW}. Assumption \ref{AE} is satisfied for $\tilde{\beta}$ and 
$\tilde{\Omega}$ (with $N=N_{2}(a_{1},a_{2})$ and $N^{\ast }=N_{2}^{\ast
}(a_{1},a_{2})$) as well as for $\hat{\beta}$ and $\hat{\Omega}$ (with $%
N=N_{0}(a_{1},a_{2})$ and $N^{\ast }=N_{0}^{\ast }(a_{1},a_{2})$) in view of
Lemma \ref{lem_GLS_1}. In view of Assumption \ref{AAR(1)} we conclude from
Lemma \ref{AR_1} that $\mathcal{Z}_{+}=\limfunc{span}\left( e_{+}\right) $
as well as $\mathcal{Z}_{-}=\limfunc{span}\left( e_{-}\right) $ are
concentration spaces of $\mathfrak{C}$.\ Applying Parts 1 and 2 of Corollary %
\ref{CW} and Remark \ref{remnew}(i) to $\mathcal{Z}_{+}$ as well as to $%
\mathcal{Z}_{-}$ establishes (1) and (2) of the theorem as well as the
corresponding parts of (4), if we also note that the size of the test can
not be zero in view of Part 5 of Lemma \ref{LWT} and Lemma \ref{lem_GLS_2}.
In order to prove (3) of the theorem, we apply Theorem \ref{prop_101}. First
note that $\tilde{\Omega}$ satisfies Assumption \ref{omega2} because of
Lemma \ref{lem_GLS_2}. Furthermore, choose as the sequence $\Sigma _{m}$ in
that theorem $\Sigma _{m}=\Lambda \left( \rho _{m}\right) $ for some
sequence $\rho _{m}\rightarrow 1$, $\rho _{m}\in \left( -1,1\right) $. Then $%
\bar{\Sigma}=e_{+}e_{+}$ by Lemma \ref{AR_1}, which also provides the matrix 
$D$ and its required properties. Hence $l=1$ and $\limfunc{span}\left( \bar{%
\Sigma}\right) =\limfunc{span}\left( e_{+}\right) $ which is contained in $%
\mathfrak{M}$ since $e_{+}\in \mathfrak{M}$ has been assumed and $\mathfrak{M%
}$ is a linear space. Condition (\ref{test_conc_100}) in Theorem \ref%
{prop_101} is satisfied in view of the assumption $R\hat{\beta}(e_{+})\neq 0$
since $\limfunc{span}\left( \bar{\Sigma}\right) =\limfunc{span}\left(
e_{+}\right) $. Inspection of the constants $K_{1}$ and $K_{2}$ in Theorem %
\ref{prop_101} reveal that $K_{1}=K_{2}=:K_{FGLS}\left( e_{+}\right) $ since
in the present case $\gamma $ is one-dimensional. That $K_{FGLS}\left(
e_{+}\right) $ depends only on the quantities given in the theorem is
obvious from the formulas for $K_{1}$ and $K_{2}$. Furthermore, if $\hat{\rho%
}\equiv \hat{\rho}_{YW}$, then $\tilde{\Omega}$ is always positive definite
on $\mathbb{R}^{n}\backslash N_{2}(1,n)=\mathbb{R}^{n}\backslash \mathfrak{M}
$, because $\left\vert \hat{\rho}_{YW}\right\vert <1$ holds implying that $%
\Lambda \left( \hat{\rho}_{YW}\right) $ is positive definite on $\mathbb{R}%
^{n}\backslash N_{2}(1,n)$. Inspection of the constants $K_{1}$ and $K_{2}$
then reveals $K_{1}=K_{2}=1$ in that case. The claims in (3) with $e_{+}$
replaced by $e_{-}$ are proved analogously, and so are the remaining claims
in (4). $\blacksquare $

\textbf{Proof of Proposition \ref{generic_2}:} (1) First consider $\mathfrak{%
X}_{1,FGLS}\left( e_{+}\right) $. The condition $e_{+}\in N_{2,X}^{\ast
}\left( a_{1},a_{2}\right) $ is equivalent to $e_{+}\in N_{1,X}\left(
a_{1},a_{2}\right) $, or $e_{+}\in \mathbb{R}^{n}\backslash
N_{1,X}(a_{1},a_{2})$ but $\det \left( X^{\prime }\Lambda ^{-1}(\hat{\rho}%
_{X}(e_{+}))X\right) =0$, or to $e_{+}\in \mathbb{R}^{n}\backslash
N_{1,X}(a_{1},a_{2})$ and $\det \left( X^{\prime }\Lambda ^{-1}(\hat{\rho}%
_{X}(e_{+}))X\right) \neq 0$ but $\tilde{\sigma}_{X}^{2}(e_{+})\det \left(
R(X^{\prime }\Lambda ^{-1}(\hat{\rho}_{X}(e_{+}))X)^{-1}R^{\prime }\right)
=0 $. The first one of these three conditions can be written as%
\begin{equation}
\left( \sum\limits_{t=2}^{n}\hat{u}_{t}(e_{+})\hat{u}_{t-1}(e_{+})\right)
^{2}=\left( \sum\limits_{t=a_{1}}^{a_{2}}\hat{u}_{t}^{2}(e_{+})\right) ^{2}.
\label{eq_1}
\end{equation}%
Since $\det (X^{\prime }X)\neq 0$ holds for $X\in \mathfrak{X}_{0}$, the set
of $X\in \mathfrak{X}_{0}$ satisfying (\ref{eq_1}) is -- after
multiplication of both sides of (\ref{eq_1}) by the fourth power of $\det
(X^{\prime }X)$ -- seen to be included in the zero-set of a multivariate
polynomial in the variables $x_{ti}$. Observing that $\det \left( \Lambda (%
\hat{\rho}_{X}(e_{+}))\right) \neq 0$ and $\sum_{t=a_{1}}^{a_{2}}\hat{u}%
_{t,X}^{2}(e_{+})\neq 0$ for $e_{+}\in \mathbb{R}^{n}\backslash
N_{1,X}(a_{1},a_{2})$, the second one of the above conditions takes the
equivalent form%
\begin{equation}
\left( \sum_{t=a_{1}}^{a_{2}}\hat{u}_{t}^{2}(e_{+})\right) ^{k(n-1)^{2}}\det
\left( X^{\prime }\limfunc{adj}\left( \Lambda (\hat{\rho}_{X}(e_{+}))\right)
X\right) =0,\left( \sum\limits_{t=2}^{n}\hat{u}_{t}(e_{+})\hat{u}%
_{t-1}(e_{+})\right) ^{2}\neq \left( \sum\limits_{t=a_{1}}^{a_{2}}\hat{u}%
_{t}^{2}(e_{+})\right) ^{2}.  \label{eq_2}
\end{equation}%
For $X\in \mathfrak{X}_{0}$ satisfying the inequality in (\ref{eq_2}), the
left-hand side of the equation in the preceding display is easily seen to be
a polynomial in the variables $x_{ti}$ and $\hat{u}_{t}(e_{+})$. Since $\det
(X^{\prime }X)\hat{u}_{t}(e_{+})$ is polynomial in the variables $x_{ti}$
and $\det (X^{\prime }X)\neq 0$ for $X\in \mathfrak{X}_{0}$, we may rewrite
the equation in the preceding display by multiplying it by the $4k(n-1)^{2}$%
-th power of $\det (X^{\prime }X)$. The resulting equivalent equation is
obviously a polynomial in the variables $x_{ti}$. This shows that the set of 
$X\in \mathfrak{X}_{0}$ satisfying (\ref{eq_2}) is (a subset of) the
zero-set of a multivariate polynomial. Recalling that $\det \left( \Lambda (%
\hat{\rho}_{X}(e_{+}))\right) \neq 0$ and $\sum_{t=a_{1}}^{a_{2}}\hat{u}%
_{t}^{2}(e_{+})\neq 0$ for $e_{+}\in \mathbb{R}^{n}\backslash
N_{1,X}(a_{1},a_{2})$, and that $\det \left( X^{\prime }\Lambda ^{-1}(\hat{%
\rho}_{X}(e_{+}))X\right) \neq 0$ implies $\det \left( X^{\prime }\limfunc{%
adj}\left( \Lambda (\hat{\rho}_{X}(e_{+}))\right) X\right) \neq 0$, the
third one of the above conditions takes the equivalent form%
\begin{equation}
\left( \sum_{t=a_{1}}^{a_{2}}\hat{u}_{t}^{2}(e_{+})\right)
^{(n-1)^{2}(2k+1+q(k-1))}f(X)^{\prime }\limfunc{adj}\left( \Lambda (\hat{\rho%
}_{X}(e_{+}))\right) f(X)g(X)=0  \label{eq_2'}
\end{equation}%
subject to%
\begin{equation}
\left( \sum\limits_{t=2}^{n}\hat{u}_{t}(e_{+})\hat{u}_{t-1}(e_{+})\right)
^{2}\neq \left( \sum\limits_{t=a_{1}}^{a_{2}}\hat{u}_{t}^{2}(e_{+})\right)
^{2},\det \left( X^{\prime }\Lambda ^{-1}(\hat{\rho}_{X}(e_{+}))X\right)
\neq 0,  \label{eq_2''}
\end{equation}%
where 
\begin{eqnarray*}
f(X) &=&\left[ \det \left( X^{\prime }\limfunc{adj}\left( \Lambda (\hat{\rho}%
_{X}(e_{+}))\right) X)\right) I_{n}-X\limfunc{adj}\left( X^{\prime }\limfunc{%
adj}\left( \Lambda (\hat{\rho}_{X}(e_{+}))\right) X\right) X^{\prime }%
\limfunc{adj}\left( \Lambda (\hat{\rho}_{X}(e_{+}))\right) \right] e_{+} \\
g(X) &=&\det \left( R\limfunc{adj}(X^{\prime }\limfunc{adj}\left( \Lambda (%
\hat{\rho}_{X}(e_{+}))\right) X)R^{\prime }\right) .
\end{eqnarray*}%
The left-hand side of the equation in (\ref{eq_2'}) is a polynomial in the
variables $x_{ti}$ as well as $\hat{u}_{t,X}(e_{+})$ for all $X\in \mathfrak{%
X}_{0}$ satisfying the inequality in (\ref{eq_2''}). After multiplying the
left-hand side of the equation in (\ref{eq_2'}) by a suitable power of $\det
(X^{\prime }X)$, which is non-zero for $X\in \mathfrak{X}_{0}$, (\ref{eq_2'}%
) can be equivalently recast as an equation that is polynomial in $x_{ti}$,
showing that the set of $X\in \mathfrak{X}_{0}$ satisfying (\ref{eq_2'}) and
(\ref{eq_2''}) is a subset of the zero-set of a multivariate polynomial. It
follows that $\mathfrak{X}_{1,FGLS}\left( e_{+}\right) $ is a $\lambda _{%
\mathbb{R}^{n\times k}}$-null set provided we can show that each of the
three polynomials in the variables $x_{ti}$ mentioned before is not trivial.
For this it certainly suffices to construct a matrix $X\in \mathfrak{X}_{0}$
such that $e_{+}\notin N_{2,X}^{\ast }\left( a_{1},a_{2}\right) $ holds:
Consider first the case $n\geq 3$. Let the first column $x_{\cdot 1}^{\ast }$
of $X^{\ast }$ be equal to $(1,0,\ldots ,0,1)^{\prime }$, and choose the
remaining columns linearly independent in the orthogonal complement of the
space spanned by $x_{\cdot 1}^{\ast }$ and $e_{+}$. Then $X^{\ast }\in 
\mathfrak{X}_{0}$ holds and $\hat{u}_{X^{\ast }}(e_{+})=(0,1,1,\ldots
,1,1,0)^{\prime }$ and hence $\hat{\rho}_{X^{\ast }}\left( e_{+}\right) $ is
well-defined and equals $\hat{\rho}_{YW,X^{\ast }}\left( e_{+}\right) $,
which is always less than $1$ in absolute value. Consequently, $e_{+}\in 
\mathbb{R}^{n}\backslash N_{1,X^{\ast }}(a_{1},a_{2})$ holds. Furthermore, $%
\Lambda (\hat{\rho}_{X^{\ast }}(e_{+}))$ is then positive definite and hence 
$\det \left( X^{\ast \prime }\Lambda ^{-1}(\hat{\rho}_{X^{\ast
}}(e_{+}))X^{\ast }\right) \neq 0$ and $\det \left( R(X^{\ast \prime
}\Lambda ^{-1}(\hat{\rho}_{X^{\ast }}(e_{+}))X^{\ast })^{-1}R^{\prime
}\right) \neq 0$ hold; also $\tilde{\sigma}_{X^{\ast }}^{2}(e_{+})>0$
follows from positive definiteness of $\Lambda (\hat{\rho}_{X^{\ast
}}(e_{+}))$ and the fact that $e_{+}\notin \limfunc{span}\left( X^{\ast
}\right) $. But this establishes $e_{+}\in \mathbb{R}^{n}\backslash
N_{2,X^{\ast }}^{\ast }(a_{1},a_{2})$ in case $n\geq 3$. Next consider the
case $n=2$. Then $k=1$ must hold. The assumption $k\leq a_{2}-a_{1}$ entails 
$a_{2}=n=2$ and $a_{1}=1$, i.e., $\hat{\rho}$ must be the Yule-Walker
estimator implying that $N_{2,X^{\ast }}^{\ast }(a_{1},a_{2})=\limfunc{span}%
\left( X^{\ast }\right) $. Choose $X^{\ast }$ as an arbitrary vector
linearly independent of $e_{+}$ (which is possible since $n=2>1=k$). Then $%
X^{\ast }\in \mathfrak{X}_{0}$ and $e_{+}\in \mathbb{R}^{n}\backslash
N_{2,X^{\ast }}^{\ast }(a_{1},a_{2})$ are satisfied. The proof for $%
\mathfrak{X}_{1,FGLS}\left( e_{-}\right) $ is completely analogous where in
case $n\geq 3$ the matrix $X^{\ast }$ is now chosen in such a way that $%
x_{\cdot 1}^{\ast }$ is equal to $(-1,0,\ldots ,0,\left( -1\right)
^{n})^{\prime }$ and $e_{-}$ takes the r\^{o}le of $e_{+}$ in the
construction of the remaining columns. Next consider the set $\mathfrak{X}%
_{2,FGLS}\left( e_{+}\right) $. Observe that for $X\in \mathfrak{X}%
_{0}\backslash \mathfrak{X}_{1,FGLS}\left( e_{+}\right) $ the relation $%
T_{FGLS,X}(e_{+}+\mu _{0}^{\ast })=C$ can equivalently be written as%
\begin{equation}
(R\tilde{\beta}_{X}(e_{+}))^{\prime }\tilde{\Omega}_{X}^{-1}(e_{+})(R\tilde{%
\beta}_{X}(e_{+}))-C=0.  \label{eq_3}
\end{equation}%
Similar arguments as above show that for $X\in \mathfrak{X}_{0}\backslash 
\mathfrak{X}_{1,FGLS}\left( e_{+}\right) $ this equation can equivalently be
stated as $p(X)=0$ where $p(X)$ is a polynomial in the variables $x_{ti}$.
But this shows that the set of $X\in \mathfrak{X}_{0}\backslash \mathfrak{X}%
_{1,FGLS}\left( e_{+}\right) $ satisfying (\ref{eq_3}) is (a subset of) an
algebraic set. It follows that $\mathfrak{X}_{2,FGLS}\left( e_{+}\right) $
is a $\lambda _{\mathbb{R}^{n\times k}}$-null set provided the polynomial $p$
is not trivial, or in other words that there exists a matrix $X\in \mathfrak{%
X}_{0}\backslash \mathfrak{X}_{1,FGLS}\left( e_{+}\right) $ that violates (%
\ref{eq_3}). But this is guaranteed by the provision in the theorem. The
result for $\mathfrak{X}_{2,FGLS}\left( e_{-}\right) $ is proved in exactly
the same manner. The remaining claims of Part 1 are now obvious.

(2) Similar arguments as in the proof of Part 1 show that $\mathfrak{\tilde{X%
}}_{1,FGLS}\left( e_{-}\right) $ and $\mathfrak{\tilde{X}}_{2,FGLS}\left(
e_{-}\right) $ are each contained in an algebraic set. By the assumed
provision it follows immediately that $\mathfrak{\tilde{X}}_{2,FGLS}\left(
e_{-}\right) $ is a $\lambda _{\mathbb{R}^{n\times \left( k-1\right) }}$%
-null set. The same conclusion holds for $\mathfrak{\tilde{X}}%
_{1,FGLS}\left( e_{-}\right) $ if we can find a matrix $X^{\ast }=\left(
e_{+},\tilde{X}^{\ast }\right) $ such that $e_{-}\notin N_{2,X^{\ast
}}^{\ast }\left( a_{1},a_{2}\right) $. To this end let the $n\times 1$
vector $a=\left( -1,0,\ldots ,0,\left( -1\right) ^{n}\right) ^{\prime }$ be
the first column of $\tilde{X}^{\ast }$ and choose the remaining $k-2$
columns linearly independent in the orthogonal complement of the space
spanned by $e_{+}$, $e_{-}$, and $a$ (which is possible since $k<n$). Simple
computation now shows that $\hat{u}_{X^{\ast }}(e_{-})\neq 0$ (note that $%
n\geq 4$ has been assumed) and that the first and last entry of $\hat{u}%
_{X^{\ast }}(e_{-})$ is zero. Consequently, $\hat{\rho}_{X^{\ast }}\left(
e_{-}\right) $ is well-defined and equals $\hat{\rho}_{YW,X^{\ast }}\left(
e_{-}\right) $, which is always less than $1$ in absolute value, and the
same argument as in the proof of Part 1 shows that $e_{-}\notin N_{2,X^{\ast
}}^{\ast }\left( a_{1},a_{2}\right) $ is indeed satisfied. The remaining
claims of Part 2 are now obvious.

(3) First consider $\mathfrak{X}_{1,OLS}\left( e_{+}\right) $. The condition 
$e_{+}\in N_{0,X}^{\ast }\left( a_{1},a_{2}\right) $ is equivalent to $%
\sum_{t=a_{1}}^{a_{2}}\hat{u}_{t}^{2}(e_{+})=0$, or $\sum_{t=a_{1}}^{a_{2}}%
\hat{u}_{t}^{2}(e_{+})\neq 0$ but $\det \left( R(X^{\prime }X)^{-1}X^{\prime
}\Lambda (\hat{\rho}(y))X(X^{\prime }X)^{-1}R^{\prime }\right) =0$. Similar
arguments as in (1) then show that $\mathfrak{X}_{1,OLS}\left( e_{+}\right) $
is a subset of an algebraic set. The matrix $X^{\ast }$ constructed in (1)
is easily seen to satisfy $e_{+}\in \mathbb{R}^{n}\backslash N_{0,X^{\ast
}}^{\ast }\left( a_{1},a_{2}\right) $. Thus $\mathfrak{X}_{1,OLS}\left(
e_{+}\right) $ is a $\lambda _{\mathbb{R}^{n\times k}}$-null set. The proof
for $\mathfrak{X}_{1,OLS}\left( e_{-}\right) $ is exactly the same. Next
consider $\mathfrak{X}_{2,OLS}\left( e_{+}\right) $. Observe that for $X\in 
\mathfrak{X}_{0}\backslash \mathfrak{X}_{1,OLS}\left( e_{+}\right) $ the
relation $T_{OLS,X}(e_{+}+\mu _{0}^{\ast })=C$ can equivalently be written as%
\begin{equation}
(R\hat{\beta}_{X}(e_{+}))^{\prime }\hat{\Omega}_{X}^{-1}(e_{+})(R\hat{\beta}%
_{X}(e_{+}))-C=0.  \label{eq_4}
\end{equation}%
The same argument as in the proof of Part (1) shows that the set of $X\in 
\mathfrak{X}_{0}\backslash \mathfrak{X}_{1,OLS}\left( e_{+}\right) $
satisfying (\ref{eq_4}) is (a subset of) the zero-set of a multivariate
polynomial in the variables $x_{ti}$. It follows that $\mathfrak{X}%
_{2,OLS}\left( e_{+}\right) $ is a $\lambda _{\mathbb{R}^{n\times k}}$-null
set under the maintained provision that it is a proper subset of $\mathfrak{X%
}_{0}\backslash \mathfrak{X}_{1,OLS}\left( e_{+}\right) $. The proof for $%
\mathfrak{X}_{2,OLS}\left( e_{-}\right) $ is the same. The proof for $%
\mathfrak{\tilde{X}}_{1,OLS}\left( e_{-}\right) $ and $\mathfrak{\tilde{X}}%
_{2,OLS}\left( e_{-}\right) $ is similar to the proof for $\mathfrak{\tilde{X%
}}_{1,FGLS}\left( e_{-}\right) $ and $\mathfrak{\tilde{X}}_{2,FGLS}\left(
e_{-}\right) $.

(4) Note that the assumptions obviously imply $e_{+}\in \mathfrak{M}$ and $R%
\hat{\beta}(e_{+})\neq 0$. $\blacksquare $

\begin{remark}
In case $a_{1}=1$ and $a_{2}=n$ the argument in the above proof simplifies
due to the fact that $N_{2,X}^{\ast }\left( 1,n\right) =N_{0,X}^{\ast
}\left( 1,n\right) =\limfunc{span}\left( X\right) $.
\end{remark}

\textbf{Proof of Theorem \ref{excGLS2}:} We apply Theorem \ref{TU_1}. That $(%
\tilde{\beta},\tilde{\Omega})$ as well as $(\hat{\beta},\hat{\Omega})$
satisfy Assumptions \ref{AE}, \ref{omega1}, and \ref{omega2} has been shown
in Lemmata \ref{lem_GLS_1} and \ref{lem_GLS_2}. The covariance model $%
\mathfrak{C}_{AR(1)}$ satisfies the properties required in Theorem \ref{TU_1}
as shown in Lemma \ref{AR_1}. Furthermore, we have $J\left( \mathfrak{C}%
_{AR(1)}\right) =\limfunc{span}(e_{+})\cup \limfunc{span}(e_{-})$, see Lemma %
\ref{AR_1}, and because $e_{+},e_{-}\in \mathfrak{M}$ is assumed we conclude
that $J\left( \mathfrak{C}_{AR(1)}\right) \subseteq \mathfrak{M}$. The
assumption $R\hat{\beta}(e_{+})=R\hat{\beta}(e_{-})=0$ then implies that
even $J\left( \mathfrak{C}_{AR(1)}\right) \subseteq \mathfrak{M}_{0}-\mu
_{0} $ holds. The invariance condition (\ref{T_inv_wrt_z}) in Theorem \ref%
{TU_1} is thus satisfied, because $T$ is $G\left( \mathfrak{M}_{0}\right) $%
-invariant by Lemma \ref{LWT}. We next show that the additional condition in
Part 2 of Theorem \ref{TU_1} is satisfied. This is trivial in case the
Yule-Walker estimator is used (i.e., if $a_{1}=1$ and $a_{n}=n$) since then $%
\tilde{\Omega}\left( y\right) $ is positive definite for $y\notin
N_{2}^{\ast }(a_{1},a_{2})$ and $\hat{\Omega}\left( y\right) $ is positive
definite for $y\notin N_{0}^{\ast }(a_{1},a_{2})$ (see the proof of Lemma %
\ref{lem_GLS}) and since $N_{2}^{\ast }(a_{1},a_{2})$ and $N_{0}^{\ast
}(a_{1},a_{2})$ are $\lambda _{\mathbb{R}^{n}}$-null sets by Lemma \ref%
{lem_GLS}. If $a_{1}\neq 1$ or $a_{n}\neq n$, then $y_{0}$ constructed in
the proof of Lemma \ref{LDEF} satisfies $y_{0}\in \mathbb{R}^{n}\backslash
N_{2}^{\ast }(a_{1},a_{2})$ and $y_{0}\in \mathbb{R}^{n}\backslash
N_{0}^{\ast }(a_{1},a_{2})$ (cf. proof of Lemma \ref{lem_GLS}) as well as $%
\hat{\rho}(y_{0})=\hat{\rho}_{YW}(y_{0})$, implying that $\tilde{\Omega}%
(y_{0})$ as well as $\hat{\Omega}(y_{0})$ are positive definite. As shown in
Lemma \ref{lem_GLS}, the matrix $\tilde{\Omega}$ is, in particular,
continuous on the open set $\mathbb{R}^{n}\backslash N_{2}^{\ast
}(a_{1},a_{2})$ and the matrix $\hat{\Omega}$ is continuous on the open set $%
\mathbb{R}^{n}\backslash N_{0}^{\ast }(a_{1},a_{2})$. Consequently, $\tilde{%
\Omega}$ and $\hat{\Omega}$ are positive definite in a neighborhood of $%
y_{0} $ and thus the additional condition in Part 2 of Theorem \ref{TU_1} is
satisfied. Finally, the condition $a_{1}=1$ and $a_{2}=n$ implies that $%
\tilde{\Omega}$ and $\hat{\Omega}$ are $\lambda _{\mathbb{R}^{n}}$-almost
everywhere positive definite (since then $\hat{\rho}=\hat{\rho}_{YW}$),
verifying the extra condition in Part 3 of Theorem \ref{TU_1}. $\blacksquare 
$

\section{Appendix: Proofs for Section \protect\ref{Het} \label{App_B2}}

\textbf{Proof of Theorem \ref{thmhet}:} First observe that $\hat{\beta}$ and 
$\hat{\Omega}_{Het}$ satisfy Assumptions \ref{AE} and \ref{omega1} with $%
N=\emptyset $. In fact, $\hat{\Omega}_{Het}\left( y\right) $ is nonnegative
definite for every $y\in \mathbb{R}^{n}$, and is positive definite $\lambda
_{\mathbb{R}^{n}}$-almost everywhere under Assumption \ref{R_and_X} by Lemma %
\ref{Definiteness_het}. Furthermore, in view of this lemma and because $%
N=\emptyset $, the set $N^{\ast }$ in Corollary \ref{CW} is precisely the
set of $y$ for which $\limfunc{rank}\left( B(y)\right) <q$. It is trivial
that $\mathcal{Z}_{i}=$span$(e_{i}\left( n\right) )$ is a concentration
space of $\mathfrak{C}$ for every $i=1,\ldots ,n$. The theorem now follows
by applying Corollary \ref{CW} and Remark \ref{remnew}(i) to $\mathcal{Z}%
_{i} $ and by noting that $e_{i}\left( n\right) \in \mathbb{R}^{n}\backslash
N^{\ast }$ translates into $\limfunc{rank}\left( B(e_{i}\left( n\right)
)\right) =q$. Also note that the size of the test can not be zero in view of
Part 5 of Lemma \ref{LWT}. $\blacksquare $

\section{Appendix: Proofs for Subsection \protect\ref{sec_inv} \label{App_C}}

\textbf{Proof of Proposition \ref{max_inv}: }Since%
\begin{eqnarray*}
\Pi _{\left( \mathfrak{N}-\nu _{\ast }\right) ^{\bot }}\left( g_{\alpha ,\nu
,\nu ^{\prime }}(y)-\nu _{\ast }\right) &=&\alpha \Pi _{\left( \mathfrak{N}%
-\nu _{\ast }\right) ^{\bot }}\left( y-\nu \right) +\Pi _{\left( \mathfrak{N}%
-\nu _{\ast }\right) ^{\bot }}\left( \nu ^{\prime }-\nu _{\ast }\right) \\
&=&\alpha \Pi _{\left( \mathfrak{N}-\nu _{\ast }\right) ^{\bot }}\left(
y-\nu \right) =\alpha \Pi _{\left( \mathfrak{N}-\nu _{\ast }\right) ^{\bot
}}\left( y-\nu _{\ast }\right) ,
\end{eqnarray*}%
invariance of $h$ follows, and hence $h$ is constant on the orbits of $G(%
\mathfrak{N})$. Now suppose that $h(y)=h(y^{\prime })$. If $h(y)=h(y^{\prime
})=0$ holds, it follows that%
\begin{equation*}
\Pi _{\left( \mathfrak{N}-\nu _{\ast }\right) ^{\bot }}(y-\nu _{\ast })=\Pi
_{\left( \mathfrak{N}-\nu _{\ast }\right) ^{\bot }}(y^{\prime }-\nu _{\ast
})=0.
\end{equation*}%
Consequently, $y^{\prime }-y$ is of the form $\nu ^{\ast }-\nu _{\ast }$ for
some $\nu ^{\ast }\in \mathfrak{N}$. But this gives $y^{\prime }=(y-\nu
_{\ast })+\nu ^{\ast }=g_{1,\nu _{\ast },\nu ^{\ast }}(y)$, showing that $%
y^{\prime }$ is in the same orbit as $y$. Next consider the case where $%
h(y)=h(y^{\prime })\neq 0$. Then%
\begin{equation*}
\Pi _{\left( \mathfrak{N}-\nu _{\ast }\right) ^{\bot }}\left( \left\Vert \Pi
_{\left( \mathfrak{N}-\nu _{\ast }\right) ^{\bot }}(y^{\prime }-\nu _{\ast
})\right\Vert (y-\nu _{\ast })-c\left\Vert \Pi _{\left( \mathfrak{N}-\nu
_{\ast }\right) ^{\bot }}(y-\nu _{\ast })\right\Vert (y^{\prime }-\nu _{\ast
})\right) =0
\end{equation*}%
where $c=\pm 1$. It follows that the argument inside the projection operator
is of the form $\nu ^{\ast }-\nu _{\ast }$ for some $\nu ^{\ast }\in 
\mathfrak{N}$. Elementary calculations give%
\begin{equation*}
y^{\prime }=\frac{\left\Vert \Pi _{\left( \mathfrak{N}-\nu _{\ast }\right)
^{\bot }}(y^{\prime }-\nu _{\ast })\right\Vert }{c\left\Vert \Pi _{\left( 
\mathfrak{N}-\nu _{\ast }\right) ^{\bot }}(y-\nu _{\ast })\right\Vert }%
(y-\nu _{\ast })+\left( \nu _{\ast }+\frac{1}{c\left\Vert \Pi _{\left( 
\mathfrak{N}-\nu _{\ast }\right) ^{\bot }}(y-\nu _{\ast })\right\Vert }%
\left( \nu _{\ast }-\nu ^{\ast }\right) \right) \text{.}
\end{equation*}%
Since the last term in parenthesis on the right-hand side above is obviously
an element of $\mathfrak{N}$, we have obtained $y^{\prime }=g(y)$ for some $%
g\in G(\mathfrak{N})$, i.e., $y^{\prime }$ is in the same orbit as $y$. This
shows that $h$ is a maximal invariant. $\blacksquare $

\textbf{Proof of Proposition \ref{inv_rej_prob}: }(1) From (\ref%
{transf_formula}) and its extension discussed subsequently to (\ref%
{transf_formula}), as well as from the transformation theorem for integrals
we obtain%
\begin{equation*}
E_{\mu ,\sigma ^{2}\Phi }\left( \varphi \left( y\right) \right) =E_{\alpha
(\mu -\mu _{0})+\mu _{0}^{\prime },\alpha ^{2}\sigma ^{2}\Phi }\left(
\varphi \left( g_{\alpha ,\mu _{0},\mu _{0}^{\prime }}^{-1}(y)\right)
\right) .
\end{equation*}%
By almost invariance of $\varphi $ we have that $\varphi \left( y\right)
=\varphi \left( g_{\alpha ,\mu _{0},\mu _{0}^{\prime }}^{-1}(y)\right) $ for
all $y\in \mathbb{R}^{n}\backslash N$ with $\lambda _{\mathbb{R}^{n}}\left(
N\right) =0$ (where $N$ may depend on $g_{\alpha ,\mu _{0},\mu _{0}^{\prime
}}^{-1}$). Since $\Phi $ is positive definite, also $P_{\alpha (\mu -\mu
_{0})+\mu _{0}^{\prime },\alpha ^{2}\sigma ^{2}\Phi }(N)=0$ holds, and thus
the right-hand side of the above display equals $E_{\alpha (\mu -\mu
_{0})+\mu _{0}^{\prime },\alpha ^{2}\sigma ^{2}\Phi }\left( \varphi \left(
y\right) \right) $ which proves the first claim.

(2) Setting $\alpha =1$ in (\ref{power_inv}) shows that the rejection
probability is invariant under addition of elements that belong to $%
\mathfrak{M}_{0}-\mu _{0}$. Since $\mu =\Pi _{\left( \mathfrak{M}_{0}-\mu
_{0}\right) }\left( \mu -\mu _{0}\right) +\Pi _{\left( \mathfrak{M}_{0}-\mu
_{0}\right) ^{\bot }}\left( \mu -\mu _{0}\right) +\mu _{0}$ we thus conclude
that $E_{\mu ,\sigma ^{2}\Phi }(\varphi )=E_{\nu +\mu _{0},\sigma ^{2}\Phi
}(\varphi )$ where $\nu =\Pi _{\left( \mathfrak{M}_{0}-\mu _{0}\right)
^{\bot }}(\mu -\mu _{0})\in \mathfrak{M}$. Now applying (\ref{power_inv})
with $\alpha =\sigma ^{-1}$ and $\mu _{0}^{\prime }=\mu _{0}$ to $E_{\nu
+\mu _{0},\sigma ^{2}\Phi }(\varphi )$ establishes the first equality in (%
\ref{power_inv_2}). The second equality follows by the same argument by
setting $\alpha =\pm \sigma ^{-1}$, the sign equaling the sign of the first
non-zero component of $\nu $ if $\nu \neq 0$, and the choice of sign being
irrelevant if $\nu =0$.

(3) The first claim is an immediate consequence of (\ref{power_inv_2}). For
the second claim it suffices to show that $\mu -X\hat{\beta}_{rest}(\mu )$
(for $\mu \in \mathfrak{M}$) is an injective linear function of $R\beta -r$,
bijectivity of this mapping following from dimension considerations. To this
end note that%
\begin{eqnarray*}
\mu -X\hat{\beta}_{rest}(\mu ) &=&X\beta -X\left[ \hat{\beta}(\mu )-\left(
X^{\prime }X\right) ^{-1}R^{\prime }\left( R\left( X^{\prime }X\right)
^{-1}R^{\prime }\right) ^{-1}\left( R\hat{\beta}(\mu )-r\right) \right] \\
&=&X\beta -X\left[ \beta -\left( X^{\prime }X\right) ^{-1}R^{\prime }\left(
R\left( X^{\prime }X\right) ^{-1}R^{\prime }\right) ^{-1}\left( R\beta
-r\right) \right] \\
&=&\left( X^{\prime }X\right) ^{-1}R^{\prime }\left( R\left( X^{\prime
}X\right) ^{-1}R^{\prime }\right) ^{-1}\left( R\beta -r\right)
\end{eqnarray*}%
and that the matrix premultiplying $R\beta -r$ is of full column rank $q$.

(4) This follows similarly as in (1) observing that for invariant $\varphi $
the exceptional set $N$ is empty. $\blacksquare $

\textbf{Proof of Proposition \ref{prop_3}: }Set $\bar{h}(\mu ,\sigma
^{2})=\left\langle \Pi _{\left( \mathfrak{M}_{0}-\mu _{0}\right) ^{\bot
}}(\mu -\mu _{0})/\sigma \right\rangle $. The invariance of $\left( \bar{h}%
(\mu ,\sigma ^{2}),\Sigma \right) $ follows from a simple computation
similar to the one in the proof of Proposition \ref{max_inv}. Now assume
that $\left( \bar{h}(\mu ,\sigma ^{2}),\Sigma \right) =\left( \bar{h}(\mu
^{\prime },\sigma ^{\prime 2}),\Sigma ^{\prime }\right) $. We immediately
get $\bar{h}(\mu ,\sigma ^{2})=\bar{h}(\mu ^{\prime },\sigma ^{\prime 2})$
and $\Sigma =\Sigma ^{\prime }$. The former implies%
\begin{equation*}
\Pi _{\left( \mathfrak{M}_{0}-\mu _{0}\right) ^{\bot }}\left( \left( \mu
-\mu _{0}\right) -c\left( \sigma /\sigma ^{\prime }\right) \left( \mu
^{\prime }-\mu _{0}\right) \right) =0
\end{equation*}%
where $c=\pm 1$. Similar calculations as in the proof of Proposition \ref%
{max_inv} give%
\begin{equation*}
\mu ^{\prime }=c\left( \sigma ^{\prime }/\sigma \right) \left( \mu -\mu
^{\ast }\right) +\mu _{0}
\end{equation*}%
for some $\mu ^{\ast }\in \mathfrak{M}_{0}$. Together with $\Sigma =\Sigma
^{\prime }$ this shows that $(\mu ^{\prime },\sigma ^{\prime 2},\Sigma
^{\prime })$ is in the same orbit under the associated group as is $(\mu
,\sigma ^{2},\Sigma )$. $\blacksquare $

\section{Appendix: Proofs and Auxiliary Results for Subsections \protect\ref%
{sec_neg} and \protect\ref{sec_pos} \label{App_D}}

The next lemma is a simple consequence of a continuity property of the
characteristic function of a multivariate Gaussian probability measure and
of the Portmanteau theorem.

\begin{lemma}
\label{conc} Let $\Phi _{m}$ be a sequence of nonnegative definite symmetric 
$n\times n$ matrices converging to an $n\times n$ matrix $\Phi $ as $%
m\rightarrow \infty $, where $\Phi $ may be singular, and let $\mu _{m}\in 
\mathbb{R}^{n}$ be a sequence converging to $\mu \in \mathbb{R}^{n}$ as $%
m\rightarrow \infty $. Then $P_{\mu _{m},\Phi _{m}}$ converges weakly to $%
P_{\mu ,\Phi }$. If, in addition, $A\in \mathcal{B}(\mathbb{R}^{n})$
satisfies $\lambda _{\mu +\limfunc{span}(\Phi )}(\limfunc{bd}(A))=0$, then $%
P_{\mu _{m},\Phi _{m}}(A)\rightarrow P_{\mu ,\Phi }(A)$.
\end{lemma}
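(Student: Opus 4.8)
The plan is to get the weak convergence from pointwise convergence of characteristic functions, and then to deduce $P_{\mu_m,\Phi_m}(A)\to P_{\mu,\Phi}(A)$ from the Portmanteau theorem once I have verified that $A$ is a continuity set of the limit law $P_{\mu,\Phi}$. Everything rests on the explicit form of the Gaussian characteristic function and on identifying the support of $P_{\mu,\Phi}$.

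First I would record that for a (possibly degenerate) Gaussian the characteristic function is
\[
\widehat{P}_{\mu_m,\Phi_m}(t)=\exp\!\left(\iota\, t'\mu_m-\tfrac12\, t'\Phi_m t\right),\qquad t\in\mathbb{R}^{n}.
\]
Since $\mu_m\to\mu$ and $\Phi_m\to\Phi$ (entrywise, hence in any matrix norm), for each fixed $t$ the exponent converges to $\iota\, t'\mu-\tfrac12\, t'\Phi t$, so that $\widehat{P}_{\mu_m,\Phi_m}(t)\to\exp(\iota\, t'\mu-\tfrac12\, t'\Phi t)=\widehat{P}_{\mu,\Phi}(t)$. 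This is exactly the ``continuity property'' alluded to: the map $(\mu,\Phi)\mapsto\widehat{P}_{\mu,\Phi}(t)$ is continuous. The limit is the characteristic function of the Gaussian law $P_{\mu,\Phi}$, which is a genuine probability measure even when $\Phi$ is singular (it is then concentrated on the affine subspace $\mu+\limfunc{span}(\Phi)$), and in particular its characteristic function is continuous at $t=0$. L\'{e}vy's continuity theorem then yields $P_{\mu_m,\Phi_m}\weakconv P_{\mu,\Phi}$, which is the first assertion.

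For the second assertion I would invoke the Portmanteau theorem: given the weak convergence just established, $P_{\mu_m,\Phi_m}(A)\to P_{\mu,\Phi}(A)$ holds as soon as $A$ is a $P_{\mu,\Phi}$-continuity set, i.e. $P_{\mu,\Phi}(\limfunc{bd}(A))=0$. To obtain this from the hypothesis $\lambda_{\mu+\limfunc{span}(\Phi)}(\limfunc{bd}(A))=0$, set $\mathcal{A}=\mu+\limfunc{span}(\Phi)$ and note that $P_{\mu,\Phi}$ is concentrated on $\mathcal{A}$ and, viewed as a non-degenerate Gaussian on the affine space $\mathcal{A}$ of dimension $\limfunc{rank}(\Phi)$, is absolutely continuous with respect to $\lambda_{\mathcal{A}}$ (with the usual convention that for $\Phi=0$ this just means point mass at $\mu$). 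Hence $\lambda_{\mathcal{A}}(\limfunc{bd}(A))=0$ forces
\[
P_{\mu,\Phi}(\limfunc{bd}(A))=P_{\mu,\Phi}\!\left(\limfunc{bd}(A)\cap\mathcal{A}\right)=0,
\]
and the Portmanteau theorem applies.

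The computations are routine; the only point needing a little care is the translation of the geometric hypothesis on $A$ into the probabilistic continuity-set condition, namely the observation that $P_{\mu,\Phi}$ lives on $\mathcal{A}$ and is dominated there by $\lambda_{\mathcal{A}}$. This is precisely what makes a singular limit $\Phi$ harmless: whatever the rank of $\Phi$, the pertinent null-set notion is exactly $\lambda_{\mu+\limfunc{span}(\Phi)}$, which is the measure already appearing in the statement, so no separate treatment of the degenerate case is required beyond the stated convention.
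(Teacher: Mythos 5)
Your proof is correct and follows exactly the route the paper indicates (it gives only a one-sentence justification via the continuity of the Gaussian characteristic function plus the Portmanteau theorem): Lévy's continuity theorem for the weak convergence, then the observation that $P_{\mu ,\Phi }$ is concentrated on $\mu +\limfunc{span}(\Phi )$ and dominated there by $\lambda _{\mu +\limfunc{span}(\Phi )}$, so the boundary hypothesis makes $A$ a continuity set. Nothing is missing.
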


\textbf{Proof of Theorem \ref{inv}: }(1) Since $\mathcal{Z}$ is a
concentration space of $\mathfrak{C}$, there exists a sequence $(\Sigma
_{m})_{m\in \mathbb{N}}$ in $\mathfrak{C}$ converging to $\bar{\Sigma}$ such
that $\limfunc{span}(\bar{\Sigma})=\mathcal{Z}$. Note that $\mu _{0}+%
\mathcal{Z}$ is a $\lambda _{\mathbb{R}^{n}}$-null set because $\dim \left( 
\mathcal{Z}\right) <n$ in view of Definition \ref{CD}. Because $\Sigma _{m}$
is positive definite, we thus have%
\begin{equation*}
P_{\mu _{0},\sigma ^{2}\Sigma _{m}}(W)=P_{\mu _{0},\sigma ^{2}\Sigma
_{m}}(W\cup \left( \mu _{0}+\mathcal{Z}\right) )\text{.}
\end{equation*}%
By Lemma \ref{conc} we then have that $P_{\mu _{0},\sigma ^{2}\Sigma
_{m}}(W\cup \left( \mu _{0}+\mathcal{Z}\right) )$ converges to $P_{\mu
_{0},\sigma ^{2}\bar{\Sigma}}(W\cup \left( \mu _{0}+\mathcal{Z}\right) )$.
But the later probability is not less than $P_{\mu _{0},\sigma ^{2}\bar{%
\Sigma}}(\mu _{0}+\mathcal{Z})$ which equals $1$ since $P_{\mu _{0},\sigma
^{2}\bar{\Sigma}}$ is supported by $\mu _{0}+\mathcal{Z}$. To prove the
claim in parentheses observe that $T(\mu _{0}+z)>C$ and lower semicontinuity
of $T$ at $\mu _{0}+z$ implies that $T(w)>C$ holds for all $w$ in a
neighborhood of $\mu _{0}+z$; hence such points $\mu _{0}+z$ belong to $%
\limfunc{int}(W)\subseteq \limfunc{int}\left( W\cup \left( \mu _{0}+\mathcal{%
Z}\right) \right) $, and consequently do not belong to $\limfunc{bd}\left(
W\cup \left( \mu _{0}+\mathcal{Z}\right) \right) $. But this establishes (%
\ref{int_cond})$.$

(2) Apply the same argument as above to $\mathbb{R}^{n}\backslash W$. Also
note that $P_{\mu _{0},\sigma ^{2}\Sigma }(W)$ can be approximated
arbitrarily closely by $P_{\mu _{1},\sigma ^{2}\Sigma }(W)$ for suitable $%
\mu _{1}\in \mathfrak{M}_{1}$, since $\Vert {P_{\mu _{0},\sigma ^{2}\Sigma
}-P_{\mu _{1},\sigma ^{2}\Sigma }}\Vert _{TV}\rightarrow 0$ for $\mu
_{1}\rightarrow \mu _{0}$ holds by Scheff\'{e}'s Lemma as $\sigma ^{2}\Sigma 
$ is positive definite.

(3) Choose an arbitrary $\mu _{1}\in \mathfrak{M}_{1}$. By assumption we
have $\inf\limits_{\Sigma \in \mathfrak{C}}P_{\mu _{0},\sigma ^{2}\Sigma
}(W)=0$ for a suitable $\sigma ^{2}>0$. It hence suffices to show that for
every $\Sigma \in \mathfrak{C}$%
\begin{equation*}
P_{\mu _{0},\sigma ^{2}\Sigma }(W)-P_{\mu _{1},\sigma ^{2}\tau ^{2}\Sigma
}(W)\rightarrow 0
\end{equation*}%
holds for $\tau \rightarrow \infty $. By almost invariance of $W$ under $%
G\left( \left\{ \mu _{0}\right\} \right) $ we have that $W\bigtriangleup
\left( \tau W+(1-\tau )\mu _{0}\right) $ is a $\lambda _{\mathbb{R}^{n}}$%
-null set. Hence, by the reproductive property of the normal distribution 
\begin{equation*}
P_{\mu _{1},\sigma ^{2}\tau ^{2}\Sigma }(W)=P_{\mu _{1},\sigma ^{2}\tau
^{2}\Sigma }(\tau W+(1-\tau )\mu _{0})=P_{\mu _{0}+\tau ^{-1}(\mu _{1}-\mu
_{0}),\sigma ^{2}\Sigma }(W).
\end{equation*}%
But, since $\sigma ^{2}\Sigma $ is positive definite, we have by an
application of Scheff\'{e}'s Lemma 
\begin{equation*}
\Vert {P_{\mu _{0},\sigma ^{2}\Sigma }-P_{\mu _{0}+\tau ^{-1}(\mu _{1}-\mu
_{0}),\sigma ^{2}\Sigma }}\Vert _{TV}\rightarrow 0
\end{equation*}%
as $\tau \rightarrow \infty $, and hence ${P_{\mu _{0},\sigma ^{2}\Sigma
}(W)-P_{\mu _{0}+\tau ^{-1}(\mu _{1}-\mu _{0}),\sigma ^{2}\Sigma }}%
(W)\rightarrow 0$. The claim in parenthesis is obvious. $\blacksquare $

\begin{lemma}
\label{Lunif_1}Let $\varphi :\mathbb{R}^{n}\rightarrow \lbrack 0,1]$ be a
Borel-measurable function that is almost invariant under $G(\mathfrak{M}%
_{0}) $. Suppose $\Phi _{m}$ is a sequence of positive definite symmetric $%
n\times n$ matrices converging to a positive definite matrix $\Phi $,
suppose $\mu _{m}\in \mathfrak{M}$, and suppose the sequence $\sigma
_{m}^{2} $ satisfies $0<\sigma _{m}^{2}<\infty $. Then%
\begin{equation*}
\lim_{m\rightarrow \infty }E_{\mu _{m},\sigma _{m}^{2}\Phi _{m}}(\varphi
)=E_{\nu +\mu _{0},\Phi }(\varphi )
\end{equation*}%
provided $\nu _{m}^{\ast }=\Pi _{\left( \mathfrak{M}_{0}-\mu _{0}\right)
^{\bot }}(\mu _{m}-\mu _{0})/\sigma _{m}$ for some $\mu _{0}\in \mathfrak{M}%
_{0}$ converges to an element $\nu \in \mathbb{R}^{n}$ (which then
necessarily belongs to $\mathfrak{M}$). [Note that $\nu _{m}^{\ast }$, and
thus the result, does not depend on the choice of $\mu _{0}\in \mathfrak{M}%
_{0}$.]
\end{lemma}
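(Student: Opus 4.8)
The plan is to reduce the claim to a statement about continuity of Gaussian integrals in the mean and covariance parameters, for which the positive definiteness of the limit covariance $\Phi$ is decisive. The entire structural work is done by the invariance representation already established in Part 2 of Proposition \ref{inv_rej_prob}.

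First I would invoke the representation (\ref{power_inv_2}). Since each $\Phi_m$ is positive definite and symmetric and $\varphi$ is almost invariant under $G(\mathfrak{M}_0)$, applying that identity with $\mu=\mu_m$, $\sigma^2=\sigma_m^2$, and $\Phi=\Phi_m$ gives
\[
E_{\mu_m,\sigma_m^2\Phi_m}(\varphi)=E_{\nu_m^\ast+\mu_0,\Phi_m}(\varphi),
\]
where $\nu_m^\ast=\Pi_{\left(\mathfrak{M}_0-\mu_0\right)^\bot}(\mu_m-\mu_0)/\sigma_m$. This strips away both the scaling by $\sigma_m$ and the component of the mean lying inside $\mathfrak{M}_0-\mu_0$, leaving a sequence of rejection probabilities whose only moving parts are the mean $\nu_m^\ast+\mu_0\to\nu+\mu_0$ and the covariance $\Phi_m\to\Phi$. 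Independence of the construction (and hence of the limit) from the choice of $\mu_0\in\mathfrak{M}_0$ is inherited directly from the corresponding independence statement in Proposition \ref{inv_rej_prob}.

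It then remains to show that $E_{\nu_m^\ast+\mu_0,\Phi_m}(\varphi)\to E_{\nu+\mu_0,\Phi}(\varphi)$. Writing these as integrals of $\varphi$ against the Gaussian densities $p_{\nu_m^\ast+\mu_0,\Phi_m}$ and $p_{\nu+\mu_0,\Phi}$, I would first note that because $\Phi$ is positive definite we have $\det\Phi_m\to\det\Phi>0$ and $\Phi_m^{-1}\to\Phi^{-1}$ by continuity of matrix inversion at nonsingular points; hence $p_{\nu_m^\ast+\mu_0,\Phi_m}(y)\to p_{\nu+\mu_0,\Phi}(y)$ for every $y\in\mathbb{R}^n$. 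As these are all probability densities with respect to $\lambda_{\mathbb{R}^n}$, Scheff\'e's lemma yields $\int\left\vert p_{\nu_m^\ast+\mu_0,\Phi_m}-p_{\nu+\mu_0,\Phi}\right\vert\,d\lambda_{\mathbb{R}^n}\to 0$. Since $0\leq\varphi\leq 1$,
\[
\left\vert E_{\nu_m^\ast+\mu_0,\Phi_m}(\varphi)-E_{\nu+\mu_0,\Phi}(\varphi)\right\vert\leq\int\left\vert p_{\nu_m^\ast+\mu_0,\Phi_m}-p_{\nu+\mu_0,\Phi}\right\vert\,d\lambda_{\mathbb{R}^n}\to 0,
\]
which establishes the lemma.

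I anticipate that the only genuinely substantive point is precisely the one that forces total-variation rather than weak convergence: $\varphi$ is merely Borel measurable and will in general be discontinuous on a null set, so the Portmanteau-type reasoning used in Lemma \ref{conc} is unavailable here. The nonsingularity of the limit $\Phi$ is exactly what rescues the argument, since it guarantees pointwise convergence of the densities and thus the applicability of Scheff\'e's lemma. For completeness I would also record the parenthetical claim that $\nu\in\mathfrak{M}$: each $\mu_m-\mu_0$ lies in the linear space $\mathfrak{M}$, and $\Pi_{\left(\mathfrak{M}_0-\mu_0\right)^\bot}$ maps $\mathfrak{M}$ into $\mathfrak{M}\cap\left(\mathfrak{M}_0-\mu_0\right)^\bot\subseteq\mathfrak{M}$, so $\nu_m^\ast\in\mathfrak{M}$ and $\nu\in\mathfrak{M}$ follows from closedness of $\mathfrak{M}$.
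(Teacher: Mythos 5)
Your proposal is correct and follows essentially the same route as the paper: the paper's proof also first applies Proposition \ref{inv_rej_prob} to obtain $E_{\mu_m,\sigma_m^2\Phi_m}(\varphi)=E_{\nu_m^\ast+\mu_0,\Phi_m}(\varphi)$ and then appeals to convergence in total variation of $P_{\nu_m^\ast+\mu_0,\Phi_m}$ to $P_{\nu+\mu_0,\Phi}$, which you simply make explicit via pointwise density convergence and Scheff\'e's lemma. Your remark that positive definiteness of $\Phi$ is what licenses total-variation (rather than merely weak) convergence is exactly the right observation.
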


\begin{proof}
By Proposition \ref{inv_rej_prob} we have that $E_{\mu _{m},\sigma
_{m}^{2}\Phi _{m}}(\varphi )=E_{\nu _{m}^{\ast }+\mu _{0},\Phi _{m}}(\varphi
)$. Since $\nu _{m}^{\ast }\rightarrow \nu $ and since $\Phi _{m}\rightarrow
\Phi $, with $\Phi $ positive definite, the result follows from total
variation distance convergence of $P_{\nu _{m}^{\ast }+\mu _{0},\Phi _{m}}$
to $P_{\nu +\mu _{0},\Phi }$.
\end{proof}

\begin{remark}
\label{rem_Lunif_1}(i) Consider the case where $\nu _{m}^{\ast }=\Pi
_{\left( \mathfrak{M}_{0}-\mu _{0}\right) ^{\bot }}(\mu _{m}-\mu
_{0})/\sigma _{m}$ does not converge. Then, as long as the sequence $\nu
_{m}^{\ast }$ is bounded, the above lemma can be applied by passing to
subsequences along which $\nu _{m}^{\ast }$ converges. In the case where the
sequence $\nu _{m}^{\ast }$ is unbounded, then, along subsequences such that
the norm of $\nu _{m}^{\ast }$ diverges, one would expect $E_{\mu
_{m},\sigma _{m}^{2}\Phi _{m}}(\varphi )=E_{\nu _{m}^{\ast }+\mu _{0},\Phi
_{m}}(\varphi )$ to converge to $1$ for any reasonable test since $\nu
_{m}^{\ast }+\mu _{0}$ moves farther and farther away from $\mathfrak{M}_{0}$
(and $\Phi _{m}$ stabilizes at a positive definite matrix). Indeed, such a
result can be shown for a large class of tests, see Lemma \ref{LWT}.

(ii) In the special case where $\mu _{m}\equiv \mu $ it is easy to see,
using Proposition \ref{inv_rej_prob}, that the limit in the above lemma is $%
E_{\mu ,\sigma ^{2}\Phi }(\varphi )$ if $\sigma _{m}^{2}\rightarrow \sigma
^{2}\in (0,\infty )$\ and $\mu \in \mathfrak{M}_{1}$, is $E_{\mu _{0},\Phi
}(\varphi )$ if $\sigma _{m}^{2}\rightarrow \infty $\ and $\mu \in \mathfrak{%
M}_{1}$, and is $E_{\mu ,\Phi }(\varphi )$ if $\mu \in \mathfrak{M}_{0}$.
\end{remark}

\begin{lemma}
\label{LUNIF}Let $\varphi :\mathbb{R}^{n}\rightarrow \lbrack 0,1]$ be a
Borel-measurable function that is almost invariant under $G(\mathfrak{M}%
_{0}) $. Suppose $\Phi _{m}$ is a sequence of positive definite symmetric $%
n\times n$ matrices converging to a singular matrix $\Phi $, suppose $\mu
_{m}\in \mathfrak{M}$, and $\sigma _{m}^{2}$ is a sequence satisfying $%
0<\sigma _{m}^{2}<\infty $. Assume further that $\varphi (x+z)=\varphi (x)$
holds for every $x\in \mathbb{R}^{n}$ and every $z\in \limfunc{span}(\Phi )$%
. Suppose that for some sequence of positive real numbers $s_{m}$ the matrix 
$D_{m}=\Pi _{\limfunc{span}(\Phi )^{\bot }}\Phi _{m}\Pi _{\limfunc{span}%
(\Phi )^{\bot }}/s_{m}$ converges to a matrix $D$, which is regular on the
orthogonal complement of $\limfunc{span}(\Phi )$. Then%
\begin{equation*}
\lim_{m\rightarrow \infty }E_{\mu _{m},\sigma _{m}^{2}\Phi _{m}}(\varphi
)=E_{\nu +\mu _{0},D+\Phi }(\varphi )=E_{\nu +\mu _{0},D}(\varphi )
\end{equation*}%
provided $\nu _{m}^{\ast \ast }=\Pi _{\left( \mathfrak{M}_{0}-\mu
_{0}\right) ^{\bot }}(\mu _{m}-\mu _{0})/\left( \sigma
_{m}s_{m}^{1/2}\right) $ for some $\mu _{0}\in \mathfrak{M}_{0}$ converges
to an element $\nu \in \mathbb{R}^{n}$ (which then necessarily belongs to $%
\mathfrak{M}$). [Note that $\nu _{m}^{\ast \ast }$, and thus the result,
does not depend on the choice of $\mu _{0}\in \mathfrak{M}_{0}$.]
Furthermore, the matrix $D+\Phi $ is positive definite.
\end{lemma}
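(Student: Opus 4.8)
The plan is to reduce the left-hand side, via the invariance properties of $\varphi$, to a sequence of \emph{nondegenerate} Gaussian integrals over the subspace $\limfunc{span}(\Phi)^{\bot}$ whose parameters converge, and then to pass to the limit by a total-variation argument. Write $Q=\Pi_{\limfunc{span}(\Phi)^{\bot}}$ and $P=I_{n}-Q$. First I would record two preliminary facts. The hypothesis $\varphi(x+z)=\varphi(x)$ for $z\in\limfunc{span}(\Phi)$ is equivalent (taking $z=Px$) to $\varphi(x)=\varphi(Qx)$ for all $x$. Also, $\nu_{m}^{\ast\ast}=\Pi_{(\mathfrak{M}_{0}-\mu_{0})^{\bot}}(\mu_{m}-\mu_{0})/(\sigma_{m}s_{m}^{1/2})$ lies in $\mathfrak{M}$ for every $m$, since $\Pi_{(\mathfrak{M}_{0}-\mu_{0})^{\bot}}(\mu_{m}-\mu_{0})=\mu_{m}-X\hat{\beta}_{rest}(\mu_{m})\in\mathfrak{M}$ (cf. Proposition \ref{inv_rej_prob}) and $\mathfrak{M}$ is a linear space; this quantity is independent of the choice of $\mu_{0}\in\mathfrak{M}_{0}$, and closedness of $\mathfrak{M}$ gives $\nu=\lim\nu_{m}^{\ast\ast}\in\mathfrak{M}$.

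The reduction proceeds in two invariance steps. Since each $\Phi_{m}$ is positive definite and $\varphi$ is almost invariant under $G(\mathfrak{M}_{0})$, Part 2 of Proposition \ref{inv_rej_prob} gives $E_{\mu_{m},\sigma_{m}^{2}\Phi_{m}}(\varphi)=E_{\nu_{m}^{\circ}+\mu_{0},\Phi_{m}}(\varphi)$ with $\nu_{m}^{\circ}=\Pi_{(\mathfrak{M}_{0}-\mu_{0})^{\bot}}(\mu_{m}-\mu_{0})/\sigma_{m}$. Applying Part 1 of the same proposition with $\alpha=s_{m}^{-1/2}$ and $\mu_{0}'=\mu_{0}$ rescales this to $E_{\nu_{m}^{\ast\ast}+\mu_{0},\,s_{m}^{-1}\Phi_{m}}(\varphi)$, because $s_{m}^{-1/2}\nu_{m}^{\circ}=\nu_{m}^{\ast\ast}$. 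The purpose of the rescaling is that $s_{m}^{-1}\Phi_{m}$ blows up in the directions of $\limfunc{span}(\Phi)$ (indeed $s_{m}\to 0$ is forced by $Q\Phi_{m}Q/s_{m}\to D$ together with $Q\Phi_{m}Q\to Q\Phi Q=0$), but stabilizes in the directions of $\limfunc{span}(\Phi)^{\bot}$. The invariance $\varphi(x)=\varphi(Qx)$ lets me discard the blow-up: writing the expectation as $\mathbb{E}[\varphi(Q\mathbf{X}_{m})]$ with $\mathbf{X}_{m}\sim N(\nu_{m}^{\ast\ast}+\mu_{0},s_{m}^{-1}\Phi_{m})$, the law of $Q\mathbf{X}_{m}$ is the Gaussian $N(Q(\nu_{m}^{\ast\ast}+\mu_{0}),D_{m})$ supported on $\limfunc{span}(\Phi)^{\bot}$, where $D_{m}=Q\Phi_{m}Q/s_{m}$.

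Next I would pass to the limit. For $v\in\limfunc{span}(\Phi)^{\bot}$ with $v\neq 0$ one has $v'D_{m}v=v'\Phi_{m}v/s_{m}>0$, so $D_{m}$ is positive definite on $\limfunc{span}(\Phi)^{\bot}$ for every $m$; its limit $D$ is nonnegative definite, vanishes on $\limfunc{span}(\Phi)$, and is regular, hence positive definite, on $\limfunc{span}(\Phi)^{\bot}$ by hypothesis. Thus, viewed as Gaussians on the fixed subspace $\limfunc{span}(\Phi)^{\bot}$, the measures $N(Q(\nu_{m}^{\ast\ast}+\mu_{0}),D_{m})$ have means converging to $Q(\nu+\mu_{0})$ and nonsingular covariances converging to the nonsingular $D$; their densities converge pointwise, so by Scheff\'{e}'s Lemma they converge in total variation to $N(Q(\nu+\mu_{0}),D)$. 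Since $\varphi$ is bounded and measurable, total-variation convergence yields $E_{\mu_{m},\sigma_{m}^{2}\Phi_{m}}(\varphi)\to\mathbb{E}[\varphi(Q(\nu+\mu_{0})+\mathbf{W})]$ with $\mathbf{W}\sim N(0,D)$.

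Finally I would identify this limit with both displayed expressions and verify positivity. Using $\varphi(x)=\varphi(Qx)$ and $QD^{1/2}=D^{1/2}$ gives $\mathbb{E}[\varphi(Q(\nu+\mu_{0})+\mathbf{W})]=E_{\nu+\mu_{0},D}(\varphi)$. Because $D=QDQ$ and $\Phi=P\Phi P$ act on the complementary orthogonal subspaces $\limfunc{span}(\Phi)^{\bot}$ and $\limfunc{span}(\Phi)$, the matrix $D+\Phi$ is block diagonal with $(D+\Phi)^{1/2}=D^{1/2}+\Phi^{1/2}$; since $\Phi^{1/2}\mathbf{G}\in\limfunc{span}(\Phi)$, the invariance $\varphi(x+z)=\varphi(x)$ for $z\in\limfunc{span}(\Phi)$ gives $E_{\nu+\mu_{0},D+\Phi}(\varphi)=E_{\nu+\mu_{0},D}(\varphi)$, establishing both equalities. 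Positive definiteness of $D+\Phi$ then follows by decomposing any $v=v_{1}+v_{2}$ with $v_{1}\in\limfunc{span}(\Phi)^{\bot}$, $v_{2}\in\limfunc{span}(\Phi)$ and noting $v'(D+\Phi)v=v_{1}'Dv_{1}+v_{2}'\Phi v_{2}$, where $D$ is positive definite on $\limfunc{span}(\Phi)^{\bot}$ and $\Phi$ is positive definite on its range $\limfunc{span}(\Phi)$. The main obstacle is the degeneracy in the middle step: the rescaled covariances $s_{m}^{-1}\Phi_{m}$ fail to converge on all of $\mathbb{R}^{n}$, so the crux is to invoke the extra invariance to project onto $\limfunc{span}(\Phi)^{\bot}$ \emph{before} taking limits, turning a degenerate limit into a convergent sequence of nondegenerate Gaussians on a fixed subspace, to which Scheff\'{e}'s Lemma applies.
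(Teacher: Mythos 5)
Your proof is correct, but the key limiting step is carried out differently from the paper's. Both arguments start by exploiting the invariance $\varphi (x)=\varphi (\mu _{m}+\Pi _{\limfunc{span}(\Phi )^{\bot }}(x-\mu _{m}))$ to replace the covariance by its compression onto $\limfunc{span}(\Phi )^{\bot }$ (display (\ref{project}) in the paper). From there the paper does \emph{not} work on the subspace: it restores a full-rank covariance on $\mathbb{R}^{n}$ by averaging $\varphi (\cdot +z)$ over an independent Gaussian $z\sim P_{0,\sigma _{m}^{2}s_{m}\Phi }$ supported on $\limfunc{span}(\Phi )$ --- legitimate because $\varphi $ is invariant under such translations --- thereby replacing $D_{m}$ by the nonsingular $D_{m}+\Phi $ (display (\ref{augment})), and then simply invokes the nondegenerate convergence result, Lemma \ref{Lunif_1}, applied to the sequence $\sigma _{m}^{2}s_{m}(D_{m}+\Phi )$; the identity $E_{\nu +\mu _{0},D+\Phi }(\varphi )=E_{\nu +\mu _{0},D}(\varphi )$ then comes from the same averaging trick. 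You instead perform the $G(\mathfrak{M}_{0})$-reduction and the $s_{m}^{-1/2}$ rescaling up front via Proposition \ref{inv_rej_prob}, push the measure forward under $\Pi _{\limfunc{span}(\Phi )^{\bot }}$, and apply Scheff\'{e}'s Lemma directly to the resulting nondegenerate Gaussians on the fixed subspace $\limfunc{span}(\Phi )^{\bot }$. What the paper's augmentation buys is that everything stays in $\mathbb{R}^{n}$ and Lemma \ref{Lunif_1} can be cited verbatim, with no need to manipulate Lebesgue densities on a proper subspace; what your route buys is a shorter, self-contained argument that makes the mechanism (degenerate directions are neutralized by invariance, the surviving directions converge nondegenerately) completely explicit. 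The supporting details you supply --- $s_{m}\rightarrow 0$, positive definiteness of $D_{m}$ and $D$ on $\limfunc{span}(\Phi )^{\bot }$, $\Pi _{\limfunc{span}(\Phi )^{\bot }}D^{1/2}=D^{1/2}$, and the block decomposition showing $D+\Phi $ is positive definite --- all check out.
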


\begin{proof}
Because $\Pi _{\limfunc{span}(\Phi )}\left( x-\mu _{m}\right) \in \limfunc{%
span}(\Phi )$, we obtain by the assumed invariance w.r.t. addition of $z\in 
\limfunc{span}(\Phi )$%
\begin{equation*}
\varphi (x)=\varphi (\mu _{m}+\Pi _{\limfunc{span}(\Phi )^{\bot }}\left(
x-\mu _{m}\right) +\Pi _{\limfunc{span}(\Phi )}\left( x-\mu _{m}\right)
)=\varphi (\mu _{m}+\Pi _{\limfunc{span}(\Phi )^{\bot }}\left( x-\mu
_{m}\right) )
\end{equation*}%
for every $x$. By the transformation theorem we then have on the one hand%
\begin{eqnarray}
E_{\mu _{m},\sigma _{m}^{2}\Phi _{m}}(\varphi (\cdot )) &=&E_{\mu
_{m},\sigma _{m}^{2}\Phi _{m}}(\varphi (\mu _{m}+\Pi _{\limfunc{span}(\Phi
)^{\bot }}\left( \cdot -\mu _{m}\right) ))=E_{\mu _{m},\sigma _{m}^{2}\Pi _{%
\limfunc{span}(\Phi )^{\bot }}\Phi _{m}\Pi _{\limfunc{span}(\Phi )^{\bot
}}}(\varphi (\cdot ))  \notag \\
&=&E_{\mu _{m},\sigma _{m}^{2}s_{m}D_{m}}(\varphi (\cdot )).  \label{project}
\end{eqnarray}%
On the other hand, by the same invariance property of $\varphi $ 
\begin{equation*}
E_{\mu _{m},\sigma _{m}^{2}s_{m}D_{m}}(\varphi (\cdot ))=E_{\mu _{m},\sigma
_{m}^{2}s_{m}D_{m}}(\varphi (\cdot +z))
\end{equation*}%
holds for every $z\in \limfunc{span}(\Phi )$. Integrating this w.r.t. a
normal distribution $P_{0,\sigma _{m}^{2}s_{m}\Phi }$ (in the variable $z$)
and using the reproductive property of the normal distribution gives%
\begin{equation}
E_{\mu _{m},\sigma _{m}^{2}s_{m}D_{m}}(\varphi (x))=E_{Q_{m}}(\varphi
(x+z))=E_{\mu _{m},\sigma _{m}^{2}s_{m}(D_{m}+\Phi )}(\varphi (x))
\label{augment}
\end{equation}%
where $Q_{m}$ denotes the product of the Gaussian measures $P_{\mu
_{m},\sigma _{m}^{2}s_{m}D_{m}}$ and $P_{0,\sigma _{m}^{2}s_{m}\Phi }$.
Observe that $D+\Phi $ as well as $D_{m}+\Phi $ are positive definite. An
application of Lemma \ref{Lunif_1} now gives

\begin{equation*}
\lim_{m\rightarrow \infty }E_{\mu _{m},\sigma _{m}^{2}s_{m}(D_{m}+\Phi
)}(\varphi )=E_{\nu +\mu _{0},D+\Phi }(\varphi ).
\end{equation*}%
The same argument that has led to (\ref{augment}) now shows that $E_{\nu
+\mu _{0},D+\Phi }(\varphi )=E_{\nu +\mu _{0},D}(\varphi )$. Combining this
with (\ref{project}) completes the proof of the display in the theorem. The
positive definiteness of $D+\Phi $ is obvious as noted earlier in the proof.
\end{proof}

\begin{remark}
\label{rem_LUNIF}(i)\ A remark similar to Remark \ref{rem_Lunif_1}(i) also
applies here. In particular, we typically can expect $E_{\mu _{m},\sigma
_{m}^{2}\Phi _{m}}(\varphi )$ to converge to $1$ in case the norm of $\nu
_{m}^{\ast \ast }$ diverges.

(ii) In the special case where $\mu _{m}\equiv \mu $ it is easy to see,
using Proposition \ref{inv_rej_prob}, that the limit in the above lemma is $%
E_{\mu ,\kappa (D+\Phi )}(\varphi )=E_{\mu ,\kappa D}(\varphi )$ if $\sigma
_{m}^{2}s_{m}\rightarrow \kappa \in (0,\infty )$\ and $\mu \in \mathfrak{M}%
_{1}$, is $E_{\mu _{0},D+\Phi }(\varphi )=E_{\mu _{0},D}(\varphi )$ if $%
\sigma _{m}^{2}s_{m}\rightarrow \infty $\ and $\mu \in \mathfrak{M}_{1}$,
and is $E_{\mu ,D+\Phi }(\varphi )=E_{\mu ,D}(\varphi )$ if $\mu \in 
\mathfrak{M}_{0}$.
\end{remark}

\begin{remark}
\label{rem_scaling_factors}(i) If $s_{m}$ and $s_{m}^{\ast }$ are two
positive scaling factors such that $\Pi _{\limfunc{span}(\Phi )^{\bot }}\Phi
_{m}\Pi _{\limfunc{span}(\Phi )^{\bot }}/s_{m}\rightarrow D$ and $\Pi _{%
\limfunc{span}(\Phi )^{\bot }}\Phi _{m}\Pi _{\limfunc{span}(\Phi )^{\bot
}}/s_{m}^{\ast }\rightarrow D^{\ast }$ with both $D$ and $D^{\ast }$ being
regular on the orthogonal complement of $\limfunc{span}(\Phi )$, then $%
s_{m}/s_{m}^{\ast }$ must converge to a positive finite number, i.e., the
scaling sequence is essentially uniquely determined.

(ii) Typical choices for $s_{m}$ are $s_{m}^{(1)}=\left\Vert \Pi _{\limfunc{%
span}(\Phi )^{\bot }}\Phi _{m}\Pi _{\limfunc{span}(\Phi )^{\bot
}}\right\Vert $ (for some choice of norm) or $s_{m}^{(2)}=\limfunc{tr}(\Pi _{%
\limfunc{span}(\Phi )^{\bot }}\Phi _{m}\Pi _{\limfunc{span}(\Phi )^{\bot }})$%
; note that $s_{m}^{(1)}$ as well as $s_{m}^{(2)}$ are positive, since $\Phi
_{m}$ is positive definite and $\Phi $ is singular. With both choices
convergence of $\Pi _{\limfunc{span}(\Phi )^{\bot }}\Phi _{m}\Pi _{\limfunc{%
span}(\Phi )^{\bot }}/s_{m}$ (at least along suitable subsequences) is
automatic. Furthermore, since for any choice of norm we have $%
c_{1}\left\Vert \Pi _{\limfunc{span}(\Phi )^{\bot }}\Phi _{m}\Pi _{\limfunc{%
span}(\Phi )^{\bot }}\right\Vert \leq \limfunc{tr}(\Pi _{\limfunc{span}(\Phi
)^{\bot }}\Phi _{m}\Pi _{\limfunc{span}(\Phi )^{\bot }})\leq c_{2}\left\Vert
\Pi _{\limfunc{span}(\Phi )^{\bot }}\Phi _{m}\Pi _{\limfunc{span}(\Phi
)^{\bot }}\right\Vert $ for suitable $0<c_{1}\leq c_{2}<\infty $, we have
convergence of $s_{m}^{(1)}/s_{m}^{(2)}$ to a positive finite number (at
least along suitable subsequences). Hence, which of the normalization
factors $s_{m}^{(i)}$ is used in an application of the above lemma,
typically does not make a difference.
\end{remark}

\textbf{Proof of Theorem \ref{TU}: }(1) By the invariance properties of the
rejection probability expressed in Proposition \ref{inv_rej_prob} it
suffices to show for an arbitrary fixed $\mu _{0}\in \mathfrak{M}_{0}$ that%
\begin{equation*}
\sup\limits_{\Sigma \in \mathfrak{C}}E_{\mu _{0},\Sigma }(\varphi )<1
\end{equation*}%
in order to establish the first claim in Part 1. To this end let $\Sigma
_{m}\in \mathfrak{C}$ be a sequence such that $E_{\mu _{0},\Sigma
_{m}}(\varphi )$ converges to $\sup_{\Sigma \in \mathfrak{C}}E_{\mu
_{0},\Sigma }(\varphi )$. Since $\mathfrak{C}$ is assumed to be bounded, we
may assume without loss of generality that $\Sigma _{m}$ converges to a
matrix $\bar{\Sigma}$ (not necessarily in $\mathfrak{C}$). If $\bar{\Sigma}$
is positive definite, it follows from Lemma \ref{Lunif_1} applied to $E_{\mu
_{0},\Sigma _{m}}(\varphi )$ that $\sup_{\Sigma \in \mathfrak{C}}E_{\mu
_{0},\Sigma }(\varphi )=E_{\mu _{0},\bar{\Sigma}}(\varphi )$ (since $\nu =0$%
). But $E_{\mu _{0},\bar{\Sigma}}(\varphi )$ is less than $1$ since $\varphi
\leq 1$ is not $\lambda _{\mathbb{R}^{n}}$-almost everywhere equal to $1$.
If $\bar{\Sigma}$ is singular, then in view of the assumptions of the
theorem we can pass to the subsequence $\Sigma _{m_{i}}$ and then apply
Lemma \ref{LUNIF} to $E_{\mu _{0},\Sigma _{m_{i}}}(\varphi )$ to obtain that 
$\sup_{\Sigma \in \mathfrak{C}}E_{\mu _{0},\Sigma }(\varphi )=E_{\mu _{0},D+%
\bar{\Sigma}}(\varphi )$ (since again $\nu =0$) for a matrix $D$ with the
properties as given in the theorem. But $E_{\mu _{0},D+\bar{\Sigma}}(\varphi
)$ is less than $1$, since $D+\bar{\Sigma}$ is positive definite (as noted
in Lemma \ref{LUNIF}) and since $\varphi \leq 1$ is not $\lambda _{\mathbb{R}%
^{n}}$-almost everywhere equal to $1$. This proves the first claim of Part 1
of the theorem. To prove the second claim in Part 1, observe that for the
same invariance reasons it suffices to show that for an arbitrary fixed $\mu
_{0}\in \mathfrak{M}_{0}$%
\begin{equation*}
\inf_{\Sigma \in \mathfrak{C}}E_{\mu _{0},\Sigma }(\varphi )>0
\end{equation*}%
holds. Now the same argument as before shows that this infimum either equals 
$E_{\mu _{0},\bar{\Sigma}}(\varphi )$ for some positive definite $\bar{\Sigma%
}$, or equals $E_{\mu _{0},D+\bar{\Sigma}}(\varphi )$ for some positive
definite $D+\bar{\Sigma}$. Since $\varphi \geq 0$, but $\varphi $ is not $%
\lambda _{\mathbb{R}^{n}}$-almost everywhere equal to $0$ by assumption, the
result follows.

(2) Let $\mu _{m}\in \mathfrak{M}_{1}$, $0<\sigma _{m}^{2}<\infty $, and $%
\Sigma _{m}\in \mathfrak{C}$ be sequences such that $E_{\mu _{m},\sigma
_{m}^{2}\Sigma _{m}}(\varphi )$ converges to $\inf_{\mu _{1}\in \mathfrak{M}%
_{1}}\inf_{\sigma ^{2}>0}\inf_{\Sigma \in \mathfrak{C}}E_{\mu _{1},\sigma
^{2}\Sigma }(\varphi )$. Since $\mathfrak{C}$ is assumed to be bounded, we
may assume without loss of generality that $\Sigma _{m}$ converges to a
matrix $\bar{\Sigma}$.

Consider first the case where $\bar{\Sigma}$ is positive definite: Set $\nu
_{m}^{\ast }=\Pi _{\left( \mathfrak{M}_{0}-\mu _{0}\right) ^{\bot }}(\mu
_{m}-\mu _{0})/\sigma _{m}$. If this sequence is bounded, we may pass to a
subsequence $m^{\prime }$ such that $\nu _{m^{\prime }}^{\ast }$ converges
to some $\nu $. Applying Lemma \ref{Lunif_1} then shows that $E_{\mu
_{m^{\prime }},\sigma _{m^{\prime }}^{2}\Sigma _{m^{\prime }}}(\varphi )$
converges to $E_{\nu +\mu _{0},\bar{\Sigma}}(\varphi )$, which is positive
since $\varphi \geq 0$ is not $\lambda _{\mathbb{R}^{n}}$-almost everywhere
equal to $0$ and since $\bar{\Sigma}$ is positive definite. If the sequence $%
\nu _{m}^{\ast }$ is unbounded, we may pass to a subsequence $m^{\prime }$
such that $\left\Vert \nu _{m^{\prime }}^{\ast }\right\Vert \rightarrow
\infty $. Since $E_{\mu _{m^{\prime }},\sigma _{m^{\prime }}^{2}\Sigma
_{m^{\prime }}}(\varphi )=E_{\nu _{m^{\prime }}^{\ast }+\mu _{0},\Sigma
_{m^{\prime }}}(\varphi )$ by\ Proposition \ref{inv_rej_prob}, it follows
from assumption (\ref{power_ass}) that $\lim_{m^{\prime }}E_{\mu _{m^{\prime
}},\sigma _{m^{\prime }}^{2}\Sigma _{m^{\prime }}}(\varphi )$ is positive.

Next consider the case where $\bar{\Sigma}$ is singular: Pass to the
subsequence $m_{i}$ mentioned in the theorem and set now $\nu _{m_{i}}^{\ast
\ast }=\Pi _{\left( \mathfrak{M}_{0}-\mu _{0}\right) ^{\bot }}(\mu
_{m_{i}}-\mu _{0})/\left( \sigma _{m_{i}}s_{m_{i}}^{1/2}\right) $. If this
sequence is bounded, we may pass to a subsequence $m_{i}^{\prime }$ of $%
m_{i} $ such that $\nu _{m_{i}^{\prime }}^{\ast \ast }$ converges to some $%
\nu $. Applying Lemma \ref{LUNIF} then shows that $E_{\mu _{m_{i}^{\prime
}},\sigma _{m_{i}^{\prime }}^{2}\Sigma _{m_{i}^{\prime }}}(\varphi )$
converges to $E_{\nu +\mu _{0},D+\bar{\Sigma}}(\varphi )$, which is positive
since $\varphi \geq 0$ is not $\lambda _{\mathbb{R}^{n}}$-almost everywhere
equal to $0$ and since $D+\bar{\Sigma}$ is positive definite. If the
sequence $\nu _{m_{i}}^{\ast \ast }$ is unbounded, we may pass to a
subsequence $m_{i}^{\prime }$ of $m_{i}$ such that $\left\Vert \nu
_{m_{i}^{\prime }}^{\ast \ast }\right\Vert \rightarrow \infty $. Since 
\begin{equation*}
E_{\mu _{m_{i}^{\prime }},\sigma _{m_{i}^{\prime }}^{2}\Sigma
_{m_{i}^{\prime }}}(\varphi )=E_{\mu _{m_{i}^{\prime }},\sigma
_{m_{i}^{\prime }}^{2}s_{m_{i}^{\prime }}\left( D_{m_{i}^{\prime }}+\bar{%
\Sigma}\right) }(\varphi )=E_{\nu _{m_{i}^{\prime }}^{\ast \ast }+\mu
_{0},D_{m_{i}^{\prime }}+\bar{\Sigma}}(\varphi )
\end{equation*}%
by (\ref{project}), (\ref{augment}), and Proposition \ref{inv_rej_prob}, it
follows from assumption (\ref{power_ass}) and positive definiteness of $D+%
\bar{\Sigma}$ that $\lim_{i\rightarrow \infty }E_{\mu _{m_{i}^{\prime
}},\sigma _{m_{i}^{\prime }}^{2}\Sigma _{m_{i}^{\prime }}}(\varphi )$ is
positive. Taken together the preceding arguments establish Part 2 of the
theorem.

(3) To prove the first claim of Part 3 of the theorem observe that we can
find $\mu _{m}\in \mathfrak{M}_{1}$ and $\sigma _{m}^{2}$ with $0<\sigma
_{m}^{2}<\infty $ with $d\left( \mu _{m},\mathfrak{M}_{0}\right) /\sigma
_{m}\geq c$ such that the expression left of the arrow in (\ref{unit_power})
differs from $E_{\mu _{m},\sigma _{m}^{2}\Sigma _{m}}(\varphi )$ only by a
sequence converging to zero. Let $m^{\prime }$ denote an arbitrary
subsequence. We can then find a further subsequence $m_{i}^{\prime }$ such
that the corresponding matrix $D_{m_{i}^{\prime }}$ satisfies the
assumptions of the theorem. Note that the sequence $s_{m_{i}^{\prime }}$
corresponding to $D_{m_{i}^{\prime }}$ necessarily converges to zero. But
then the norm of $\nu _{m_{i}^{\prime }}^{\ast \ast }$ defined above must
diverge since $d\left( \mu _{m_{i}^{\prime }},\mathfrak{M}_{0}\right)
/\sigma _{m}\geq c$ and since $\Pi _{\left( \mathfrak{M}_{0}-\mu _{0}\right)
^{\bot }}$ is the projection onto the orthogonal complement of $\mathfrak{M}%
_{0}-\mu _{0}$. Because%
\begin{equation*}
E_{\mu _{m_{i}^{\prime }},\sigma _{m_{i}^{\prime }}^{2}\Sigma
_{m_{i}^{\prime }}}(\varphi )=E_{\nu _{m_{i}^{\prime }}^{\ast \ast }+\mu
_{0},D_{m_{i}^{\prime }}+\bar{\Sigma}}(\varphi )
\end{equation*}%
in view of (\ref{project}), (\ref{augment}), and Proposition \ref%
{inv_rej_prob}, the result then follows from the assumption that the limit
inferior in (\ref{power_ass}) is equal to $1$, noting that $D_{m_{i}^{\prime
}}+\bar{\Sigma}$ is positive definite and converges to the positive definite
matrix $D+\bar{\Sigma}$.

We next prove the second claim in Part 3. Choose $\mu _{m}\in \mathfrak{M}%
_{1}$ with $d\left( \mu _{m},\mathfrak{M}_{0}\right) \geq c_{m}$ such that
the expression to the left of the arrow in (\ref{unit_power_2}) differs from 
$E_{\mu _{m},\sigma _{m}^{2}\Sigma _{m}}(\varphi )$ only by a sequence
converging to zero. Since 
\begin{equation*}
E_{\mu _{m},\sigma _{m}^{2}\Sigma _{m}}(\varphi )=E_{\nu _{m}^{\ast }+\mu
_{0},\Sigma _{m}}(\varphi )
\end{equation*}%
by Proposition \ref{inv_rej_prob} where $\nu _{m}^{\ast }$ was defined above
and since $\left\Vert \nu _{m}^{\ast }\right\Vert \geq c_{m}/\sigma
_{m}\rightarrow \infty $ clearly holds, the result follows from the
assumption that the limit inferior in (\ref{power_ass}) is equal to $1$.
[Note that we have not made use of condition (\ref{phi_inv_wrt_z}) and the
condition on $\mathfrak{C}$ following (\ref{phi_inv_wrt_z}).] $\blacksquare $

\textbf{Proof of Theorem \ref{TUU}: }By invariance properties of the
rejection probability (cf. Proposition \ref{inv_rej_prob}) it suffices to
show for the particular $\mu _{0}^{\ast }\in \mathfrak{M}_{0}$ appearing in (%
\ref{monotone})\ that for every $\delta $, $0<\delta <1$, there exists $%
k_{0}=k_{0}(\delta )$ such that%
\begin{equation}
\sup\limits_{\Sigma \in \mathfrak{C}}E_{\mu _{0}^{\ast },\Sigma }(\varphi
_{k_{0}})\leq \delta .  \label{equiv_res}
\end{equation}%
For this it suffices to show that $\sup\limits_{\Sigma \in \mathfrak{C}%
}E_{\mu _{0}^{\ast },\Sigma }(\varphi _{k})$ converges to zero for $%
k\rightarrow \infty $. Let $\Sigma _{k}\in \mathfrak{C}$ be a sequence such
that for all $k\geq 1$ 
\begin{equation}
\sup\limits_{\Sigma \in \mathfrak{C}}E_{\mu _{0}^{\ast },\Sigma }(\varphi
_{k})\leq E_{\mu _{0}^{\ast },\Sigma _{k}}(\varphi _{k})+k^{-1}\text{.}
\label{ineq}
\end{equation}%
Since $\mathfrak{C}$ is assumed to be bounded, we can find for every
subsequence $k^{\ast }$ a further subsubsequence $k^{\prime }$ such that $%
\Sigma _{k^{\prime }}$ converges to a matrix $\bar{\Sigma}$ (not necessarily
in $\mathfrak{C}$). Let $\varepsilon >0$ be given. We then distinguish two
cases:

Case 1: $\bar{\Sigma}$ is positive definite. By (\ref{monotone}) we can then
find a $k_{0}^{\prime }$ in the subsequence such that 
\begin{equation*}
E_{\mu _{0}^{\ast },\bar{\Sigma}}(\varphi _{k_{0}^{\prime }})<\varepsilon /2
\end{equation*}%
holds. But then by (\ref{ineq})\ and by the monotonicity expressed in (\ref%
{monotone}) 
\begin{equation}
\sup\limits_{\Sigma \in \mathfrak{C}}E_{\mu _{0}^{\ast },\Sigma }(\varphi
_{k^{\prime }})\leq E_{\mu _{0}^{\ast },\Sigma _{k^{\prime }}}(\varphi
_{k^{\prime }})+k^{\prime -1}\leq E_{\mu _{0}^{\ast },\Sigma _{k^{\prime
}}}(\varphi _{k_{0}^{\prime }})+k^{\prime -1}  \label{upper_bound}
\end{equation}%
holds for all $k^{\prime }\geq k_{0}^{\prime }$. Now Lemma \ref{Lunif_1}
together with Remark \ref{rem_Lunif_1}(ii) may clearly be applied to the
subsequence $k^{\prime }$, showing that $E_{\mu _{0}^{\ast },\Sigma
_{k^{\prime }}}(\varphi _{k_{0}^{\prime }})$ converges to $E_{\mu _{0}^{\ast
},\bar{\Sigma}}(\varphi _{k_{0}^{\prime }})<\varepsilon /2$. But this shows
that 
\begin{equation}
\limsup_{k^{\prime }\rightarrow \infty }\sup\limits_{\Sigma \in \mathfrak{C}%
}E_{\mu _{0}^{\ast },\Sigma }(\varphi _{k^{\prime }})<\varepsilon \text{.}
\label{almost_result}
\end{equation}

Case 2: $\bar{\Sigma}$ is singular. Then we can find a subsequence $%
k_{i}^{\prime }$ of $k^{\prime }$ and normalization constants $%
s_{k_{i}^{\prime }}$ such that the resulting matrices $D_{k_{i}^{\prime }}$
converge to a matrix $D$ with the properties specified in Theorem \ref{TU}.
Because $D+\bar{\Sigma}$ is positive definite, we can in view of (\ref%
{monotone}) find a $k_{i(0)}^{\prime }$ in the subsequence $k_{i}^{\prime }$
such that 
\begin{equation*}
E_{\mu _{0}^{\ast },D+\bar{\Sigma}}(\varphi _{k_{i(0)}^{\prime
}})<\varepsilon /2\text{.}
\end{equation*}%
Now applying Lemma \ref{LUNIF} together with Remark \ref{rem_LUNIF}(ii) to
the subsequence $k_{i}^{\prime }$ shows that $E_{\mu _{0}^{\ast },\Sigma
_{k_{i}^{\prime }}}(\varphi _{k_{i(0)}^{\prime }})$ converges to $E_{\mu
_{0}^{\ast },D+\bar{\Sigma}}(\varphi _{k_{i(0)}^{\prime }})<\varepsilon /2$.
But by (\ref{ineq})\ and (\ref{monotone}) 
\begin{equation*}
\sup\limits_{\Sigma \in \mathfrak{C}}E_{\mu _{0}^{\ast },\Sigma }(\varphi
_{k_{i}^{\prime }})\leq E_{\mu _{0}^{\ast },\Sigma _{k_{i}^{\prime
}}}(\varphi _{k_{i}^{\prime }})+k_{i}^{\prime -1}\leq E_{\mu _{0}^{\ast
},\Sigma _{k_{i}^{\prime }}}(\varphi _{k_{i(0)}^{\prime }})+k_{i}^{\prime -1}
\end{equation*}%
holds for all $i\geq i(0)$. This shows that 
\begin{equation*}
\limsup_{i\rightarrow \infty }\sup\limits_{\Sigma \in \mathfrak{C}}E_{\mu
_{0}^{\ast },\Sigma }(\varphi _{k_{i}^{\prime }})<\varepsilon \text{.}
\end{equation*}%
Taken together we have shown that $\sup\limits_{\Sigma \in \mathfrak{C}%
}E_{\mu _{0}^{\ast },\Sigma }(\varphi _{k})$ must converge to zero along the
original sequence $k$ which proves (\ref{equiv_res}). $\blacksquare $

\section{Appendix: Proofs and Auxiliary Results for Subsection \protect\ref%
{Impclass} \label{App_E}}

\begin{lemma}
\label{aux}Suppose Assumption \ref{AE} holds. Then the sets 
\begin{equation*}
A_{1}=\left\{ y\in \mathbb{R}^{n}\backslash N:\det {\check{\Omega}(y)}%
=0\right\} \text{ \ \ and \ \ }A_{2}=\left\{ y\in \mathbb{R}^{n}\backslash
N:\det {\check{\Omega}(y)}\neq 0\right\}
\end{equation*}%
are invariant under $G(\mathfrak{M})$, the former set being closed in the
relative topology on $\mathbb{R}^{n}\backslash N$. The set 
\begin{equation*}
N^{\ast }=N\cup \left\{ y\in \mathbb{R}^{n}\backslash N:\det {\check{\Omega}%
(y)}=0\right\}
\end{equation*}%
is a closed $\lambda _{\mathbb{R}^{n}}$-null set in $\mathbb{R}^{n}$ that is
invariant under $G(\mathfrak{M})$.
\end{lemma}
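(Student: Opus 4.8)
The plan is to treat the asserted properties of $A_1$, $A_2$, and $N^{\ast }$ in turn, exploiting throughout the fact that under Assumption \ref{AE} the elements of $G(\mathfrak{M})$ are precisely the affine maps $y\mapsto \alpha y+X\gamma $ with $\alpha \neq 0$ and $\gamma \in \mathbb{R}^{k}$. This identification holds because $\mathfrak{M}=\limfunc{span}(X)$, so any $\nu ,\nu ^{\prime }\in \mathfrak{M}$ can be written as $X\delta $, $X\delta ^{\prime }$, whence $g_{\alpha ,\nu ,\nu ^{\prime }}(y)=\alpha y+X(\delta ^{\prime }-\alpha \delta )$.

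First I would establish invariance of $A_1$ and $A_2$. Fix $g\in G(\mathfrak{M})$, written as $g(y)=\alpha y+X\gamma $, and take $y\in \mathbb{R}^{n}\backslash N$. By the invariance of $\mathbb{R}^{n}\backslash N$ assumed in Assumption \ref{AE}, we have $g(y)\in \mathbb{R}^{n}\backslash N$, so $\det \check{\Omega}(g(y))$ is defined, and the equivariance property $\check{\Omega}(\alpha y+X\gamma )=\alpha ^{2}\check{\Omega}(y)$ gives $\det \check{\Omega}(g(y))=\alpha ^{2q}\det \check{\Omega}(y)$. Since $\alpha ^{2q}\neq 0$, the determinant vanishes at $g(y)$ exactly when it vanishes at $y$; hence $g(A_1)\subseteq A_1$ and $g(A_2)\subseteq A_2$, and because $G(\mathfrak{M})$ is a group these inclusions are in fact equalities. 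Next I would record the relative closedness of $A_1$: since $\check{\Omega}$ is continuous on $\mathbb{R}^{n}\backslash N$ by Assumption \ref{AE} and $\det $ is continuous, the map $y\mapsto \det \check{\Omega}(y)$ is continuous on $\mathbb{R}^{n}\backslash N$, so $A_1=(\det \check{\Omega})^{-1}(\{0\})$ is closed in the relative topology on $\mathbb{R}^{n}\backslash N$.

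Finally, for $N^{\ast }=N\cup A_1$ the three claims follow as follows. Invariance is immediate: $N$ is invariant under $G(\mathfrak{M})$ (invariance of $\mathbb{R}^{n}\backslash N$ is equivalent to invariance of $N$, since $G(\mathfrak{M})$ consists of bijections), $A_1$ is invariant by the first step, and a union of invariant sets is invariant. The null-set claim is also immediate: $N$ is a $\lambda _{\mathbb{R}^{n}}$-null set by Assumption \ref{AE}(i), while $A_1$ is a $\lambda _{\mathbb{R}^{n}}$-null set precisely because Assumption \ref{AE}(iv) asserts that $\check{\Omega}$ is nonsingular $\lambda _{\mathbb{R}^{n}}$-almost everywhere on $\mathbb{R}^{n}\backslash N$; a union of two null sets is null.

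The remaining, and slightly less automatic, point is closedness of $N^{\ast }$ in $\mathbb{R}^{n}$, since $A_1$ is only \emph{relatively} closed in $\mathbb{R}^{n}\backslash N$. Here the plan is to argue that passing to the closure in $\mathbb{R}^{n}$ can only adjoin points of $N$: any $y\in \limfunc{cl}(A_1)$ is a limit of points of $A_1\subseteq \mathbb{R}^{n}\backslash N$, and if $y\in \mathbb{R}^{n}\backslash N$ then relative closedness forces $y\in A_1$, so $\limfunc{cl}(A_1)\subseteq A_1\cup N=N^{\ast }$. Since $N$ is closed, $\limfunc{cl}(N^{\ast })=N\cup \limfunc{cl}(A_1)\subseteq N^{\ast }$, whence $N^{\ast }$ is closed. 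This last step is the only genuine obstacle; everything else is a direct unwinding of the equivariance and continuity hypotheses in Assumption \ref{AE}.
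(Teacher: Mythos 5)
Your proof is correct and follows essentially the same route as the paper's: invariance of $A_1$, $A_2$ from equivariance of $\check{\Omega}$ and invariance of $\mathbb{R}^n\backslash N$, relative closedness from continuity, and closedness of $N^{\ast}$ by showing a limit point outside $N$ must land in $A_1$ (the paper phrases this last step with sequences, you with closures, which is the same argument).
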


\begin{proof}
The invariance of $A_{1}$ and $A_{2}$ follows immediately from the
invariance of $\mathbb{R}^{n}\backslash N$ and the equivariance of ${\check{%
\Omega}(y)}$. The relative closedness of $A_{1}$ is an immediate consequence
of the continuity of ${\check{\Omega}(y)}$ on $\mathbb{R}^{n}\backslash N$.
The invariance of $N^{\ast }$ follows from invariance of $N$ discussed after
Assumption \ref{AE} and the just established invariance of $A_{1}$. Because $%
N$ is a $\lambda _{\mathbb{R}^{n}}$-null set and because ${\check{\Omega}(y)}
$ is $\lambda _{\mathbb{R}^{n}}$-almost everywhere nonsingular on $\mathbb{R}%
^{n}\backslash N$, it follows that $N^{\ast }$ is a $\lambda _{\mathbb{R}%
^{n}}$-null set. Finally, we establish closedness of $N^{\ast }$: let $%
y_{i}\in N^{\ast }$ be a sequence with limit $y_{0}$. If $y_{0}\in N$, we
are done. If $y_{0}\in \mathbb{R}^{n}\backslash N$, by openness of this set
also $y_{i}\in \mathbb{R}^{n}\backslash N$ for all but finitely many $i$
must hold and thus $\det {\check{\Omega}(y}_{i}{)=0}$. But then continuity
of ${\check{\Omega}}$ on $\mathbb{R}^{n}\backslash N$ implies $\det {\check{%
\Omega}(y}_{0}{)}=0$, and hence $y_{0}\in N^{\ast }$.
\end{proof}

\textbf{Proof of Lemma \ref{LWT}: }(1) Follows from the discussion preceding
the lemma and Lemma \ref{aux}.

(2) Follows immediately from the observation that $T$ coincides on the open
set $\mathbb{R}^{n}\backslash N^{\ast }$ with $(R\check{\beta}(y)-r)^{\prime
}\check{\Omega}^{-1}(y)(R\check{\beta}(y)-r)$ which is continuous on this
set by Assumption \ref{AE}.

(3) Since $N^{\ast }$ is invariant under the elements of $G(\mathfrak{M})$,
it is in particular invariant under $G(\mathfrak{M}_{0})$. The result $%
T\left( g\left( y\right) \right) =T\left( y\right) =0$ for $g\in G(\mathfrak{%
M}_{0})$ then follows trivially for $y\in N^{\ast }$. Now suppose $y\in 
\mathbb{R}^{n}\backslash N^{\ast }$. Then also $g_{\alpha ,\mu
_{0}^{(1)},\mu _{0}^{(2)}}(y)\in \mathbb{R}^{n}\backslash N^{\ast }$ for $%
\alpha \neq 0$, $\mu _{0}^{(i)}\in \mathfrak{M}_{0}$ ($i=1,2$) by invariance
of $\mathbb{R}^{n}\backslash N^{\ast }$. The invariance of $T$ then follows
immediately from the equivariance properties of $\check{\beta}$ and $\check{%
\Omega}$ expressed in Assumption \ref{AE}, using that $\mu _{0}^{(i)}\in 
\mathfrak{M}_{0}$ implies $R\gamma ^{(i)}=r$ for uniquely defined vectors $%
\gamma ^{(i)}$ satisfying $\mu _{0}^{(i)}=X\gamma ^{(i)}$.

(4) Set $O=\left\{ y\in \mathbb{R}^{n}:T(y)=C\right\} $ and note that $%
O\subseteq \mathbb{R}^{n}\backslash N^{\ast }$ since $C>0$ by assumption. We
can then write%
\begin{equation*}
O=\bigcup_{y_{2}\in \mathfrak{M}^{\bot }}\left( \left\{ y_{1}\in \mathfrak{M}%
:y_{1}+y_{2}\in \mathbb{R}^{n}\backslash N^{\ast },T(y_{1}+y_{2})=C\right\}
+y_{2}\right) =\bigcup_{y_{2}\in \mathfrak{M}^{\bot }}\left(
O(y_{2})+y_{2}\right) .
\end{equation*}%
Note that $O$ as well as $O(y_{2})$ are clearly measurable sets. By the
already established invariance of $\mathbb{R}^{n}\backslash N^{\ast }$, the
fact that $\mathbb{R}^{n}\backslash N^{\ast }\subseteq \mathbb{R}%
^{n}\backslash N$, and by the equivariance properties of $\check{\beta}$ and 
$\check{\Omega}$ maintained in Assumption \ref{AE}, the set $O(y_{2})$
equals 
\begin{equation*}
\left\{ y_{1}\in \mathfrak{M}:\left( R\left( \check{\beta}(y_{2})+\left(
X^{\prime }X\right) ^{-1}X^{\prime }y_{1}\right) -r\right) ^{\prime }\check{%
\Omega}^{-1}(y_{2})\left( R\left( \check{\beta}(y_{2})+\left( X^{\prime
}X\right) ^{-1}X^{\prime }y_{1}\right) -r\right) =C\right\}
\end{equation*}%
if $y_{2}\in (\mathbb{R}^{n}\backslash N^{\ast })\cap \mathfrak{M}^{\bot }$,
and it is empty if $y_{2}\in N^{\ast }\cap \mathfrak{M}^{\bot }$ (since $C>0$%
). If $y_{2}\in (\mathbb{R}^{n}\backslash N^{\ast })\cap \mathfrak{M}^{\bot
} $, the set $O(y_{2})\subseteq \mathfrak{M}$ is the image of 
\begin{equation*}
\bar{O}(y_{2})=\left\{ \gamma \in \mathbb{R}^{k}:\left( R\left( \check{\beta}%
(y_{2})+\gamma \right) -r\right) ^{\prime }\check{\Omega}^{-1}(y_{2})\left(
R\left( \check{\beta}(y_{2})+\gamma \right) -r\right) =C\right\}
\end{equation*}%
under the invertible linear map $\gamma \mapsto X\gamma $ from $\mathbb{R}%
^{k}$ onto $\mathfrak{M}$. Now $\bar{O}(y_{2})$ is the zero-set of a
multivariate real polynomial (in the components of $\gamma $). The
polynomial does not vanish everywhere on $\mathbb{R}^{k}$ because the
quadratic form making up the polynomial is unbounded on $\mathbb{R}^{k}$
(because $\check{\Omega}^{-1}(y_{2})$ is symmetric and well-defined if $%
y_{2}\in \mathbb{R}^{n}\backslash N^{\ast }$ and because $\limfunc{rank}%
(R)=q $ holds). Consequently, $\bar{O}(y_{2})$ has $k$-dimensional Lebesgue
measure zero and hence $\lambda _{\mathfrak{M}}(O(y_{2}))=0$ for every $%
y_{2}\in (\mathbb{R}^{n}\backslash N^{\ast })\cap \mathfrak{M}^{\bot }$. We
conclude that $\lambda _{\mathfrak{M}}(O(y_{2}))=0$ for every $y_{2}\in 
\mathfrak{M}^{\bot }$.

We now identify $\mathbb{R}^{n}$ with $\mathfrak{M}\times \mathfrak{M}^{\bot
}$ and view Lebesgue measure $\lambda _{\mathbb{R}^{n}}$ on $\mathbb{R}^{n}$
as $\lambda _{\mathfrak{M}}\otimes \lambda _{\mathfrak{M}^{\bot }}$. Hence, $%
y$ is identified with $\left( y_{1},y_{2}\right) \in \mathfrak{M}\times 
\mathfrak{M}^{\bot }$ satisfying $y=y_{1}+y_{2}$. Fubini's Theorem then shows%
\begin{eqnarray*}
\lambda _{\mathbb{R}^{n}}(O) &=&\lambda _{\mathfrak{M}\times \mathfrak{M}%
^{\bot }}(O)=\int\limits_{\mathfrak{M}\times \mathfrak{M}^{\bot }}\mathbf{1}%
_{O}((y_{1},y_{2}))d\lambda _{\mathfrak{M}\times \mathfrak{M}^{\bot
}}(y_{1},y_{2}) \\
&=&\int\limits_{\mathfrak{M}^{\bot }}\int\limits_{\mathfrak{M}}\mathbf{1}%
_{O(y_{2})}(y_{1})d\lambda _{\mathfrak{M}}(y_{1})d\lambda _{\mathfrak{M}%
^{\bot }}(y_{2})=\int\limits_{\mathfrak{M}^{\bot }}\lambda _{\mathfrak{M}%
}(O(y_{2}))d\lambda _{\mathfrak{M}^{\bot }}(y_{2})=0.
\end{eqnarray*}

(5\&6) First observe that $\left\{ y\in \mathbb{R}^{n}\backslash N^{\ast
}:T(y)>C\right\} =\left\{ y\in \mathbb{R}^{n}:T(y)>C\right\} $ holds in view
of $C>0$ and the definition of $T$. By continuity of $T$ on $\mathbb{R}%
^{n}\backslash N^{\ast }$ established in Part 2 and by openness of $\mathbb{R%
}^{n}\backslash N^{\ast }$, the openness of $\left\{ y\in \mathbb{R}%
^{n}\backslash N^{\ast }:T(y)>C\right\} $ and $\left\{ y\in \mathbb{R}%
^{n}\backslash N^{\ast }:T(y)<C\right\} $ follows. It hence suffices to show
that these two sets are non-empty: Choose an arbitrary $y\in \mathbb{R}%
^{n}\backslash N^{\ast }$ and set $y(\gamma )=y+X\gamma $ for $\gamma \in 
\mathbb{R}^{k}$. Then $y(\gamma )\in \mathbb{R}^{n}\backslash N^{\ast }$ by
invariance of $\mathbb{R}^{n}\backslash N^{\ast }$ under $G(\mathfrak{M})$.
Now by the equivariance properties of $\check{\beta}$ and $\check{\Omega}$
expressed in Assumption \ref{AE}%
\begin{equation*}
T(y(\gamma ))=\left( R\gamma +R\check{\beta}(y)-r\right) ^{\prime }\check{%
\Omega}^{-1}(y)\left( R\gamma +R\check{\beta}(y)-r\right) .
\end{equation*}%
Define $\bar{\gamma}=\bar{\beta}-\check{\beta}(y)$ for some $\bar{\beta}$
satisfying $R\bar{\beta}=r$. Then $T(y(\bar{\gamma}))=0<C$ holds showing
that $\left\{ y\in \mathbb{R}^{n}\backslash N^{\ast }:T(y)<C\right\} $ is
non-empty. Finally choose $y\in \mathbb{R}^{n}\backslash N^{\ast }$ and $v$
as in Assumption \ref{omega1}. Choose $\delta $ such that $v=R\delta $. Then
set $\gamma =c\delta +\bar{\beta}-\check{\beta}(y)$ where $\bar{\beta}$ is
as before and $c$ is a real number. Observe that then $T(y(\gamma
))=c^{2}v^{\prime }\check{\Omega}^{-1}(y)v$. Choosing $c$ sufficiently large
shows that $T(y(\gamma ))>C$ can be achieved, establishing that $\left\{
y\in \mathbb{R}^{n}\backslash N^{\ast }:T(y)>C\right\} $ is non-empty.

(7) Let $\mathbf{G}$ be a standard normal $n\times 1$ random vector. Then 
\begin{equation}
P_{\nu _{m}+\mu _{0},\Phi _{m}}(W(C))=\Pr \left( T(\nu _{m}+\mu _{0}+\Phi
_{m}^{1/2}\mathbf{G})-C\geq 0\right) .  \label{prob_w}
\end{equation}%
Set $\gamma _{m}=\left( X^{\prime }X\right) ^{-1}X^{\prime }\nu _{m}$ and $%
\gamma _{0}=\left( X^{\prime }X\right) ^{-1}X^{\prime }\mu _{0}$. Observe
that $R\gamma _{0}=r$ while $\left\Vert R\gamma _{m}\right\Vert \rightarrow
\infty $ as $m\rightarrow \infty $ in view of $\nu _{m}\in \Pi _{\left( 
\mathfrak{M}_{0}-\mu _{0}\right) ^{\bot }}(\mathfrak{M}_{1}-\mu _{0})$ and $%
\left\Vert \nu _{m}\right\Vert \rightarrow \infty $. For $\Phi _{m}^{1/2}%
\mathbf{G}\in \mathbb{R}^{n}\backslash N^{\ast }$ (an event which has
probability $1$ because $N^{\ast }$ is a $\lambda _{\mathbb{R}^{n}}$-null
set and $\Phi _{m}$ is positive-definite) we may use equivariance of $\check{%
\beta}$ and $\check{\Omega}$ and obtain that $T(\nu _{m}+\mu _{0}+\Phi
_{m}^{1/2}\mathbf{G})-C$ coincides on this event with%
\begin{equation}
({R\gamma }_{m}{+R\check{\beta}(\Phi _{m}^{1/2}\mathbf{G})})^{\prime }\check{%
\Omega}^{-1}(\Phi _{m}^{1/2}\mathbf{G})({{R\gamma }_{m}+R\check{\beta}(\Phi
_{m}^{1/2}\mathbf{G})})-C.  \label{quadratic}
\end{equation}%
Observe that $\Phi _{m}^{1/2}\mathbf{G\rightarrow }\Phi ^{1/2}\mathbf{G}$ as 
$m\rightarrow \infty $ with probability $1$. Furthermore, $\check{\beta}$
and $\check{\Omega}^{-1}$ are continuous on $\mathbb{R}^{n}\backslash
N^{\ast }$, a set which has probability $1$ under the law of $\Phi ^{1/2}%
\mathbf{G}$ (since $N^{\ast }$ is a $\lambda _{\mathbb{R}^{n}}$-null set and 
$\Phi $ is positive-definite). From the continuous mapping theorem we
conclude that ${R\check{\beta}(\Phi _{m}^{1/2}\mathbf{G})}$ and $\check{%
\Omega}^{-1}(\Phi _{m}^{1/2}\mathbf{G})$ converge almost surely to ${R\check{%
\beta}(\Phi ^{1/2}\mathbf{G})}$ and $\check{\Omega}^{-1}(\Phi ^{1/2}\mathbf{G%
})$, respectively. Now let $v\in A(\left( \nu _{m}\right) _{m\geq 1})$ and
let $m_{i}$ be a subsequence such that $\left\Vert {R\gamma }%
_{m_{i}}\right\Vert ^{-1}{R\gamma }_{m_{i}}\rightarrow v$. It follows that%
\begin{equation*}
\left[ ({R\gamma }_{m_{i}}{+R\check{\beta}(\Phi _{m_{i}}^{1/2}\mathbf{G})}%
)^{\prime }\check{\Omega}^{-1}(\Phi _{m_{i}}^{1/2}\mathbf{G})({{R\gamma }%
_{m_{i}}+R\check{\beta}(\Phi _{m_{i}}^{1/2}\mathbf{G})})-C\right]
/\left\Vert {R\gamma }_{m_{i}}\right\Vert ^{2}
\end{equation*}%
converges to%
\begin{equation*}
{v}^{\prime }\check{\Omega}^{-1}(\Phi ^{1/2}\mathbf{G})v
\end{equation*}%
with probability $1$. Since $\Pr \left( {v}^{\prime }\check{\Omega}%
^{-1}(\Phi ^{1/2}\mathbf{G})v=0\right) $ by Assumption \ref{omega2}, it
follows that%
\begin{equation*}
\Pr \left( T(\nu _{m_{i}}+\mu _{0}+\Phi _{m_{i}}^{1/2}\mathbf{G})-C\geq
0\right) \rightarrow \Pr \left( {v}^{\prime }\check{\Omega}^{-1}(\Phi ^{1/2}%
\mathbf{G})v\geq 0\right) .
\end{equation*}%
This shows that%
\begin{eqnarray*}
\liminf_{m\rightarrow \infty }P_{\nu _{m}+\mu _{0},\Phi _{m}}(W(C)) &\leq
&\liminf_{i\rightarrow \infty }P_{\nu _{m_{i}}+\mu _{0},\Phi _{m_{i}}}(W(C))
\\
&=&\Pr \left( {v}^{\prime }\check{\Omega}^{-1}(\Phi ^{1/2}\mathbf{G})v\geq
0\right) ,
\end{eqnarray*}%
implying that%
\begin{equation*}
\liminf_{m\rightarrow \infty }P_{\nu _{m}+\mu _{0},\Phi _{m}}(W(C))\leq
\inf_{v\in A(\left( \nu _{m}\right) _{m\geq 1})}\Pr \left( {v}^{\prime }%
\check{\Omega}^{-1}(\Phi ^{1/2}\mathbf{G})v\geq 0\right) .
\end{equation*}%
Conversely, let $m_{i}$ be a subsequence such that 
\begin{equation*}
P_{\nu _{m_{i}}+\mu _{0},\Phi _{m_{i}}}(W(C))\rightarrow
\liminf_{m\rightarrow \infty }P_{\nu _{m}+\mu _{0},\Phi _{m}}(W(C)).
\end{equation*}%
Since the unit ball in $\mathbb{R}^{q}$ is compact, we may assume that $%
\left\Vert {R\gamma }_{m_{i}(j)}\right\Vert ^{-1}{R\gamma }_{m_{i(j)}}$
converges to some $v\in A(\left( \nu _{m}\right) _{m\geq 1})$ along a
suitable subsequence $m_{i(j)}$ . The same arguments as above then show that%
\begin{eqnarray*}
\liminf_{m\rightarrow \infty }P_{\nu _{m}+\mu _{0},\Phi _{m}}(W(C))
&=&\liminf_{j\rightarrow \infty }P_{\nu _{m_{i(j)}}+\mu _{0},\Phi
_{m_{i(j)}}}(W(C)) \\
&=&\Pr \left( {v}^{\prime }\check{\Omega}^{-1}(\Phi ^{1/2}\mathbf{G})v\geq
0\right) \\
&\geq &\inf_{v\in A(\left( \nu _{m}\right) _{m\geq 1})}\Pr \left( {v}%
^{\prime }\check{\Omega}^{-1}(\Phi ^{1/2}\mathbf{G})v\geq 0\right) .
\end{eqnarray*}%
Given Assumption \ref{omega2}, the remaining equalities and inequalities in (%
\ref{power_far_away}) and (\ref{lower_bound_power_far_away}) are now
obvious. $\blacksquare $

\textbf{Proof of Corollary \ref{CW}: }(1) If $z\in \mathbb{R}^{n}\backslash
N^{\ast }$ then $\mu _{0}+z\in \mathbb{R}^{n}\backslash N^{\ast }$ and $T$
is continuous at $\mu _{0}+z$ for every $\mu _{0}\in \mathfrak{M}_{0}$ by
Parts 1 and 2 of Lemma \ref{LWT}. If $T(\mu _{0}^{\ast }+z)>C$ holds, then
by the invariance of $T$ established in Part 3 of Lemma \ref{LWT}, we have $%
T(\mu _{0}+z)=T(\mu _{0}^{\ast }+z)>C$ for every $\mu _{0}\in \mathfrak{M}%
_{0}$. Hence the sufficient conditions in Part 1 of Theorem \ref{inv} are
satisfied and an application of this theorem delivers the result.

(2) Completely analogous to the proof of (1) noting that the invariance of $%
T $ required in Part 3 of Theorem \ref{inv} is clearly satisfied.

(3) Since $N^{\ast }$ is a $\lambda _{\mathbb{R}^{n}}$-null set the test
statistic $T$ is $\lambda _{\mathbb{R}^{n}}$-almost everywhere equal to the
test statistic 
\begin{equation*}
T^{\ast }(y)=%
\begin{cases}
T(y) & y\in \mathbb{R}^{n}\backslash N^{\ast }, \\ 
\infty , & y\in N^{\ast }\text{.}%
\end{cases}%
\end{equation*}%
We verify that the sufficient conditions in Part 1 of Theorem \ref{inv} are
satisfied for $T^{\ast }$. To that end fix $\mu _{0}\in \mathfrak{M}_{0}$
and let $\mathcal{Z}^{\prime }\subseteq \mathcal{Z}$ denote the set of all $%
z $ such that $z\in \mathbb{R}^{n}\backslash N$, $\check{\Omega}(z)=0$, and $%
R\check{\beta}(z)\neq 0$ hold. By invariance of $N$ (cf. discussion after
Assumption \ref{AE}) and equivariance of $\check{\Omega}$ we see that $z\in 
\mathcal{Z}^{\prime }$ implies $\mu _{0}+z\in \mathbb{R}^{n}\backslash N$
and $\check{\Omega}(\mu _{0}+z)=0$, and thus $T^{\ast }(\mu _{0}+z)=\infty
>C $ holds for every $z\in \mathcal{Z}^{\prime }$ by definition of $T^{\ast }
$. We next show that $T^{\ast }$ is lower semicontinuous at $\mu _{0}+z$ for
every $z\in \mathcal{Z}^{\prime }$. Let $y_{m}$ be a sequence converging to $%
\mu _{0}+z$. Since $\mathbb{R}^{n}\backslash N$ is open, we may assume that
this sequence entirely belongs to $\mathbb{R}^{n}\backslash N$. If $\det ${$%
\check{\Omega}$}${(y_{m})}=0$ eventually holds, we are done since then $%
T^{\ast }(y_{m})=\infty $ eventually by construction. By a standard
subsequence argument we may thus assume that $\det ${$\check{\Omega}$}${%
(y_{m})}>0$ eventually holds since $\check{\Omega}$ is nonnegative definite
on $\mathbb{R}^{n}\backslash N$ by assumption. Now note that then%
\begin{equation*}
T^{\ast }(y_{m})=T(y_{m})=(R\check{\beta}(y_{m})-r)^{\prime }\check{\Omega}%
^{-1}(y_{m})(R\check{\beta}(y_{m})-r)\geq \lambda _{max}^{-1}(\check{\Omega}%
(y_{m}))\Vert {R\check{\beta}(y_{m})-r}\Vert ^{2}.
\end{equation*}%
Since $\check{\beta}$ is continuous on $\mathbb{R}^{n}\backslash N$ by
assumption, we have $R\check{\beta}(y_{m})\rightarrow R\check{\beta}(\mu
_{0}+z)=R\check{\beta}(z)+r\neq r$ where we have made use of equivariance of 
$\check{\beta}(z)$ and of $\mu _{0}\in \mathfrak{M}_{0}$. Hence $\Vert {R}${$%
\check{\beta}$}${(y_{m})-r}\Vert \rightarrow \Vert {R}${$\check{\beta}$}${(z)%
}\Vert >0$. Furthermore, $\check{\Omega}$ is continuous on $\mathbb{R}%
^{n}\backslash N$ by assumption, hence $\check{\Omega}(y_{m})\rightarrow 
\check{\Omega}(\mu _{0}+z)=0$. Consequently, $T^{\ast }(y_{m})\rightarrow
\infty $, establishing lower semicontinuity of $T^{\ast }$. We may now apply
Part 1 of Theorem \ref{inv} together with Remark \ref{trivial}(i) to
conclude the proof. $\blacksquare $

\begin{lemma}
\label{lem_100}Let $\check{\beta}$ and $\check{\Omega}$ satisfy Assumption %
\ref{AE}, let $T$ be the test statistic defined in (\ref{DT}), and let $%
W(C)=\left\{ y\in \mathbb{R}^{n}:T(y)\geq C\right\} $ with $0<C<\infty $ be
the rejection region. Let $\Phi _{m}$ be symmetric positive definite $%
n\times n$ matrices such that $\Phi _{m}\rightarrow \Phi $ for $m\rightarrow
\infty $ where $\Phi $ is singular with $l:=\dim \limfunc{span}(\Phi )>0$.
Suppose that for some sequence of positive real numbers $s_{m}$ the matrix $%
D_{m}=\Pi _{\limfunc{span}(\Phi )^{\bot }}\Phi _{m}\Pi _{\limfunc{span}(\Phi
)^{\bot }}/s_{m}$ converges to a matrix $D$, which is regular on $\limfunc{%
span}(\Phi )^{\bot }$, and that $\Pi _{\limfunc{span}(\Phi )^{\bot }}\Phi
_{m}\Pi _{\limfunc{span}(\Phi )}/s_{m}^{1/2}\rightarrow 0$. Suppose further
that $\limfunc{span}(\Phi )\subseteq \mathfrak{M}$. Let $Z$ be a matrix, the
columns of which form a basis for $\limfunc{span}(\Phi )$ and let $%
\boldsymbol{G}$ be a standard normal $n$-vector. Then:

\begin{enumerate}
\item For every $\mu _{0}\in \mathfrak{M}_{0}$, $\gamma \in \mathbb{R}^{l}$, 
$0<\sigma <\infty $ we have%
\begin{equation*}
s_{m}\left[ T\left( \mu _{0}+Z\gamma +\sigma \Phi _{m}^{1/2}\boldsymbol{G}%
\right) -C\right] \overset{d}{\rightarrow }\xi \left( \gamma ,\sigma \right)
\end{equation*}%
for $m\rightarrow \infty $ where the random variable $\xi \left( \gamma
,\sigma \right) $ is given by%
\begin{equation*}
\left( R\hat{\beta}\left( \sigma ^{-1}Z\gamma +\Phi ^{1/2}\boldsymbol{G}%
\right) \right) ^{\prime }\check{\Omega}^{-1}\left( \left( \Phi
^{1/2}+D^{1/2}\right) \boldsymbol{G}\right) \left( R\hat{\beta}\left( \sigma
^{-1}Z\gamma +\Phi ^{1/2}\boldsymbol{G}\right) \right)
\end{equation*}%
for $\left( \Phi ^{1/2}+D^{1/2}\right) \boldsymbol{G}\notin N^{\ast }$,
which is an event that has probability $1$ under the law of $\boldsymbol{G}$%
, and where $\xi \left( \gamma ,\sigma \right) =0$ else.

\item If additionally Assumption \ref{omega2} holds and%
\begin{equation*}
R\hat{\beta}(z)\neq 0\qquad \lambda _{\limfunc{span}(\Phi )}\text{-}a.e.
\end{equation*}%
is satisfied, then 
\begin{equation*}
P_{\mu _{0}+Z\gamma ,\sigma ^{2}\Phi _{m}}\left( W(C)\right) =\Pr \left(
T\left( \mu _{0}+Z\gamma +\sigma \Phi _{m}^{1/2}\boldsymbol{G}\right) \geq
C\right) \rightarrow \Pr \left( \xi \left( \gamma ,\sigma \right) \geq
0\right)
\end{equation*}%
as $m\rightarrow \infty $.
\end{enumerate}
\end{lemma}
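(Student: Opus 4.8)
The plan is to use the equivariance of $\check{\beta}$ and $\check{\Omega}$ from Assumption~\ref{AE} to rewrite $T(y_m)$, with $y_m=\mu_0+Z\gamma+\sigma\Phi_m^{1/2}\boldsymbol{G}$, as a quadratic form whose two factors live at the two natural scales of $\Phi_m$, and then to pass to the limit by the continuous mapping theorem. Write $P=\Pi_{\limfunc{span}(\Phi)}$, $Q=\Pi_{\limfunc{span}(\Phi)^{\bot}}$, and $U_m=\Phi_m^{1/2}\boldsymbol{G}$. Since $\Phi_m$ is positive definite, $y_m$ has a density on $\mathbb{R}^n$, so $y_m\in\mathbb{R}^n\setminus N^{\ast}$ almost surely, and by the $G(\mathfrak{M})$-invariance of $N^{\ast}$ (Lemma~\ref{aux}) the same holds for $\sigma QU_m$ and for $\sigma s_m^{-1/2}QU_m$. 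Because $\mu_0+Z\gamma+\sigma PU_m\in\mathfrak{M}$ (using $\limfunc{span}(\Phi)\subseteq\mathfrak{M}$ and $\mu_0\in\mathfrak{M}_0\subseteq\mathfrak{M}$), the $\mathfrak{M}$-invariance and degree-two homogeneity of $\check{\Omega}$ give $\check{\Omega}(y_m)=\check{\Omega}(\sigma QU_m)=s_m\check{\Omega}(\sigma s_m^{-1/2}QU_m)$. Peeling off the $\mathfrak{M}$-part with the least squares coordinates and using $R\gamma_0=r$ for $\mu_0=X\gamma_0$ yields $R\check{\beta}(y_m)-r=s_m^{1/2}R\check{\beta}(\sigma s_m^{-1/2}QU_m)+R\hat{\beta}(Z\gamma+\sigma PU_m)$. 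Hence $s_m[T(y_m)-C]=(R\check{\beta}(y_m)-r)'\check{\Omega}^{-1}(\sigma s_m^{-1/2}QU_m)(R\check{\beta}(y_m)-r)-s_mC$, a form to which limits can be taken.

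Next I would establish the joint weak convergence $(PU_m,s_m^{-1/2}QU_m)\distconv(\Phi^{1/2}\boldsymbol{G},D^{1/2}\boldsymbol{G})$. These are jointly Gaussian, and the covariance blocks converge: $P\Phi_mP\to\Phi$, $s_m^{-1}Q\Phi_mQ=D_m\to D$, and the cross term $s_m^{-1/2}P\Phi_mQ\to0$ by hypothesis (with $s_m\to0$ since $Q\Phi_mQ\to Q\Phi Q=0$), so convergence of laws follows as in Lemma~\ref{conc}. The block-diagonal limit can be realized with a \emph{single} standard normal $\boldsymbol{G}$, because $\Phi^{1/2}D^{1/2}=0$ (orthogonal ranges) makes $\Phi^{1/2}\boldsymbol{G}$ and $D^{1/2}\boldsymbol{G}$ independent. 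The delicate point is that $\check{\beta}$ and $\check{\Omega}^{-1}$ are continuous only off $N^{\ast}$, so the (extended) continuous mapping theorem requires the limiting argument $\sigma D^{1/2}\boldsymbol{G}$ of $\check{\Omega}^{-1}$ to lie in $\mathbb{R}^n\setminus N^{\ast}$ almost surely; since this argument is supported on the proper subspace $\limfunc{span}(\Phi)^{\bot}$, avoidance of the $\lambda_{\mathbb{R}^n}$-null set $N^{\ast}$ is \emph{not} automatic. I would resolve this using that $N^{\ast}$ is invariant under translation by $\mathfrak{M}$ (Lemma~\ref{aux}), hence equals $\Pi_{\mathfrak{M}^{\bot}}^{-1}(\tilde{N})$ for a $\lambda_{\mathfrak{M}^{\bot}}$-null set $\tilde{N}$; as $D$ is regular on $\limfunc{span}(\Phi)^{\bot}\supseteq\mathfrak{M}^{\bot}$, the law of $\Pi_{\mathfrak{M}^{\bot}}(\sigma D^{1/2}\boldsymbol{G})$ has a density on $\mathfrak{M}^{\bot}$, so $\sigma D^{1/2}\boldsymbol{G}\notin N^{\ast}$ a.s. With this, the continuous mapping theorem gives $s_mT(y_m)\distconv(R\hat{\beta}(Z\gamma+\sigma\Phi^{1/2}\boldsymbol{G}))'\check{\Omega}^{-1}(\sigma D^{1/2}\boldsymbol{G})(R\hat{\beta}(Z\gamma+\sigma\Phi^{1/2}\boldsymbol{G}))$, the term $s_m^{1/2}R\check{\beta}(\sigma s_m^{-1/2}QU_m)$ being $o_{p}(1)$ and $s_mC\to0$. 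Finally I recast this limit in the stated form: homogeneity lets me pull $\sigma$ out of both factors, where it cancels, and the $\mathfrak{M}$-invariance of $\check{\Omega}$ replaces $\sigma D^{1/2}\boldsymbol{G}$ by $(\Phi^{1/2}+D^{1/2})\boldsymbol{G}$, whose covariance $\Phi+D$ is positive definite, providing the a.e.\ well-defined representative. This establishes Part~1.

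For Part~2, observe $P_{\mu_0+Z\gamma,\sigma^2\Phi_m}(W(C))=\Pr(T(y_m)\geq C)=\Pr(s_m[T(y_m)-C]\geq0)$ since $s_m>0$. Given Part~1, it suffices that $0$ be a continuity point of the law of $\xi(\gamma,\sigma)$, i.e.\ $\Pr(\xi(\gamma,\sigma)=0)=0$. Set $v=R\hat{\beta}(\sigma^{-1}Z\gamma+\Phi^{1/2}\boldsymbol{G})$. Because $\Phi^{1/2}\boldsymbol{G}$ has a density on $\limfunc{span}(\Phi)$, the hypothesis that $R\hat{\beta}(z)\neq0$ for $\lambda_{\limfunc{span}(\Phi)}$-almost every $z$ forces $v\neq0$ a.s. Moreover $v$ depends only on $\Phi^{1/2}\boldsymbol{G}$, whereas $\check{\Omega}^{-1}((\Phi^{1/2}+D^{1/2})\boldsymbol{G})=\check{\Omega}^{-1}(D^{1/2}\boldsymbol{G})$ (again by $\mathfrak{M}$-invariance) depends only on $D^{1/2}\boldsymbol{G}$; as these two Gaussian components are independent, $v$ is independent of $\check{\Omega}^{-1}((\Phi^{1/2}+D^{1/2})\boldsymbol{G})$. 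Conditioning on $v$ and invoking Assumption~\ref{omega2}—for fixed $v\neq0$ the set $\{y\in\mathbb{R}^n\setminus N^{\ast}:v'\check{\Omega}^{-1}(y)v=0\}$ is $\lambda_{\mathbb{R}^n}$-null, while $(\Phi^{1/2}+D^{1/2})\boldsymbol{G}$ has a density on $\mathbb{R}^n$—yields $\Pr(\xi(\gamma,\sigma)=0\mid v)=0$ a.s., hence $\Pr(\xi(\gamma,\sigma)=0)=0$. The Portmanteau theorem then gives $\Pr(s_m[T(y_m)-C]\geq0)\to\Pr(\xi(\gamma,\sigma)\geq0)$, as claimed.

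The main obstacle is the measure-theoretic point flagged above: the limiting arguments of $\check{\beta}$ and $\check{\Omega}^{-1}$ are supported on proper subspaces, so both the continuous mapping step and the continuity-point argument hinge on showing that a $\lambda_{\mathbb{R}^n}$-null set—namely $N^{\ast}$, respectively the zero set of the fixed quadratic form $y\mapsto v'\check{\Omega}^{-1}(y)v$—remains negligible when tested against a \emph{degenerate} Gaussian. The $G(\mathfrak{M})$-invariance of $N^{\ast}$ (reducing matters to $\mathfrak{M}^{\bot}$), together with the two equivalent representations $D^{1/2}\boldsymbol{G}$ and $(\Phi^{1/2}+D^{1/2})\boldsymbol{G}$—the first convenient for taking limits, the second for well-definedness and for applying Assumption~\ref{omega2}—is precisely what makes both arguments go through.
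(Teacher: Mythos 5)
Your proof is correct, and its overall architecture coincides with the paper's: peel off the $\mathfrak{M}$-component of $\mu _{0}+Z\gamma +\sigma \Phi _{m}^{1/2}\boldsymbol{G}$ via the equivariance in Assumption \ref{AE}, establish joint Gaussian convergence of the two components at their respective scales (with vanishing cross-covariance and the single-$\boldsymbol{G}$ representation justified by $\Phi ^{1/2}D^{1/2}=0$), apply the continuous mapping theorem, and for Part 2 exploit the independence of $\Phi ^{1/2}\boldsymbol{G}$ and $D^{1/2}\boldsymbol{G}$ together with Assumption \ref{omega2}. The one place where you genuinely diverge is the treatment of the measure-theoretic crux you correctly flag: the limiting argument of $\check{\Omega}^{-1}$ is a degenerate Gaussian supported on $\limfunc{span}(\Phi )^{\bot }$, so avoidance of the $\lambda _{\mathbb{R}^{n}}$-null set $N^{\ast }$ is not automatic. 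The paper sidesteps this by augmenting the argument to $\boldsymbol{L}_{m}=\Phi ^{1/2}\boldsymbol{G}+s_{m}^{-1/2}\Pi _{\limfunc{span}(\Phi )^{\bot }}\Phi _{m}^{1/2}\boldsymbol{G}$ (harmless, since the added term lies in $\mathfrak{M}$), whose limit law $N(0,\Phi +D)$ is equivalent to $\lambda _{\mathbb{R}^{n}}$, making the null-set issue disappear; you instead keep the degenerate limit $D^{1/2}\boldsymbol{G}$ and argue that the $G(\mathfrak{M})$-invariant null set $N^{\ast }$ is the preimage under $\Pi _{\mathfrak{M}^{\bot }}$ of a $\lambda _{\mathfrak{M}^{\bot }}$-null set, which is avoided almost surely because $D$ is positive definite on $\mathfrak{M}^{\bot }\subseteq \limfunc{span}(\Phi )^{\bot }$. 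Both resolutions are valid; the paper's augmentation is slightly more economical because it handles the continuous-mapping step and the continuity-point argument of Part 2 with the same nondegenerate law $P_{0,\Phi +D}$ (you in fact fall back on that same device when you invoke Assumption \ref{omega2}), whereas your projection argument makes more transparent exactly which structural feature of $N^{\ast }$ renders the degenerate limit harmless.
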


\begin{proof}
(1) Observe that $\mu _{0}+Z\gamma \in \mathfrak{M}$, that the columns of $%
\Phi ^{1/2}$ as well of $\Pi _{\limfunc{span}(\Phi )}\Phi _{m}^{1/2}$ belong
to $\mathfrak{M}$, and that $\mathbb{R}^{n}\backslash N^{\ast }$ is
invariant under the group $G(\mathfrak{M})$. Hence, using the equivariance
properties of $\check{\beta}$ and $\check{\Omega}$ expressed in Assumption %
\ref{AE} repeatedly, we obtain that on the event $\left\{ \Phi _{m}^{1/2}%
\boldsymbol{G}\in \mathbb{R}^{n}\backslash N^{\ast }\right\} $%
\begin{eqnarray*}
R\check{\beta}\left( \mu _{0}+Z\gamma +\sigma \Phi _{m}^{1/2}\boldsymbol{G}%
\right) -r &=&R\check{\beta}\left( \mu _{0}+Z\gamma +\sigma \Pi _{\limfunc{%
span}(\Phi )}\Phi _{m}^{1/2}\boldsymbol{G}+\sigma \Pi _{\limfunc{span}(\Phi
)^{\bot }}\Phi _{m}^{1/2}\boldsymbol{G}\right) -r \\
&=&R\left( BZ\gamma +\sigma B\Pi _{\limfunc{span}(\Phi )}\Phi _{m}^{1/2}%
\boldsymbol{G}+\sigma s_{m}^{1/2}\check{\beta}\left( s_{m}^{-1/2}\Pi _{%
\limfunc{span}(\Phi )^{\bot }}\Phi _{m}^{1/2}\boldsymbol{G}\right) \right) \\
&=&\sigma R\left( \sigma ^{-1}BZ\gamma +\boldsymbol{K}_{m}+s_{m}^{1/2}\check{%
\beta}\left( \boldsymbol{L}_{m}\right) \right)
\end{eqnarray*}%
holds, where $B$ is shorthand for $\left( X^{\prime }X\right) ^{-1}X^{\prime
}$, $\boldsymbol{K}_{m}=B\left( \Pi _{\limfunc{span}(\Phi )}\Phi
_{m}^{1/2}-s_{m}^{1/2}\Phi ^{1/2}\right) \boldsymbol{G}$, and $\boldsymbol{L}%
_{m}=\Phi ^{1/2}\boldsymbol{G}+s_{m}^{-1/2}\Pi _{\limfunc{span}(\Phi )^{\bot
}}\Phi _{m}^{1/2}\boldsymbol{G})$. Similarly, we obtain%
\begin{eqnarray*}
\check{\Omega}\left( \mu _{0}+Z\gamma +\sigma \Phi _{m}^{1/2}\boldsymbol{G}%
\right) &=&\sigma ^{2}\check{\Omega}\left( \Phi _{m}^{1/2}\boldsymbol{G}%
\right) =\sigma ^{2}\check{\Omega}\left( \Pi _{\limfunc{span}(\Phi )}\Phi
_{m}^{1/2}\boldsymbol{G}+\Pi _{\limfunc{span}(\Phi )^{\bot }}\Phi _{m}^{1/2}%
\boldsymbol{G}\right) \\
&=&\sigma ^{2}\check{\Omega}\left( \Pi _{\limfunc{span}(\Phi )^{\bot }}\Phi
_{m}^{1/2}\boldsymbol{G}\right) =\sigma ^{2}s_{m}\check{\Omega}(\boldsymbol{L%
}_{m})
\end{eqnarray*}%
on the event $\left\{ \Phi _{m}^{1/2}\boldsymbol{G}\in \mathbb{R}%
^{n}\backslash N^{\ast }\right\} $. Hence, on this event we have%
\begin{eqnarray*}
s_{m}\left[ T\left( \mu _{0}+Z\gamma +\sigma \Phi _{m}^{1/2}\boldsymbol{G}%
\right) -C\right] &=&\left( R\left( \sigma ^{-1}BZ\gamma +\boldsymbol{K}%
_{m}+s_{m}^{1/2}\check{\beta}\left( \boldsymbol{L}_{m}\right) \right)
\right) ^{\prime }\check{\Omega}^{-1}\left( \boldsymbol{L}_{m}\right) \\
&&\times R\left( \sigma ^{-1}BZ\gamma +\boldsymbol{K}_{m}+s_{m}^{1/2}\check{%
\beta}\left( \boldsymbol{L}_{m}\right) \right) -s_{m}C.
\end{eqnarray*}%
Clearly, $\boldsymbol{K}_{m}$ and $\boldsymbol{L}_{m}$ are jointly normal
with mean zero and second moments given by%
\begin{equation*}
\mathbb{E}\left( \boldsymbol{K}_{m}\boldsymbol{K}_{m}^{\prime }\right)
=B\left( \Pi _{\limfunc{span}(\Phi )}\Phi _{m}^{1/2}-s_{m}^{1/2}\Phi
^{1/2}\right) \left( \Pi _{\limfunc{span}(\Phi )}\Phi
_{m}^{1/2}-s_{m}^{1/2}\Phi ^{1/2}\right) ^{\prime }B^{\prime },
\end{equation*}%
\begin{equation*}
\mathbb{E}\left( \boldsymbol{L}_{m}\boldsymbol{L}_{m}^{\prime }\right) =\Phi
+D_{m}+s_{m}^{-1/2}\Pi _{\limfunc{span}(\Phi )^{\bot }}\Phi _{m}^{1/2}\Phi
^{1/2}+s_{m}^{-1/2}\left( \Pi _{\limfunc{span}(\Phi )^{\bot }}\Phi
_{m}^{1/2}\Phi ^{1/2}\right) ^{\prime },
\end{equation*}%
and%
\begin{equation*}
\mathbb{E}\left( \boldsymbol{K}_{m}\boldsymbol{L}_{m}^{\prime }\right)
=B\left( \Pi _{\limfunc{span}(\Phi )}\Phi _{m}^{1/2}-s_{m}^{1/2}\Phi
^{1/2}\right) \left( \Phi ^{1/2}+s_{m}^{-1/2}\Pi _{\limfunc{span}(\Phi
)^{\bot }}\Phi _{m}^{1/2}\right) ^{\prime }.
\end{equation*}%
It is easy to see that $\mathbb{E}\left( \boldsymbol{K}_{m}\boldsymbol{K}%
_{m}^{\prime }\right) $ converges to $B\Phi B^{\prime }$ because $%
s_{m}\rightarrow 0$, while $\mathbb{E}\left( \boldsymbol{L}_{m}\boldsymbol{L}%
_{m}^{\prime }\right) $ converges to $\Phi +D$ because of the following:
Observe that $s_{m}^{-1/2}\Pi _{\limfunc{span}(\Phi )^{\bot }}\Phi
_{m}^{1/2} $ is a (not necessarily symmetric) square root of $D_{m}$, and
hence there exists an orthogonal $n\times n$ matrix $U_{m}$ such that $%
s_{m}^{-1/2}\Pi _{\limfunc{span}(\Phi )^{\bot }}\Phi
_{m}^{1/2}=D_{m}^{1/2}U_{m}$. Let $m^{\prime }$ be an arbitrary subsequence
of $m$. Then we can find a subsequence $m^{\ast }$ of $m^{\prime }$ along
which $U_{m}$ converges to $U$, say. Using $D_{m}\rightarrow D$, we see that
along $m^{\ast }$ the sequence $s_{m}^{-1/2}\Pi _{\limfunc{span}(\Phi
)^{\bot }}\Phi _{m}^{1/2}\Phi ^{1/2}$ converges to $D^{1/2}U\Phi ^{1/2}$. It
remains to show that this limit is zero. By assumption $s_{m}^{-1/2}\Pi _{%
\limfunc{span}(\Phi )^{\bot }}\Phi _{m}\Pi _{\limfunc{span}(\Phi )}$
converges to $0$. By rewriting this sequence as $D_{m}^{1/2}U_{m}\Phi
_{m}^{1/2}\Pi _{\limfunc{span}(\Phi )}$ we see, using $\Phi _{m}\rightarrow
\Phi $, that it converges to $D^{1/2}U\Phi ^{1/2}$ along $m^{\ast }$,
showing that $D^{1/2}U\Phi ^{1/2}=0$.

Furthermore, $\mathbb{E}\left( \boldsymbol{K}_{m}\boldsymbol{L}_{m}^{\prime
}\right) $ converges to $B\Phi $ because%
\begin{eqnarray*}
&&B\left( \Pi _{\limfunc{span}(\Phi )}\Phi _{m}^{1/2}-s_{m}^{1/2}\Phi
^{1/2}\right) \left( s_{m}^{-1/2}\Pi _{\limfunc{span}(\Phi )^{\bot }}\Phi
_{m}^{1/2}\right) ^{\prime } \\
&=&B\left( \Pi _{\limfunc{span}(\Phi )^{\bot }}\Phi _{m}\Pi _{\limfunc{span}%
(\Phi )}/s_{m}^{1/2}\right) ^{\prime }-B\Phi ^{1/2}\Phi _{m}^{1/2}\Pi _{%
\limfunc{span}(\Phi )^{\bot }} \\
&\rightarrow &-B\Phi \Pi _{\limfunc{span}(\Phi )^{\bot }}=-B\left( \Pi _{%
\limfunc{span}(\Phi )^{\bot }}\Phi \right) ^{\prime }=0
\end{eqnarray*}%
where we have made use of the assumption $\Pi _{\limfunc{span}(\Phi )^{\bot
}}\Phi _{m}\Pi _{\limfunc{span}(\Phi )}/s_{m}^{1/2}\rightarrow 0$ and of
symmetry of $\Phi $. Hence we have (cf. Lemma \ref{conc}) that%
\begin{equation*}
\left( 
\begin{array}{c}
\boldsymbol{K}_{m} \\ 
\boldsymbol{L}_{m}%
\end{array}%
\right) \overset{d}{\rightarrow }N\left( 0,\left[ 
\begin{array}{cc}
B\Phi B^{\prime } & B\Phi \\ 
\Phi B^{\prime } & \Phi +D%
\end{array}%
\right] \right) .
\end{equation*}%
Note that this limiting normal distribution is also the joint distribution
of $\boldsymbol{K}=B\Phi ^{1/2}\boldsymbol{G}$ and $\boldsymbol{L}=\left(
\Phi ^{1/2}+D^{1/2}\right) \boldsymbol{G}$. [Observe that $\Phi
^{1/2}+D^{1/2}=\left( \Phi +D\right) ^{1/2}$ since $\Phi D=D\Phi =0$ as $D$
vanishes on $\limfunc{span}(\Phi )$ by construction.] Now consider the map $%
f $ on $\mathbb{R}^{n+k}$ given by $f(x,y)=\left(
f_{1}(x),f_{2}(y),f_{3}(y)\right) $ where $f_{1}(x)=x$ for $x\in \mathbb{R}%
^{k}$, and where $f_{2}(y)=\check{\beta}(y)$, $f_{3}(y)=\check{\Omega}%
^{-1}\left( y\right) $ for $y\in \mathbb{R}^{n}\backslash N^{\ast }$ and are
zero else. Observe that the set of discontinuity points, $F$ say, of $f$ is
contained in $\mathbb{R}^{k}\times N^{\ast }$. But 
\begin{equation}
\Pr \left( \left( \boldsymbol{K},\boldsymbol{L}\right) \in F\right) \leq \Pr
\left( \left( \boldsymbol{K},\boldsymbol{L}\right) \in \mathbb{R}^{k}\times
N^{\ast }\right) =\Pr \left( \boldsymbol{L}\in N^{\ast }\right) =0
\label{zero}
\end{equation}%
because $N^{\ast }$ is a $\lambda _{\mathbb{R}^{n}}$-null set and the
distribution of $\boldsymbol{L}$ is equivalent to Lebesgue measure on $%
\mathbb{R}^{n}$ as $\Phi +D$ is positive definite. This shows that $f\left( 
\boldsymbol{K}_{m},\boldsymbol{L}_{m}\right) $ converges in distribution to $%
f\left( \boldsymbol{K},\boldsymbol{L}\right) $ as $m\rightarrow \infty $. Now%
\begin{eqnarray*}
s_{m}\left[ T\left( \mu _{0}+Z\gamma +\sigma \Phi _{m}^{1/2}\boldsymbol{G}%
\right) -C\right] &=&\left( R\left( \sigma ^{-1}BZ\gamma +f_{1}\left( 
\boldsymbol{K}_{m}\right) +s_{m}^{1/2}f_{2}(\boldsymbol{L}_{m})\right)
\right) ^{\prime }f_{3}\left( \boldsymbol{L}_{m}\right) \\
&&\times R\left( \sigma ^{-1}BZ\gamma +f_{1}\left( \boldsymbol{K}_{m}\right)
+s_{m}^{1/2}f_{2}(\boldsymbol{L}_{m})\right) -s_{m}C
\end{eqnarray*}%
holds everywhere (note that $\boldsymbol{L}_{m}\in \mathbb{R}^{n}\backslash
N^{\ast }$ if and only if $\Phi _{m}^{1/2}\boldsymbol{G}\in \mathbb{R}%
^{n}\backslash N^{\ast }$ by $G(\mathfrak{M})$-invariance of $\mathbb{R}%
^{n}\backslash N^{\ast }$). Because $s_{m}^{1/2}f_{2}(\boldsymbol{L}_{m})$
converges to zero in probability and $s_{m}C\rightarrow 0$ we immediately
see that the random variable in the preceding display converges in
distribution to%
\begin{equation*}
\left( R\left( \sigma ^{-1}BZ\gamma +f_{1}\left( B\Phi ^{1/2}\boldsymbol{G}%
\right) \right) \right) ^{\prime }f_{3}\left( \left( \Phi
^{1/2}+D^{1/2}\right) \boldsymbol{G}\right) R\left( \sigma ^{-1}BZ\gamma
+f_{1}\left( B\Phi ^{1/2}\boldsymbol{G}\right) \right)
\end{equation*}%
which coincides with $\xi \left( \gamma ,\sigma \right) $. Finally, the
claim that $\left\{ \left( \Phi ^{1/2}+D^{1/2}\right) \boldsymbol{G}\in 
\mathbb{R}^{n}\backslash N^{\ast }\right\} $ is a probability $1$ event has
already been established in (\ref{zero}).

(2) This follows from Part 1 if we can establish that $\Pr \left( \xi \left(
\gamma ,\sigma \right) =0\right) =0$. Now observe that $\check{\Omega}%
^{-1}(\left( \Phi ^{1/2}+D^{1/2}\right) \boldsymbol{G})=\check{\Omega}%
^{-1}(D^{1/2}\boldsymbol{G})$ by equivariance and that $\left( \Phi
^{1/2}+D^{1/2}\right) \boldsymbol{G}\in \mathbb{R}^{n}\backslash N^{\ast }$
if and only if $D^{1/2}\boldsymbol{G}\in \mathbb{R}^{n}\backslash N^{\ast }$%
. Hence 
\begin{eqnarray}
&&\Pr \left( \xi \left( \gamma ,\sigma \right) =0\right) =\Pr \left( \xi
\left( \gamma ,\sigma \right) =0,\left( \Phi ^{1/2}+D^{1/2}\right) 
\boldsymbol{G}\in \mathbb{R}^{n}\backslash N^{\ast }\right)  \notag \\
&=&\Pr \left( \left( \hat{\beta}\left( \sigma ^{-1}Z\gamma +\Phi ^{1/2}%
\boldsymbol{G}\right) \right) ^{\prime }R^{\prime }\check{\Omega}%
^{-1}(D^{1/2}\boldsymbol{G})\right.  \notag \\
&&\times \left. R\left( \hat{\beta}\left( \sigma ^{-1}Z\gamma +\Phi ^{1/2}%
\boldsymbol{G}\right) \right) =0,D^{1/2}\boldsymbol{G}\in \mathbb{R}%
^{n}\backslash N^{\ast }\right)  \notag \\
&=&\int \Pr \left( \left( \hat{\beta}\left( \sigma ^{-1}Z\gamma +x\right)
\right) ^{\prime }R^{\prime }\check{\Omega}^{-1}(D^{1/2}\boldsymbol{G}%
)\right.  \notag \\
&&\times \left. R\left( \hat{\beta}\left( \sigma ^{-1}Z\gamma +x\right)
\right) =0,D^{1/2}\boldsymbol{G}\in \mathbb{R}^{n}\backslash N^{\ast
}\right) dP_{0,\Phi }(x)  \notag \\
&=&\int \Pr \left( \left( \hat{\beta}\left( \sigma ^{-1}Z\gamma +x\right)
\right) ^{\prime }R^{\prime }\check{\Omega}^{-1}\left( \left( \Phi
^{1/2}+D^{1/2}\right) \boldsymbol{G}\right) \right.  \notag \\
&&\times \left. R\left( \hat{\beta}\left( \sigma ^{-1}Z\gamma +x\right)
\right) =0,\left( \Phi ^{1/2}+D^{1/2}\right) \boldsymbol{G}\in \mathbb{R}%
^{n}\backslash N^{\ast }\right) dP_{0,\Phi }(x)  \notag \\
&=&\int P_{0,\Phi +D}\left( \left\{ y\in \mathbb{R}^{n}\backslash N^{\ast
}:v(x)^{\prime }\check{\Omega}^{-1}(y)v(x)=0\right\} \right) dP_{0,\Phi }(x)
\label{conditioning100}
\end{eqnarray}%
with $v(x)=R\hat{\beta}\left( \sigma ^{-1}Z\gamma +x\right) $, the third
equality in the preceding display being true since $\Phi ^{1/2}\boldsymbol{G}
$ and $D^{1/2}\boldsymbol{G}$ are independent as 
\begin{equation*}
\mathbb{E}\left( \Phi ^{1/2}\boldsymbol{G}\left( D^{1/2}\boldsymbol{G}%
\right) ^{\prime }\right) =\Phi ^{1/2}D^{1/2}=0.
\end{equation*}%
Now the integrand in the last line of (\ref{conditioning100}) is zero by
Assumption (\ref{omega2}) for every $x$ except when $v(x)=0$. Hence, we are
done if we can establish that $P_{0,\Phi }\left( v(x)=0\right) =0$. Because $%
\limfunc{span}(\Phi )$ equals the span of the columns of $Z$, we can make
the change of variables $x=Zc$ and obtain%
\begin{equation*}
P_{0,\Phi }\left( v(x)=0\right) =P_{0,A}\left( v(Zc)=0\right) =P_{0,A}\left(
R\left( \hat{\beta}\left( Z\left( \sigma ^{-1}\gamma +c\right) \right)
\right) =0\right)
\end{equation*}%
where $A=\left( Z^{\prime }Z\right) ^{-1}Z^{\prime }\Phi Z\left( Z^{\prime
}Z\right) ^{-1}$. Because $A$ is non-singular, this probability is zero if
the event has $\lambda _{\mathbb{R}^{l}}$-measure zero. But 
\begin{equation*}
\lambda _{\mathbb{R}^{l}}\left( \left\{ c:R\hat{\beta}\left( Z\left( \sigma
^{-1}\gamma +c\right) \right) =0\right\} \right) =\lambda _{\limfunc{span}%
(\Phi )}\left( \left\{ z:R\hat{\beta}\left( z\right) =0\right\} \right) =0
\end{equation*}%
by our assumptions.
\end{proof}

\textbf{Proof of Theorem \ref{prop_101}: }Fix $\mu _{0}\in \mathfrak{M}_{0}$
and $\sigma $, $0<\sigma <\infty $. Then for every $\gamma \in \mathbb{R}%
^{l} $ we have%
\begin{equation*}
P_{\mu _{0}+Z\gamma ,\sigma ^{2}\Sigma _{m}}\left( W(C)\right) =\Pr \left(
s_{m}\left[ T\left( \mu _{0}+Z\gamma +\sigma \Sigma _{m}^{1/2}\boldsymbol{G}%
\right) -C\right] \geq 0\right)
\end{equation*}%
which converges to $\Pr \left( \xi \left( \gamma ,\sigma \right) \geq
0\right) $ as shown in the preceding lemma (with $\Sigma _{m}$ and $\bar{%
\Sigma}$ playing the r\^{o}les of $\Phi _{m}$ and $\Phi $, respectively).
Consequently, for every $\gamma \in \mathbb{R}^{l}$%
\begin{equation*}
\inf_{\Sigma \in \mathfrak{C}}P_{\mu _{0}+Z\gamma ,\sigma ^{2}\Sigma }\left(
W(C)\right) \leq \Pr \left( \xi \left( \gamma ,\sigma \right) \geq 0\right) .
\end{equation*}%
But now%
\begin{eqnarray*}
&&\liminf_{M\rightarrow \infty }\inf_{\left\Vert \gamma \right\Vert \geq
M}\Pr \left( \xi \left( \gamma ,\sigma \right) \geq 0\right) \leq
\liminf_{M\rightarrow \infty }\inf_{R\hat{\beta}\left( Z\gamma \right) \neq
0,\left\Vert \gamma \right\Vert \geq M}\Pr \left( \xi \left( \gamma ,\sigma
\right) \geq 0\right) \\
&=&\liminf_{M\rightarrow \infty }\inf_{R\hat{\beta}\left( Z\gamma \right)
\neq 0,\left\Vert \gamma \right\Vert \geq M}\Pr \left( \xi \left( \gamma
,\sigma \right) /\left\Vert \gamma \right\Vert ^{2}\geq 0\right) \\
&\leq &\liminf_{M\rightarrow \infty }\inf_{R\hat{\beta}\left( Z\gamma
\right) \neq 0,\left\Vert \gamma \right\Vert =M}\Pr \left( \xi \left( \gamma
,\sigma \right) /\left\Vert \gamma \right\Vert ^{2}\geq 0\right) \\
&\leq &\inf_{\left\Vert c\right\Vert =1,R\hat{\beta}\left( Zc\right) \neq
0}\liminf_{M\rightarrow \infty }\Pr \left( \bar{\xi}\left( c,M,\sigma
\right) \geq 0\right)
\end{eqnarray*}%
where 
\begin{eqnarray*}
\bar{\xi}\left( c,M,\sigma \right) &=&\left( R\left( \hat{\beta}\left(
Zc\right) +\sigma \hat{\beta}\left( \bar{\Sigma}^{1/2}\boldsymbol{G}\right)
/M\right) \right) ^{\prime }\check{\Omega}^{-1}\left( \left( \bar{\Sigma}%
^{1/2}+D^{1/2}\right) \boldsymbol{G}\right) \\
&&\times R\left( \hat{\beta}\left( Zc\right) +\sigma \hat{\beta}\left( \bar{%
\Sigma}^{1/2}\boldsymbol{G}\right) /M\right)
\end{eqnarray*}%
on the event where $\left( \bar{\Sigma}^{1/2}+D^{1/2}\right) \boldsymbol{G}%
\in \mathbb{R}^{n}\backslash N^{\ast }$ and is zero else. The random
variable $\bar{\xi}\left( c,M,\sigma \right) $ converges in probability to
the random variable $\bar{\xi}\left( c\right) $ as $M\rightarrow \infty $.
Hence 
\begin{equation*}
\liminf_{M\rightarrow \infty }\Pr \left( \bar{\xi}\left( c,M,\sigma \right)
\geq 0\right) =\Pr \left( \bar{\xi}\left( c\right) \geq 0\right)
\end{equation*}%
holds for every $c\in \mathbb{R}^{l}$ satisfying $\left\Vert c\right\Vert =1$
and $R\hat{\beta}\left( Zc\right) \neq 0$, because $\Pr \left( \bar{\xi}%
\left( c\right) =0\right) =0$ for such $c$ in view of Assumption \ref{omega2}
observing that $P_{0,\bar{\Sigma}+D}$ is equivalent to $\lambda _{\mathbb{R}%
^{n}}$ as $\bar{\Sigma}+D$ is nonsingular. This proves that%
\begin{eqnarray*}
&&\liminf_{M\rightarrow \infty }\inf_{\left\Vert \gamma \right\Vert \geq
M}\inf_{\Sigma \in \mathfrak{C}}P_{\mu _{0}+Z\gamma ,\sigma ^{2}\Sigma
}\left( W(C)\right) \leq \inf_{\left\Vert c\right\Vert =1,R\hat{\beta}\left(
Zc\right) \neq 0}\Pr \left( \bar{\xi}\left( c\right) \geq 0\right) \\
&=&\inf_{\left\Vert c\right\Vert =1}\Pr \left( \bar{\xi}\left( c\right) \geq
0\right) =\inf_{c\in \mathbb{R}^{l}}\Pr \left( \bar{\xi}\left( c\right) \geq
0\right) =K_{1},
\end{eqnarray*}%
the first two equalities holding because $\bar{\xi}\left( c\right) \equiv 0$
if $R\hat{\beta}\left( Zc\right) =0$ (and in particular if $c=0$) and
because $\Pr \left( \bar{\xi}\left( c\right) \geq 0\right) $ is homogenous
in $c$. This establishes the first inequality in (\ref{Power_size_ineq})
because the left-most expression in (\ref{Power_size_ineq}) is monotonically
increasing in $M$. Furthermore,%
\begin{equation*}
\sup_{\Sigma \in \mathfrak{C}}P_{\mu _{0},\sigma ^{2}\Sigma }\left(
W(C)\right) \geq P_{\mu _{0},\sigma ^{2}\Sigma _{m}}\left( W(C)\right) ,
\end{equation*}%
and hence we obtain from Lemma \ref{lem_100} that 
\begin{eqnarray*}
&&\sup_{\Sigma \in \mathfrak{C}}P_{\mu _{0},\sigma ^{2}\Sigma }\left(
W(C)\right) \geq \Pr \left( \xi \left( 0,\sigma \right) \geq 0\right) \\
&=&\Pr \left( \left( R\hat{\beta}\left( \bar{\Sigma}^{1/2}\boldsymbol{G}%
\right) \right) ^{\prime }\check{\Omega}^{-1}\left( \left( \bar{\Sigma}%
^{1/2}+D^{1/2}\right) \boldsymbol{G}\right) R\hat{\beta}\left( \bar{\Sigma}%
^{1/2}\boldsymbol{G}\right) \geq 0,\right. \\
&&\left. \left( \bar{\Sigma}^{1/2}+D^{1/2}\right) \boldsymbol{G}\in \mathbb{R%
}^{n}\backslash N^{\ast }\right) .
\end{eqnarray*}%
Now observe that $\check{\Omega}^{-1}\left( \left( \bar{\Sigma}%
^{1/2}+D^{1/2}\right) \boldsymbol{G}\right) =\check{\Omega}^{-1}(D^{1/2}%
\boldsymbol{G})$ by equivariance and that $\left( \bar{\Sigma}%
^{1/2}+D^{1/2}\right) \boldsymbol{G}\in \mathbb{R}^{n}\backslash N^{\ast }$
if and only if $D^{1/2}\boldsymbol{G}\in \mathbb{R}^{n}\backslash N^{\ast }$%
. Then by the same arguments as in (\ref{conditioning100}) we obtain%
\begin{eqnarray*}
\Pr \left( \xi \left( 0,\sigma \right) \geq 0\right) &=&\int \Pr \left(
\left( R\hat{\beta}\left( x\right) \right) ^{\prime }\check{\Omega}%
^{-1}\left( \left( \bar{\Sigma}^{1/2}+D^{1/2}\right) \boldsymbol{G}\right)
\right. \\
&&\times \left. R\left( \hat{\beta}\left( x\right) \right) \geq 0,\left( 
\bar{\Sigma}^{1/2}+D^{1/2}\right) \boldsymbol{G}\in \mathbb{R}^{n}\backslash
N^{\ast }\right) dP_{0,\bar{\Sigma}}(x) \\
&=&\int \Pr \left( \bar{\xi}\left( \gamma \right) \geq 0\right)
dP_{0,A}(\gamma )=K_{2},
\end{eqnarray*}%
the last equality resulting from the variable change $x=Z\gamma $ which is
possible since $\limfunc{span}(\bar{\Sigma})$ equals the space spanned by $Z$%
. Finally, the inequality $K_{1}\leq K_{2}$ is obvious from the definition
of these constants. $\blacksquare $

\textbf{Proof of Theorem \ref{TU_1}: }Define $\varphi =\boldsymbol{1}\left(
W(C)\right) $ and note that invariance of $\varphi $ under $G(\mathfrak{M}%
_{0})$ as well as the fact that $\varphi $ is $\lambda _{\mathbb{R}^{n}}$%
-almost everywhere neither equal to $0$ or $1$ follows from Lemma \ref{LWT}.
Part 1 of Theorem \ref{TU} then implies Part 1 of the theorem. Similarly,
Parts 2 and 3 of the theorem follow from Parts 2 and 3 of Theorem \ref{TU},
respectively, because condition (\ref{power_ass}) follows from Part 7 of
Lemma \ref{LWT} combined with Remark \ref{omega_rem} and because the lower
bound in (\ref{lower_bound_power_far_away}) equals $1$ under the assumptions
of Part 3. To prove Part 4 we use Theorem \ref{TUU}. Choose a sequence $%
C_{k} $, $0<C_{k}<\infty $, \ that diverges monotonically to infinity and
set $\varphi _{k}=\boldsymbol{1}\left( W(C_{k})\right) $. Then (\ref%
{monotone}) is satisfied and the result follows from Theorem \ref{TUU} upon
setting $C(\delta )=C_{k_{0}(\delta )}$. $\blacksquare $

\begin{lemma}
\label{nec}Let $\check{\beta}$ and $\check{\Omega}$ satisfy Assumptions \ref%
{AE} and \ref{omega2}. Let $T$ be the test statistic defined in (\ref{DT})
and let $\mathfrak{C}$ be a covariance model. If there is a $z\in \limfunc{%
span}\left( J(\mathfrak{C})\right) \cap \mathfrak{M}$ with $z\notin 
\mathfrak{M}_{0}-\mu _{0}$ (i.e., with $R\hat{\beta}(z)\neq 0$), then $T$
does not satisfy the invariance condition (\ref{T_inv_wrt_z}).
\end{lemma}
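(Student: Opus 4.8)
The plan is to argue by contradiction, so I suppose that $T$ satisfies the invariance condition (\ref{T_inv_wrt_z}). The first step is to upgrade this to invariance under the whole span: since $J(\mathfrak{C})$ is a union of linear spaces and hence homogeneous, the argument recorded in Remark \ref{rem_positive}(i) shows that invariance under addition of elements of $J(\mathfrak{C})$ is equivalent to invariance under addition of elements of $\limfunc{span}(J(\mathfrak{C}))$. Consequently, for the given $z\in \limfunc{span}(J(\mathfrak{C}))\cap \mathfrak{M}$ one would have $T(y+\lambda z)=T(y)$ for every $y\in \mathbb{R}^{n}$ and every scalar $\lambda$, because $\lambda z\in \limfunc{span}(J(\mathfrak{C}))$.

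Next I would exploit that $z\in \mathfrak{M}$. Writing $z=X\gamma _{0}$ with $\gamma _{0}=\hat{\beta}(z)$ (so that $R\gamma _{0}=R\hat{\beta}(z)\neq 0$ by hypothesis), the translation $y\mapsto y+\lambda z=y+X(\lambda \gamma _{0})$ is an element of the group $G(\mathfrak{M})$. By Part 1 of Lemma \ref{LWT} the set $\mathbb{R}^{n}\backslash N^{\ast }$ is invariant under $G(\mathfrak{M})$, so for any $y\in \mathbb{R}^{n}\backslash N^{\ast }$ the entire line $\left\{ y+\lambda z:\lambda \in \mathbb{R}\right\} $ remains in $\mathbb{R}^{n}\backslash N^{\ast }$, where $T$ is given by the quadratic form in (\ref{DT}) and $\check{\Omega}$ is nonsingular. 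Applying the equivariance properties of Assumption \ref{AE}(iii) with $\alpha =1$ gives $\check{\beta}(y+\lambda z)=\check{\beta}(y)+\lambda \gamma _{0}$ and $\check{\Omega}(y+\lambda z)=\check{\Omega}(y)$. Hence, setting $v=R\gamma _{0}\neq 0$ and $w=R\check{\beta}(y)-r$, one obtains
\begin{equation*}
T(y+\lambda z)=(w+\lambda v)^{\prime }\check{\Omega}^{-1}(y)(w+\lambda v),
\end{equation*}
a quadratic polynomial in $\lambda $ with leading coefficient $v^{\prime }\check{\Omega}^{-1}(y)v$.

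The final step is to convert the assumed constancy into a contradiction with Assumption \ref{omega2}. If $T$ were invariant, the displayed polynomial would be constant in $\lambda $, which forces its leading coefficient to vanish, i.e.\ $v^{\prime }\check{\Omega}^{-1}(y)v=0$. This would have to hold for \emph{every} $y\in \mathbb{R}^{n}\backslash N^{\ast }$. But $v\neq 0$, so Assumption \ref{omega2} guarantees that $\left\{ y\in \mathbb{R}^{n}\backslash N^{\ast }:v^{\prime }\check{\Omega}^{-1}(y)v=0\right\} $ is a $\lambda _{\mathbb{R}^{n}}$-null set, and in particular is not all of $\mathbb{R}^{n}\backslash N^{\ast }$. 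Thus there exists $y\in \mathbb{R}^{n}\backslash N^{\ast }$ for which $\lambda \mapsto T(y+\lambda z)$ is a genuinely non-degenerate quadratic, hence non-constant, contradicting $T(y+\lambda z)=T(y)$. This shows $T$ cannot satisfy (\ref{T_inv_wrt_z}).

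The computation itself is routine; the only point requiring care is the bookkeeping of the exceptional set $N^{\ast }$. The main thing to get right is that the hypothesis $z\in \mathfrak{M}$ is precisely what makes the translation by $z$ an element of $G(\mathfrak{M})$, so that $\mathbb{R}^{n}\backslash N^{\ast }$ is preserved and the quadratic representation of $T$ remains valid along the whole line $y+\lambda z$. This is exactly what allows the failure of invariance to be read off as a violation of the measure-theoretic nondegeneracy encoded in Assumption \ref{omega2}.
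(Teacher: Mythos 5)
Your proposal is correct and follows essentially the same route as the paper's own proof: both expand $T(y+\lambda z)$ as a quadratic in $\lambda$ via the equivariance of $\check{\beta}$ and $\check{\Omega}$, observe that constancy would force the leading coefficient $\bigl(R\hat{\beta}(z)\bigr)'\check{\Omega}^{-1}(y)\bigl(R\hat{\beta}(z)\bigr)$ to vanish for all $y\in\mathbb{R}^{n}\backslash N^{\ast}$, and then invoke Assumption \ref{omega2} together with Remark \ref{rem_positive}(i) to produce a $y$ at which this fails. The only cosmetic difference is that you phrase the argument as a contradiction and apply Remark \ref{rem_positive}(i) at the outset rather than at the end.
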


\begin{proof}
Choose $z\in \limfunc{span}\left( J(\mathfrak{C})\right) \cap \mathfrak{M}$
with $z\notin \mathfrak{M}_{0}-\mu _{0}$. Because $\mathfrak{M}$ is a linear
space, we also have $cz\in \limfunc{span}\left( J(\mathfrak{C})\right) \cap 
\mathfrak{M}$ for every $c\in \mathbb{R}$. Now $cz\in \mathfrak{M}$ entails
that $y\in \mathbb{R}^{n}\backslash N^{\ast }$ implies $y+cz\in \mathbb{R}%
^{n}\backslash N^{\ast }$. Using the definition of $T$ and Assumption \ref%
{AE} we obtain%
\begin{equation*}
T\left( y+cz\right) =T\left( y\right) +2c\left( R\check{\beta}\left(
y\right) -r\right) ^{\prime }\check{\Omega}^{-1}\left( y\right) R\hat{\beta}%
(z)+c^{2}\left( R\hat{\beta}(z)\right) ^{\prime }\check{\Omega}^{-1}\left(
y\right) \left( R\hat{\beta}(z)\right)
\end{equation*}%
for every $y\in \mathbb{R}^{n}\backslash N^{\ast }$. Because $R\hat{\beta}%
(z)\neq 0$, we can in view of Assumption \ref{omega2} find an $y\in \mathbb{R%
}^{n}\backslash N^{\ast }$ such that 
\begin{equation*}
\left( R\hat{\beta}(z)\right) ^{\prime }\check{\Omega}^{-1}\left( y\right)
\left( R\hat{\beta}(z)\right) \neq 0
\end{equation*}%
holds. Hence $T\left( y+cz\right) =T\left( y\right) $ cannot hold for the
so-chosen $y$ and all $c\neq 0$. Because $cz\in \limfunc{span}\left( J(%
\mathfrak{C})\right) $, Remark \ref{rem_positive}(i) implies that condition (%
\ref{T_inv_wrt_z}) is not satisfied.
\end{proof}

\textbf{Proof of Proposition \ref{enforce_inv}: }(1) By the assumed
equivariance (invariance, respectively) of $\bar{\theta}$, $\bar{\Omega}$,
and $\bar{N}$ (and hence of $\bar{N}^{\ast }$) w.r.t. the transformations $%
y\mapsto \alpha y+\bar{X}\eta $, the equivariance (invariance, respectively)
of $\bar{\beta}$, ${\bar{\Omega}}$, and $\bar{N}$ required in the original
Assumption \ref{AE} is clearly satisfied. Now choose $z\in J(\mathfrak{C})$
and $y\in \mathbb{R}^{n}$. If $y\in \bar{N}^{\ast }$ then so is $y+z$
because of invariance of $\bar{N}^{\ast }$ and because $z\in J(\mathfrak{C}%
)\subseteq \mathfrak{\bar{M}}$ holds by construction. Hence, $T(y)=0=T(y+z)$
is satisfied in this case. Now let $y\in \mathbb{R}^{n}\backslash \bar{N}%
^{\ast }$ (and hence also $y+z\in \mathbb{R}^{n}\backslash \bar{N}^{\ast }$%
). Note that ${\bar{\Omega}(y)=\bar{\Omega}(y+z)}$ holds by equivariance. It
remains to show that $R\bar{\beta}(y)=R\bar{\beta}(y+z)$. Because $z\in J(%
\mathfrak{C})\subseteq \mathfrak{\bar{M}}$ we have $z=X\gamma +\left( \bar{x}%
_{1},\ldots ,\bar{x}_{p}\right) \delta $ and thus obtain%
\begin{equation}
R\bar{\beta}(y+z)=\left( R,0\right) \bar{\theta}(y+z)=\left( R,0\right)
\left( \bar{\theta}(y)+\left( \gamma ^{\prime },\delta ^{\prime }\right)
^{\prime }\right) =R\bar{\beta}(y)+R\gamma ,  \label{beta}
\end{equation}%
where we have made use of equivariance of $\bar{\theta}$. Now observe that $%
\left( \bar{x}_{1},\ldots ,\bar{x}_{p}\right) \delta \in \limfunc{span}%
\left( J(\mathfrak{C})\cup \left( \mathfrak{M}_{0}-\mu _{0}\right) \right) $
by construction of the $\bar{x}_{i}$. Hence, we can find an element $\mu
_{0}^{\#}\in \mathfrak{M}_{0}$ such that $\left( \bar{x}_{1},\ldots ,\bar{x}%
_{p}\right) \delta -\left( \mu _{0}^{\#}-\mu _{0}\right) \in \limfunc{span}%
\left( J(\mathfrak{C})\right) $. Consequently, we obtain%
\begin{equation*}
z-\left( \left( \bar{x}_{1},\ldots ,\bar{x}_{p}\right) \delta -\left( \mu
_{0}^{\#}-\mu _{0}\right) \right) =X\gamma +\left( \mu _{0}^{\#}-\mu
_{0}\right) .
\end{equation*}%
The left-hand side is obviously an element of $\limfunc{span}\left( J(%
\mathfrak{C})\right) $, while the right-hand side belongs to $\mathfrak{M}$,
implying that the right-hand side is in $\limfunc{span}J(\mathfrak{C})\cap 
\mathfrak{M}$ which is a subset of $\mathfrak{M}_{0}-\mu _{0}$ by
assumption. Because $\mu _{0}^{\#}-\mu _{0}\in \mathfrak{M}_{0}-\mu _{0}$,
we have established that $X\gamma \in \mathfrak{M}_{0}-\mu _{0}$, or in
other words, that $R\gamma =0$.

(2) The very first claim is obvious. If $z\in \limfunc{span}\left( J(%
\mathfrak{C})\right) $ then again we have $z=X\gamma +\left( \bar{x}%
_{1},\ldots ,\bar{x}_{p}\right) \delta $ and $\bar{\theta}\left( z\right)
=\left( \gamma ^{\prime },\delta ^{\prime }\right) ^{\prime }$. Now $R\bar{%
\beta}\left( z\right) =\left( R,0\right) \bar{\theta}\left( z\right)
=R\gamma $ and exactly the same argument as above shows that $R\gamma =0$.
For the last claim note that $\bar{X}\bar{\theta}\left( y\right) =X^{\ast
}\theta ^{\ast }\left( y\right) $ holds because $\bar{X}$ and $X^{\ast }$
span the same space. This equality can be written as%
\begin{equation*}
X\bar{\beta}(y)-X\beta ^{\ast }(y)=\sum_{i=1}^{p}x_{i}^{\ast }\theta
_{k+i}^{\ast }\left( y\right) -\sum_{i=1}^{p}\bar{x}_{i}\bar{\theta}%
_{k+i}\left( y\right) .
\end{equation*}%
Because the right-hand side of the above equation belongs to $\limfunc{span}%
\left( J(\mathfrak{C})\cup \left( \mathfrak{M}_{0}-\mu _{0}\right) \right) $
we can find $\mu _{0}^{\#}\in \mathfrak{M}_{0}$ such that the right-hand
side of%
\begin{equation*}
X\left( \bar{\beta}(y)-\beta ^{\ast }(y)\right) -\left( \mu _{0}^{\#}-\mu
_{0}\right) =\sum_{i=1}^{p}x_{i}^{\ast }\theta _{k+i}^{\ast }\left( y\right)
-\sum_{i=1}^{p}\bar{x}_{i}\bar{\theta}_{k+i}\left( y\right) -\left( \mu
_{0}^{\#}-\mu _{0}\right)
\end{equation*}%
belongs to $\limfunc{span}\left( J(\mathfrak{C})\right) $ while the
left-hand side belongs to $\mathfrak{M}$. Arguing now similarly as in the
proof of Part 1, we conclude that $R\bar{\beta}(y)=R\beta ^{\ast }(y)$. $%
\blacksquare $

\section{Appendix: Properties of AR-Correlation Matrices \label{AR_100}}

\begin{lemma}
\label{AR_1}

\begin{enumerate}
\item Suppose the covariance model $\mathfrak{C}$ contains $\Lambda (\rho
_{m})$ for some sequence $\rho _{m}\in \left( -1,1\right) $ with $\rho
_{m}\rightarrow 1$ ($\rho _{m}\rightarrow -1$, respectively). Then $\limfunc{%
span}\left( e_{+}\right) $ ( $\limfunc{span}\left( e_{-}\right) $,
respectively) is a concentration space of $\mathfrak{C}$.

\item $\mathfrak{C}_{AR(1)}$ has $\limfunc{span}\left( e_{+}\right) $ and $%
\limfunc{span}\left( e_{-}\right) $ as its only concentration spaces.
Consequently, $J(\mathfrak{C}_{AR(1)})=\limfunc{span}(e_{+})\cup \limfunc{%
span}(e_{-})$.

\item If $\rho _{m}\in \left( -1,1\right) $ is a sequence converging to $1$
then $\Sigma _{m}=\Lambda (\rho _{m})$ satisfies $\Sigma _{m}\rightarrow 
\bar{\Sigma}=e_{+}e_{+}^{\prime }$ and $D_{m}=\Pi _{\limfunc{span}\left( 
\bar{\Sigma}\right) ^{\bot }}\Sigma _{m}\Pi _{\limfunc{span}\left( \bar{%
\Sigma}\right) ^{\bot }}/s_{m}\rightarrow D$ as well as $\Pi _{\limfunc{span}%
\left( \bar{\Sigma}\right) ^{\bot }}\Sigma _{m}\Pi _{\limfunc{span}\left( 
\bar{\Sigma}\right) }/s_{m}^{1/2}\rightarrow 0$ where $s_{m}=\limfunc{tr}%
\left( \Pi _{\limfunc{span}\left( \bar{\Sigma}\right) ^{\bot }}\Sigma
_{m}\Pi _{\limfunc{span}\left( \bar{\Sigma}\right) ^{\bot }}\right) $
converges to zero and $D$ is the matrix with $(i,j)$-th element $%
-n\left\vert i-j\right\vert /\sum_{i,j}\left\vert i-j\right\vert $ pre- and
postmultiplied by $\left( I_{n}-n^{-1}e_{+}e_{+}^{\prime }\right) $.
Furthermore, $D$ is regular on $\limfunc{span}\left( \bar{\Sigma}\right)
^{\bot }$.

\item If $\rho _{m}\in \left( -1,1\right) $ is a sequence converging to $-1$
then $\Sigma _{m}=\Lambda (\rho _{m})$ satisfies $\Sigma _{m}\rightarrow 
\bar{\Sigma}=e_{-}e_{-}^{\prime }$ and $D_{m}=\Pi _{\limfunc{span}\left( 
\bar{\Sigma}\right) ^{\bot }}\Sigma _{m}\Pi _{\limfunc{span}\left( \bar{%
\Sigma}\right) ^{\bot }}/s_{m}\rightarrow D$ as well as $\Pi _{\limfunc{span}%
\left( \bar{\Sigma}\right) ^{\bot }}\Sigma _{m}\Pi _{\limfunc{span}\left( 
\bar{\Sigma}\right) }/s_{m}^{1/2}\rightarrow 0$ where $s_{m}=\limfunc{tr}%
\left( \Pi _{\limfunc{span}\left( \bar{\Sigma}\right) ^{\bot }}\Sigma
_{m}\Pi _{\limfunc{span}\left( \bar{\Sigma}\right) ^{\bot }}\right) $
converges to zero and $D$ is the matrix with $(i,j)$-th element $%
n(-1)^{\left\vert i-j\right\vert +1}\left\vert i-j\right\vert
/\sum_{i,j}\left\vert i-j\right\vert $ pre- and postmultiplied by $\left(
I_{n}-n^{-1}e_{-}e_{-}^{\prime }\right) $. Furthermore, $D$ is regular on $%
\limfunc{span}\left( \bar{\Sigma}\right) ^{\bot }$.
\end{enumerate}
\end{lemma}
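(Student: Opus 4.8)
The plan is to treat the four parts in order, deriving Parts 1 and 2 directly from Definition \ref{CD} and reserving the real work for the asymptotic expansions in Parts 3 and 4.

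First I would establish the two limit identities on which everything rests. Since the $(i,j)$-entry of $\Lambda(\rho)$ is $\rho^{|i-j|}$, letting $\rho \to 1$ gives $\rho^{|i-j|} \to 1$ for every pair $(i,j)$, so $\Lambda(\rho_m) \to e_+ e_+'$; and since $(-1)^{|i-j|} = (-1)^{i+j} = (e_-)_i (e_-)_j$, letting $\rho \to -1$ gives $\Lambda(\rho_m) \to e_- e_-'$. As $\limfunc{span}(e_+ e_+') = \limfunc{span}(e_+)$ and $\limfunc{span}(e_- e_-') = \limfunc{span}(e_-)$, Part 1 is immediate from the definition of a concentration space. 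For the converse in Part 2, I would take any sequence $\Lambda(\rho_m)$ in $\mathfrak{C}_{AR(1)}$ converging to a singular $\bar\Sigma$; by compactness of $[-1,1]$ pass to a subsequence with $\rho_m \to \rho_*$, and use continuity of $\rho \mapsto \Lambda(\rho)$ (extended to the endpoints via the two identities just noted) to conclude $\bar\Sigma = \Lambda(\rho_*)$. Since $\Lambda(\rho_*)$ is positive definite for $\rho_* \in (-1,1)$, singularity forces $\rho_* \in \{-1, 1\}$, leaving only the two spaces $\limfunc{span}(e_\pm)$, whose union is $J(\mathfrak{C}_{AR(1)})$.

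For Part 3 I would write $\rho_m = 1 - \varepsilon_m$ with $\varepsilon_m \searrow 0$ and expand $\rho_m^{|i-j|} = 1 - \varepsilon_m |i-j| + o(\varepsilon_m)$ entrywise, so that $\Sigma_m = e_+ e_+' - \varepsilon_m M + o(\varepsilon_m)$, where $M$ has $(i,j)$-entry $|i-j|$. Writing $\Pi = I_n - n^{-1} e_+ e_+'$ for the projection onto $\limfunc{span}(e_+)^\perp$ and using $\Pi e_+ = 0$, the rank-one leading term drops out, giving $\Pi \Sigma_m \Pi = -\varepsilon_m \Pi M \Pi + o(\varepsilon_m)$. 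A short trace computation ($\limfunc{tr}(M) = 0$ and $e_+' M e_+ = \sum_{i,j}|i-j|$) yields $s_m = \varepsilon_m n^{-1}\sum_{i,j}|i-j| + o(\varepsilon_m) > 0$, whence $s_m \to 0$ and $D_m \to D := -n (\sum_{i,j}|i-j|)^{-1}\, \Pi M \Pi$, which is exactly the asserted matrix. The cross term is handled by order counting: $\Pi \Sigma_m (n^{-1} e_+ e_+') = -\varepsilon_m n^{-1} \Pi M e_+ e_+' + o(\varepsilon_m)$ is $O(\varepsilon_m)$ while $s_m^{1/2}$ is of order $\varepsilon_m^{1/2}$, so the quotient is $O(\varepsilon_m^{1/2}) \to 0$.

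The step I expect to be the crux is the regularity of $D$ on $\limfunc{span}(e_+)^\perp$, i.e. the strict conditional negative definiteness of the distance matrix $M$. Here I would use the identity $|i-j| = i + j - 2\min(i,j)$: for $v$ with $e_+' v = 0$ the two linear terms vanish, giving $v' M v = -2 v' A v$, where $A$ is the matrix with entries $\min(i,j)$. Since $A = LL'$ with $L$ the lower-triangular matrix of ones (the Brownian-motion covariance sampled at the integers) is positive definite, $v' M v < 0$ for every nonzero $v \perp e_+$; multiplying by the negative scalar $-n/\sum_{i,j}|i-j|$ shows $D$ is positive definite on $\limfunc{span}(e_+)^\perp$ and annihilates $e_+$, which is the regularity claim. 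Finally, for Part 4 I would repeat the expansion with $\rho_m = -1 + \varepsilon_m$, producing the matrix $M'$ with entries $(-1)^{|i-j|}|i-j|$ and the stated $D$; regularity then follows by conjugating with the orthogonal sign matrix $J = \limfunc{diag}((-1)^1,\ldots,(-1)^n)$, which satisfies $J e_- = e_+$, $J e_+ = e_-$ and $J M' J = M$, thereby reducing the $e_-$ computation to the already-settled $e_+$ case.
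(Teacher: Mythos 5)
Your proposal is correct, and on the crux of the lemma it takes a genuinely different route from the paper. The limit computations are essentially equivalent: your first-order Taylor expansion $\rho_m^{|i-j|}=1-\varepsilon_m|i-j|+o(\varepsilon_m)$ is the same calculation as the paper's application of l'Hopital's rule to $\Pi\Sigma_m\Pi/\limfunc{tr}(\Pi\Sigma_m\Pi)$, both reducing $D$ to the derivative matrix $(d\Lambda/d\rho)(\pm 1)$ normalized by $\mp n^{-1}\sum_{i,j}|i-j|$, and your order-counting for the cross term matches the paper's observation that $\Pi\Sigma_m\Pi_{\limfunc{span}(\bar\Sigma)}/s_m$ has a finite limit. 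Where you diverge is the regularity of $D$ on $\limfunc{span}(e_+)^{\bot}$: the paper proves nonsingularity of the bordered $(n+1)\times(n+1)$ matrix $A$ by an explicit sequence of elementary row/column transformations ($B$, $B^{\ast}$, $B^{\ast\ast}$) reducing it to a visibly nonsingular matrix, whereas you prove the strict conditional negative definiteness of the distance matrix directly via $|i-j|=i+j-2\min(i,j)$ and the factorization $\bigl(\min(i,j)\bigr)_{i,j}=LL^{\prime}$ with $L$ lower-triangular of ones. Your argument is shorter, more conceptual, and yields strictly more — it shows $D$ is positive definite on $\limfunc{span}(e_+)^{\bot}$ rather than merely injective there — while the paper's matrix manipulation is self-contained but opaque. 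Your Part 4 reduction via the sign matrix $J=\limfunc{diag}((-1)^{1},\ldots,(-1)^{n})$ with $Je_-=e_+$ and $JM^{\prime}J=\pm M$ is the same device the paper uses in the form $A^{\dag}=-EAE$. (Minor quibbles only: your $M^{\prime}$ carries the opposite sign to the paper's $(d\Lambda/d\rho)(-1)$, which is immaterial for regularity; and Parts 1 and 2, which the paper dismisses as obvious, are handled adequately by your compactness argument.)
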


\begin{proof}
(1) and (2) are obvious.

(3) Because $\Pi _{\limfunc{span}\left( \bar{\Sigma}\right) ^{\bot }}\Sigma
_{m}\Pi _{\limfunc{span}\left( \bar{\Sigma}\right) ^{\bot }}$ is nonnegative
definite, but obviously different from the zero matrix (recall that $n>1$ is
assumed), we see that $s_{m}$ is always positive. Clearly, $\Pi _{\limfunc{%
span}\left( \bar{\Sigma}\right) ^{\bot }}\Sigma _{m}\Pi _{\limfunc{span}%
\left( \bar{\Sigma}\right) ^{\bot }}$ converges to $\Pi _{\limfunc{span}%
\left( \bar{\Sigma}\right) ^{\bot }}\bar{\Sigma}\Pi _{\limfunc{span}\left( 
\bar{\Sigma}\right) ^{\bot }}=0$ and hence $s_{m}\rightarrow 0$. By
l'Hopital's rule the limit of $D_{m}$ can be obtained as the limit of $\Pi _{%
\limfunc{span}\left( \bar{\Sigma}\right) ^{\bot }}\left( d\Lambda /d\rho
\right) (\rho _{m})\Pi _{\limfunc{span}\left( \bar{\Sigma}\right) ^{\bot }}$
divided by the limit of 
\begin{equation*}
\limfunc{tr}\left( \Pi _{\limfunc{span}\left( \bar{\Sigma}\right) ^{\bot
}}\left( d\Lambda /d\rho \right) (\rho _{m})\Pi _{\limfunc{span}\left( \bar{%
\Sigma}\right) ^{\bot }}\right)
\end{equation*}
provided the latter is nonzero. The second limit now equals%
\begin{eqnarray*}
\limfunc{tr}\left( \left( I_{n}-n^{-1}e_{+}e_{+}^{\prime }\right) \left(
d\Lambda /d\rho \right) (1)\left( I_{n}-n^{-1}e_{+}e_{+}^{\prime }\right)
\right) &=&\limfunc{tr}\left( \left( d\Lambda /d\rho \right) (1)\left(
I_{n}-n^{-1}e_{+}e_{+}^{\prime }\right) \right) \\
&=&\limfunc{tr}\left( \left( d\Lambda /d\rho \right) (1)\right) -n^{-1}%
\limfunc{tr}\left( e_{+}^{\prime }\left( d\Lambda /d\rho \right)
(1)e_{+}\right) .
\end{eqnarray*}%
Observe that the $(i,j)$-th element of the matrix $\left( d\Lambda /d\rho
\right) (1)$ is given by $\left\vert i-j\right\vert $. Hence, the above
expression equals%
\begin{equation*}
-n^{-1}\limfunc{tr}\left( e_{+}^{\prime }\left( d\Lambda /d\rho \right)
(1)e_{+}\right) =-n^{-1}\sum_{i,j}\left\vert i-j\right\vert ,
\end{equation*}%
which is clearly nonzero. The first limit exists and equals%
\begin{equation*}
\left( I_{n}-n^{-1}e_{+}e_{+}^{\prime }\right) \left( d\Lambda /d\rho
\right) (1)\left( I_{n}-n^{-1}e_{+}e_{+}^{\prime }\right)
\end{equation*}%
which shows that $D$ is of the form as claimed in the lemma. We next show
that $D$ is regular on $\limfunc{span}\left( \bar{\Sigma}\right) ^{\bot }=%
\limfunc{span}(e_{+})^{\bot }$. This is equivalent to showing that the
equation system%
\begin{equation*}
\begin{array}{c}
\left( d\Lambda /d\rho \right) (1)x+\lambda e_{+}=0 \\ 
e_{+}^{\prime }x=0%
\end{array}%
\end{equation*}%
has $x=0$, $\lambda =0$ as its only solution. We hence need to show that the 
$(n+1)\times (n+1)$ matrix%
\begin{equation*}
A=\left[ 
\begin{array}{cc}
\left( d\Lambda /d\rho \right) (1) & e_{+} \\ 
e_{+}^{\prime } & 0%
\end{array}%
\right]
\end{equation*}%
has rank $n+1$. Let $B$ be the $(n+1)\times (n+1)$ matrix given by%
\begin{equation*}
B=\left[ 
\begin{array}{cc}
B_{11} & 0 \\ 
0 & 1%
\end{array}%
\right]
\end{equation*}%
where the $n\times n$ matrix $B_{11}$ has $1$ everywhere on the main
diagonal, $-1$ everywhere on the first off-diagonal above the main diagonal,
and zeroes elsewhere. Let the $(n+1)\times (n+1)$ matrices $B^{\ast }$ and $%
B^{\ast \ast }$ be given by%
\begin{equation*}
B^{\ast }=\left[ 
\begin{array}{cc}
0 & 1 \\ 
I_{n} & 0%
\end{array}%
\right] ,\qquad B^{\ast \ast }=\left[ 
\begin{array}{cc}
I_{n} & 0 \\ 
f & 1%
\end{array}%
\right] ,
\end{equation*}%
where $f=-\left( n-1,n-2,n-3,\ldots ,1,0\right) $. Observe that $B$, $%
B^{\ast }$, as well as $B^{\ast \ast }$ are non-singular and that%
\begin{equation*}
B^{\ast }BAB^{\ast \ast }=C=\left[ 
\begin{array}{cc}
C_{11} & 0 \\ 
0 & 1%
\end{array}%
\right]
\end{equation*}%
where $C_{11}$ is an $n\times n$ matrix that has $1$ everywhere on and above
the diagonal and $-1$ everywhere below the diagonal. Obviously, $C$ is
nonsingular and hence $A$ is so. Finally, we show that the limit of $\Pi _{%
\limfunc{span}\left( \bar{\Sigma}\right) ^{\bot }}\Sigma _{m}\Pi _{\limfunc{%
span}\left( \bar{\Sigma}\right) }/s_{m}^{1/2}$ equals zero. Because $%
s_{m}\rightarrow 0$, it suffices to show that the limit of $\Pi _{\limfunc{%
span}\left( \bar{\Sigma}\right) ^{\bot }}\Sigma _{m}\Pi _{\limfunc{span}%
\left( \bar{\Sigma}\right) }/s_{m}$ exists and is finite. Now the same
arguments as above show that the latter limit is equal to $\left(
I_{n}-n^{-1}e_{+}e_{+}^{\prime }\right) \left( d\Lambda /d\rho \right)
(1)n^{-1}e_{+}e_{+}^{\prime }$ divided by $-n^{-1}\sum_{i,j}\left\vert
i-j\right\vert $.

(4) For the same reasons as in (3) $s_{m}$ is positive and converges to
zero. By the same argument as in (3) the limit of $D_{m}$ is%
\begin{equation*}
\left[ \left( I_{n}-n^{-1}e_{-}e_{-}^{\prime }\right) \left( d\Lambda /d\rho
\right) (-1)\left( I_{n}-n^{-1}e_{-}e_{-}^{\prime }\right) \right] /\limfunc{%
tr}\left( \left( I_{n}-n^{-1}e_{-}e_{-}^{\prime }\right) \left( d\Lambda
/d\rho \right) (-1)\left( I_{n}-n^{-1}e_{-}e_{-}^{\prime }\right) \right) .
\end{equation*}%
Note that the denominator is equal to%
\begin{equation*}
\limfunc{tr}\left( \left( d\Lambda /d\rho \right) (-1)\right) -n^{-1}%
\limfunc{tr}\left( e_{-}^{\prime }\left( d\Lambda /d\rho \right)
(-1)e_{-}\right) =n^{-1}\sum_{i,j}\left\vert i-j\right\vert \neq 0,
\end{equation*}%
observing that the $(i,j)$-th element of $\left( d\Lambda /d\rho \right)
(-1) $ is given by $(-1)^{\left\vert i-j\right\vert +1}\left\vert
i-j\right\vert $. We next show that $D$ is regular on $\limfunc{span}\left( 
\bar{\Sigma}\right) ^{\bot }=\limfunc{span}(e_{-})^{\bot }$. This is
equivalent to showing that the equation system%
\begin{equation*}
\begin{array}{c}
\left( d\Lambda /d\rho \right) (-1)x+\lambda e_{-}=0 \\ 
e_{-}^{\prime }x=0%
\end{array}%
\end{equation*}%
has $x=0$, $\lambda =0$ as its only solution. We hence need to show that the 
$(n+1)\times (n+1)$ matrix%
\begin{equation*}
A^{\#}=\left[ 
\begin{array}{cc}
\left( d\Lambda /d\rho \right) (-1) & e_{-} \\ 
e_{-}^{\prime } & 0%
\end{array}%
\right]
\end{equation*}%
has rank $n+1$. Note that this is equivalent to establishing that the matrix 
\begin{equation*}
A^{\dag }=\left[ 
\begin{array}{cc}
\left( d\Lambda /d\rho \right) (-1) & (-1)^{n+1}e_{-} \\ 
(-1)^{n+1}e_{-}^{\prime } & 0%
\end{array}%
\right]
\end{equation*}%
is nonsingular. Now note that 
\begin{equation*}
A^{\dag }=-EAE
\end{equation*}%
where $A$ is as in (3) and $E$ is an $(n+1)\times (n+1)$ diagonal matrix
with the $i$-th diagonal element given by $\left( -1\right) ^{i}$. This
proves regularity of $D$ on $\limfunc{span}\left( \bar{\Sigma}\right) ^{\bot
}$. The claim for $\Pi _{\limfunc{span}\left( \bar{\Sigma}\right) ^{\bot
}}\Sigma _{m}\Pi _{\limfunc{span}\left( \bar{\Sigma}\right) }/s_{m}^{1/2}$
is proved as in (3).
\end{proof}

\begin{lemma}
\label{cpar2} For every $\nu \in \lbrack 0,\pi ]$ there exists a sequence $%
\Sigma _{m}\in \mathfrak{C}_{AR(2)}$ converging to $E(\nu )E(\nu )^{\prime }$%
.
\end{lemma}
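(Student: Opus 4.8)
The plan is to exhibit, for each $\nu$, an explicit one-parameter family of stationary AR(2) correlation matrices whose autocorrelation function collapses onto the cosine sequence $h\mapsto \cos(h\nu)$ as the roots approach the unit circle. First I would record the target: the $(i,j)$-entry of $E(\nu)E(\nu)^{\prime}$ equals $\cos(i\nu)\cos(j\nu)+\sin(i\nu)\sin(j\nu)=\cos((i-j)\nu)$, so $E(\nu)E(\nu)^{\prime}$ is precisely the symmetric Toeplitz matrix whose lag-$h$ entry is $\cos(h\nu)$. Since the matrices involved are $n\times n$ and thus have finitely many entries, it suffices to produce a sequence $\Sigma_m\in\mathfrak{C}_{AR(2)}$ whose lag-$h$ autocorrelation tends to $\cos(h\nu)$ for each fixed $h\in\{0,\ldots,n-1\}$; entrywise convergence then yields convergence in any matrix norm.

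For the interior case $\nu\in(0,\pi)$ I would consider, for $\rho\in(0,1)$, the AR(2) recursion $u_t=2\rho\cos(\nu)\,u_{t-1}-\rho^2 u_{t-2}+\varepsilon_t$. Its associated polynomial $1-2\rho\cos(\nu)z+\rho^2 z^2$ has the conjugate roots $\rho^{-1}e^{\pm\iota\nu}$, which lie strictly outside the unit disk, so the process is stationary with finite positive variance and the induced correlation matrix $\Sigma(\rho,\nu)$ is a genuine element of $\mathfrak{C}_{AR(2)}$. Solving the Yule--Walker recursion $\rho_{AR}(h)=2\rho\cos(\nu)\rho_{AR}(h-1)-\rho^2\rho_{AR}(h-2)$ subject to $\rho_{AR}(0)=1$ and $\rho_{AR}(1)=2\rho\cos(\nu)/(1+\rho^2)$ gives the closed form
\[
\rho_{AR}(h)=\rho^{|h|}\left(\cos(|h|\nu)+\frac{\cos\nu}{\sin\nu}\cdot\frac{1-\rho^2}{1+\rho^2}\sin(|h|\nu)\right),
\]
valid since $\sin\nu\neq 0$ on $(0,\pi)$. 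Letting $\rho=\rho_m\uparrow 1$, the prefactor $\rho^{|h|}$ tends to $1$ while the factor $(1-\rho^2)/(1+\rho^2)$ tends to $0$, so $\rho_{AR}(h)\to\cos(h\nu)$ for every fixed $h$, and hence $\Sigma(\rho_m,\nu)\to E(\nu)E(\nu)^{\prime}$.

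The two endpoints are treated separately, precisely because the sine-term coefficient above degenerates when $\sin\nu=0$. Using $\mathfrak{C}_{AR(1)}\subseteq\mathfrak{C}_{AR(2)}$, for $\nu=0$ I would take $\Sigma_m=\Lambda(\rho_m)$ with $\rho_m\uparrow 1$, and for $\nu=\pi$ take $\Sigma_m=\Lambda(\rho_m)$ with $\rho_m\downarrow -1$; by Lemma \ref{AR_1} these converge to $e_{+}e_{+}^{\prime}=E(0)E(0)^{\prime}$ and $e_{-}e_{-}^{\prime}=E(\pi)E(\pi)^{\prime}$, respectively. The only substantive computation is the interior autocorrelation formula, and the single point deserving attention is the vanishing of the sine-term coefficient in the limit, which guarantees that no residual phase survives; as this is immediate from the explicit factor $(1-\rho^2)/(1+\rho^2)$, I do not anticipate any real obstacle.
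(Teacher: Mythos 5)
Your proof is correct, and it uses exactly the same one-parameter family of AR(2) processes as the paper: the paper's spectral density $f_{r}(\omega )\propto \left\vert 1-2r\cos (\nu )e^{-\iota \omega }+r^{2}e^{-2\iota \omega }\right\vert ^{-2}$ is precisely the spectral density of your recursion $u_{t}=2\rho \cos (\nu )u_{t-1}-\rho ^{2}u_{t-2}+\varepsilon _{t}$ with $r=\rho $. Where you diverge is in how the limit is computed. The paper works in the frequency domain: it shows that the normalized spectral density concentrates its mass near $\pm \nu $ as $r\rightarrow 1$, so the spectral measures converge weakly to $0.5\delta _{\nu }+0.5\delta _{-\nu }$, and then reads off the limiting autocovariances by testing against $g(\omega )=\exp (-\iota l\omega )$. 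You instead work in the time domain with the explicit closed-form autocorrelation $\rho _{AR}(h)=\rho ^{|h|}\bigl(\cos (|h|\nu )+\cot (\nu )\,\tfrac{1-\rho ^{2}}{1+\rho ^{2}}\sin (|h|\nu )\bigr)$, which is the correct solution of the Yule--Walker recursion with the right initial conditions ($\rho _{AR}(0)=1$, $\rho _{AR}(1)=2\rho \cos (\nu )/(1+\rho ^{2})$), and you let the sine coefficient vanish as $\rho \uparrow 1$. Your route is more elementary and fully explicit, and the identification of the $(i,j)$-entry of $E(\nu )E(\nu )^{\prime }$ as $\cos ((i-j)\nu )$ makes the entrywise convergence transparent; the paper's spectral argument is less computational and has the advantage of immediately suggesting the more general closure result it mentions afterward (that the weak closure of the AR($p$) spectral densities adjoins the discrete measures $0.5\delta _{\nu }+0.5\delta _{-\nu }$). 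Your treatment of the endpoints $\nu \in \{0,\pi \}$ via $\mathfrak{C}_{AR(1)}\subseteq \mathfrak{C}_{AR(2)}$ and Lemma \ref{AR_1} coincides with the paper's.
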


\begin{proof}
For $\nu =0$ ($\nu =\pi $, respectively) the matrix $E(\nu )E(\nu )^{\prime
} $ equals $e_{+}e_{+}^{\prime }$ ($e_{-}e_{-}^{\prime }$, respectively),
and the result thus follows from Lemma \ref{AR_1}. Hence assume that $\nu
\in (0,\pi )$. Consider for $0<r<1$ the AR(2)-spectral density%
\begin{equation*}
f_{r}(\omega )=\left( 2\pi \right) ^{-1}c\left( r\right) \left\vert 1-2r\cos
\left( \nu \right) \exp (-\iota \omega )+r^{2}\exp (-2\iota \omega
)\right\vert ^{-2}
\end{equation*}%
where 
\begin{equation*}
c\left( r\right) =\left( 1-r^{2}\right) \left( \left( 1+r^{2}\right)
^{2}-4r^{2}\cos ^{2}\left( \nu \right) \right) \left( 1+r^{2}\right) ^{-1}.
\end{equation*}%
Observe that $\int f_{r}\left( \omega \right) d\omega =1$ where the integral
extends over $\left[ -\pi ,\pi \right] $. Hence the $n\times n$ variance
covariance matrix $\Sigma \left( r\right) $ corresponding to $f_{r}$ belongs
to $\mathfrak{C}_{AR(2)}$. Let $\varepsilon >0$ be given and set $A\left(
\varepsilon \right) =\left\{ \omega \in \left[ -\pi ,\pi \right] :\left\vert
\omega -\nu \right\vert \geq \varepsilon \right\} \cup \left\{ \omega \in %
\left[ -\pi ,\pi \right] :\left\vert \omega +\nu \right\vert \geq
\varepsilon \right\} $. Then it is easy to see that%
\begin{equation*}
\sup_{\omega \in A\left( \varepsilon \right) }\left\vert f_{r}(\omega
)\right\vert \rightarrow 0\text{ \ for \ }r\rightarrow 1\text{.}
\end{equation*}%
Consequently, for every $\delta >0$ and every $\varepsilon >0$ there exists
an $0<r\left( \varepsilon ,\delta \right) <1$ such that%
\begin{equation*}
\int\limits_{\left[ -\pi ,\pi \right] \backslash A\left( \varepsilon \right)
}f_{r}\left( \omega \right) d\omega >1-\delta
\end{equation*}%
holds for all $r$ satisfying $r\left( \varepsilon ,\delta \right) <r<1$. In
view of symmetry of $f_{r}$ around $\omega =0$, this shows that for $r$
sufficiently close to $1$ the spectral density $f_{r}$ is arbitrarily small
outside of the union of the neighborhoods $\left\vert \omega -\nu
\right\vert <\varepsilon $ and $\left\vert \omega +\nu \right\vert
<\varepsilon $ and puts mass arbitrarily close to $1/2$ on each one of the
two neighborhoods. A standard argument then shows for every continuous
function $g$ on $\left[ -\pi ,\pi \right] $ that%
\begin{equation*}
\int\limits_{\left[ -\pi ,\pi \right] }g\left( \omega \right) f_{r}\left(
\omega \right) d\omega \rightarrow 0.5g\left( \nu \right) +0.5g\left( -\nu
\right) =\int\limits_{\left[ -\pi ,\pi \right] }g\left( \omega \right)
d\left( 0.5\delta _{\nu }+0.5\delta _{-\nu }\right)
\end{equation*}%
where $\delta _{x}$ denotes unit pointmass at $x$. Specializing to $g\left(
\omega \right) =\exp (-\iota l\omega )$ shows that $\Sigma \left( r\right) $
converges to $E(\nu )E(\nu )^{\prime }$.
\end{proof}

Using the arguments in the above proof it is actually not difficult to show
that the closure of the set of AR(2)-spectral densities in the weak topology
is the class of AR(2)-spectral densities plus all spectral measures of the
form $0.5\delta _{\nu }+0.5\delta _{-\nu }$ for $\nu \in \lbrack 0,\pi ]$.
This result extends in an obvious way to higher-order autoregressive models
and has an appropriate generalization to (multivariate) autoregressive
moving average models, see Theorem 4.1 in \cite{DeiPoe1984}.

\bibliographystyle{ims}
\bibliography{refs}

\end{document}